\newtheorem{theorem}{Theorem}[section]
\newtheorem{lemma}[theorem]{Lemma}
\newtheorem{proposition}[theorem]{Proposition}
\newtheorem{corollary}[theorem]{Corollary}
\theoremstyle{definition}
\newtheorem{definition}[theorem]{Definition}
\newtheorem{example}[theorem]{Example}
\newtheorem{remark}[theorem]{Remark}
\newtheorem{notation}[theorem]{Notation}
\theoremstyle{question}
\numberwithin{equation}{section}
\newtheorem*{xclaim}{Claim}
\begin{document}

\title[Continuous mappings with null support]{Continuous mappings with null support}

\author{M.R. Koushesh}
\address{Department of Mathematical Sciences, Isfahan University of Technology, Isfahan 84156--83111, Iran}
\address{School of Mathematics, Institute for Research in Fundamental Sciences (IPM), P.O. Box: 19395--5746, Tehran, Iran}
\email{koushesh@cc.iut.ac.ir}

\subjclass[2010]{Primary 54D35, 54D60, 46J10, 46J25, 46E25, 46E15; Secondary 54C35, 54D65, 46H05, 16S60}

\keywords{Stone--\v{C}ech compactification, ideal of sets, real Banach algebra, commutative Gelfand--Naimark Theorem, Hewitt realcompactification}

\begin{abstract}
Let $X$ be a (topological) space and let ${\mathscr I}$ be an ideal in $X$, that is, a collection of subsets of $X$ which contains all subsets of its elements and is closed under finite unions. The elements of ${\mathscr I}$ are called \textit{null}. The space $X$ is \textit{locally null} if each $x$ in $X$ has a null neighborhood in $X$. Let $C_b(X)$ denote the normed algebra of all continuous bounded real-valued mappings on $X$ equipped with the supremum norm, $C_0(X)$ denote the subalgebra of $C_b(X)$ consisting of elements vanishing at infinity and $C_{00}(X)$ the subalgebra of $C_b(X)$ consisting of elements with compact support.

We study the normed subalgebra $C^{\mathscr I}_{00}(X)$ of $C_b(X)$ consisting of all $f$ in $C_b(X)$ whose support has a null neighborhood in $X$, and the Banach subalgebra $C^{\mathscr I}_0(X)$ of $C_b(X)$ consisting of all $f$ in $C_b(X)$ such that $|f|^{-1}([1/n,\infty))$ has a null neighborhood in $X$ for all positive integer $n$. We prove that if $X$ is a normal locally null space then $C^{\mathscr I}_{00}(X)$ and $C^{\mathscr I}_0(X)$ are respectively isometrically isomorphic to $C_{00}(Y)$ and $C_0(Y)$ for a unique locally compact Hausdorff space $Y$; furthermore, $C^{\mathscr I}_{00}(X)$ is dense in $C^{\mathscr I}_0(X)$. We construct $Y$ explicitly as a subspace of the Stone--\v{C}ech compactification $\beta X$ of $X$. The space $Y$ is locally compact (and countably compact, in certain cases), contains $X$ densely, and in specific cases turns out to be familiar subspaces of $\beta X$. The known topological structure of $Y$ enables us to establish several commutative Gelfand--Naimark type theorems and derive results not generally expected to be deducible from the standard Gelfand theory.
\end{abstract}

\maketitle

\tableofcontents

\section{Introduction}

Throughout this article by a \textit{space} we mean a topological space. Let $X$ be a space. We denote by $C(X)$ the set of all continuous $f:X\rightarrow\mathbb{R}$ and we denote by $C_b(X)$ the set of all bounded elements of $C(X)$. Let $f\in C(X)$. The \textit{zero-set} of $f$ is $f^{-1}(0)$ and is denoted by $\mathrm{z}(f)$, the \textit{cozero-set} of $f$ is $X\setminus f^{-1}(0)$ and is denoted by $\mathrm{coz}(f)$, and the \textit{support} of $f$ is $\mathrm{cl}_X\mathrm{coz}(f)$ and is denoted by $\mathrm{supp}(f)$. The set of all zero-sets of $X$ is denoted by $\mathscr{Z}(X)$ and the set of all cozero-sets of $X$ is denoted by $\mathrm{Coz}(X)$. We will denote by $C_0(X)$ the set of all $f\in C_b(X)$ which vanish at infinity (that is, $|f|^{-1}([1/n,\infty))$ is compact for each positive integer $n$) and we will denote by $C_{00}(X)$ the set of all $f\in C_b(X)$ with compact support.

An \textit{upper semi-lattice} $(L,\leq)$ is a partially ordered set that together with any two elements $a,b\in L$ it contains their least upper bound $a\vee b$. Let $(L,\leq)$ be an upper semi-lattice. A non-empty subset $I$ of $L$ is an \textit{ideal} in $L$ if it satisfies the following:
\begin{itemize}
\item If $a,b\in I$ then $a\vee b\in I$.
\item If $t\in L$ and $t\leq a$ with $a\in I$ then $t\in I$.
\end{itemize}
Suppose that $(L,\leq)$ contains the least upper bound for any countable number of its elements. An ideal $I$ of $L$ is a \textit{$\sigma$-ideal} in $L$ if it satisfies the following:
\begin{itemize}
\item If $J\subseteq I$ is countable then $\bigvee J\in I$.
\end{itemize}
We are particularly interested in upper semi-lattices $(\mathscr{L},\leq)$ such that $\mathscr{L}\subseteq\mathscr{P}(X)$, where $X$ is a space, and $\leq$ is the set-theoretic inclusion $\subseteq$; the elements of ${\mathscr I}$ are then called \textit{null} (or \textit{negligible}). The ideal $\mathscr{I}$ is \textit{proper} if $X$ in not null. The space $X$ is \textit{locally null} if each of its points has a null neighborhood in $X$.

Roughly speaking, an ideal may be viewed as a collection of sets that are considered somehow to be ``small" or ``negligible". Every element of the upper semi-lattice contained in an element of the ideal must also be in the ideal; this explains what is meant by ``smallness".

Let $X$ be a locally separable metrizable space. In \cite{Ko6} (also \cite {Ko11} and \cite{Ko10}) we have introduced and studied the Banach subalgebra $C_s(X)$ of $C_b(X)$ consisting of all $f\in C_b(X)$ with separable support. This has been accomplished through the critical introduction of a certain subspace $\lambda X$ of the Stone--\v{C}ech compactification $\beta X$ of $X$ and relating the algebraic structure of $C_s(X)$ to the topology of $\lambda X$, using techniques we have already developed in \cite{Ko3} and \cite{Ko12}. Here we aim to follow the same route. We consider various subalgebras of $C_b(X)$ and study their structures by relating them to the topology of subspaces of $\beta X$.

The article is divided into two parts.

In Part \ref{HFPG}, for a space $X$ and an ideal $\mathscr{I}$ of an upper semi-lattice $(\mathscr{L},\subseteq)$, where $\mathscr{L}\subseteq\mathscr{P}(X)$, we consider the subsets $C^{\mathscr I}_{00}(X)$ and $C^{\mathscr I}_0(X)$ of $C_b(X)$ defined by
\[C^{\mathscr I}_{00}(X)=\big\{f\in C_b(X):\mathrm{supp}(f)\mbox{ has a null neighborhood in }X\big\}\]
and
\[C^{\mathscr I}_0(X)=\big\{f\in C_b(X):|f|^{-1}\big([1/n,\infty)\big)\mbox{ has a null neighborhood in $X$ for each }n\big\}.\]
The sets $C^{\mathscr I}_{00}(X)$ and $C^{\mathscr I}_0(X)$ coincide with $C_{00}(X)$ and $C_0(X)$, respectively, if $X$ is locally compact and $\mathscr{I}$ is the set of all subspaces of $X$ with compact closure. Under certain conditions the expression of either $C^{\mathscr I}_{00}(X)$ or $C^{\mathscr I}_0(X)$ simplifies; in particular
\[C^{\mathscr I}_{00}(X)=\big\{f\in C_b(X):\mathrm{supp}(f)\mbox{ is null}\big\}\]
if $X$ is Lindel\"{o}f and locally null, $\mathscr{I}$ is a $\sigma$-ideal and $\mathscr{L}$ contains the set $\mathscr{RC}(X)$ of all regular closed subspaces of $X$ (a subspace of $X$ is called \textit{regular closed} in $X$ if it equals the closure of its interior),
\[C^{\mathscr I}_0(X)=\big\{f\in C_b(X):|f|^{-1}\big([1/n,\infty)\big)\mbox{ is null for each }n\big\}\]
if $\mathscr{L}$ contains the set $\mathscr{Z}(X)$ of all zero-sets of $X$,
\[C^{\mathscr I}_0(X)=\big\{f\in C_b(X):\mathrm{coz}(f)\mbox{ has a null neighborhood in }X\big\}\]
if $\mathscr{I}$ is a $\sigma$-ideal and
\[C^{\mathscr I}_0(X)=\big\{f\in C_b(X):\mathrm{coz}(f)\mbox{ is null}\big\}\]
if $\mathscr{I}$ is a $\sigma$-ideal and $\mathscr{L}$ contains the set $\mathrm{Coz}(X)$ of all cozero-sets of $X$. In general $C^{\mathscr I}_{00}(X)$ is an ideal in $C_b(X)$ and $C^{\mathscr I}_0(X)$ is a closed ideal in $C_b(X)$ which contains $C^{\mathscr I}_{00}(X)$. Furthermore, if $X$ is completely regular, then the following are equivalent
\begin{itemize}
  \item $C^{\mathscr I}_{00}(X)$ is of empty hull,
  \item $C^{\mathscr I}_0(X)$ is of empty hull,
  \item $X$ is locally null,
\end{itemize}
(where an ideal $H$ of $C_b(X)$ is \textit{of empty hull} if for every $x\in X$ we have $f(x)\neq 0$ for some $f\in H$), and if $X$ is locally null, then the following are equivalent
\begin{itemize}
  \item $C^{\mathscr I}_{00}(X)$ is unital,
  \item $C^{\mathscr I}_0(X)$ is unital,
  \item $\mathscr{I}$ is non-proper.
\end{itemize}
The main result of this part states that if $X$ is normal and locally null, then $C^{\mathscr I}_{00}(X)$ and $C^{\mathscr I}_0(X)$ are respectively isometrically isomorphic to $C_{00}(Y)$ and $C_0(Y)$ for a unique (up to homeomorphism) locally compact Hausdorff space $Y$, namely $Y=\lambda_{\mathscr I} X$, where
\[\lambda_{\mathscr I} X=\bigcup\big\{\mathrm{int}_{\beta X}\mathrm{cl}_{\beta X}C:C\in\mathrm{Coz}(X)\mbox{ and }\mathrm{cl}_XC\mbox{ has a null neighborhood in $X$}\big\},\]
considered as a subspace of the Stone--\v{C}ech compactification $\beta X$ of $X$. Furthermore,
\begin{itemize}
  \item $X$ is dense in $Y$,
  \item $C^{\mathscr I}_{00}(X)$ is dense in $C^{\mathscr I}_0(X)$,
  \item $Y$ is compact if and only if $C^{\mathscr I}_{00}(X)$ is unital if and only if $C^{\mathscr I}_0(X)$ is unital.
\end{itemize}
(For the case of $C^{\mathscr I}_0(X)$ we may assume that $X$ is only completely regular provided that ${\mathscr I}$ is an ideal in $(\mathrm{Coz}(X),\subseteq)$.) Moreover, if $X$ is also Lindel\"{o}f and $\mathscr{I}$ is a $\sigma$-ideal, then
\begin{itemize}
  \item $C^{\mathscr I}_0(X)=C^{\mathscr I}_{00}(X)$,
  \item $C_0(Y)=C_{00}(Y)$; in particular, $Y$ is countably compact,
\end{itemize}
and
\[Y=\bigcup\{\mathrm{int}_{\beta X}\mathrm{cl}_{\beta X}U:U\mbox{ is null}\}.\]

In Part \ref{KHGJ} we deal with specific examples. This specification, either of the space $X$ or the ideal ${\mathscr I}$, enables us to study $C^{\mathscr I}_{00}(X)$ and $C^{\mathscr I}_0(X)$ further and deeper. This part is divided into four sections.

In Section \ref{HFLH}, we consider certain well known ideals of $\mathbb{N}$. These include the \textit{summable} ideal
\[{\mathscr S}=\bigg\{A\subseteq\mathbb{N}:\sum_{n\in A}\frac{1}{n}\mbox{ converges}\bigg\}\]
and the \textit{density} ideal
\[{\mathscr D}=\bigg\{A\subseteq\mathbb{N}:\limsup_{n\rightarrow\infty}\frac{|A\cap\{1,\ldots,n\}|}{n}=0\bigg\}\]
of $\mathbb{N}$. Denote
\[\mathfrak{s}_{00}=C^{\mathscr S}_{00}(\mathbb{N}),\;\; \mathfrak{d}_{00}=C^{\mathscr D}_{00}(\mathbb{N}),\;\; \mathfrak{s}_0=C^{\mathscr S}_0(\mathbb{N})\;\mbox{ and }\; \mathfrak{d}_0=C^{\mathscr D}_0(\mathbb{N}).\]
Then
\[\mathfrak{s}_{00}=\bigg\{\mathbf{x}\in\ell_\infty:\sum_{\mathbf{x}(n)\neq0}\frac{1}{n}\mbox{ converges}\bigg\},\]
and
\[\mathfrak{d}_{00}=\bigg\{\mathbf{x}\in\ell_\infty:\limsup_{n\rightarrow\infty}\frac{|\{k\leq n:\mathbf{x}(k)\neq0\}|}{n}=0\bigg\},\]
are normed subalgebras of $\ell_\infty$ and
\[\mathfrak{s}_0=\bigg\{\mathbf{x}\in\ell_\infty:\sum_{|\mathbf{x}(n)|\geq\epsilon}\frac{1}{n}\mbox{ converges for each }\epsilon>0\bigg\},\]
and
\[\mathfrak{d}_0=\bigg\{\mathbf{x}\in\ell_\infty:\limsup_{n\rightarrow\infty}\frac{|\{k\leq n:|\mathbf{x}(k)|\geq\epsilon\}|}{n}=0\mbox{ for each }\epsilon>0\bigg\}\]
are Banach subalgebras of $\ell_\infty$. It follows from our discussion that $\mathfrak{s}_{00}$ and $\mathfrak{s}_0$ are respectively isometrically isomorphic to $C_{00}(S)$ and $C_0(S)$ with
\[S=\bigcup\bigg\{\mathrm{cl}_{\beta\mathbb{N}}A:A\subseteq\mathbb{N}\mbox{ and }\sum_{n\in A}\frac{1}{n}\mbox{ converges}\bigg\},\]
and $\mathfrak{d}_{00}$ and $\mathfrak{d}_0$ are respectively isometrically isomorphic to $C_{00}(D)$ and $C_0(D)$ with
\[D=\bigcup\bigg\{\mathrm{cl}_{\beta\mathbb{N}}A:A\subseteq\mathbb{N}\mbox{ and }\limsup_{n\rightarrow\infty}\frac{|A\cap\{1,\ldots,n\}|}{n}=0\bigg\},\]
where $S$ and $D$ are considered as subspaces of $\beta\mathbb{N}$. In particular, $\mathfrak{s}_{00}$ and $\mathfrak{d}_{00}$ are dense in $\mathfrak{s}_0$ and $\mathfrak{d}_0$, respectively, and are all non-unital. As corollaries of the above representations it follows that $\mathfrak{s}_{00}$ and $\mathfrak{d}_{00}$ each contains an isometric copy of the normed algebra
\[\bigoplus_{n=1}^\infty\ell_\infty,\]
$\mathfrak{s}_{00}/c_{00}$ and $\mathfrak{d}_{00}/c_{00}$ each contains a copy of the algebra
\[\bigoplus_{i<2^\omega}\frac{\ell_\infty}{c_{00}}\]
and $\mathfrak{s}_0/c_0$ and $\mathfrak{d}_0/c_0$ each contains an isometric copy of the normed algebra
\[\bigoplus_{i<2^\omega}\frac{\ell_\infty}{c_0}.\]

In Section \ref{KHDS} we consider certain subalgebras of $C_b(X)$ and we prove a few representation theorems. More importantly, for a completely regular space $X$ and a Banach subalgebra $H$ of $C_b(X)$ of empty hull which satisfies
\begin{itemize}
\item $f\in H$ whenever $f\in C_b(X)$ and $\mathrm{coz}(f)\subseteq\mathrm{coz}(h)$ for some $h\in H$,
\end{itemize}
we prove that $H$ is isometrically isomorphic to $C_0(Y)$ with the (unique) locally compact Hausdorff space $Y$ defined by
\[Y=\bigcup\big\{\mathrm{int}_{\beta X}\mathrm{cl}_{\beta X}C:C\in\mathrm{Coz}(X)\mbox{ and }\mathrm{cl}_XC\subseteq\mathrm{coz}(h)\mbox{ for some }h\in H\big\},\]
considered as a subspace of $\beta X$. Furthermore, $X$ is dense in $Y$, and the following are equivalent
\begin{itemize}
  \item $H$ is unital,
  \item $H$ contains a non-vanishing element,
  \item $Y$ is compact.
\end{itemize}
In the special case in which $X$ is Lindel\"{o}f and $H$ is a closed $z$-ideal in $C_b(X)$ (that is, if $\mathrm{z}(f)=\mathrm{z}(h)$, where $f\in C_b(X)$ and $h\in H$, then $f\in H$) we have
\[Y=\bigcup\big\{\mathrm{int}_{\beta X}\mathrm{cl}_{\beta X}\mathrm{coz}(h):h\in H\big\}\]
and
\[C_0(Y)=C_{00}(Y);\]
in particular, in this case, $Y$ is countably compact.

In Section \ref{HGF}, for a topological measure space $(X,{\mathscr O},{\mathscr B},\mu)$ we consider the ideal
\[{\mathscr M}=\big\{B\in{\mathscr B}:\mu(B)=0\big\}.\]
This leads to the consideration of the Banach subalgebra $\mathfrak{M}_0(X)$ of $C_b(X)$ consisting of elements with $\mu$-null cozero-set. In particular, if $X$ is completely regular then $\mathfrak{M}_0(X)$ is of empty hull if and only if the measure $\mu$ is locally null, and if so, then $\mathfrak{M}_0(X)$ is unital if and only if the measure $\mu$ is trivial.

Finally, in Section \ref{KIJGD}, for a space $X$ and a topological property $\mathfrak{P}$ we will be dealing with subsets of $C_b(X)$ consisting of elements with support having $\mathfrak{P}$.

Let $X$ be a space and $\mathfrak{P}$ a topological property. Let
\[{\mathscr I}_\mathfrak{P}=\{A\subseteq X:\mathrm{cl}_XA\mbox{ has }\mathfrak{P}\}.\]
Then ${\mathscr I}_\mathfrak{P}$ is an ideal in $({\mathscr P}(X),\subseteq)$, provided that $\mathfrak{P}$ is closed hereditary (that is, any closed subspace of a space with $\mathfrak{P}$ has $\mathfrak{P}$) and is preserved under finite closed sums (that is, any space which is a finite union of its closed subspaces each having $\mathfrak{P}$ has $\mathfrak{P}$). We consider $C^{\mathscr I}_{00}(X)$ and $C^{\mathscr I}_0(X)$ where ${\mathscr I}={\mathscr I}_\mathfrak{P}$. For simplicity of the notation denote
\[C^\mathfrak{P}_{00}(X)=C^{{\mathscr I}_\mathfrak{P}}_{00}(X)\;\;\;\;\mbox{ and }\;\;\;\; C^\mathfrak{P}_0(X)=C^{{\mathscr I}_\mathfrak{P}}_0(X)\]
and
\[\lambda_\mathfrak{P}X=\lambda_{{\mathscr I}_\mathfrak{P}}X.\]
In this context we have
\[C^\mathfrak{P}_{00}(X)=\big\{f\in C_b(X):\mathrm{supp}(f)\mbox{ has a closed neighborhood in $X$ with }\mathfrak{P}\big\}\]
and
\[C^\mathfrak{P}_0(X)=\big\{f\in C_b(X):|f|^{-1}\big([1/n,\infty)\big)\mbox{ has }\mathfrak{P}\mbox{ for each }n\big\}.\]
It follows that $C^\mathfrak{P}_{00}(X)$ is a normed subalgebra of $C_b(X)$ and $C^\mathfrak{P}_0(X)$ is a Banach subalgebra of $C_b(X)$ which contains $C^\mathfrak{P}_{00}(X)$.
Furthermore,
\[C^\mathfrak{P}_{00}(X)=\big\{f\in C_b(X):\mathrm{supp}(f)\mbox{ has a neighborhood in $X$ with }\mathfrak{P}\big\}\]
if $X$ is normal and
\[C^\mathfrak{P}_0(X)=\big\{f\in C_b(X):\mathrm{coz}(f)\mbox{ has }\mathfrak{P}\big\}\]
if $\mathfrak{P}$ is preserved under countable closed sums (that is, any space which is a countable union of its closed subspaces each having $\mathfrak{P}$ has $\mathfrak{P}$). Moreover, if $X$ is regular, then the following are equivalent:
\begin{itemize}
  \item $X$ is locally null,
  \item $X$ is locally-$\mathfrak{P}$,
\end{itemize}
and if $X$ is completely regular, then the following are equivalent:
\begin{itemize}
  \item $C^\mathfrak{P}_{00}(X)$ is of empty hull,
  \item $C^\mathfrak{P}_0(X)$ is of empty hull,
  \item $X$ is locally-$\mathfrak{P}$.
\end{itemize}
(A space is called \textit{locally-$\mathfrak{P}$}, where $\mathfrak{P}$ is a topological property, if each of its points has a neighborhood in it with $\mathfrak{P}$.) Also, if $X$ is completely regular and locally-$\mathfrak{P}$, then the following are equivalent:
\begin{itemize}
  \item $C^\mathfrak{P}_{00}(X)$ is unital,
  \item $C^\mathfrak{P}_0(X)$ is unital,
  \item $X$ has $\mathfrak{P}$.
\end{itemize}
In particular, if $X$ is normal and locally-$\mathfrak{P}$, then $C^\mathfrak{P}_{00}(X)$ and $C^\mathfrak{P}_0(X)$ are respectively isometrically isomorphic to $C_{00}(Y)$ and $C_0(Y)$ for the (unique) locally compact Hausdorff space $Y=\lambda_\mathfrak{P}X$. Furthermore,
\begin{itemize}
  \item $X$ is dense in $Y$,
  \item $C^\mathfrak{P}_{00}(X)$ is dense in $C^\mathfrak{P}_0(X)$,
  \item $Y$ is compact if and only if $X$ has $\mathfrak{P}$.
\end{itemize}
For the case of $C^\mathfrak{P}_0(X)$ the normality requirement of $X$ can be relaxed to complete regularity provided that $\mathfrak{P}$ is preserved under countable closed sums.

Let $X$ be a regular locally-$\mathfrak{P}$ space with $\mathfrak{Q}$, where $\mathfrak{P}$ and $\mathfrak{Q}$ are topological properties such that
\begin{itemize}
  \item $\mathfrak{P}$ and $\mathfrak{Q}$ are closed hereditary,
  \item a space with both $\mathfrak{P}$ and $\mathfrak{Q}$ is Lindel\"{o}f,
  \item $\mathfrak{P}$ is preserved under countable closed sums,
  \item a space with $\mathfrak{Q}$ having a dense subspace with $\mathfrak{P}$ has $\mathfrak{P}$.
\end{itemize}
Then we have
\[C^\mathfrak{P}_{00}(X)=\big\{f\in C_b(X):\mathrm{supp}(f)\mbox{ has }\mathfrak{P}\big\}=C^\mathfrak{P}_0(X).\]
If $X$ is normal, then $C^\mathfrak{P}_0(X)$ is isometrically isomorphic to $C_0(Y)$ for the (unique) locally compact Hausdorff space $Y=\lambda_\mathfrak{P}X$. In this case
\[C_{00}(Y)=C_0(Y);\]
in particular, $Y$ is countably compact. The special case in which $\mathfrak{P}$ is the Lindel\"{o}f property and $\mathfrak{Q}$ is metrizability (or paracompactness) is studied in great detail. In particular, if $X$ is a locally separable metrizable space and
\[C_s(X)=\big\{f\in C_b(X):\mathrm{supp}(f)\mbox{ is separable}\big\}\]
then $C_s(X)$ is a Banach algebra which is isometrically isomorphic to $C_0(Y)$ for the (unique) locally compact Hausdorff space $Y$ defined by
\[Y=\bigcup\{\mathrm{cl}_{\beta X}S:S\subseteq X\mbox{ is separable}\};\]
furthermore,
\begin{itemize}
\item $\dim C_s(X)=d(X)^{\aleph_0}$,
\item $Y$ is non-normal if $X$ is non-separable,
\item $C_0(Y)=C_{00}(Y)$; in particular, $Y$ is countably compact.
\end{itemize}
Here $d(X)$ (called the \textit{density} of $X$) is defined by
\[d(X)=\min\big\{|D|:D\mbox{ is dense in }X\big\}+\aleph_0.\]
An analogous result holds if $\mathfrak{P}$ is the Lindel\"{o}f property and $\mathfrak{Q}$ is local compactness paracompactness, with $C_s(X)$ being replaced by
\[C_l(X)=\big\{f\in C_b(X):\mathrm{supp}(f)\mbox{ is Lindel\"{o}f}\big\},\]
the space $Y$ being replaced by
\[Y=\bigcup\{\mathrm{cl}_{\beta X}L:L\subseteq X\mbox{ is Lindel\"{o}f}\},\]
and $d(X)$ being replaced by $\ell(X)$, where $\ell(X)$ (the \textit{Lindel\"{o}f number} of $X$) is defined by
\[\ell(X)=\min\{\mathfrak{n}:\mbox{any open cover of $X$ has a subcover of cardinality}\leq\mathfrak{n}\}+\aleph_0.\]

Locally Lindel\"{o}f metrizable spaces and locally compact paracompact spaces, besides sharing the above theorem, share the following property. Let $X$ be either a locally Lindel\"{o}f metrizable space or a locally compact paracompact space. Then there is a chain of length $\lambda$, in which $\aleph_\lambda=\ell(X)$, consisting of closed ideals $H_\mu$'s of $C_b(X)$ such that
\[C_0(X)\subsetneqq H_0\subsetneqq H_1\subsetneqq\cdots\subsetneqq H_\lambda=C_b(X).\]
Here $H_\mu$, for each $\mu<\lambda$, is isometrically isomorphic to
\[C_0(Y_\mu)=C_{00}(Y_\mu),\]
where $Y_\mu$ is a locally compact countably compact Hausdorff space which contains $X$ densely.

The concluding results in this section deal with realcompactness and pseudocompactness. (Recall that a completely regular space $X$ is \textit{realcompact} if it is homeomorphic to a closed subspaces of some product $\mathbb{R}^\mathfrak{m}$ and is \textit{pseudocompact} if there is no unbounded continuous $f:X\rightarrow\mathbb{R}$.) Realcompactness is a closed hereditary topological property, and though is not in general preserved under finite closed sums, it is so in the realm of normal spaces. We show that if $\mathfrak{P}$ is realcompactness and $X$ is normal, then
\[\lambda_\mathfrak{P}X=\beta X\setminus\mathrm{cl}_{\beta X}(\upsilon X\setminus X)\]
where $\upsilon X$ is the Hewitt realcompactification of $X$. Also, if $X$ is completely regular, then
\[\lambda_{\mathscr U}X=\mathrm{int}_{\beta X}\upsilon X\]
for the ideal
\[{\mathscr U}=\big\{A\in\mathscr{RC}(X):A\mbox{ is pseudocompact}\big\}\]
of the partially ordered set $(\mathscr{RC}(X),\subseteq)$ of all regular closed subspaces of $X$. (Recall that the \textit{Hewitt realcompactification} of a completely regular space $X$ is a realcompact space $\upsilon X$ which contains $X$ as a dense subspace and is such that every continuous $f:X\rightarrow\mathbb{R}$ is continuously extendible over $\upsilon X$; we may assume that $\upsilon X\subseteq\beta X$.)

In the preprint \cite{T}, for a completely regular space $X$ and a filter base ${\mathscr B}$ of open subspaces of $X$, the author defined $C_{\mathscr B}(X)$ to be the set of all $f\in C(X)$ whose support is contained in $X\setminus B$ for some $B\in{\mathscr B}$, and $C_{\infty{\mathscr B}}(X)$ to be the set of all $f\in C(X)$ such that $|f|^{-1}([1/n,\infty))$ is contained in $X\setminus B$ for some $B\in{\mathscr B}$ for each positive integer $n$. (See \cite{AN} for certain special cases.) Also, if ${\mathscr I}$ is an ideal of closed subspaces of $X$, in \cite{AG}, the authors defined $C_{\mathscr I}(X)$ to be the set of all $f\in C(X)$ whose support is contained in ${\mathscr I}$, and $C_\infty^{\mathscr I}(X)$ to be the set of all $f\in C(X)$ such that $|f|^{-1}([1/n,\infty))$ is contained in ${\mathscr I}$ for each positive integer $n$. Despite certain similarities between our definitions and the definitions given in \cite{T} or \cite{AG}, the existing differences between definitions have caused this work to have little in common with either \cite{T} or \cite{AG}.

\subsection*{Preliminaries and notation}\label{GYYI}

This section contains certain notion and known facts that we will use frequently throughout this article. The first part is to provide examples of upper semi-lattices; upper semi-lattices are the natural setting to state and prove our results. The second part review certain properties of the Stone--\v{C}ech compactification; the Stone--\v{C}ech compactification is the main tool in our study. For more information on the theory of the Stone--\v{C}ech compactification we refer the reader to \cite{E}, \cite{GJ}, \cite{PW} and \cite{We}.

\subsubsection*{Examples of upper semi-lattices} In the following we give examples of upper semi-lattices. (Indeed, the examples are all lattices; however, this will not be used in the sequel.)
\begin{enumerate}
 \item Let $X$ be a space. A subspace $A$ of $X$ is \textit{regular closed} in $X$ if $A=\mathrm{cl}_X\mathrm{int}_XA$. Denote by $\mathscr{RC}(X)$ the set of all regular closed subspaces of $X$. Note that the closure of each open subspace of $X$ is regular closed in $X$. The partially ordered set $(\mathscr{RC}(X),\subseteq)$ is an upper semi-lattice. If $\mathscr{B}\subseteq\mathscr{RC}(X)$ is non-empty then
     \[\bigvee\mathscr{B}=\mathrm{cl}_X\Big(\mathrm{int}_X\Big(\bigcup\mathscr{B}\Big)\Big).\]
     In particular, if $A,B\in\mathscr{RC}(X)$ then
     \[A\vee B=A\cup B.\]
 \item Let $X$ be a space. A subspace $A$ of $X$ is \textit{regular open} in $X$ if $A=\mathrm{int}_X\mathrm{cl}_XA$. Denote by $\mathscr{RO}(X)$ the set of all regular open subspaces of $X$. Note that the interior of each closed subspace of $X$ is regular open in $X$. The partially ordered set $(\mathscr{RO}(X),\subseteq)$ is an upper semi-lattice. If $\mathscr{B}\subseteq\mathscr{RO}(X)$ is non-empty then
     \[\bigvee\mathscr{B}=\mathrm{int}_X\Big(\mathrm{cl}_X\Big(\bigcup\mathscr{B}\Big)\Big).\]
     In particular, if $A,B\in\mathscr{RO}(X)$ then
     \[A\vee B=\mathrm{int}_X\mathrm{cl}_X(A\cup B).\]
 \item Let $X$ be a space. A subspace $Z$ of $X$ is a \textit{zero-set} in $X$ if $Z=f^{-1}(0)$ for some continuous $f:X\rightarrow[0,1]$. Denote by $\mathscr{Z}(X)$ the set of all zero-sets of $X$. The partially ordered set $(\mathscr{Z}(X),\subseteq)$ is an upper semi-lattice. If $Z,S\in\mathscr{Z}(X)$ then
     \[S\vee Z=S\cup Z;\]
     to see this, let $f,g:X\rightarrow[0,1]$ be continuous and observe that
     \[\mathrm{z}(f)\cup\mathrm{z}(g)=\mathrm{z}(fg).\]
 \item Let $X$ be a space. A subspace $C$ of $X$ is a \textit{cozero-set} in $X$ if $C=X\setminus Z$ for some zero-set $Z$ in $X$. Denote by $\mathrm{Coz}(X)$ the set of all cozero-sets of $X$. The partially ordered set $(\mathrm{Coz}(X),\subseteq)$ is an upper semi-lattice. Indeed, if $C,D\in\mathrm{Coz}(X)$ then
     \[C\vee D=C\cup D.\]
     More generally, if $\mathscr{B}\subseteq\mathrm{Coz}(X)$ is non-empty and countable then
     \[\bigvee\mathscr{B}=\bigcup\mathscr{B};\]
     as, if $f_n:X\rightarrow[0,1]$ is continuous for each positive integer $n$, then
     \[\bigcup_{n=1}^\infty\mathrm{coz}(f_n)=\mathrm{coz}(f)\]
     where
     \[f=\sum_{n=1}^\infty \frac{f_n}{2^n}.\]
     (That $f$ is continuous follows from the Weierstrass $M$-test.)
\end{enumerate}

The fact that $(\mathscr{L},\subseteq)$ is an upper semi-lattice does \textit{not} necessarily mean that if $A,B\in\mathscr{L}$ then $A\vee B=A\cup B$. (For instance, let $\mathscr{L}=\mathscr{RO}(X)$ where $X$ is a space. Then $A\vee B=\mathrm{int}_X\mathrm{cl}_X(A\cup B)$ for any $A,B\in\mathscr{L}$, as noted previously.) However, since we have $A\subseteq A\vee B$ and $B\subseteq A\vee B$, it is always true that $A\cup B\subseteq A\vee B$.

\subsubsection*{The Stone--\v{C}ech compactification} Let $X$ be a completely regular space. A \textit{compactification} $\gamma X$ of $X$ is a compact Hausdorff space $\gamma X$ containing $X$ as a dense subspace. The \textit{Stone--\v{C}ech compactification} of $X$, denoted by $\beta X$, is the compactification of $X$ which is characterized among all compactifications of $X$ by the following property: Every continuous $f:X\rightarrow K$, where $K$ is a compact Hausdorff space, is continuously extendable over $\beta X$; denote by $f_\beta$ this continuous extension of $f$. For a completely regular space the Stone--\v{C}ech compactification always exists. In what follows we will use the following properties of $\beta X$. (See Sections 3.5 and 3.6 of \cite{E}.)
\begin{itemize}
  \item $X$ is locally compact if and only if $X$ is open in $\beta X$.
  \item Any open-closed subspace of $X$ has open-closed closure in $\beta X$.
  \item If $X\subseteq T\subseteq\beta X$ then $\beta T=\beta X$.
  \item If $X$ is normal then $\beta T=\mathrm{cl}_{\beta X}T$ for any closed subspace $T$ of $X$.
  \item Disjoint zero-sets of $X$ have disjoint closures in $\beta X$.
\end{itemize}

\part{General theory}\label{HFPG}

This part is divided into two sections. Section \ref{GPG} introduces the normed algebra $C^{\mathscr I}_{00}(X)$ and Section \ref{JGGG} introduces the Banach algebra $C^{\mathscr I}_0(X)$. Results of this part are stated and proved in the most general context; Part \ref{KHGJ} will be devoted subsequently to the consideration of specific examples.

\section{The normed algebra $C^{\mathscr I}_{00}(X)$}\label{GPG}

Let $X$ be a space and let $\mathscr{I}$ be an ideal in an upper semi-lattice $(\mathscr{L},\subseteq)$, where $\mathscr{L}\subseteq\mathscr{P}(X)$. In this section we consider the subset $C^{\mathscr I}_{00}(X)$ of $C_b(X)$ consisting of those elements of $C_b(X)$ whose support has a null neighborhood in $X$. The set $C^{\mathscr I}_{00}(X)$ coincides with $C_{00}(X)$ if $X$ is locally compact and $\mathscr{I}$ is the set of all subspaces of $X$ with compact closure (and $\mathscr{L}=\mathscr{P}(X)$). We show that $C^{\mathscr I}_{00}(X)$ is in general an ideal in $C_b(X)$. Furthermore, if $X$ is completely regular, then $C^{\mathscr I}_{00}(X)$ is of empty hull if and only if $X$ is locally null, and if so, then $C^{\mathscr I}_{00}(X)$ is unital if and only if $\mathscr{I}$ is non-proper. The main result of this section states that if $X$ is normal and locally null then the normed algebra $C^{\mathscr I}_{00}(X)$ is isometrically isomorphic to $C_{00}(Y)$ for some unique (up to homeomorphism) locally compact Hausdorff space $Y$; this will be made possible through the introduction of the subspace $\lambda_{\mathscr I} X$ of the Stone--\v{C}ech compactification of $X$. Furthermore, $Y$ contains $X$ densely, and is compact if and only if $C^{\mathscr I}_{00}(X)$ is unital. Our final result in this section simplifies the expression of $C^{\mathscr I}_{00}(X)$, namely
\[C^{\mathscr I}_{00}(X)=\big\{f\in C_b(X):\mathrm{supp}(f)\mbox{ is null}\big\}\]
whenever $X$ is Lindel\"{o}f and locally null, $\mathscr{I}$ is a $\sigma$-ideal and $\mathscr{L}$ contains the set $\mathscr{RC}(X)$ of all regular closed subspaces of $X$.

We now proceed with the formal treatment of the subject. Results of this section will generalize those we have already obtained in \cite{Ko6}, \cite{Ko10} and \cite{Ko11}.

Let $X$ be a space and let $A$ be a subspace of $X$. A subspace $U$ of $X$ is called a \textit{neighborhood} of $A$ in $X$ if $A\subseteq\mathrm{int}_XU$.

\begin{definition}\label{HG}
Let $X$ be a space and let $\mathscr{I}$ be an ideal in an upper semi-lattice $(\mathscr{L},\subseteq)$, where $\mathscr{L}\subseteq\mathscr{P}(X)$. Define
\[C^{\mathscr I}_{00}(X)=\big\{f\in C_b(X):\mathrm{supp}(f)\mbox{ has a null neighborhood in }X\big\}.\]
\end{definition}

The following example justifies our use of the notation $C^{\mathscr I}_{00}(X)$.

\begin{example}\label{HUJ}
Let $X$ be a locally compact space and let
\[\mathscr{I}=\{A\subseteq X:\mathrm{cl}_X A \mbox{ is compact}\}.\]
Trivially, $\mathscr{I}$ is an ideal in $(\mathscr{P}(X),\subseteq)$. As we see now, in this case we have
\[C^{\mathscr I}_{00}(X)=C_{00}(X).\]
This justifies our use of the notation $C^{\mathscr I}_{00}(X)$ here. Let $f\in C_b(X)$. It is obvious that if $\mathrm{supp}(f)$ has a null neighborhood $U$ in $X$, then $\mathrm{supp}(f)$ is compact, as it is closed in $\mathrm{cl}_X U$ and the latter is so. Now, suppose that $\mathrm{supp}(f)$ is compact. For each $x\in X$ let $V_x$ be an open neighborhood of $x$ in $X$ such that $\mathrm{cl}_X V_x$ is compact. The set $\{V_x:x\in X\}$ forms an open cover for $\mathrm{supp}(f)$. Therefore
\[\mathrm{supp}(f)\subseteq V_{x_1}\cup\cdots\cup V_{x_n}=V,\]
where $x_i\in X$ for each $i=1,\ldots,n$. Clearly, $V$ is a neighborhood of $\mathrm{supp}(f)$ in $X$, and it is null, as
\[\mathrm{cl}_XV=\mathrm{cl}_XV_{x_1}\cup\cdots\cup\mathrm{cl}_X V_{x_n},\]
(being the union of a finite number of compact subspaces) is compact.
\end{example}

\begin{theorem}\label{TTG}
Let $X$ be a space and let $\mathscr{I}$ be an ideal in an upper semi-lattice $(\mathscr{L},\subseteq)$, where $\mathscr{L}\subseteq\mathscr{P}(X)$. Then
$C^{\mathscr I}_{00}(X)$ is an ideal in $C_b(X)$.
\end{theorem}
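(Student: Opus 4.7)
The plan is to verify directly that $C^{\mathscr I}_{00}(X)$ is closed under addition, under scalar multiplication, and under multiplication by arbitrary elements of $C_b(X)$. The underlying idea is to show that in each case the new function's support is contained in the interior of a null set that we construct from the null neighborhoods supplied by hypothesis, using only (i) the ideal properties of $\mathscr I$ in $(\mathscr L,\subseteq)$, and (ii) the elementary inclusions for supports of sums and products.

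For closure under addition, suppose $f,g\in C^{\mathscr I}_{00}(X)$ with null neighborhoods $U,V$ of $\mathrm{supp}(f)$ and $\mathrm{supp}(g)$ respectively, so $U,V\in{\mathscr I}$ with $\mathrm{supp}(f)\subseteq\mathrm{int}_X U$ and $\mathrm{supp}(g)\subseteq\mathrm{int}_X V$. First I would record the standard inclusion $\mathrm{supp}(f+g)\subseteq\mathrm{supp}(f)\cup\mathrm{supp}(g)$ (because $\mathrm{coz}(f+g)\subseteq\mathrm{coz}(f)\cup\mathrm{coz}(g)$, and closure respects finite unions). Then I would form the candidate null neighborhood $U\vee V\in\mathscr L$, which lies in $\mathscr I$ by the defining property of an ideal. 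The point noted at the end of the Preliminaries, namely that $U\cup V\subseteq U\vee V$, together with monotonicity of interior, gives
\[
\mathrm{supp}(f+g)\subseteq\mathrm{int}_XU\cup\mathrm{int}_XV\subseteq\mathrm{int}_X(U\cup V)\subseteq\mathrm{int}_X(U\vee V),
\]
so $U\vee V$ is a null neighborhood of $\mathrm{supp}(f+g)$ and hence $f+g\in C^{\mathscr I}_{00}(X)$.

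Scalar multiplication and absorption are both handled by the same simple observation: for any $f\in C_b(X)$, $g\in C_b(X)$ and scalar $\alpha$, $\mathrm{coz}(\alpha f)\subseteq\mathrm{coz}(f)$ and $\mathrm{coz}(fg)\subseteq\mathrm{coz}(f)$, hence $\mathrm{supp}(\alpha f)\subseteq\mathrm{supp}(f)$ and $\mathrm{supp}(fg)\subseteq\mathrm{supp}(f)$ by taking closures. Thus any null neighborhood of $\mathrm{supp}(f)$ is automatically a null neighborhood of $\mathrm{supp}(\alpha f)$ and of $\mathrm{supp}(fg)$, so $\alpha f$ and $fg$ belong to $C^{\mathscr I}_{00}(X)$ whenever $f$ does. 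Together with the additive closure above, this gives that $C^{\mathscr I}_{00}(X)$ is a vector subspace of $C_b(X)$ that absorbs multiplication by $C_b(X)$, i.e.\ an ideal.

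The only conceptual subtlety, which I would flag explicitly in the write-up, is that $U\vee V$ need not equal $U\cup V$ when $\mathscr L$ is, say, $\mathscr{RO}(X)$; this is precisely why the proof passes through $U\vee V$ rather than through $U\cup V$ and why the reminder $U\cup V\subseteq U\vee V$ from the Preliminaries is invoked. Beyond this single bookkeeping point, the argument is routine and no further hypothesis on $X$ or $\mathscr I$ (such as normality, local nullness, or $\sigma$-completeness) is needed.
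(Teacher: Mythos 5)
Your proof is correct and follows essentially the same route as the paper's: pass from $U\cup V$ to $U\vee V$ via the ideal property of $\mathscr I$ for sums, and reuse the same null neighborhood for products and scalar multiples via $\mathrm{supp}(fg)\subseteq\mathrm{supp}(f)$. The only detail the paper adds, which you should include for completeness, is the verification that $C^{\mathscr I}_{00}(X)$ is non-empty: $\mathbf{0}\in C^{\mathscr I}_{00}(X)$ because $\mathscr I$ is non-empty by definition, so some null set serves as a neighborhood of $\mathrm{supp}(\mathbf{0})=\emptyset$.
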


\begin{proof}
Note that $C^{\mathscr I}_{00}(X)$ is non-empty, as it contains the zero vector $\mathbf{0}$. (Observe that there always exists a null subset of $X$; this constitutes a neighborhood for $\emptyset=\mathrm{supp}(\mathbf{0})$ in $X$.) To show that $C^{\mathscr I}_{00}(X)$ is closed under addition, let $f,g\in C^{\mathscr I}_{00}(X)$. Then, there exist null neighborhoods $U$ and $V$ of $\mathrm{supp}(f)$ and $\mathrm{supp}(g)$ in $X$, respectively. Note that $\mathrm{coz}(f+g)\subseteq\mathrm{coz}(f)\cup\mathrm{coz}(g)$. Thus
\[\mathrm{supp}(f+g)\subseteq\mathrm{supp}(f)\cup\mathrm{supp}(g)\subseteq\mathrm{int}_X U\cup\mathrm{int}_X V\subseteq\mathrm{int}_X (U\cup V)\subseteq U\cup V\subseteq U\vee V.\]
Therefore $\mathrm{supp}(f+g)$ has a null neighborhood in $X$, namely $U\vee V$. Then $f+g\in C^{\mathscr I}_{00}(X)$. Next, let $f\in C^{\mathscr I}_{00}(X)$ and $g\in C_b(X)$. Note that $\mathrm{coz}(fg)\subseteq\mathrm{coz}(f)$. Thus $\mathrm{supp}(fg)\subseteq\mathrm{supp}(f)$. In particular, $\mathrm{supp}(fg)$ has a null neighborhoods in $X$, as $\mathrm{supp}(f)$ does. Therefore $fg\in C^{\mathscr I}_{00}(X)$. That $C^{\mathscr I}_{00}(X)$ is closed under scalar multiplication follows trivially.
\end{proof}

\begin{remark}\label{TUJ}
Following \cite{GJ}, we call an ideal $H$ of $C_b(X)$ a \textit{$z$-ideal} if $\mathrm{z}(f)=\mathrm{z}(h)$, where $f\in C_b(X)$ and $h\in H$, implies that $f\in H$. It is worth noting that in Theorem \ref{TTG} the ideal $C^{\mathscr I}_{00}(X)$ is indeed a $z$-ideal; to see this let $f\in C_b(X)$ and $g\in C^{\mathscr I}_{00}(X)$ with $\mathrm{z}(f)=\mathrm{z}(g)$. Then $\mathrm{coz}(f)=\mathrm{coz}(g)$ and thus $\mathrm{supp}(f)=\mathrm{supp}(g)$. Therefore $\mathrm{supp}(f)$ has a null neighborhood in $X$, as $\mathrm{supp}(g)$ does. That is $f\in C^{\mathscr I}_{00}(X)$.
\end{remark}

The subspace $\lambda_{\mathscr I} X$ of $\beta X$ introduced below plays a crucial role in our study. The space $\lambda_{\mathscr I} X$ has been first considered in \cite{Ko3} (in a different context) to study certain classes of topological extensions. (See also \cite{Ko4}, \cite{Ko7}, \cite{Ko13}, \cite{Ko8} and \cite{Ko12}.)

\begin{definition}\label{HWA}
Let $X$ be a completely regular space and let $\mathscr{I}$ be an ideal in an upper semi-lattice $(\mathscr{L},\subseteq)$, where $\mathscr{L}\subseteq\mathscr{P}(X)$. Define
\[\lambda_{\mathscr I} X=\bigcup\big\{\mathrm{int}_{\beta X}\mathrm{cl}_{\beta X}C:C\in\mathrm{Coz}(X)\mbox{ and }\mathrm{cl}_XC\mbox{ has a null neighborhood in $X$}\big\},\]
considered as a subspace of $\beta X$.
\end{definition}

The space $\lambda_{\mathscr I} X$ just defined may have a better expression if one requires the elements of ${\mathscr I}$ to be closed subspaces of $X$. This is the subject matter of the next proposition. We need the following simple lemma.

Observe that for a space $X$ and a dense subspace $D$ of $X$ we have
\[\mathrm{cl}_XU=\mathrm{cl}_X(U\cap D)\]
for every open subspace $U$ of $X$.

\begin{lemma}\label{LKG}
Let $X$ be a completely regular space. Let $f:X\rightarrow[0,1]$ be continuous and $0<r<1$. Then
\[f_\beta^{-1}\big([0,r)\big)\subseteq\mathrm{int}_{\beta X}\mathrm{cl}_{\beta X}f^{-1}\big([0,r)\big).\]
\end{lemma}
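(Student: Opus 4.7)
The plan is to apply the observation stated just before the lemma (that for a dense subspace $D$ of a space $X$, $\mathrm{cl}_X U = \mathrm{cl}_X(U\cap D)$ whenever $U$ is open in $X$) to the compactification $\beta X$ with its dense subspace $X$. Let me outline the steps.

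First I would record that since $f:X\to [0,1]$ is continuous and $[0,1]$ is compact Hausdorff, the defining property of $\beta X$ yields a continuous extension $f_\beta:\beta X\to [0,1]$, and that $f_\beta|_X=f$. Because $[0,r)$ is open in $[0,1]$, the preimage $f_\beta^{-1}([0,r))$ is an open subspace of $\beta X$. This openness is what will let us conclude the statement from a mere containment in $\mathrm{cl}_{\beta X}f^{-1}([0,r))$.

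Next I would invoke the dense-subspace observation with $D=X$ (which is dense in $\beta X$ by definition of a compactification) and $U=f_\beta^{-1}([0,r))$. Since $f_\beta$ restricts to $f$ on $X$,
\[U\cap X=f_\beta^{-1}\big([0,r)\big)\cap X=f^{-1}\big([0,r)\big),\]
so the observation gives the inclusion
\[f_\beta^{-1}\big([0,r)\big)\subseteq\mathrm{cl}_{\beta X}U=\mathrm{cl}_{\beta X}\big(U\cap X\big)=\mathrm{cl}_{\beta X}f^{-1}\big([0,r)\big).\]
Finally, because the left-hand side is open in $\beta X$ and is contained in $\mathrm{cl}_{\beta X}f^{-1}([0,r))$, it must lie inside the interior of that closure, which is exactly the desired conclusion.

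There is no real obstacle here; the lemma is essentially a one-line consequence of the density of $X$ in $\beta X$ together with the openness of $f_\beta^{-1}([0,r))$. The only subtlety worth flagging explicitly is that one uses $r<1$ (so that $[0,r)$ is genuinely open, not the whole of $[0,1]$, although the inclusion would hold trivially in that case as well) and that the extension $f_\beta$ exists precisely because the target $[0,1]$ is compact Hausdorff.
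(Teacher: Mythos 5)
Your argument is correct and is essentially the same as the paper's: both use that $f_\beta^{-1}([0,r))$ is open in $\beta X$ (hence contained in the interior of its closure) together with the dense-subspace observation $\mathrm{cl}_{\beta X}f_\beta^{-1}([0,r))=\mathrm{cl}_{\beta X}\big(X\cap f_\beta^{-1}([0,r))\big)=\mathrm{cl}_{\beta X}f^{-1}([0,r))$. Nothing further is needed.
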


\begin{proof}
Note that
\[f_\beta^{-1}\big([0,r)\big)\subseteq\mathrm{int}_{\beta X}\mathrm{cl}_{\beta X}f_\beta^{-1}\big([0,r)\big)\]
and
\[\mathrm{cl}_{\beta X}f_\beta^{-1}\big([0,r)\big)=\mathrm{cl}_{\beta X}\big(X\cap f_\beta^{-1}\big([0,r)\big)\big)=\mathrm{cl}_{\beta X}f^{-1}\big([0,r)\big).\]
\end{proof}

For a space $X$ denote by $\mathscr{C}(X)$ the set of all closed subspaces of $X$.

\begin{proposition}\label{KJHF}
Let $X$ be a completely regular space and let $\mathscr{I}$ be an ideal in an upper semi-lattice $(\mathscr{L},\subseteq)$, where $\mathscr{L}\subseteq\mathscr{C}(X)$. Then
\[\lambda_{\mathscr I} X=\bigcup\{\mathrm{int}_{\beta X}\mathrm{cl}_{\beta X}U:U\mbox{ is null}\,\}.\]
\end{proposition}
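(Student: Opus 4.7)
The plan is to establish the stated equality by proving both inclusions, with the hypothesis $\mathscr{L}\subseteq\mathscr{C}(X)$ (forcing every null set to be closed in $X$) used on both sides. For the easier inclusion $\lambda_{\mathscr I} X\subseteq\bigcup\{\mathrm{int}_{\beta X}\mathrm{cl}_{\beta X} U:U\text{ is null}\}$, I would take $C\in\mathrm{Coz}(X)$ with a null neighborhood $V$ of $\mathrm{cl}_X C$ in $X$; since $V$ is null, it is closed in $X$, so $\mathrm{cl}_X C\subseteq V$ gives $\mathrm{cl}_{\beta X} C\subseteq\mathrm{cl}_{\beta X} V$ and hence $\mathrm{int}_{\beta X}\mathrm{cl}_{\beta X} C\subseteq\mathrm{int}_{\beta X}\mathrm{cl}_{\beta X} V$, which is in the right-hand union with null witness $U=V$.

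The reverse inclusion is the substantive content. Starting from a null (hence closed in $X$) $U$ and a point $p\in\mathrm{int}_{\beta X}\mathrm{cl}_{\beta X} U$, I would use Urysohn's lemma on the compact Hausdorff (hence normal) space $\beta X$ to obtain a continuous $g\colon\beta X\to[0,1]$ with $g(p)=0$ and $g\equiv 1$ on $\beta X\setminus\mathrm{int}_{\beta X}\mathrm{cl}_{\beta X} U$, and set $C=(g|_X)^{-1}([0,1/2))$. Since $C=\mathrm{coz}((1/2-g|_X)^+)$, it is a cozero-set of $X$; and by Lemma \ref{LKG} applied to $g|_X$ with $r=1/2$, the chain $p\in g^{-1}([0,1/2))\subseteq\mathrm{int}_{\beta X}\mathrm{cl}_{\beta X} C$ holds, so $p$ already lies in one of the sets defining $\lambda_{\mathscr I} X$ — provided $\mathrm{cl}_X C$ admits a null neighborhood in $X$.

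The hard part will be verifying this null-neighborhood condition, and the natural candidate is $U$ itself; it is not enough to know $\mathrm{cl}_X C\subseteq U$, since I need the stronger $\mathrm{cl}_X C\subseteq\mathrm{int}_X U$. Exploiting that $g$ equals $1$ outside the open set $\mathrm{int}_{\beta X}\mathrm{cl}_{\beta X} U$, I obtain $\mathrm{cl}_{\beta X} C\subseteq g^{-1}([0,1/2])\subseteq\mathrm{int}_{\beta X}\mathrm{cl}_{\beta X} U$. Intersecting with $X$ and invoking the general identity $\mathrm{cl}_{\beta X} A\cap X=\mathrm{cl}_X A$ together with the closedness of $U$ in $X$, the set $\mathrm{int}_{\beta X}\mathrm{cl}_{\beta X} U\cap X$ is an \emph{open} subset of $X$ sitting inside $\mathrm{cl}_{\beta X} U\cap X=U$, and hence inside $\mathrm{int}_X U$. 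Therefore $\mathrm{cl}_X C\subseteq\mathrm{int}_X U$, so $U$ is a null neighborhood of $\mathrm{cl}_X C$ in $X$, completing the reverse inclusion. The crucial leverage provided by the hypothesis $\mathscr{L}\subseteq\mathscr{C}(X)$ is precisely the identity $\mathrm{cl}_{\beta X} U\cap X=U$ that closedness of $U$ in $X$ allows.
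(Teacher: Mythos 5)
Your proof is correct, but the substantive inclusion is handled by a genuinely different (and leaner) argument than the paper's. The paper takes $t$ in the union $T=\bigcup\{\mathrm{int}_{\beta X}\mathrm{cl}_{\beta X}U:U\mbox{ null}\}$ and separates $t$ from $\beta X\setminus T$ by a function $f$ on $\beta X$; it then needs compactness of $f^{-1}([0,1/2])$ to extract a finite subcover by sets $\mathrm{int}_{\beta X}\mathrm{cl}_{\beta X}U_i$, and finally invokes the ideal structure to form the join $U_1\vee\cdots\vee U_n$, which serves as the null neighborhood of $\mathrm{cl}_XC$. You instead fix a single null $U$ with $p\in\mathrm{int}_{\beta X}\mathrm{cl}_{\beta X}U$ and separate $p$ from the complement of that one open set, so that $U$ itself is the required null neighborhood; the chain $\mathrm{cl}_XC\subseteq g^{-1}([0,1/2])\cap X\subseteq\mathrm{int}_{\beta X}\mathrm{cl}_{\beta X}U\cap X\subseteq\mathrm{cl}_{\beta X}U\cap X=\mathrm{cl}_XU=U$, with the middle set open in $X$, correctly yields $\mathrm{cl}_XC\subseteq\mathrm{int}_XU$. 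This dispenses entirely with the finite-cover step and with the use of $\vee$, so your argument in fact establishes the identity for an arbitrary family of closed ``null'' sets, not only for ideals; both arguments rely on Lemma \ref{LKG} and on the identity $\mathrm{cl}_{\beta X}A\cap X=\mathrm{cl}_XA$ together with the closedness of null sets, which is exactly where the hypothesis $\mathscr{L}\subseteq\mathscr{C}(X)$ enters.
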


\begin{proof}
Denote
\[T=\bigcup\{\mathrm{int}_{\beta X}\mathrm{cl}_{\beta X}U:U\mbox{ is null}\}.\]
It follows trivially from the definition of $\lambda_{\mathscr I} X$ that $\lambda_{\mathscr I} X\subseteq T$; we need to show the reverse inclusion. Let $t\in T$. Note that $T$ is an open subspace of $\beta X$. There exists a continuous $f:\beta X\rightarrow[0,1]$ with
\[f(t)=0\;\;\;\;\mbox{ and }\;\;\;\;f|_{\beta X\setminus T}=\mathbf{1}.\]
Observe that
$f^{-1}([0,1/2])$ is contained in $T$, and it is compact, as it is closed in $\beta X$. Therefore
\begin{equation}\label{FGFF}
f^{-1}\big([0,1/2]\big)\subseteq\mathrm{int}_{\beta X}\mathrm{cl}_{\beta X}U_1\cup\cdots\cup\mathrm{int}_{\beta X}\mathrm{cl}_{\beta X}U_n,
\end{equation}
where $U_i$ is null for each $i=1,\ldots,n$. Note that by our assumption the null elements are closed in $X$. Thus, if we intersect the two sides of (\ref{FGFF}) with $X$ we have
\[X\cap f^{-1}\big([0,1/2]\big)\subseteq\mathrm{cl}_XU_1\cup\cdots\cup\mathrm{cl}_XU_n=U_1\cup\cdots\cup U_n.\]
Observe that
\[U_1\cup\cdots\cup U_n\subseteq U_1\vee\cdots\vee U_n=U.\]
Let $C=X\cap f^{-1}([0,1/3))$. Then $C\in\mathrm{Coz}(X)$ as $f^{-1}([0,1/3))\in\mathrm{Coz}(\beta X)$. (To see the latter, let
\[g=\max\Big\{\mathbf{0},\mathbf{\frac{1}{3}}-f\Big\}\]
and observe that $\mathrm{coz}(g)=f^{-1}([0,1/3))$.) Now $\mathrm{cl}_XC$ has a null neighborhood in $X$, namely $U$. Therefore $\mathrm{int}_{\beta X}\mathrm{cl}_{\beta X}C\subseteq\lambda_{\mathscr I} X$. But then $t\in\lambda_{\mathscr I} X$, as $t\in f^{-1}([0,1/3))$ and  $f^{-1}([0,1/3))\subseteq\mathrm{int}_{\beta X}\mathrm{cl}_{\beta X}C$ by Lemma \ref{LKG}. This shows that $T\subseteq\lambda_{\mathscr I} X$, and concludes the proof.
\end{proof}

\begin{definition}
Let $X$ be a space and let $\mathscr{I}$ be an ideal in an upper semi-lattice $(\mathscr{L},\subseteq)$, where $\mathscr{L}\subseteq\mathscr{P}(X)$. Then $X$ is called \textit{locally null} (with respect to $\mathscr{I}$) if every point of $X$ has a null neighborhood in $X$.
\end{definition}

Observe that if $f:\beta X\rightarrow[0,1]$ is continuous then $(f|_X)_\beta=f$, as they are both continuous and coincide on the dense subspace $X$ of $\beta X$.

Let $X$ be a space. For an ideal $H$ of $C_b(X)$ the \textit{hull} of $H$ is defined to be
\[\mathfrak{h}(H)=\bigcap\big\{\mathrm{z}(h):h\in H\big\};\]
the ideal $H$ is said to be \textit{of empty hull} (or \textit{free}) if $\mathfrak{h}(H)=\emptyset$.

\begin{theorem}\label{BBV}
Let $X$ be a completely regular space and let $\mathscr{I}$ be an ideal in an upper semi-lattice $(\mathscr{L},\subseteq)$, where $\mathscr{L}\subseteq\mathscr{P}(X)$. The following are equivalent:
\begin{itemize}
\item[\rm(1)] $X\subseteq\lambda_{\mathscr I} X$.
\item[\rm(2)] $X$ is locally null.
\item[\rm(3)] $C^{\mathscr I}_{00}(X)$ is of empty hull.
\end{itemize}
\end{theorem}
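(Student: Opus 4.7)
The plan is to establish the cycle $(3)\Rightarrow(2)\Rightarrow(3)$ together with $(1)\Leftrightarrow(2)$, so I will prove four short implications organized around two uses of complete regularity and one application of Lemma \ref{LKG}.

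The implication $(3)\Rightarrow(2)$ is immediate from unwinding definitions: if $f\in C^{\mathscr I}_{00}(X)$ satisfies $f(x)\neq 0$, then $x\in\mathrm{coz}(f)\subseteq\mathrm{supp}(f)$, so any null neighborhood of $\mathrm{supp}(f)$ serves as a null neighborhood of $x$. For the reverse $(2)\Rightarrow(3)$, given $x$ with null neighborhood $U$, I will invoke complete regularity to pick a continuous $h:X\to[0,1]$ with $h(x)=0$ and $h\equiv 1$ on $X\setminus\mathrm{int}_X U$, and set $f=\max\{\mathbf{0},1/2-h\}$. Then $f(x)=1/2\neq 0$, and the threshold sandwich $\mathrm{supp}(f)=\mathrm{cl}_X\{h<1/2\}\subseteq\{h\leq 1/2\}\subseteq\mathrm{int}_X U$ certifies $U$ as a null neighborhood of $\mathrm{supp}(f)$, so $f\in C^{\mathscr I}_{00}(X)$.

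For $(1)\Rightarrow(2)$, given $x\in X\subseteq\lambda_{\mathscr I}X$, pick $C\in\mathrm{Coz}(X)$ with $x\in\mathrm{int}_{\beta X}\mathrm{cl}_{\beta X}C$ and with a null neighborhood $U$ of $\mathrm{cl}_X C$ in $X$. The subspace-closure identity $X\cap\mathrm{cl}_{\beta X}C=\mathrm{cl}_X C$ combined with $\mathrm{cl}_X C\subseteq\mathrm{int}_X U$ forces any open $\beta X$-neighborhood of $x$ lying in $\mathrm{cl}_{\beta X}C$ to meet $X$ inside $\mathrm{int}_X U$. In particular $x\in\mathrm{int}_X U$, so $U$ is a null neighborhood of $x$ in $X$.

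The main step is $(2)\Rightarrow(1)$, where Lemma \ref{LKG} does the work. Given $x$ and a null neighborhood $U$, complete regularity yields a continuous $g:X\to[0,1]$ with $g(x)=0$ and $g\equiv 1$ on $X\setminus\mathrm{int}_X U$. I will take $C:=g^{-1}([0,1/3))$, which lies in $\mathrm{Coz}(X)$ since $C=\mathrm{coz}(\max\{\mathbf{0},1/3-g\})$, and use the sandwich $\mathrm{cl}_X C\subseteq g^{-1}([0,1/3])\subseteq\mathrm{int}_X U$ to exhibit $U$ as a null neighborhood of $\mathrm{cl}_X C$. Applying Lemma \ref{LKG} to the Stone--\v{C}ech extension $g_\beta$ at $r=1/3$ then gives
\[
x\in g_\beta^{-1}\bigl([0,1/3)\bigr)\subseteq\mathrm{int}_{\beta X}\mathrm{cl}_{\beta X}g^{-1}\bigl([0,1/3)\bigr)=\mathrm{int}_{\beta X}\mathrm{cl}_{\beta X}C\subseteq\lambda_{\mathscr I}X.
\]
The recurring technical point across these implications is securing the \emph{strict} containment $\mathrm{cl}_X C\subseteq\mathrm{int}_X U$ (rather than merely $\mathrm{cl}_X C\subseteq U$); this is why the constructions separate two thresholds (here $1/3$ versus $1/2$, or $1/2$ versus $1$) via a completely regular Urysohn function, and it is the only place where care beyond bookkeeping is required.
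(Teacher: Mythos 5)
Your proof is correct and takes essentially the same route as the paper's: the same four implications, the same appeal to Lemma \ref{LKG} at a threshold below $1$ for (2)$\Rightarrow$(1), and the same closure identity $X\cap\mathrm{cl}_{\beta X}C=\mathrm{cl}_XC$ for (1)$\Rightarrow$(2). Your $\max\{\mathbf{0},1/2-h\}$ device in (2)$\Rightarrow$(3) is only a cosmetic variant of the paper's choice of an intermediate open set $W$ with $\mathrm{cl}_XW\subseteq\mathrm{int}_XV$.
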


\begin{proof}
(1) {\em implies} (2). Let $x\in X$. Then $x\in\lambda_{\mathscr I} X$ and therefore $x\in\mathrm{int}_{\beta X}\mathrm{cl}_{\beta X}D$ for some $D\in\mathrm{Coz}(X)$ such that $\mathrm{cl}_XD$ has a null neighborhood $V$ in $X$. But $V$ is then a neighborhood of $x$ in $X$ as well, as $x\in\mathrm{cl}_{\beta X}D\cap X=\mathrm{cl}_XD$.

(2) {\em implies} (1). Let $x\in X$ and let $U$ be a null neighborhood of $x$ in $X$. Let $f:X\rightarrow[0,1]$ be continuous with $f(x)=0$ and $f|_{X\setminus \mathrm{int}_XU}=\mathbf{1}$. Let $C=f^{-1}([0,1/2))$. Then $C\in\mathrm{Coz}(X)$. Now $\mathrm{cl}_XC\subseteq f^{-1}([0,1/2])$ and $f^{-1}([0,1/2])\subseteq\mathrm{int}_XU$. Thus $U$ is a null neighborhood of $\mathrm{cl}_XC$ in $X$. Therefore $\mathrm{int}_{\beta X}\mathrm{cl}_{\beta X}C\subseteq\lambda_{\mathscr I} X$. But then $x\in\lambda_{\mathscr I} X$, as $x\in f_\beta^{-1}([0,1/2))$ and  $f_\beta^{-1}([0,1/2))\subseteq\mathrm{int}_{\beta X}\mathrm{cl}_{\beta X}C$ by Lemma \ref{LKG}.

(2) {\em implies} (3). Note that $C^{\mathscr I}_{00}(X)$ is an ideal in $C_b(X)$ by Theorem \ref{TTG}. Let $x\in X$ and let $V$ be a null neighborhood of $x$ in $X$. Let $W$ be an open neighborhood of $x$ in $X$ with $\mathrm{cl}_XW\subseteq\mathrm{int}_XV$. Let $g:X\rightarrow[0,1]$ be continuous with $g(x)=1$ and $g|_{X\setminus W}=\mathbf{0}$. Then $\mathrm{supp}(g)\subseteq\mathrm{cl}_XW$, as $\mathrm{coz}(g)\subseteq W$. Thus $V$ is a null neighborhood of $\mathrm{supp}(g)$ in $X$. Therefore $g\in C^{\mathscr I}_{00}(X)$.

(3) {\em implies} (2). Let $x\in X$. Then $x\notin\mathrm{z}(h)$ for some $h\in C^{\mathscr I}_{00}(X)$. Since $\mathrm{supp}(h)$ has a null neighborhood in $X$ and $x\in\mathrm{supp}(h)$, as $x\in\mathrm{coz}(h)$, it then follows that $x$ has a null neighborhood in $X$.
\end{proof}

\begin{lemma}\label{HDHD}
Let $X$ be a completely regular space and let $\mathscr{I}$ be an ideal in an upper semi-lattice $(\mathscr{L},\subseteq)$, where $\mathscr{L}\subseteq\mathscr{P}(X)$. For any subspace $A$ of $X$, if $\mathrm{cl}_{\beta X}A\subseteq\lambda_{\mathscr I} X$ then $\mathrm{cl}_XA$ has a null neighborhood in $X$.
\end{lemma}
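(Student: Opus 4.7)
The plan is to use compactness of $\mathrm{cl}_{\beta X}A$ (it is closed in the compact Hausdorff space $\beta X$) together with the fact that $\lambda_{\mathscr I}X$ is a union of open subsets of $\beta X$ indexed by cozero-sets with null neighborhoods, and then transfer this finite cover back down to $X$ and package it with the upper semi-lattice operation $\vee$.

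First I would unfold the definition of $\lambda_{\mathscr I}X$ and note that it is a union of the open sets $\mathrm{int}_{\beta X}\mathrm{cl}_{\beta X}C$ as $C$ ranges over cozero-sets whose closure $\mathrm{cl}_XC$ admits a null neighborhood in $X$. Since $\mathrm{cl}_{\beta X}A$ is compact and contained in $\lambda_{\mathscr I}X$, there exist finitely many cozero-sets $C_1,\ldots,C_n\in\mathrm{Coz}(X)$ such that
\[\mathrm{cl}_{\beta X}A\subseteq\mathrm{int}_{\beta X}\mathrm{cl}_{\beta X}C_1\cup\cdots\cup\mathrm{int}_{\beta X}\mathrm{cl}_{\beta X}C_n,\]
and, for each $i=1,\ldots,n$, a null subspace $U_i\in\mathscr{I}$ with $\mathrm{cl}_XC_i\subseteq\mathrm{int}_XU_i$.

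Next I would intersect the displayed inclusion with $X$. Using $\mathrm{cl}_XA=X\cap\mathrm{cl}_{\beta X}A$ and $X\cap\mathrm{cl}_{\beta X}C_i=\mathrm{cl}_XC_i$ (since $C_i$ is dense in $\mathrm{cl}_{\beta X}C_i$, as observed in the paragraph preceding Lemma \ref{LKG}), I would obtain
\[\mathrm{cl}_XA\subseteq\mathrm{cl}_XC_1\cup\cdots\cup\mathrm{cl}_XC_n\subseteq\mathrm{int}_XU_1\cup\cdots\cup\mathrm{int}_XU_n\subseteq\mathrm{int}_X(U_1\cup\cdots\cup U_n).\]

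Finally, set $V=U_1\vee\cdots\vee U_n$. Because $\mathscr{I}$ is an ideal in the upper semi-lattice $(\mathscr{L},\subseteq)$ and each $U_i\in\mathscr{I}$, the element $V$ belongs to $\mathscr{I}$, i.e.\ $V$ is null. From the general inclusion $U_1\cup\cdots\cup U_n\subseteq U_1\vee\cdots\vee U_n$ noted in the preliminaries, one has $\mathrm{int}_X(U_1\cup\cdots\cup U_n)\subseteq\mathrm{int}_XV$, and therefore $\mathrm{cl}_XA\subseteq\mathrm{int}_XV$. Thus $V$ is a null neighborhood of $\mathrm{cl}_XA$ in $X$, which is exactly what the lemma asserts.

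The only delicate point — and the one I expect might catch a reader off guard — is that the set-theoretic union $U_1\cup\cdots\cup U_n$ need not itself lie in $\mathscr{L}$, so one cannot simply declare it null; this is why it is essential to replace the union by the lattice join $U_1\vee\cdots\vee U_n$ and then use the containment $U_1\cup\cdots\cup U_n\subseteq U_1\vee\cdots\vee U_n$ to preserve the neighborhood property.
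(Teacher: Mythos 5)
Your proposal is correct and follows essentially the same route as the paper's proof: compactness of $\mathrm{cl}_{\beta X}A$ gives a finite subcover by sets $\mathrm{int}_{\beta X}\mathrm{cl}_{\beta X}C_i$, intersecting with $X$ yields $\mathrm{cl}_XA\subseteq\mathrm{cl}_XC_1\cup\cdots\cup\mathrm{cl}_XC_n$, and the join $U_1\vee\cdots\vee U_n$ of the null neighborhoods serves as the desired null neighborhood. Your extra remark about passing through $\mathrm{int}_XU_1\cup\cdots\cup\mathrm{int}_XU_n$ to secure the neighborhood property is a slightly more explicit rendering of a step the paper leaves implicit, but it is the same argument.
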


\begin{proof}
By compactness, we have
\begin{equation}\label{TDJB}
\mathrm{cl}_{\beta X}A\subseteq\mathrm{int}_{\beta X}\mathrm{cl}_{\beta X}C_1\cup\cdots\cup\mathrm{int}_{\beta X}\mathrm{cl}_{\beta X}C_n,
\end{equation}
where $C_i\in\mathrm{Coz}(X)$ for each $i=1,\ldots,n$, and $\mathrm{cl}_XC_i$ has a null neighborhood $U_i$ in $X$. Intersecting both sides of (\ref{TDJB}) with $X$ we have
\[\mathrm{cl}_XA\subseteq\mathrm{cl}_XC_1\cup\cdots\cup\mathrm{cl}_XC_n.\]
Since
\[\mathrm{cl}_XC_1\cup\cdots\cup\mathrm{cl}_XC_n\subseteq U_1\cup\cdots\cup U_n\subseteq U_1\vee\cdots\vee U_n=W\]
it follows that $W$ is a null neighborhood of $\mathrm{cl}_XA$ in $X$.
\end{proof}

\begin{theorem}\label{HGBV}
Let $X$ be a completely regular space locally null with respect to an ideal $\mathscr{I}$ of an upper semi-lattice $(\mathscr{L},\subseteq)$, where $\mathscr{L}\subseteq\mathscr{P}(X)$. The following are equivalent:
\begin{itemize}
\item[\rm(1)] $\lambda_{\mathscr I} X$ is compact.
\item[\rm(2)] ${\mathscr I}$ is non-proper.
\item[\rm(3)] $C^{\mathscr I}_{00}(X)$ is unital.
\end{itemize}
\end{theorem}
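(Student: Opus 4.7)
The plan is to prove the equivalences by combining an easy $(2)\Leftrightarrow(3)$ with a direct passage to the identity $\lambda_{\mathscr I} X=\beta X$ for $(1)\Leftrightarrow(2)$. Two earlier results do all the heavy lifting: Theorem \ref{BBV} supplies both the inclusion $X\subseteq\lambda_{\mathscr I} X$ and the empty-hull property of $C^{\mathscr I}_{00}(X)$ under the local-nullity hypothesis, while Lemma \ref{HDHD} lets us pull null neighborhoods back from $\beta X$ to $X$. The trivial observation I would use at several points is that whenever $X$ itself admits a null neighborhood $U$ in $X$, the chain $U\subseteq X\subseteq\mathrm{int}_X U\subseteq U$ forces $U=X$, so the existence of such a neighborhood is equivalent to $X\in\mathscr{I}$.

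For $(2)\Rightarrow(3)$, if $X\in\mathscr{I}$ then $X$ serves as a null neighborhood in $X$ of the support of every $f\in C_b(X)$, so $C^{\mathscr I}_{00}(X)=C_b(X)$ is unital with unit $\mathbf{1}$. For $(3)\Rightarrow(2)$, let $u$ be the unit of $C^{\mathscr I}_{00}(X)$. Since the ideal has empty hull by Theorem \ref{BBV}, for every $x\in X$ there is some $f\in C^{\mathscr I}_{00}(X)$ with $f(x)\neq 0$, and $uf=f$ evaluated at $x$ forces $u(x)=1$; hence $u=\mathbf{1}\in C^{\mathscr I}_{00}(X)$, so $\mathrm{supp}(\mathbf{1})=X$ admits a null neighborhood in $X$, which by the observation above gives $X\in\mathscr{I}$.

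For $(1)\Rightarrow(2)$, note that $\lambda_{\mathscr I} X$ is a compact, hence closed, subspace of the Hausdorff space $\beta X$ that contains the dense subspace $X$ by Theorem \ref{BBV}; so $\lambda_{\mathscr I} X=\beta X$. Applying Lemma \ref{HDHD} with $A=X$ then yields a null neighborhood of $\mathrm{cl}_X X=X$ in $X$, whence $X\in\mathscr{I}$. For $(2)\Rightarrow(1)$, observe that $X=\mathrm{coz}(\mathbf{1})\in\mathrm{Coz}(X)$ and $\mathrm{cl}_X X=X$ has itself as a null neighborhood, so the term $\mathrm{int}_{\beta X}\mathrm{cl}_{\beta X}X=\beta X$ appears in the union defining $\lambda_{\mathscr I} X$; thus $\lambda_{\mathscr I} X=\beta X$, which is compact.

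I do not foresee a substantive obstacle; each implication reduces to a single line once Theorem \ref{BBV} and Lemma \ref{HDHD} are invoked. The only slightly subtle point is the repeated use of the trivial fact that a null neighborhood of the whole space $X$ must coincide with $X$, which is what allows us to equate "$\lambda_{\mathscr I} X=\beta X$" and "$C^{\mathscr I}_{00}(X)$ is unital" with the single condition $X\in\mathscr{I}$.
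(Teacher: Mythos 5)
Your proof is correct and follows essentially the same route as the paper: both reduce everything to the observation that a null neighborhood of $X$ in $X$ must be $X$ itself, use $\mathrm{cl}_{\beta X}X=\beta X$ together with Lemma \ref{HDHD} for compactness of $\lambda_{\mathscr I}X$, and show the unit must be $\mathbf{1}$. The only cosmetic difference is that you organize the argument as two biconditionals and cite the empty-hull part of Theorem \ref{BBV} to force $u=\mathbf{1}$, whereas the paper runs the cycle $(1)\Rightarrow(2)\Rightarrow(3)\Rightarrow(1)$ and re-constructs the bump functions $f_x$ directly.
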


\begin{proof}
(1) {\em  implies} (2). Note that $X\subseteq\lambda_{\mathscr I} X$ by Theorem \ref{BBV}, as $X$ is locally null. Since $\lambda_{\mathscr I} X$ is compact we have $\mathrm{cl}_{\beta X}X\subseteq\lambda_{\mathscr I} X$. Therefore $X$ is null by Lemma \ref{HDHD}.

(2) {\em  implies} (3). Suppose that $X$ is null. Then the mapping $\mathbf{1}$ is the unit element of $C^{\mathscr I}_{00}(X)$.

(3) {\em  implies} (1). Suppose that $C^{\mathscr I}_{00}(X)$ has a unit element $u$. Let $x\in X$. Let $U_x$ be a null neighborhood of $x$ in $X$ and let $V_x$ be an open neighborhoods of $x$ in $X$ such that $\mathrm{cl}_XV_x\subseteq\mathrm{int}_X U_x$. Let $f_x:X\rightarrow[0,1]$ be continuous and such that $f_x(x)=1$ and $f_x|_{X\setminus V_x}=\mathbf{0}$. Then $U_x$ is a neighborhood of $\mathrm{supp}(f_x)$ in $X$, as $\mathrm{supp}(f_x)\subseteq\mathrm{cl}_XV_x$. Therefore $f_x\in C^{\mathscr I}_{00}(X)$. We have
\[u(x)=u(x)f_x(x)=f_x(x)=1.\]
Thus $u=\mathbf{1}$ and therefore $X=\mathrm{supp}(u)$ is null. Since $X\in\mathrm{Coz}(X)$ trivially, it follows that $\lambda_{\mathscr I} X=\beta X$ is compact.
\end{proof}

\begin{definition}\label{WWA}
Let $X$ be a completely regular space locally null with respect to an ideal $\mathscr{I}$ of an upper semi-lattice $(\mathscr{L},\subseteq)$, where $\mathscr{L}\subseteq\mathscr{P}(X)$. For any $f\in C_b(X)$ denote
\[f_\lambda=f_\beta|_{\lambda_{\mathscr I} X}.\]
\end{definition}

Observe that by Theorem \ref{BBV} the mapping $f_\lambda$ extends $f$.

\begin{lemma}\label{TES}
Let $X$ be a normal space locally null with respect to an ideal $\mathscr{I}$ of an upper semi-lattice $(\mathscr{L},\subseteq)$, where $\mathscr{L}\subseteq\mathscr{P}(X)$. For any $f\in C_b(X)$ the following are equivalent:
\begin{itemize}
\item[\rm(1)] $f\in C^{\mathscr I}_{00}(X)$.
\item[\rm(2)] $f_\lambda\in C_{00}(\lambda_{\mathscr I} X)$.
\end{itemize}
\end{lemma}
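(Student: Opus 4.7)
The plan is to pass freely between $f$, $f_\beta$, and $f_\lambda$ by using two standard bookkeeping facts: first, that $X\subseteq\lambda_{\mathscr I} X$ by Theorem \ref{BBV} (since $X$ is locally null), so $f_\lambda$ is a genuine extension of $f$; and second, that for any open $V\subseteq\lambda_{\mathscr I} X$ of the form $V=\lambda_{\mathscr I} X\cap W$ with $W$ open in $\beta X$, density of $X$ gives $\mathrm{cl}_{\lambda_{\mathscr I} X}V=\mathrm{cl}_{\lambda_{\mathscr I} X}(V\cap X)$. Applied with $V=\mathrm{coz}(f_\lambda)$ (so $V\cap X=\mathrm{coz}(f)$), this yields the identification
\[\mathrm{supp}_{\lambda_{\mathscr I} X}(f_\lambda)=\mathrm{cl}_{\lambda_{\mathscr I} X}\mathrm{coz}(f)=\lambda_{\mathscr I} X\cap\mathrm{cl}_{\beta X}\mathrm{coz}(f),\]
which is the bridge between the two statements.

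For the direction (1)$\Rightarrow$(2), pick a null neighborhood $U$ of $\mathrm{supp}(f)$ in $X$. By normality of $X$, Urysohn's lemma yields a continuous $g:X\rightarrow[0,1]$ with $g|_{\mathrm{supp}(f)}=\mathbf{1}$ and $g|_{X\setminus\mathrm{int}_X U}=\mathbf{0}$. Set $C=g^{-1}((1/2,1])$; this is a cozero-set of $X$ (as $\mathrm{coz}(\max\{\mathbf{0},g-\mathbf{1/2}\})$), and $\mathrm{cl}_XC\subseteq g^{-1}([1/2,1])\subseteq\mathrm{int}_X U$, so $U$ is a null neighborhood of $\mathrm{cl}_XC$. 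Hence $\mathrm{int}_{\beta X}\mathrm{cl}_{\beta X}C\subseteq\lambda_{\mathscr I} X$ by definition. Applying Lemma \ref{LKG} to $1-g$ with $r=1/2$ gives $g_\beta^{-1}((1/2,1])\subseteq\mathrm{int}_{\beta X}\mathrm{cl}_{\beta X}C$, and since $\mathrm{coz}(f)\subseteq g^{-1}(\{1\})\subseteq g^{-1}((1/2,1])$, taking closures in $\beta X$ yields
\[\mathrm{cl}_{\beta X}\mathrm{coz}(f)\subseteq g_\beta^{-1}\big([1/2,1]\big)\cap\mathrm{int}_{\beta X}\mathrm{cl}_{\beta X}C\subseteq\lambda_{\mathscr I} X.\]
Thus $\mathrm{supp}_{\lambda_{\mathscr I} X}(f_\lambda)=\mathrm{cl}_{\beta X}\mathrm{coz}(f)$ is a compact subset of $\lambda_{\mathscr I} X$, i.e.\ $f_\lambda\in C_{00}(\lambda_{\mathscr I} X)$.

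For (2)$\Rightarrow$(1), assume $\mathrm{supp}_{\lambda_{\mathscr I} X}(f_\lambda)$ is compact. Then it is closed in the Hausdorff space $\beta X$, and since it contains $\mathrm{coz}(f)$, we get $\mathrm{cl}_{\beta X}\mathrm{coz}(f)\subseteq\mathrm{supp}_{\lambda_{\mathscr I} X}(f_\lambda)\subseteq\lambda_{\mathscr I} X$. Now Lemma \ref{HDHD} applied with $A=\mathrm{coz}(f)$ gives that $\mathrm{supp}(f)=\mathrm{cl}_X\mathrm{coz}(f)$ has a null neighborhood in $X$, i.e.\ $f\in C^{\mathscr I}_{00}(X)$.

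The only real obstacle is the forward direction, where naively passing from ``$U$ contains $\mathrm{supp}(f)$'' to ``some cozero-set $C$ has $\mathrm{cl}_{\beta X}\mathrm{coz}(f)\subseteq\mathrm{int}_{\beta X}\mathrm{cl}_{\beta X}C$'' is not automatic; it is precisely the combination of normality (to produce the intermediate Urysohn function $g$ separating $\mathrm{supp}(f)$ from $X\setminus\mathrm{int}_X U$) with Lemma \ref{LKG} (to open $\mathrm{cl}_{\beta X}C$ up inside $\beta X$ around the sub-level set of $g_\beta$) that resolves this. The reverse direction is then a pure consequence of Lemma \ref{HDHD} and Hausdorffness of $\beta X$, and does not require normality.
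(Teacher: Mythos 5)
Your proposal is correct and follows essentially the same route as the paper's own proof: the forward direction via Urysohn's lemma, a cozero-set $C$ squeezed between $\mathrm{supp}(f)$ and $\mathrm{int}_XU$, and Lemma \ref{LKG} to land $\mathrm{cl}_{\beta X}\mathrm{coz}(f)$ inside $\mathrm{int}_{\beta X}\mathrm{cl}_{\beta X}C\subseteq\lambda_{\mathscr I}X$; the reverse direction via compactness and Lemma \ref{HDHD}. The only differences are cosmetic (your Urysohn function is $1$ minus the paper's, so you work with the super-level set $g^{-1}((1/2,1])$ instead of the sub-level set, and you apply Lemma \ref{HDHD} to $A=\mathrm{coz}(f)$ rather than $A=\mathrm{supp}(f)$).
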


\begin{proof}
Note that $X\subseteq\lambda_{\mathscr I} X$ by Theorem \ref{BBV}, as $X$ is locally null.

(1) {\em  implies} (2). Let $U$ be a null neighborhood of $\mathrm{supp}(f)$ in $X$. Thus $\mathrm{supp}(f)\subseteq\mathrm{int}_X U$. Since $X$ is normal, by the Urysohn Lemma, there exists a continuous $g:X\rightarrow[0,1]$ such that
\[g|_{\mathrm{supp}(f)}=\mathbf{0}\;\;\;\;\mbox{ and }\;\;\;\;g|_{X\setminus\mathrm{int}_X U}=\mathbf{1}.\]
Let
\[C=g^{-1}\big([0,1/2)\big)\in\mathrm{Coz}(X).\]
Then $\mathrm{cl}_XC$ has a null neighborhood in $X$, namely $U$, as
\[\mathrm{cl}_XC\subseteq g^{-1}\big([0,1/2]\big)\subseteq\mathrm{int}_X U.\]
Therefore $\mathrm{int}_{\beta X}\mathrm{cl}_{\beta X}C\subseteq\lambda_{\mathscr I} X$. But $g_\beta^{-1}([0,1/2))\subseteq\mathrm{int}_{\beta X}\mathrm{cl}_{\beta X}C$ by Lemma \ref{LKG}, and thus
\[\mathrm{cl}_{\beta X}\mathrm{coz}(f)\subseteq\mathrm{z}(g_\beta)\subseteq g_\beta^{-1}\big([0,1/2)\big)\subseteq\lambda_{\mathscr I} X.\]
This implies that
\begin{eqnarray*}
\mathrm{supp}(f_\lambda)=\mathrm{cl}_{\lambda_{\mathscr I} X}\mathrm{coz}(f_\lambda)&=&\mathrm{cl}_{\lambda_{\mathscr I} X}\big(X\cap \mathrm{coz}(f_\lambda)\big)\\&=&\mathrm{cl}_{\lambda_{\mathscr I} X}\mathrm{coz}(f)=\lambda_{\mathscr I}X\cap\mathrm{cl}_{\beta X}\mathrm{coz}(f)=\mathrm{cl}_{\beta X}\mathrm{coz}(f)
\end{eqnarray*}
is compact, as it is closed in $\beta X$.

(2) {\em  implies} (1). Note that $\mathrm{cl}_{\beta X}(\mathrm{supp}(f))\subseteq\mathrm{supp}(f_\lambda)$, as $\mathrm{supp}(f)\subseteq\mathrm{supp}(f_\lambda)$ and the latter is compact. Thus $\mathrm{cl}_{\beta X}(\mathrm{supp}(f))\subseteq\lambda_{\mathscr I}X$. By Lemma \ref{HDHD} it follows that $\mathrm{supp}(f)$ has a null neighborhood in $X$.
\end{proof}

A version of the classical Banach--Stone Theorem (see Theorem 7.1 of \cite{Be}) states that for any locally compact Hausdorff spaces $X$ and $Y$, the rings $C_{00}(X)$ and $C_{00}(Y)$ are isomorphic if and only if the spaces $X$ and $Y$ are homeomorphic. (See \cite{A}.) This will be used in the proof of the following theorem.

\begin{theorem}\label{UUS}
Let $X$ be a normal space locally null with respect to an ideal $\mathscr{I}$ of an upper semi-lattice $(\mathscr{L},\subseteq)$, where $\mathscr{L}\subseteq\mathscr{P}(X)$. Then $C^{\mathscr I}_{00}(X)$ is a normed algebra isometrically isomorphic to $C_{00}(Y)$ for some unique (up to homeomorphism) locally compact Hausdorff space $Y$, namely $Y=\lambda_{\mathscr I} X$. Furthermore,
\begin{itemize}
\item[\rm(1)] $X$ is dense in $Y$.
\item[\rm(2)] $C^{\mathscr I}_{00}(X)$ is of empty hull.
\item[\rm(3)] $C^{\mathscr I}_{00}(X)$ is unital if and only if ${\mathscr I}$ is non-proper if and only if $Y$ is compact.
\end{itemize}
\end{theorem}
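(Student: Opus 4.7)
The plan is to define $\Phi \colon C^{\mathscr I}_{00}(X)\to C_{00}(\lambda_{\mathscr I} X)$ by $\Phi(f)=f_\lambda$ and verify that $\Phi$ is an isometric algebra isomorphism. Two preliminary observations set the stage. First, $\lambda_{\mathscr I} X$ is locally compact Hausdorff, being a union of open subspaces of $\beta X$ (hence open in $\beta X$) and inheriting the Hausdorff property. Second, since $X$ is locally null, Theorem \ref{BBV} gives $X\subseteq\lambda_{\mathscr I} X$, and since $X$ is dense in $\beta X$ it is also dense in $\lambda_{\mathscr I} X$; this establishes clause (1).

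That $\Phi$ lands in $C_{00}(\lambda_{\mathscr I} X)$ is the forward direction of Lemma \ref{TES}. It preserves sums and products because for $f,g\in C^{\mathscr I}_{00}(X)$ the mappings $(f+g)_\lambda$ and $f_\lambda+g_\lambda$ are two continuous extensions of $f+g$ from the dense subspace $X$ to $\lambda_{\mathscr I} X$ and must therefore coincide; the same argument handles products and scalar multiples. Density of $X$ in $\lambda_{\mathscr I} X$ together with continuity yields $\|f_\lambda\|_\infty=\sup_{X}|f|=\|f\|_\infty$, which gives both the isometry property and the injectivity of $\Phi$.

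For surjectivity, let $g\in C_{00}(\lambda_{\mathscr I} X)$ and set $K=\mathrm{supp}(g)$, a compact subset of $\lambda_{\mathscr I} X$. Because $\lambda_{\mathscr I} X$ is open in $\beta X$ and $K$ is compact, the extension-by-zero $\tilde g\colon\beta X\to\mathbb{R}$ with $\tilde g|_{\lambda_{\mathscr I} X}=g$ and $\tilde g\equiv0$ off $\lambda_{\mathscr I} X$ is continuous: at points of $K$ continuity is that of $g$, and at every point of $\beta X\setminus K$ one has the open neighborhood $\beta X\setminus K$ on which $\tilde g$ vanishes. Setting $f=\tilde g|_X\in C_b(X)$, uniqueness of the Stone--\v{C}ech extension forces $f_\beta=\tilde g$, so $f_\lambda=g\in C_{00}(\lambda_{\mathscr I} X)$; the converse direction of Lemma \ref{TES} then places $f$ in $C^{\mathscr I}_{00}(X)$, and $\Phi(f)=g$.

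Uniqueness of $Y$ up to homeomorphism follows from the Banach--Stone variant cited just before the statement: any locally compact Hausdorff $Y$ with $C_{00}(Y)\cong C_{00}(\lambda_{\mathscr I} X)$ (as rings) must be homeomorphic to $\lambda_{\mathscr I} X$. Clause (2) is Theorem \ref{BBV}, and clause (3) combines Theorem \ref{HGBV} with the elementary fact that $C_{00}(Y)$ is unital exactly when $Y$ is compact. The only place requiring genuine care is the surjectivity step, where one must verify that the extension-by-zero $\tilde g$ is continuous across $\partial(\lambda_{\mathscr I} X)$ in $\beta X$; compactness of $\mathrm{supp}(g)$ inside the open set $\lambda_{\mathscr I} X$ is exactly what makes this work, and every other step is a routine application of the lemmas already in place.
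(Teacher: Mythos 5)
Your proof is correct and follows essentially the same route as the paper: the same map $f\mapsto f_\lambda$, well-definedness and surjectivity via Lemma \ref{TES}, the homomorphism and isometry properties via density of $X$ in $\lambda_{\mathscr I}X$, uniqueness via the Banach--Stone variant, and clauses (1)--(3) via Theorems \ref{BBV} and \ref{HGBV}. The only cosmetic difference is in the surjectivity step, where you construct the preimage of $g$ by extending it by zero over $\beta X$; the paper instead notes directly that $(g|_X)_\lambda$ and $g$ are continuous on $\lambda_{\mathscr I}X$ and agree on the dense subspace $X$, hence coincide --- both arguments are valid.
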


\begin{proof}
Observe that $C^{\mathscr I}_{00}(X)$ is a normed subalgebra of $C_b(X)$ by Theorem \ref{TTG}. Define
\[\psi:C^{\mathscr I}_{00}(X)\rightarrow C_{00}(\lambda_{\mathscr I} X)\]
by
\[\psi(f)=f_\lambda\]
for any $f\in C^{\mathscr I}_{00}(X)$. By Lemma \ref{TES} the mapping $\psi$ is well-defined. It is clear that $\psi$ is an algebra homomorphism and that it is injective. (Again, note that $X\subseteq\lambda_{\mathscr I} X$, and use the fact that any two scalar-valued continuous mapping on $\lambda_{\mathscr I} X$ coincide, provided that they agree on the dense subspace $X$ of $\lambda_{\mathscr I} X$.) To show that $\psi$ is surjective, let $g\in C_{00}(\lambda_{\mathscr I} X)$. Then $(g|_X)_\lambda=g$ and thus  $g|_X\in C^{\mathscr I}_{00}(X)$ by Lemma \ref{TES}. Note that $\psi(g|_X)=g$. To show that $\psi$ is an isometry, let $h\in C^{\mathscr I}_{00}(X)$. Then
\[|h_\lambda|(\lambda_{\mathscr I} X)=|h_\lambda|(\mathrm{cl}_{\lambda_{\mathscr I} X}X)\subseteq\mathrm{cl}_{\mathbb{R}}\big(|h_\lambda|(X)\big)=\mathrm{cl}_{\mathbb{R}}\big(|h|(X)\big)\subseteq\big[0,\|h\|\big]\]
which yields $\|h_\lambda\|\leq\|h\|$. That $\|h\|\leq\|h_\lambda\|$ is clear, as $h_\lambda$ extends $h$.

Note that $\lambda_{\mathscr I} X$ is locally compact, as it is open in the compact Hausdorff space $\beta X$.

The uniqueness of $\lambda_{\mathscr I} X$ follows from the fact that for any locally compact Hausdorff space $T$ the ring $C_{00}(T)$ determines the topology of $T$.

(1). By Theorem \ref{BBV} we have $X\subseteq\lambda_{\mathscr I} X$. That $X$ is dense in $\lambda_{\mathscr I} X$ is then obvious.

(2). This follows from Theorem \ref{BBV}.

(3). This follows from Theorem \ref{HGBV}.
\end{proof}

Under certain conditions the representation given for $C^{\mathscr I}_{00}(X)$ simplifies. This is the context of our next result. First, we need a lemma.

\begin{lemma}\label{GYUS}
Let $X$ be a Lindel\"{o}f space locally null with respect to a $\sigma$-ideal $\mathscr{I}$ of an upper semi-lattice $(\mathscr{L},\subseteq)$, where $\mathscr{L}\subseteq\mathscr{P}(X)$. Then the closure in $X$ of each subset of $X$ has a null neighborhood in $X$.
\end{lemma}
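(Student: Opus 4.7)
The plan is to combine three ingredients: the Lindelöfness of closed subsets of $X$, local nullness to supply a cover of such a closed set by null neighborhoods, and the $\sigma$-ideal property of $\mathscr{I}$ to glue countably many such null sets into a single null set that still engulfs the given closure.

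First, let $A\subseteq X$ be arbitrary and set $B=\mathrm{cl}_XA$. Since Lindelöfness is closed-hereditary and $B$ is closed in the Lindelöf space $X$, the subspace $B$ is itself Lindelöf. Next, using that $X$ is locally null, pick for each $x\in B$ a null set $U_x\in\mathscr{I}$ with $x\in\mathrm{int}_XU_x$. The family $\{\mathrm{int}_XU_x:x\in B\}$ is an open cover of $B$, so by the Lindelöf property of $B$ there exist points $x_1,x_2,\ldots\in B$ such that
\[B\subseteq\bigcup_{n=1}^\infty\mathrm{int}_XU_{x_n}.\]

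Now set $W=\bigvee_{n=1}^\infty U_{x_n}$, where the join is taken in the upper semi-lattice $(\mathscr{L},\subseteq)$; this join exists and belongs to $\mathscr{I}$ because $\mathscr{I}$ is a $\sigma$-ideal and each $U_{x_n}\in\mathscr{I}$. In particular $W\in\mathscr{L}\subseteq\mathscr{P}(X)$ is a null subset of $X$. Since $U_{x_n}\subseteq W$ for each $n$, we have $\mathrm{int}_XU_{x_n}\subseteq\mathrm{int}_XW$, and therefore
\[B\subseteq\bigcup_{n=1}^\infty\mathrm{int}_XU_{x_n}\subseteq\mathrm{int}_XW,\]
which exhibits $W$ as a null neighborhood of $B=\mathrm{cl}_XA$ in $X$.

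The only delicate point—which is where the $\sigma$-ideal hypothesis enters essentially—is that the set-theoretic union $\bigcup_{n=1}^\infty U_{x_n}$ need not itself lie in $\mathscr{L}$, so one cannot directly declare it null; one has to pass to the lattice-theoretic join $\bigvee_{n=1}^\infty U_{x_n}$, observing only that the union is contained in the join, which is enough to transfer the ``neighborhood'' property from one to the other. Apart from this bookkeeping issue the argument is a routine Lindelöf covering argument.
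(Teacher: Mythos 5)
Your proof is correct and follows essentially the same route as the paper: extract a countable subcover of $\mathrm{cl}_XA$ from the null neighborhoods supplied by local nullness, then use the $\sigma$-ideal property to form the join $W=\bigvee_{n=1}^\infty U_{x_n}$, which contains the open union and hence is a null neighborhood of $\mathrm{cl}_XA$. The remark that one must pass from the set-theoretic union to the lattice join is exactly the point the paper's chain of inclusions is making.
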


\begin{proof}
Let $A$ be a subset of $X$. Since $X$ is locally null, there exists a null neighborhood $U_x$ of $x$ in $X$ for each $x\in X$. Since
\[\mathrm{cl}_XA\subseteq\bigcup_{x\in X}\mathrm{int}_XU_x\]
and $\mathrm{cl}_XA$ is Lindel\"{o}f, as it is closed in $X$ and $X$ is so, we have
\[\mathrm{cl}_XA\subseteq\bigcup_{n=1}^\infty\mathrm{int}_XU_{x_n}.\]
But then, since $\mathscr{I}$ is a $\sigma$-ideal, we have
\[\bigcup_{n=1}^\infty\mathrm{int}_XU_{x_n}\subseteq\bigcup_{n=1}^\infty U_{x_n}\subseteq\bigvee_{n=1}^\infty U_{x_n}=W.\]
That is, $\mathrm{cl}_XA$ has a null neighborhood in $X$, namely $W$.
\end{proof}

Recall that $\mathscr{RC}(X)$ denotes the set of all regular closed subspaces of a space $X$.

\begin{theorem}\label{GRHS}
Let $X$ be a Lindel\"{o}f space locally null with respect to a $\sigma$-ideal $\mathscr{I}$ of an upper semi-lattice $(\mathscr{L},\subseteq)$, where $\mathscr{L}\subseteq\mathscr{P}(X)$. Let $\mathscr{RC}(X)\subseteq\mathscr{L}$. Then
\[C^{\mathscr I}_{00}(X)=\big\{f\in C_b(X):\mathrm{supp}(f)\mbox{ is null}\big\}.\]
\end{theorem}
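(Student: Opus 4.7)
The plan is to establish the equality by proving the two inclusions separately, both of which follow fairly directly once one recognizes that the support of any continuous function is automatically regular closed.

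For the inclusion $C^{\mathscr I}_{00}(X)\subseteq\{f\in C_b(X):\mathrm{supp}(f)\text{ is null}\}$, I would start with an $f\in C^{\mathscr I}_{00}(X)$ and a null neighborhood $U$ of $\mathrm{supp}(f)$ in $X$, so that $\mathrm{supp}(f)\subseteq\mathrm{int}_XU\subseteq U$. The key structural observation is that $\mathrm{supp}(f)=\mathrm{cl}_X\mathrm{coz}(f)$ is the closure of an open set; the standard identity $\mathrm{cl}_X\mathrm{int}_X\mathrm{cl}_XV=\mathrm{cl}_XV$ for open $V$ then yields $\mathrm{supp}(f)\in\mathscr{RC}(X)$. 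By the hypothesis $\mathscr{RC}(X)\subseteq\mathscr{L}$, we have $\mathrm{supp}(f)\in\mathscr{L}$, and since $\mathrm{supp}(f)\subseteq U$ with $U$ null, the defining downward-closure property of the ideal $\mathscr{I}$ forces $\mathrm{supp}(f)\in\mathscr{I}$.

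For the reverse inclusion, suppose $\mathrm{supp}(f)$ is null; I need to produce a null neighborhood of $\mathrm{supp}(f)$ in $X$. This is precisely where Lemma \ref{GYUS} enters: it states that under the standing hypotheses (Lindel\"of, locally null with respect to a $\sigma$-ideal) the closure in $X$ of any subset of $X$ admits a null neighborhood. Applying this to the set $\mathrm{coz}(f)$ (so that its closure is $\mathrm{supp}(f)$) immediately produces the required null neighborhood, whence $f\in C^{\mathscr I}_{00}(X)$. Note that the ``$\mathrm{supp}(f)$ is null'' hypothesis is not actually needed here: Lemma \ref{GYUS} grants the conclusion for every $f\in C_b(X)$, so in the presence of the lemma this inclusion is essentially automatic.

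There is really no serious obstacle: the forward direction is a one-line application of the ideal property once one notes the regular-closedness of supports, and the reverse direction is a one-line invocation of Lemma \ref{GYUS}. The mildly interesting point is that the combination of hypotheses collapses a priori-distinct notions (having a null neighborhood versus being null) for sets of the form $\mathrm{supp}(f)$, and the proof makes this collapse transparent by matching each direction with its appropriate structural input ($\mathscr{RC}(X)\subseteq\mathscr{L}$ on one side, the $\sigma$-ideal plus Lindel\"of structure on the other).
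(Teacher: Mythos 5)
Your proof is correct and follows essentially the same route as the paper: the forward inclusion uses that $\mathrm{supp}(f)$ is regular closed, hence lies in $\mathscr{L}$, so the downward-closure property of $\mathscr{I}$ applies, and the reverse inclusion is an immediate application of Lemma \ref{GYUS}. Your added remark that the reverse inclusion holds for every $f\in C_b(X)$ is also accurate and consistent with the paper's one-line treatment of that direction.
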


\begin{proof}
Let $f\in C^{\mathscr I}_{00}(X)$. Then $\mathrm{supp}(f)\subseteq U$ for some null $U$. Since $\mathrm{supp}(f)\in\mathscr{L}$, as $\mathrm{supp}(f)$ is regular closed in $X$, it follows that $\mathrm{supp}(f)$ is null. The converse trivially follows from Lemma \ref{GYUS}.
\end{proof}

\section{The Banach algebra $C^{\mathscr I}_0(X)$}\label{JGGG}

Let $X$ be a space and let $\mathscr{I}$ be an ideal in an upper semi-lattice $(\mathscr{L},\subseteq)$, where $\mathscr{L}\subseteq\mathscr{P}(X)$. In this section we consider the subset $C^{\mathscr I}_0(X)$ of $C_b(X)$ consisting of those elements $f$ of $C_b(X)$ such that $|f|^{-1}([1/n,\infty))$ has a null neighborhood in $X$ for each positive integer $n$. The set $C^{\mathscr I}_0(X)$ coincides with $C_0(X)$ if $X$ is locally compact and $\mathscr{I}$ is the set of all subspaces of $X$ with compact closure (and $\mathscr{L}=\mathscr{P}(X)$). Under certain conditions the expression of $C^{\mathscr I}_0(X)$ simplifies; in particular
\[C^{\mathscr I}_0(X)=\big\{f\in C_b(X):|f|^{-1}\big([1/n,\infty)\big)\mbox{ is null for each }n\big\}.\]
if $\mathscr{L}$ contains the set $\mathscr{Z}(X)$ of all zero-sets of $X$. We show that $C^{\mathscr I}_0(X)$ is in general a closed ideal (and in particular, a Banach subalgebra) in $C_b(X)$ containing $C^{\mathscr I}_{00}(X)$. Furthermore, if $X$ is completely regular, then $C^{\mathscr I}_0(X)$ is of empty hull if and only if $X$ is locally null, and if so, then $C^{\mathscr I}_0(X)$ is unital if and only if $\mathscr{I}$ is non-proper. The main result of this section states that if $X$ is normal and locally null then the Banach algebra $C^{\mathscr I}_0(X)$ is isometrically isomorphic to $C_0(Y)$ for some unique (up to homeomorphism) locally compact Hausdorff space $Y$. Furthermore, $C^{\mathscr I}_0(X)$ contains $C^{\mathscr I}_{00}(X)$ densely, $Y$ contains $X$ densely, and $Y$ is compact if and only if $C^{\mathscr I}_0(X)$ is unital. Also, if $X$ is moreover Lindel\"{o}f and $\mathscr{I}$ is a $\sigma$-ideal, then $Y$ is countably compact and $C^{\mathscr I}_0(X)=C^{\mathscr I}_{00}(X)$.

We now proceed with the formal treatment of the subject.

\begin{definition}\label{HHLG}
Let $X$ be a space and let $\mathscr{I}$ be an ideal in an upper semi-lattice $(\mathscr{L},\subseteq)$, where $\mathscr{L}\subseteq\mathscr{P}(X)$. Define
\[C^{\mathscr I}_0(X)=\big\{f\in C_b(X):|f|^{-1}\big([1/n,\infty)\big)\mbox{ has a null neighborhood in $X$ for each }n\big\}.\]
\end{definition}

The following is to justify our use of the notation $C^{\mathscr I}_0(X)$.

\begin{example}\label{GHUJ}
Let $X$ be a locally compact space. Consider the ideal
\[\mathscr{I}=\{A\subseteq X:\mathrm{cl}_X A \mbox{ is compact}\}.\]
of $(\mathscr{P}(X),\subseteq)$. Then, an argument analogous to the one given in Example \ref{HUJ} shows that
\[C^{\mathscr I}_0(X)=C_0(X).\]
\end{example}

Under certain conditions the representation of $C^{\mathscr I}_0(X)$ given in Definition \ref{HHLG} simplifies. This is the context of the next few results. Recall that $\mathscr{Z}(X)$ and $\mathrm{Coz}(X)$ denote the set of all zero-sets and the set of all cozero-sets of a space $X$, respectively.

\begin{proposition}\label{JJG}
Let $X$ be a space and let $\mathscr{I}$ be an ideal in an upper semi-lattice $(\mathscr{L},\subseteq)$, where $\mathscr{L}\subseteq\mathscr{P}(X)$. Let $\mathscr{Z}(X)\subseteq\mathscr{L}$. Then
\[C^{\mathscr I}_0(X)=\big\{f\in C_b(X):|f|^{-1}\big([1/n,\infty)\big)\mbox{ is null for each }n\big\}.\]
\end{proposition}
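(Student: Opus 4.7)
The plan is to reduce both inclusions to the single observation that the level set $|f|^{-1}([1/n,\infty))$ is itself a zero-set of $X$, and is therefore automatically an element of $\mathscr{L}$ by hypothesis. Concretely, for $f\in C_b(X)$ and $n$ a positive integer, I would exhibit the witness
\[g=\max\{\mathbf{0},\mathbf{1/n}-|f|\},\]
which is continuous and satisfies $\mathrm{z}(g)=|f|^{-1}([1/n,\infty))$. Hence $|f|^{-1}([1/n,\infty))\in\mathscr{Z}(X)\subseteq\mathscr{L}$.

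For the inclusion $\subseteq$, suppose $f\in C^{\mathscr{I}}_0(X)$, so for every $n$ there is a null neighborhood $U$ of $|f|^{-1}([1/n,\infty))$ in $X$. Then $|f|^{-1}([1/n,\infty))\subseteq\mathrm{int}_X U\subseteq U$, with $U\in\mathscr{I}$ and $|f|^{-1}([1/n,\infty))\in\mathscr{L}$ by the observation above. Since $\mathscr{I}$ is downward closed in $\mathscr{L}$, it follows that $|f|^{-1}([1/n,\infty))\in\mathscr{I}$, i.e., is null.

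For the reverse inclusion $\supseteq$, suppose $|f|^{-1}([1/n,\infty))$ is null for each $n$. Fix $n$ and apply the assumption at $n+1$ to get that $|f|^{-1}([1/(n+1),\infty))$ is null. Now use the elementary inclusion
\[|f|^{-1}\big([1/n,\infty)\big)\subseteq|f|^{-1}\big((1/(n+1),\infty)\big)\subseteq|f|^{-1}\big([1/(n+1),\infty)\big),\]
where the middle set is open. This exhibits $|f|^{-1}([1/(n+1),\infty))$ as a null neighborhood of $|f|^{-1}([1/n,\infty))$ in $X$. Since $n$ was arbitrary, $f\in C^{\mathscr{I}}_0(X)$.

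The only mildly nontrivial step is the zero-set identification of $|f|^{-1}([1/n,\infty))$, since it is what makes the ideal's downward closure axiom applicable to the target set; the rest is a routine bookkeeping of shifting $n$ to $n+1$ so as to manufacture a neighborhood out of a null set, and would be the main thing to get right.
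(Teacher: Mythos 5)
Your proof is correct and follows essentially the same route as the paper's: the same zero-set witness $g=\max\{\mathbf{0},\mathbf{1/n}-|f|\}$ to place the level set in $\mathscr{L}$ so that downward closure of $\mathscr{I}$ applies, and the same inclusion chain through $|f|^{-1}((1/(n+1),\infty))$ to manufacture a null neighborhood for the reverse direction. Nothing to add.
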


\begin{proof}
Let $f\in C^{\mathscr I}_0(X)$. Let $n$ be a positive integer. Then $|f|^{-1}([1/n,\infty))$ is contained in a null subset of $X$, and is therefore null itself. (Note that $|f|^{-1}([1/n,\infty))$ is contained in $\mathscr{L}$, as it is contained in $\mathscr{Z}(X)$; to see the latter, let
\[g=\max\Big\{\mathbf{0},\mathbf{\frac{1}{n}}-|f|\Big\}\]
and observe that $\mathrm{z}(g)=|f|^{-1}([1/n,\infty))$.)

Next, let $f\in C_b(X)$ such that $|f|^{-1}([1/n,\infty))$ is null for each positive integer $n$. Since
\[|f|^{-1}\big([1/n,\infty)\big)\subseteq|f|^{-1}\Big(\Big(\frac{1}{n+1},\infty\Big)\Big)\subseteq|f|^{-1}\Big(\Big[\frac{1}{n+1},\infty\Big)\Big),\]
it follows that the latter is a null neighborhood of $|f|^{-1}([1/n,\infty))$ in $X$, for each positive integer $n$. That is $f\in C^{\mathscr I}_0(X)$.
\end{proof}

\begin{proposition}\label{JFJG}
Let $X$ be a space and let $\mathscr{I}$ be a $\sigma$-ideal in an upper semi-lattice $(\mathscr{L},\subseteq)$, where $\mathscr{L}\subseteq\mathscr{P}(X)$. Then
\[C^{\mathscr I}_0(X)=\big\{f\in C_b(X):\mathrm{coz}(f)\mbox{ has a null neighborhood in }X\big\}.\]
In particular, if $\mathrm{Coz}(X)\subseteq\mathscr{L}$, then
\[C^{\mathscr I}_0(X)=\big\{f\in C_b(X):\mathrm{coz}(f)\mbox{ is null}\big\}.\]
\end{proposition}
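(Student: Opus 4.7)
The plan is to prove the two displayed equalities in turn, with the first (main) equality being the substantive step and the second (``In particular'') a quick consequence.

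For the main equality I would show both inclusions. The inclusion $\supseteq$ is immediate: if $\mathrm{coz}(f)$ has a null neighborhood $U$ in $X$, then for every positive integer $n$ we have
\[|f|^{-1}\big([1/n,\infty)\big)\subseteq\mathrm{coz}(f)\subseteq\mathrm{int}_XU,\]
so the same $U$ serves as a null neighborhood for each level set, giving $f\in C^{\mathscr I}_0(X)$. No $\sigma$-ideal hypothesis is needed here.

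The reverse inclusion $\subseteq$ is where the $\sigma$-ideal hypothesis enters. Given $f\in C^{\mathscr I}_0(X)$, choose for each positive integer $n$ a null neighborhood $U_n$ of $|f|^{-1}([1/n,\infty))$ in $X$. Let $W=\bigvee_{n=1}^\infty U_n$, which lies in $\mathscr I$ because $\mathscr I$ is a $\sigma$-ideal. Since $\bigcup_{n=1}^\infty U_n\subseteq W$ and each $\mathrm{int}_X U_n$ is open, the union $\bigcup_{n=1}^\infty\mathrm{int}_XU_n$ is an open subset of $W$, hence is contained in $\mathrm{int}_XW$. Finally,
\[\mathrm{coz}(f)=\bigcup_{n=1}^\infty|f|^{-1}\big([1/n,\infty)\big)\subseteq\bigcup_{n=1}^\infty\mathrm{int}_XU_n\subseteq\mathrm{int}_XW,\]
so $W$ is a null neighborhood of $\mathrm{coz}(f)$ in $X$, as required.

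For the ``In particular'' part, assume $\mathrm{Coz}(X)\subseteq\mathscr L$. If $\mathrm{coz}(f)$ is null, then since $\mathrm{coz}(f)$ is open we have $\mathrm{coz}(f)=\mathrm{int}_X\mathrm{coz}(f)$, so $\mathrm{coz}(f)$ itself is a null neighborhood of $\mathrm{coz}(f)$ in $X$, and the first equality gives $f\in C^{\mathscr I}_0(X)$. Conversely, if $f\in C^{\mathscr I}_0(X)$, the first equality provides a null $W$ with $\mathrm{coz}(f)\subseteq W$; since $\mathrm{coz}(f)\in\mathrm{Coz}(X)\subseteq\mathscr L$, the downward closure axiom for an ideal forces $\mathrm{coz}(f)\in\mathscr I$, i.e.\ $\mathrm{coz}(f)$ is null. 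The only subtle point throughout is remembering that $\mathscr L$ need not be closed under arbitrary (set-theoretic) unions, so one must pass through the semi-lattice operation $\bigvee$ rather than an ordinary union when invoking the $\sigma$-ideal property; this is the only place in the argument requiring care.
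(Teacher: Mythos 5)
Your proof is correct and follows essentially the same route as the paper: the same chain $\mathrm{coz}(f)=\bigcup_n|f|^{-1}([1/n,\infty))\subseteq\bigcup_n\mathrm{int}_XU_n\subseteq\bigvee_n U_n$ for the substantive inclusion, the same trivial converse, and the same observation that when $\mathrm{Coz}(X)\subseteq\mathscr{L}$ the open set $\mathrm{coz}(f)$ has a null neighborhood iff it is null itself. Your explicit remark that $\bigcup_n\mathrm{int}_XU_n$ is an open subset of $W=\bigvee_n U_n$ and hence lies in $\mathrm{int}_XW$ is a detail the paper leaves implicit, but nothing differs in substance.
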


\begin{proof}
We verify only the first representation of $C^{\mathscr I}_0(X)$; the second representation follows readily from this, as if $\mathrm{Coz}(X)\subseteq\mathscr{L}$, then $\mathrm{coz}(f)$ has a null neighborhood in $X$, where $f\in C_b(X)$, if and only if $\mathrm{coz}(f)$ is null itself.

Let $f\in C^{\mathscr I}_0(X)$. Let $n$ be a positive integer. Then $|f|^{-1}([1/n,\infty))$ has a null neighborhood $U_n$ in $X$.
We have
\[\mathrm{coz}(f)=\bigcup_{n=1}^\infty|f|^{-1}\big([1/n,\infty)\big)\subseteq\bigcup_{n=1}^\infty\mathrm{int}_XU_n\subseteq\bigcup_{n=1}^\infty U_n\subseteq\bigvee_{n=1}^\infty U_n=V.\]
Thus $\mathrm{coz}(f)$ has a null neighborhood in $X$, namely $V$.

The converse is trivial, as if $\mathrm{coz}(f)$ has a null neighborhood in $X$, where $f\in C_b(X)$, then so does $|f|^{-1}([1/n,\infty))$ for each positive integer $n$, as it is contained in $\mathrm{coz}(f)$.
\end{proof}

\begin{theorem}\label{DGH}
Let $X$ be a space and let $\mathscr{I}$ be an ideal in an upper semi-lattice $(\mathscr{L},\subseteq)$, where $\mathscr{L}\subseteq\mathscr{P}(X)$. Then
$C^{\mathscr I}_0(X)$ is a closed ideal in $C_b(X)$. Furthermore,
\[C^{\mathscr I}_{00}(X)\subseteq C^{\mathscr I}_0(X).\]
\end{theorem}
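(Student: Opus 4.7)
The plan is to verify the three assertions in the order: (i) the inclusion $C^{\mathscr I}_{00}(X)\subseteq C^{\mathscr I}_0(X)$, (ii) that $C^{\mathscr I}_0(X)$ is an ideal in $C_b(X)$, and (iii) that it is closed in the sup norm. All three reduce to careful manipulation of sublevel sets of the form $|f|^{-1}([1/n,\infty))$.

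For (i), suppose $f\in C^{\mathscr I}_{00}(X)$ and let $U$ be a null neighborhood of $\mathrm{supp}(f)$ in $X$. For any positive integer $n$ we have $|f|^{-1}([1/n,\infty))\subseteq\mathrm{coz}(f)\subseteq\mathrm{supp}(f)\subseteq\mathrm{int}_XU$, so the same $U$ is a null neighborhood of $|f|^{-1}([1/n,\infty))$ in $X$, uniformly in $n$. Hence $f\in C^{\mathscr I}_0(X)$.

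For (ii), non-emptiness is immediate since $\mathbf{0}\in C^{\mathscr I}_0(X)$. To show closure under addition, let $f,g\in C^{\mathscr I}_0(X)$ and $n$ be a positive integer. If $|(f+g)(x)|\geq 1/n$ then $|f(x)|\geq 1/(2n)$ or $|g(x)|\geq 1/(2n)$, so
\[
|f+g|^{-1}\big([1/n,\infty)\big)\subseteq|f|^{-1}\big([1/(2n),\infty)\big)\cup|g|^{-1}\big([1/(2n),\infty)\big).
\]
Choose null neighborhoods $U$ and $V$ of the two sets on the right. As in the proof of Theorem \ref{TTG}, $\mathrm{int}_XU\cup\mathrm{int}_XV\subseteq\mathrm{int}_X(U\cup V)$ and $U\cup V\subseteq U\vee V$, so $U\vee V$ is a null neighborhood of $|f+g|^{-1}([1/n,\infty))$ in $X$. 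Closure under multiplication by an element $g\in C_b(X)$ follows from the inclusion $|fg|^{-1}([1/n,\infty))\subseteq|f|^{-1}([1/m,\infty))$ for any positive integer $m\geq n\|g\|$ (using $|fg(x)|\leq\|g\|\cdot|f(x)|$), after which a null neighborhood for the latter supplied by $f\in C^{\mathscr I}_0(X)$ serves also for the former. Scalar multiplication is handled similarly.

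For (iii), suppose $(f_k)$ is a sequence in $C^{\mathscr I}_0(X)$ converging uniformly to some $f\in C_b(X)$. Fix a positive integer $n$ and pick $k$ with $\|f-f_k\|<1/(2n)$. If $|f(x)|\geq 1/n$ then $|f_k(x)|>1/n-1/(2n)=1/(2n)$, so
\[
|f|^{-1}\big([1/n,\infty)\big)\subseteq|f_k|^{-1}\big([1/(2n),\infty)\big).
\]
Since $f_k\in C^{\mathscr I}_0(X)$, the right-hand side admits a null neighborhood $U$ in $X$, i.e.\ $|f_k|^{-1}([1/(2n),\infty))\subseteq\mathrm{int}_XU$. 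Hence $|f|^{-1}([1/n,\infty))\subseteq\mathrm{int}_XU$, so $U$ is a null neighborhood of $|f|^{-1}([1/n,\infty))$ in $X$, and thus $f\in C^{\mathscr I}_0(X)$. The only delicate step is making sure the threshold $1/(2n)$ is itself of the admissible form $1/m$ for a positive integer $m$, which is true since $m=2n$; everything else is a direct use of the definitions and of $U\vee V\supseteq U\cup V$.
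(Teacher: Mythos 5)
Your proposal is correct and follows essentially the same route as the paper: the inclusion via $|f|^{-1}([1/n,\infty))\subseteq\mathrm{supp}(f)$, the ideal property via the sublevel-set inclusions with thresholds $1/(2n)$ and $1/(mn)$ together with $U\cup V\subseteq U\vee V$, and closedness via the estimate $\|f-g\|<1/(2n)$ forcing $|f|^{-1}([1/n,\infty))\subseteq|g|^{-1}([1/(2n),\infty))$. The only differences are cosmetic (e.g.\ choosing $m\geq n\|g\|$ rather than an integer bound $m$ for $|g|$ followed by the threshold $1/(mn)$).
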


\begin{proof}
We start from the last statement. Let $f\in C^{\mathscr I}_{00}(X)$. Let $n$ be a positive integer. Then $|f|^{-1}([1/n,\infty))$ has a null neighborhood in $X$, as it is contained in $\mathrm{supp}(f)$ and the latter does. Thus $f\in C^{\mathscr I}_0(X)$.

Next, we show that $C^{\mathscr I}_0(X)$ is an ideal in the algebra $C_b(X)$. To show that $C^{\mathscr I}_0(X)$ is closed under addition, let $f,g\in C^{\mathscr I}_0(X)$. Let $n$ be a positive integer. There exist null neighborhoods $U$ and $V$ of $|f|^{-1}([1/(2n),\infty))$ and $|g|^{-1}([1/(2n),\infty))$ in $X$, respectively. Then
\begin{eqnarray*}
|f+g|^{-1}\big([1/n,\infty)\big)&\subseteq&|f|^{-1}\Big(\Big[\frac{1}{2n},\infty\Big)\Big)\cup|g|^{-1}\Big(\Big[\frac{1}{2n},\infty\Big)\Big)\\&\subseteq&\mathrm{int}_X U\cup\mathrm{int}_X V\subseteq U\cup V\subseteq U\vee V.
\end{eqnarray*}
Thus $|f+g|^{-1}([1/n,\infty))$ has a null neighborhood in $X$, namely $U\vee V$. Therefore $f+g\in C^{\mathscr I}_0(X)$. Next, let $f\in C^{\mathscr I}_0(X)$ and $g\in C_b(X)$. Let $m$ be a positive integer such that $|g(x)|\leq m$ for each $x\in X$. Since
\[|fg|^{-1}\big([1/n,\infty)\big)\subseteq|f|^{-1}\Big(\Big[\frac{1}{mn},\infty\Big)\Big),\]
and $|f|^{-1}([1/(mn),\infty))$ has a null neighborhoods in $X$ it follows that $|fg|^{-1}([1/n,\infty))$ has a null neighborhoods in $X$. Therefore $fg\in C^{\mathscr I}_0(X)$. That $C^{\mathscr I}_0(X)$ is closed under scalar multiplication follows analogously.

Finally, we show that $C^{\mathscr I}_0(X)$ is closed in $C_b(X)$. Let $f$ be in the closure in $C_b(X)$ of $C^{\mathscr I}_0(X)$. Let $n$ be a positive integer. There exists some $g\in C^{\mathscr I}_0(X)$ with $\|f-g\|<1/(2n)$. Let $t\in |f|^{-1}([1/n,\infty))$. Then
\[\frac{1}{n}\leq\big|f(t)\big|\leq\big|f(t)-g(t)\big|+\big|g(t)\big|\leq\|f-g\|+\big|g(t)\big|\leq\frac{1}{2n}+\big|g(t)\big|\]
and thus $|g(t)|\geq 1/(2n)$. That is $t\in |g|^{-1}([1/(2n),\infty))$. Therefore
\[|f|^{-1}\big([1/n,\infty)\big)\subseteq|g|^{-1}\Big(\Big[\frac{1}{2n},\infty\Big)\Big).\]
Since the latter has a null neighborhood in $X$, so does $|f|^{-1}([1/n,\infty))$. Thus $f\in C^{\mathscr I}_0(X)$.
\end{proof}

\begin{theorem}\label{KGV}
Let $X$ be a completely regular space and let $\mathscr{I}$ be an ideal in an upper semi-lattice $(\mathscr{L},\subseteq)$, where $\mathscr{L}\subseteq\mathscr{P}(X)$. The following are equivalent:
\begin{itemize}
\item[\rm(1)] $X\subseteq\lambda_{\mathscr I} X$.
\item[\rm(2)] $X$ is locally null.
\item[\rm(3)] $C^{\mathscr I}_0(X)$ is of empty hull.
\item[\rm(4)] $C^{\mathscr I}_{00}(X)$ is of empty hull.
\end{itemize}
\end{theorem}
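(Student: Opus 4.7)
The plan is to build on Theorem \ref{BBV}, which already establishes the equivalence of (1), (2) and (4). It therefore suffices to splice (3) into this chain, and the most economical way is to prove (4) $\Rightarrow$ (3) $\Rightarrow$ (2).

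The implication (4) $\Rightarrow$ (3) should be immediate from the inclusion $C^{\mathscr I}_{00}(X)\subseteq C^{\mathscr I}_0(X)$ established in Theorem \ref{DGH}: if for every $x\in X$ some $f\in C^{\mathscr I}_{00}(X)$ satisfies $f(x)\neq 0$, then the same $f$ (viewed as an element of the larger set $C^{\mathscr I}_0(X)$) witnesses that $x\notin\mathfrak{h}(C^{\mathscr I}_0(X))$. So $\mathfrak{h}(C^{\mathscr I}_0(X))=\emptyset$.

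For (3) $\Rightarrow$ (2), I would fix an arbitrary $x\in X$ and, using the hypothesis that $C^{\mathscr I}_0(X)$ has empty hull, pick $h\in C^{\mathscr I}_0(X)$ with $h(x)\neq 0$. Choose a positive integer $n$ large enough that $|h(x)|\geq 1/n$, so that $x\in|h|^{-1}([1/n,\infty))$. By definition of $C^{\mathscr I}_0(X)$, the set $|h|^{-1}([1/n,\infty))$ admits a null neighborhood $U$ in $X$. Since $x$ belongs to this set, $U$ is in particular a null neighborhood of $x$ in $X$. This shows $X$ is locally null.

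Combining these two implications with the already-established chain (1) $\Leftrightarrow$ (2) $\Leftrightarrow$ (4) of Theorem \ref{BBV} closes the loop and gives the full four-way equivalence. There is no real obstacle here; the argument is purely bookkeeping, and the only point that requires even a moment's thought is the passage from $h(x)\neq 0$ to membership in a specific level set $|h|^{-1}([1/n,\infty))$, which is handled by the archimedean property of $\mathbb{R}$.
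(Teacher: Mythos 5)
Your proposal is correct and follows exactly the paper's own route: the equivalence of (1), (2), (4) is quoted from Theorem \ref{BBV}, the implication (4) $\Rightarrow$ (3) is obtained from the inclusion $C^{\mathscr I}_{00}(X)\subseteq C^{\mathscr I}_0(X)$ of Theorem \ref{DGH}, and (3) $\Rightarrow$ (2) is proved by the identical level-set argument. Nothing is missing.
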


\begin{proof}
The equivalence of (1), (2) and (4) follows from Theorem \ref{BBV}.

(4) {\em implies} (3). Note that $C^{\mathscr I}_0(X)$ contains $C^{\mathscr I}_{00}(X)$ by Theorem \ref{DGH}. Thus $C^{\mathscr I}_0(X)$ is of empty hull if $C^{\mathscr I}_{00}(X)$ is so.

(3) {\em implies} (2). Let $x\in X$. Then $x\notin\mathrm{z}(f)$ for some $f\in C^{\mathscr I}_0(X)$. That is $|f(x)|>0$. Let $n$ be a positive integer such that $|f(x)|\geq1/n$. Then $x\in |f|^{-1}([1/n,\infty))$. Since $|f|^{-1}([1/n,\infty))$ has a null neighborhood in $X$ it follows that $x$ has a null neighborhood in $X$.
\end{proof}

\begin{theorem}\label{HFBY}
Let $X$ be a completely regular space locally null with respect to an ideal $\mathscr{I}$ of an upper semi-lattice $(\mathscr{L},\subseteq)$, where $\mathscr{L}\subseteq\mathscr{P}(X)$. The following are equivalent:
\begin{itemize}
\item[\rm(1)] $\lambda_{\mathscr I} X$ is compact.
\item[\rm(2)] ${\mathscr I}$ is non-proper.
\item[\rm(3)] $C^{\mathscr I}_0(X)$ is unital.
\item[\rm(4)] $C^{\mathscr I}_{00}(X)$ is unital.
\end{itemize}
\end{theorem}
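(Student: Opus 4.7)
The plan is to reduce Theorem \ref{HFBY} to Theorem \ref{HGBV}, which already establishes the equivalence of (1), (2), and (4). Hence the remaining work is to weave condition (3) into this chain of equivalences, and the natural route is to show that (4) implies (3) and (3) implies (2).

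First I would observe that (4) implies (3) is essentially free. By Theorem \ref{DGH} we have the inclusion $C^{\mathscr I}_{00}(X)\subseteq C^{\mathscr I}_0(X)$. If $u$ is a unit for $C^{\mathscr I}_{00}(X)$, then as in the proof of Theorem \ref{HGBV} the unit must coincide with the constant function $\mathbf{1}$ on $X$ (using local nullity to produce sufficiently many peaking functions $f_x\in C^{\mathscr I}_{00}(X)$). Since $\mathbf{1}\in C^{\mathscr I}_{00}(X)\subseteq C^{\mathscr I}_0(X)$ and $\mathbf{1}$ is the multiplicative identity of the ambient algebra $C_b(X)$, it is automatically a unit for $C^{\mathscr I}_0(X)$.

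The other direction, (3) implies (2), is the only step with any content, and I would model it directly on the proof of (3) implies (1) in Theorem \ref{HGBV}. Suppose $C^{\mathscr I}_0(X)$ has a unit element $u$. For each $x\in X$, using local nullity of $X$, choose a null neighborhood $U_x$ of $x$ together with an open neighborhood $V_x$ of $x$ with $\mathrm{cl}_X V_x\subseteq\mathrm{int}_X U_x$, and then by complete regularity pick a continuous $f_x:X\to[0,1]$ with $f_x(x)=1$ and $f_x|_{X\setminus V_x}=\mathbf{0}$. Then $\mathrm{supp}(f_x)\subseteq\mathrm{cl}_X V_x$, so $U_x$ is a null neighborhood of $\mathrm{supp}(f_x)$ in $X$; hence $f_x\in C^{\mathscr I}_{00}(X)\subseteq C^{\mathscr I}_0(X)$. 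From $u\cdot f_x=f_x$ evaluated at $x$ we obtain $u(x)=1$, and since this holds for every $x\in X$ we conclude $u=\mathbf{1}$. In particular $\mathbf{1}\in C^{\mathscr I}_0(X)$, so $|\mathbf{1}|^{-1}([1,\infty))=X$ has a null neighborhood in $X$; the only such neighborhood is $X$ itself, so $X$ is null, i.e. $\mathscr{I}$ is non-proper.

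I do not expect any genuine obstacle here: every ingredient (local nullity, complete regularity, the inclusion $C^{\mathscr I}_{00}(X)\subseteq C^{\mathscr I}_0(X)$, and Theorem \ref{HGBV}) is already in place. The only point requiring mild care is the step forcing the unit $u$ of $C^{\mathscr I}_0(X)$ to equal $\mathbf{1}$ pointwise, and this is exactly the same peaking-function argument used in Theorem \ref{HGBV}, so the proof amounts to stitching together those existing pieces rather than introducing new ideas.
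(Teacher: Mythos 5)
Your proof is correct and follows essentially the same route as the paper: both reduce to Theorem \ref{HGBV} for the equivalence of (1), (2) and (4), and both handle (3) $\Rightarrow$ (2) by the identical peaking-function argument forcing the unit to equal $\mathbf{1}$. The only cosmetic difference is that you close the loop via (4) $\Rightarrow$ (3) while the paper uses the (even more immediate) implication (2) $\Rightarrow$ (3); both are trivial given the inclusion $C^{\mathscr I}_{00}(X)\subseteq C^{\mathscr I}_0(X)$.
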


\begin{proof}
The equivalence of (1), (2) and (4) follows from Theorem \ref{HGBV}.

(2) {\em implies} (3). Suppose that $X$ is null. Then the mapping $\mathbf{1}$ is the unit element of $C^{\mathscr I}_0(X)$.

(3) {\em implies} (2).  Suppose that $C^{\mathscr I}_0(X)$ has a unit element $u$. Let $U_x$, $V_x$ and $f_x$ be as defined in the proof of Theorem \ref{HGBV}. Note that $f_x\in C^{\mathscr I}_0(X)$, as $f_x\in C^{\mathscr I}_{00}(X)$ and $C^{\mathscr I}_{00}(X)\subseteq C^{\mathscr I}_0(X)$ by Theorem \ref{DGH}. Arguing as in the proof of Theorem \ref{HGBV} we have $u=\mathbf{1}$. This implies that $X=u^{-1}([1,\infty))$ is null.
\end{proof}

\begin{lemma}\label{TTES}
Let $X$ be a normal space locally null with respect to an ideal $\mathscr{I}$ of an upper semi-lattice $(\mathscr{L},\subseteq)$, where $\mathscr{L}\subseteq\mathscr{P}(X)$. For any $f\in C_b(X)$ the following are equivalent:
\begin{itemize}
\item[\rm(1)] $f\in C^{\mathscr I}_0(X)$.
\item[\rm(2)] $f_\lambda\in C_0(\lambda_{\mathscr I} X)$.
\end{itemize}
\end{lemma}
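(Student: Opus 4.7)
The plan is to mimic the structure of the proof of Lemma \ref{TES}, replacing $\mathrm{supp}(f)$ by the level sets $|f|^{-1}([1/n,\infty))$. Throughout, note that $X\subseteq\lambda_{\mathscr I}X$ by Theorem \ref{BBV}, so $f_\lambda$ extends $f$.

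For the direction (1) $\Rightarrow$ (2), fix a positive integer $n$. Since $f\in C^{\mathscr I}_0(X)$, the set $|f|^{-1}([1/(n+1),\infty))$ has a null neighborhood $U$ in $X$. By normality and the Urysohn Lemma, pick a continuous $g:X\rightarrow[0,1]$ with
\[g|_{|f|^{-1}([1/(n+1),\infty))}=\mathbf{0}\;\;\;\;\mbox{ and }\;\;\;\;g|_{X\setminus\mathrm{int}_XU}=\mathbf{1}.\]
Set $C=g^{-1}([0,1/2))\in\mathrm{Coz}(X)$; then $\mathrm{cl}_XC\subseteq g^{-1}([0,1/2])\subseteq\mathrm{int}_XU$, so $U$ is a null neighborhood of $\mathrm{cl}_XC$ in $X$, which forces $\mathrm{int}_{\beta X}\mathrm{cl}_{\beta X}C\subseteq\lambda_{\mathscr I}X$. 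By Lemma \ref{LKG} we have $g_\beta^{-1}([0,1/2))\subseteq\mathrm{int}_{\beta X}\mathrm{cl}_{\beta X}C\subseteq\lambda_{\mathscr I}X$.

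Next I would connect this back to $|f_\beta|$. Since $|f_\beta|^{-1}((1/(n+1),\infty))$ is open in $\beta X$ and $X$ is dense in $\beta X$, we get
\[|f_\beta|^{-1}\big([1/n,\infty)\big)\subseteq|f_\beta|^{-1}\big((1/(n+1),\infty)\big)\subseteq\mathrm{cl}_{\beta X}|f|^{-1}\big((1/(n+1),\infty)\big)\subseteq\mathrm{cl}_{\beta X}\mathrm{z}(g)\subseteq\mathrm{z}(g_\beta),\]
where the containment $\mathrm{cl}_{\beta X}\mathrm{z}(g)\subseteq\mathrm{z}(g_\beta)$ follows from continuity of $g_\beta$. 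Hence $|f_\beta|^{-1}([1/n,\infty))\subseteq g_\beta^{-1}([0,1/2))\subseteq\lambda_{\mathscr I}X$, so $|f_\lambda|^{-1}([1/n,\infty))=|f_\beta|^{-1}([1/n,\infty))$, which is closed in $\beta X$ and therefore compact. Thus $f_\lambda\in C_0(\lambda_{\mathscr I}X)$.

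For the direction (2) $\Rightarrow$ (1), fix $n$. Since $|f_\lambda|^{-1}([1/n,\infty))$ is compact in the Hausdorff space $\beta X$, it is closed there. Because $f_\lambda$ extends $f$, we have $|f|^{-1}([1/n,\infty))\subseteq|f_\lambda|^{-1}([1/n,\infty))$, and taking closures in $\beta X$ gives
\[\mathrm{cl}_{\beta X}|f|^{-1}\big([1/n,\infty)\big)\subseteq|f_\lambda|^{-1}\big([1/n,\infty)\big)\subseteq\lambda_{\mathscr I}X.\]
By Lemma \ref{HDHD}, $\mathrm{cl}_X|f|^{-1}([1/n,\infty))$ has a null neighborhood in $X$, which is \emph{a fortiori} a null neighborhood of $|f|^{-1}([1/n,\infty))$. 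Since $n$ was arbitrary, $f\in C^{\mathscr I}_0(X)$.

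The main obstacle is the asymmetry between the level sets $|f|^{-1}([1/n,\infty))$ and $|f_\beta|^{-1}([1/n,\infty))$: the natural containment goes the wrong way for a direct application of Lemma \ref{HDHD} to the $(1)\Rightarrow(2)$ direction. The device that resolves this is the standard $1/(n+1)$ slack, together with the identity $\mathrm{cl}_{\beta X}V=\mathrm{cl}_{\beta X}(V\cap X)$ for $V$ open in $\beta X$; everything else reduces to Urysohn plus Lemma \ref{LKG}, exactly as in Lemma \ref{TES}.
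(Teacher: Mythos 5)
Your proof is correct and follows essentially the same route as the paper's: Urysohn plus Lemma \ref{LKG} to place the relevant level sets of $f_\beta$ inside $\lambda_{\mathscr I}X$ for the forward direction, and Lemma \ref{HDHD} for the converse. The only cosmetic difference is that the paper first establishes $|f_\beta|^{-1}((1/k,\infty))\subseteq\lambda_{\mathscr I}X$ for every $k$ and then specializes to $k=n+1$, whereas you build the $1/(n+1)$ slack directly into the choice of null neighborhood.
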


\begin{proof}
Note that $X\subseteq\lambda_{\mathscr I} X$ by Theorem \ref{BBV}, as $X$ is locally null.

(1) {\em implies} (2). Let $k$ be a positive integer. Consider a null neighborhood $U_k$ of $|f|^{-1}([1/k,\infty))$ in $X$. Then $|f|^{-1}([1/k,\infty))\subseteq\mathrm{int}_X U_k$. Since $X$ is normal, by the Urysohn Lemma, there exists a continuous $g_k:X\rightarrow[0,1]$ such that
\[g_k|_{|f|^{-1}([1/k,\infty))}=\mathbf{0}\;\;\;\;\mbox{ and }\;\;\;\;g_k|_{X\setminus\mathrm{int}_X U_k}=\mathbf{1}.\]
Let
\[C_k=g_k^{-1}\big([0,1/2)\big)\in\mathrm{Coz}(X).\]
Then $\mathrm{cl}_XC_k$ has a null neighborhood in $X$, namely $U_k$, as
\[\mathrm{cl}_XC_k\subseteq g_k^{-1}\big([0,1/2]\big)\subseteq\mathrm{int}_XU_k.\]
Therefore $\mathrm{int}_{\beta X}\mathrm{cl}_{\beta X}C_k\subseteq\lambda_{\mathscr I} X$. Arguing as in the proof of Lemma \ref{LKG} we have
\[|f_\beta|^{-1}\big((1/k,\infty)\big)\subseteq\mathrm{cl}_{\beta X}\big(|f|^{-1}\big((1/k,\infty)\big)\big).\]
Since
\[\mathrm{cl}_{\beta X}\big(|f|^{-1}\big((1/k,\infty)\big)\big)\subseteq\mathrm{cl}_{\beta X}\big(\mathrm{z}(g_k)\big)\subseteq\mathrm{z}\big((g_k)_\beta\big)\subseteq(g_k)_\beta^{-1}\big([0,1/2)\big)\]
and
\[(g_k)_\beta^{-1}\big([0,1/2)\big)\subseteq\mathrm{int}_{\beta X}\mathrm{cl}_{\beta X}C_k\]
by Lemma \ref{LKG}, it follows that
\begin{equation}\label{DDSD}
|f_\beta|^{-1}\big((1/k,\infty)\big)\subseteq\lambda_{\mathscr I} X.
\end{equation}
Now, let $n$ be a positive integer. Using (\ref{DDSD}), we have
\[|f_\beta|^{-1}\big([1/n,\infty)\big)\subseteq|f_\beta|^{-1}\Big(\Big(\frac{1}{n+1},\infty\Big)\Big)\subseteq\lambda_{\mathscr I} X.\]
Therefore
\[|f_\lambda|^{-1}\big([1/n,\infty)\big)=\lambda_{\mathscr I} X\cap|f_\beta|^{-1}\big([1/n,\infty)\big)=|f_\beta|^{-1}\big([1/n,\infty)\big)\]
is compact, as it is closed in $\beta X$.

(2) {\em implies} (1). Let $n$ be a positive integer. Then since $|f_\lambda|^{-1}([1/n,\infty))$ contains $|f|^{-1}([1/n,\infty))$ and it is compact, we have
\[\mathrm{cl}_{\beta X}\big(|f|^{-1}\big([1/n,\infty)\big)\big)\subseteq|f_\lambda|^{-1}\big([1/n,\infty)\big)\subseteq\lambda_{\mathscr I} X.\]
But then $|f|^{-1}([1/n,\infty))$ has a null neighborhood in $X$ by Lemma \ref{HDHD}.
\end{proof}

There is a version of the classical Banach--Stone Theorem which states that for any locally compact Hausdorff spaces $X$ and $Y$, the rings $C_0(X)$ and $C_0(Y)$ are isomorphic if and only if the spaces $X$ and $Y$ are homeomorphic. (See \cite{A}.) This will be used in the proof of the following theorem.

\begin{theorem}\label{UDR}
Let $X$ be a normal space locally null with respect to an ideal $\mathscr{I}$ of an upper semi-lattice $(\mathscr{L},\subseteq)$, where $\mathscr{L}\subseteq\mathscr{P}(X)$. Then $C^{\mathscr I}_0(X)$ is a Banach algebra isometrically isomorphic to $C_0(Y)$ for some unique (up to homeomorphism) locally compact Hausdorff space $Y$, namely $Y=\lambda_{\mathscr I} X$. Furthermore,
\begin{itemize}
\item[\rm(1)] $X$ is dense in $Y$.
\item[\rm(2)] $C^{\mathscr I}_0(X)$ is of empty hull.
\item[\rm(3)] $C^{\mathscr I}_{00}(X)$ is dense in $C^{\mathscr I}_0(X)$.
\item[\rm(4)] $C^{\mathscr I}_0(X)$ is unital if and only if ${\mathscr I}$ is non-proper if and only if $Y$ is compact.
\end{itemize}
\end{theorem}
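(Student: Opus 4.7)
The plan is to mirror the proof of Theorem \ref{UUS} almost verbatim, substituting Lemma \ref{TTES} for Lemma \ref{TES} throughout. First I would note that $C^{\mathscr I}_0(X)$ is a Banach subalgebra of $C_b(X)$ by Theorem \ref{DGH}. Then I would define
\[\psi:C^{\mathscr I}_0(X)\rightarrow C_0(\lambda_{\mathscr I} X),\qquad \psi(f)=f_\lambda,\]
with well-definedness guaranteed by (1)$\Rightarrow$(2) of Lemma \ref{TTES}. Since $X\subseteq\lambda_{\mathscr I} X$ by Theorem \ref{BBV} and $X$ is dense in $\beta X$, hence dense in $\lambda_{\mathscr I} X$, injectivity of $\psi$ and the fact that $\psi$ is an algebra homomorphism are both immediate from the uniqueness of continuous extensions on a dense subspace.

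For surjectivity I would take $g\in C_0(\lambda_{\mathscr I} X)$ and let $f=g|_X$. Then $f_\lambda=g$ (both are continuous on $\lambda_{\mathscr I} X$ and coincide on the dense subspace $X$), so by (2)$\Rightarrow$(1) of Lemma \ref{TTES} we have $f\in C^{\mathscr I}_0(X)$ and $\psi(f)=g$. For the isometric property, if $h\in C^{\mathscr I}_0(X)$ then the computation
\[|h_\lambda|(\lambda_{\mathscr I} X)=|h_\lambda|(\mathrm{cl}_{\lambda_{\mathscr I} X}X)\subseteq\mathrm{cl}_{\mathbb{R}}\bigl(|h|(X)\bigr)\subseteq\bigl[0,\|h\|\bigr]\]
used in the proof of Theorem \ref{UUS} applies verbatim, while $\|h\|\leq\|h_\lambda\|$ is immediate since $h_\lambda$ extends $h$. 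That $\lambda_{\mathscr I} X$ is locally compact Hausdorff follows from its openness in $\beta X$, and uniqueness up to homeomorphism now follows from the $C_0$ version of the Banach--Stone Theorem cited just before the statement.

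The auxiliary items come for free: (1) is $X\subseteq\lambda_{\mathscr I} X$ together with the density of $X$ in $\beta X$; (2) is Theorem \ref{KGV}; and (4) is Theorem \ref{HFBY}. The only assertion that is not a direct translation of the proof of Theorem \ref{UUS} is (3), the density of $C^{\mathscr I}_{00}(X)$ in $C^{\mathscr I}_0(X)$. For this, I would transport the question across $\psi$: using Theorem \ref{UUS} the restriction of $\psi$ to $C^{\mathscr I}_{00}(X)$ is an isometric isomorphism onto $C_{00}(\lambda_{\mathscr I} X)$, so (3) reduces to the classical fact that $C_{00}(Y)$ is supremum-norm dense in $C_0(Y)$ for any locally compact Hausdorff $Y$ (apply the truncation $g\mapsto(|g|-1/n)^+\cdot\mathrm{sgn}(g)/|g|$, whose support is contained in the compact set $|g|^{-1}([1/n,\infty))$, and let $n\to\infty$).

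The main obstacle I anticipate is verifying item (3) cleanly; everything else is a routine adaptation of the $C^{\mathscr I}_{00}$ argument. One must be careful that the well-definedness half of Lemma \ref{TTES}, which supplies compactness of $|f_\lambda|^{-1}([1/n,\infty))$ in $\beta X$, really gives membership in $C_0(\lambda_{\mathscr I} X)$ with respect to the subspace topology on $\lambda_{\mathscr I} X$, and not merely vanishing at the boundary inside $\beta X$. The identity $|f_\lambda|^{-1}([1/n,\infty))=|f_\beta|^{-1}([1/n,\infty))$ established in the proof of Lemma \ref{TTES} handles exactly this point, so no additional work is required.
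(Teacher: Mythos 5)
Your proposal is correct and follows essentially the same route as the paper's own proof: reduce everything to the extension map $f\mapsto f_\lambda$, use Lemma \ref{TTES} for well-definedness and surjectivity, recycle the isometry computation from Theorem \ref{UUS}, and obtain (3) by restricting to $C^{\mathscr I}_{00}(X)$ and invoking the classical density of $C_{00}(Y)$ in $C_0(Y)$. Nothing further is needed.
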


\begin{proof}
Observe that $C^{\mathscr I}_0(X)$ is a Banach subalgebra of $C_b(X)$ by Theorem \ref{DGH}. Define
\[\phi:C^{\mathscr I}_0(X)\rightarrow C_0(\lambda_{\mathscr I} X)\]
by
\[\phi(f)=f_\lambda\]
for any $f\in C^{\mathscr I}_0(X)$. By Lemma \ref{TTES} the mapping $\phi$ is well-defined, and arguing as in the proof of Theorem \ref{UUS}, it follows that $\phi$ is an isometric algebra isomorphism.

Note that $\lambda_{\mathscr I} X$ is locally compact, as it is open in the compact Hausdorff space $\beta X$, and $\lambda_{\mathscr I} X$ contains $X$ (as a dense subspace) by Theorem \ref{BBV}.

The uniqueness of $\lambda_{\mathscr I} X$ follows from the fact that the topology of any locally compact Hausdorff space $T$ is determined by the algebraic structure of the ring $C_0(T)$.

(1). By Theorem \ref{KGV} we have $X\subseteq\lambda_{\mathscr I} X$. That $X$ is dense in $\lambda_{\mathscr I} X$ is then obvious.

(2). This follows from Theorem \ref{KGV}.

(3). Let $\phi$ be as defined in the above and let $\psi=\phi|_{C^{\mathscr I}_{00}(X)}$. Then
\[\psi:C^{\mathscr I}_{00}(X)\rightarrow C_{00}(\lambda_{\mathscr I} X),\]
and $\psi$ is surjective by the proof of Theorem \ref{UUS}. The result now follows from the well known fact that $C_{00}(T)$ is dense in $C_0(T)$ for any locally compact Hausdorff space $T$.

(4). This follows from Theorem \ref{HFBY}.
\end{proof}

Our next result is dual to Theorem \ref{UDR}. It will require the space $X$ under consideration to be only completely regular at the price of limiting $\mathscr{I}$ to be an ideal in $(\mathrm{Coz}(X),\subseteq)$. (Recall that $(\mathrm{Coz}(X),\subseteq)$ is an upper semi-lattice, indeed a lattice, for any space $X$.) We need the following modification of Lemma \ref{TTES}. We will also need to use the well known fact that in a space any two disjoint zero-sets are \textit{completely separated}, in the sense that, there exists a continuous $[0,1]$-valued mapping on the space which equals to $\mathbf{0}$ on one and $\mathbf{1}$ on the other. (To see this, let $S$ and $T$ be a pair of disjoint zero-sets in a space $X$. Let $S=\mathrm{z}(f)$ and $T=\mathrm{z}(g)$ for some continuous $f,g:X\rightarrow[0,1]$. The mapping
\[h=\frac{f}{f+g}:X\rightarrow[0,1]\]
is well defined and continuous with $h|_S=\mathbf{0}$ and $h|_T=\mathbf{1}$.)

\begin{lemma}\label{FSS}
Let $X$ be a completely regular space locally null with respect to an ideal $\mathscr{I}$ of $(\mathrm{Coz}(X),\subseteq)$. For any $f\in C_b(X)$ the following are equivalent:
\begin{itemize}
\item[\rm(1)] $f\in C^\mathscr{I}_0(X)$.
\item[\rm(2)] $f_\lambda\in C_0(\lambda_\mathscr{I} X)$.
\end{itemize}
\end{lemma}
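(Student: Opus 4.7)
The plan is to mirror the proof of Lemma \ref{TTES} verbatim, except at the single step where the Urysohn Lemma was invoked to separate the zero-set $|f|^{-1}([1/k,\infty))$ from the closed set $X\setminus\mathrm{int}_X U_k$. The cost of weakening normality to mere complete regularity is that such a separating function need not exist in general; the saving grace here is the restriction that ${\mathscr I}\subseteq\mathrm{Coz}(X)$. Under this restriction, any null neighborhood $U_k$ is automatically a cozero-set (hence open, so $\mathrm{int}_X U_k=U_k$), which makes $X\setminus U_k$ a zero-set. Combined with the fact (recorded in Proposition \ref{JJG}) that $|f|^{-1}([1/k,\infty))$ is always a zero-set, the complete separation of disjoint zero-sets quoted just above Lemma \ref{FSS} substitutes for the Urysohn Lemma.

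For the implication (1) $\Rightarrow$ (2), fix $f\in C^{\mathscr I}_0(X)$ and a positive integer $k$, and pick a null neighborhood $U_k\in\mathrm{Coz}(X)$ of $|f|^{-1}([1/k,\infty))$ in $X$. Since $X\setminus U_k$ is a zero-set disjoint from the zero-set $|f|^{-1}([1/k,\infty))$, complete separation produces a continuous $h_k:X\rightarrow[0,1]$ with $h_k|_{|f|^{-1}([1/k,\infty))}=\mathbf{0}$ and $h_k|_{X\setminus U_k}=\mathbf{1}$. Setting $C_k=h_k^{-1}([0,1/2))$ yields $C_k\in\mathrm{Coz}(X)$ with $\mathrm{cl}_X C_k\subseteq h_k^{-1}([0,1/2])\subseteq U_k$, so $U_k$ is a null neighborhood of $\mathrm{cl}_X C_k$ in $X$ and hence $\mathrm{int}_{\beta X}\mathrm{cl}_{\beta X}C_k\subseteq\lambda_{\mathscr I} X$. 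Exactly as in the proof of Lemma \ref{TTES}, Lemma \ref{LKG} together with the identity $\mathrm{cl}_{\beta X}W=\mathrm{cl}_{\beta X}(W\cap X)$ for open $W\subseteq\beta X$ yields
\[|f_\beta|^{-1}\big((1/k,\infty)\big)\subseteq\mathrm{cl}_{\beta X}\big(|f|^{-1}\big((1/k,\infty)\big)\big)\subseteq\mathrm{cl}_{\beta X}\mathrm{z}(h_k)\subseteq(h_k)_\beta^{-1}\big([0,1/2)\big)\subseteq\mathrm{int}_{\beta X}\mathrm{cl}_{\beta X}C_k\subseteq\lambda_{\mathscr I} X.\]
Applying this with $k$ replaced by $n+1$ gives $|f_\beta|^{-1}([1/n,\infty))\subseteq\lambda_{\mathscr I} X$ for every $n$, so $|f_\lambda|^{-1}([1/n,\infty))=|f_\beta|^{-1}([1/n,\infty))$ is compact, establishing $f_\lambda\in C_0(\lambda_{\mathscr I} X)$.

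For (2) $\Rightarrow$ (1), the argument from Lemma \ref{TTES} transfers without change: compactness of $|f_\lambda|^{-1}([1/n,\infty))$ gives $\mathrm{cl}_{\beta X}(|f|^{-1}([1/n,\infty)))\subseteq|f_\lambda|^{-1}([1/n,\infty))\subseteq\lambda_{\mathscr I} X$, and Lemma \ref{HDHD} then supplies a null neighborhood of $|f|^{-1}([1/n,\infty))$ in $X$. Note that Lemma \ref{HDHD} applies in the present setting because joins in $(\mathrm{Coz}(X),\subseteq)$ coincide with unions, so the finite union $U_1\cup\cdots\cup U_n$ of cozero null sets is itself a null cozero-set. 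The only real obstacle lies in the forward direction, and the cozero hypothesis on ${\mathscr I}$ is tailored precisely to defuse it: it promotes the auxiliary set $X\setminus U_k$ to a zero-set, so that complete separation of disjoint zero-sets suffices and normality of $X$ is no longer needed.
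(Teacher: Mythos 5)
Your proposal is correct and follows essentially the same route as the paper: the paper's proof of Lemma \ref{FSS} likewise keeps the argument of Lemma \ref{TTES} intact except for replacing the Urysohn Lemma step by the complete separation of the disjoint zero-sets $X\setminus U_k$ and $|f|^{-1}([1/k,\infty))$, which is available precisely because $U_k$ is a cozero-set. The direction (2) $\Rightarrow$ (1) via Lemma \ref{HDHD} is unchanged in both treatments.
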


\begin{proof}
The proof is analogous to the proof we have already given for Lemma \ref{TTES}, except for the proof that (1) implies (2), which we now slightly modify. Let $k$ be a given positive integer. Let $U_k$ be a null neighborhood of $|f|^{-1}([1/k,\infty))$ in $X$. Note that $X\setminus U_k$ and $|f|^{-1}([1/k,\infty))$ are zero-sets in $X$ and they are disjoint. Thus $X\setminus U_k$ and $|f|^{-1}([1/k,\infty))$ are completely separated in $X$. Denote by $g_k:X\rightarrow[0,1]$ the continuous mapping such that
\[g_k|_{|f|^{-1}([1/k,\infty))}=\mathbf{0}\;\;\;\;\mbox{ and }\;\;\;\;g_k|_{X\setminus U_k}=\mathbf{1}.\]
The remaining part of the proof now follows from the same line of argument as given in the proof of Lemma \ref{TTES}.
\end{proof}

The proof of the following is analogous to the proof of Theorem \ref{UDR} with the usage of Lemma \ref{FSS} in place of Lemma \ref{TTES}.

\begin{theorem}\label{UF}
Let $X$ be a completely regular space locally null with respect to an ideal $\mathscr{I}$ of $(\mathrm{Coz}(X),\subseteq)$. Then $C^{\mathscr I}_0(X)$ is a Banach algebra isometrically isomorphic to $C_0(Y)$ for some unique (up to homeomorphism) locally compact Hausdorff space $Y$, namely $Y=\lambda_{\mathscr I} X$. Furthermore,
\begin{itemize}
\item[\rm(1)] $X$ is dense in $Y$.
\item[\rm(2)] $C^{\mathscr I}_0(X)$ is of empty hull.
\item[\rm(3)] $C^{\mathscr I}_{00}(X)$ is dense in $C^{\mathscr I}_0(X)$.
\item[\rm(4)] $C^{\mathscr I}_0(X)$ is unital if and only if ${\mathscr I}$ is non-proper if and only if $Y$ is compact.
\end{itemize}
\end{theorem}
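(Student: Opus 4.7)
My plan is to follow the proof of Theorem \ref{UDR} essentially verbatim, substituting Lemma \ref{FSS} for Lemma \ref{TTES} at the single step where the latter was invoked. Define $\phi:C^{\mathscr I}_0(X)\to C_0(\lambda_{\mathscr I}X)$ by $\phi(f)=f_\lambda$. Lemma \ref{FSS} yields well-definedness; it also yields surjectivity by applying the $(2)\Rightarrow(1)$ direction to $g|_X$ for any $g\in C_0(\lambda_{\mathscr I}X)$, since $(g|_X)_\lambda=g$ by continuity on $\lambda_{\mathscr I}X$ and density of $X$ in $\lambda_{\mathscr I}X$. Injectivity and the algebra-homomorphism property are immediate from that same density (granted by Theorem \ref{KGV}, which only needs complete regularity). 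The isometry estimate reproduces the one in Theorem \ref{UUS}:
\[
|h_\lambda|(\lambda_{\mathscr I}X)\subseteq\mathrm{cl}_{\mathbb{R}}\big(|h|(X)\big)\subseteq\big[0,\|h\|\big],
\]
so $\|h_\lambda\|\leq\|h\|$, with the reverse clear because $h_\lambda$ extends $h$. That $C^{\mathscr I}_0(X)$ is a Banach algebra comes for free from Theorem \ref{DGH}.

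The space $\lambda_{\mathscr I}X$ is locally compact Hausdorff, being open in $\beta X$, and the uniqueness of $Y$ up to homeomorphism follows from the Banach--Stone variant for $C_0$ cited just before Theorem \ref{UDR}. Items (1) and (2) of the statement come directly from Theorem \ref{KGV} (the inclusion $X\subseteq\lambda_{\mathscr I}X$ and emptiness of the hull both hold under complete regularity alone), and item (4) follows from Theorem \ref{HFBY}, which is likewise stated under complete regularity.

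The only step requiring care is item (3). The argument of Theorem \ref{UDR}(3) goes through as soon as the restriction of $\phi$ to $C^{\mathscr I}_{00}(X)$ maps onto $C_{00}(\lambda_{\mathscr I}X)$, after which the classical density of $C_{00}(T)$ in $C_0(T)$ for locally compact Hausdorff $T$, transported across the isometry $\phi^{-1}$, gives the conclusion. Theorem \ref{UDR} obtained this surjection by citing Theorem \ref{UUS}, whose proof invokes normality of $X$; however, only the $(2)\Rightarrow(1)$ direction of Lemma \ref{TES} is actually needed there, and inspection of that portion of its proof shows that it depends solely on Lemma \ref{HDHD}, whose hypothesis is mere complete regularity. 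So for any $g\in C_{00}(\lambda_{\mathscr I}X)$, setting $f=g|_X$ gives
\[
\mathrm{cl}_{\beta X}\mathrm{supp}(f)\subseteq\mathrm{supp}(f_\lambda)=\mathrm{supp}(g)\subseteq\lambda_{\mathscr I}X,
\]
and Lemma \ref{HDHD} furnishes a null neighborhood of $\mathrm{supp}(f)$ in $X$, placing $f$ in $C^{\mathscr I}_{00}(X)$ as required. The main obstacle is precisely to confirm that no step in Theorems \ref{UUS} and \ref{UDR} secretly uses normality beyond what is absorbed by Lemma \ref{HDHD}; once this is verified, the rest of the proof is purely a transcription.
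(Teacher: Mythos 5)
Your proposal is correct and follows exactly the route the paper takes: the paper's proof of this theorem is literally ``analogous to the proof of Theorem \ref{UDR} with the usage of Lemma \ref{FSS} in place of Lemma \ref{TTES}.'' Your extra care over item (3) is well placed and correctly resolved --- the surjectivity of $\phi$ restricted to $C^{\mathscr I}_{00}(X)$ onto $C_{00}(\lambda_{\mathscr I}X)$ needs only the $(2)\Rightarrow(1)$ direction of Lemma \ref{TES}, which rests solely on Lemma \ref{HDHD} and hence on complete regularity.
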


\begin{remark}\label{HFJD}
Assuming that $C^{\mathscr I}_0(X)$ is a Banach algebra, it follows from the commutative Gelfand--Naimark Theorem that $C^{\mathscr I}_0(X)$ is isometrically isomorphic to $C_0(Y)$ for some locally compact Hausdorff space $Y$. Our approach here in Theorems \ref{UDR} and \ref{UF}, apart from the independence of its proof, has the advantage of giving extra information, about either the Banach algebra $C^{\mathscr I}_0(X)$ or the space $Y$, not generally expected to be deducible from the standard Gelfand Theory. This fact is particularly highlighted in the second part of the article in which we consider specific examples of the space $X$ or the ideal ${\mathscr I}$.
\end{remark}

Let $X$ be a locally compact Hausdorff space. It is known that $C_0(X)=C_{00}(X)$ if and only if every $\sigma$-compact subspace of $X$ is contained in a compact subspace of $X$. (See Problem 7G.2 of \cite{GJ}.) In particular, $C_0(X)=C_{00}(X)$ implies that $X$ is countably compact. (Recall that a space $X$ is \textit{countably compact} if every countable open cover of $X$ has a finite subcover, equivalently, if every countable infinite subspace of $X$ has an accumulation point in $X$; see Theorem 3.10.3 of \cite{E}.) This will be used in the proof of the following result which examines conditions under which $C^{\mathscr I}_{00}(X)$ and $C^{\mathscr I}_0(X)$ coincide.

\begin{theorem}\label{YUS}
Let $X$ be a regular Lindel\"{o}f space locally null with respect to a $\sigma$-ideal $\mathscr{I}$ of an upper semi-lattice $(\mathscr{L},\subseteq)$, where $\mathscr{L}\subseteq\mathscr{P}(X)$. Then $C^{\mathscr I}_0(X)$ is a Banach algebra isometrically isomorphic to $C_0(Y)$ for the unique (up to homeomorphism) locally compact Hausdorff space
\[Y=\bigcup\{\mathrm{int}_{\beta X}\mathrm{cl}_{\beta X}U:U\mbox{ is null}\,\},\]
considered as a subspace of $\beta X$. Furthermore,
\begin{itemize}
\item[\rm(1)] $X$ is dense in $Y$.
\item[\rm(2)] $C^{\mathscr I}_0(X)=C^{\mathscr I}_{00}(X)$.
\item[\rm(3)] $C_0(Y)=C_{00}(Y)$.
\item[\rm(4)] $Y$ is countably compact, and is compact, if and only if ${\mathscr I}$ is non-proper if and only if $C^{\mathscr I}_0(X)$ is unital.
\end{itemize}
\end{theorem}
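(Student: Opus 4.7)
The plan is to reduce everything to Theorem \ref{UDR} by identifying $\lambda_{\mathscr I}X$ with the space $Y$ displayed in the statement. Since regular Lindel\"{o}f spaces are normal, $X$ is normal and locally null, so Theorem \ref{UDR} already supplies an isometric isomorphism $C^{\mathscr I}_0(X)\cong C_0(\lambda_{\mathscr I}X)$, the density of $X$ in $\lambda_{\mathscr I}X$, the empty-hull property, and the equivalence ``$C^{\mathscr I}_0(X)$ unital $\Leftrightarrow$ $\mathscr I$ non-proper $\Leftrightarrow$ $\lambda_{\mathscr I}X$ compact''. It remains to prove (i) $\lambda_{\mathscr I}X=Y$, (ii) the identity $C^{\mathscr I}_0(X)=C^{\mathscr I}_{00}(X)$, which is assertion (2), and then (iii) to extract $C_0(Y)=C_{00}(Y)$ together with the countable compactness of $Y$.

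For (i), the inclusion $\lambda_{\mathscr I}X\subseteq Y$ is immediate from Definition \ref{HWA}: if $C\in\mathrm{Coz}(X)$ and $\mathrm{cl}_XC\subseteq\mathrm{int}_XU$ with $U$ null, then $\mathrm{int}_{\beta X}\mathrm{cl}_{\beta X}C\subseteq\mathrm{int}_{\beta X}\mathrm{cl}_{\beta X}U\subseteq Y$. For the reverse inclusion I would mimic the proof of Proposition \ref{KJHF}: given $t\in\mathrm{int}_{\beta X}\mathrm{cl}_{\beta X}U$ with $U$ null, pick a continuous $f:\beta X\rightarrow[0,1]$ with $f(t)=0$ and $f\equiv\mathbf{1}$ off $\mathrm{int}_{\beta X}\mathrm{cl}_{\beta X}U$, and set $C=X\cap f^{-1}([0,1/3))\in\mathrm{Coz}(X)$. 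The critical step — where Proposition \ref{KJHF} required the closed-subspace hypothesis $\mathscr L\subseteq\mathscr C(X)$ — is here replaced by a direct appeal to Lemma \ref{GYUS}: the Lindel\"{o}f, locally null, $\sigma$-ideal hypothesis ensures that every closure in $X$, hence $\mathrm{cl}_XC$, has a null neighborhood in $X$. Consequently $\mathrm{int}_{\beta X}\mathrm{cl}_{\beta X}C\subseteq\lambda_{\mathscr I}X$, and Lemma \ref{LKG} applied to $f|_X$ (whose $\beta$-extension is $f$) gives $t\in f^{-1}([0,1/3))\subseteq\mathrm{int}_{\beta X}\mathrm{cl}_{\beta X}C\subseteq\lambda_{\mathscr I}X$.

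For (ii), combine Proposition \ref{JFJG} with Lemma \ref{GYUS}: any $f\in C^{\mathscr I}_0(X)$ has $\mathrm{coz}(f)$ with a null neighborhood, so $\mathrm{supp}(f)=\mathrm{cl}_X\mathrm{coz}(f)$ likewise has a null neighborhood by Lemma \ref{GYUS}, placing $f$ in $C^{\mathscr I}_{00}(X)$; the reverse inclusion is Theorem \ref{DGH}. For (iii), the isomorphism $\phi$ of Theorem \ref{UDR}, sending $f$ to $f_\lambda$, restricts (by Lemma \ref{TES} and the proof of Theorem \ref{UUS}) to an isometric isomorphism $C^{\mathscr I}_{00}(X)\cong C_{00}(Y)$, so combining with (ii) yields $C_0(Y)=C_{00}(Y)$. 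The countable compactness of $Y$ then follows from the fact recorded immediately before the theorem (Problem 7G.2 of \cite{GJ}) that $C_0=C_{00}$ on a locally compact Hausdorff space forces countable compactness; the remaining equivalences in (4) are inherited from Theorem \ref{UDR}(4). The main obstacle is the reverse inclusion $Y\subseteq\lambda_{\mathscr I}X$ in step (i), where Proposition \ref{KJHF} cannot be applied directly because we do not assume $\mathscr L\subseteq\mathscr C(X)$; Lemma \ref{GYUS} supplies the substitute that makes the argument go through.
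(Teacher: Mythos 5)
Your proposal is correct, and its skeleton — reduce to Theorem \ref{UDR} after checking that regular Lindel\"{o}f implies normal, then identify $\lambda_{\mathscr I}X$ with the displayed space $Y$ by using Lemma \ref{GYUS} as the substitute for the closedness hypothesis of Proposition \ref{KJHF} — is exactly the paper's. (The paper's version of the identification works at the level of sets, producing for each null $U$ a cozero-set $C\supseteq\mathrm{cl}_XU$ with $\mathrm{cl}_XC$ having a null neighborhood, so that $\mathrm{int}_{\beta X}\mathrm{cl}_{\beta X}U\subseteq\mathrm{int}_{\beta X}\mathrm{cl}_{\beta X}C\subseteq\lambda_{\mathscr I}X$; your pointwise variant following Proposition \ref{KJHF} is equally valid.) Where you genuinely diverge is in the order of (2) and (3). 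The paper proves (3) first, by a direct topological argument: it shows that every $\sigma$-compact subspace of $Y$ is contained in a compact subspace of $Y$ (covering each $A_n$ by finitely many $\mathrm{int}_{\beta X}\mathrm{cl}_{\beta X}U^n_i$, forming the countable join $V$, and exhibiting $\mathrm{z}(g_\beta)$ as the enveloping compact set), invokes the characterization from Problem 7G.2 of \cite{GJ} to conclude $C_0(Y)=C_{00}(Y)$ and countable compactness, and only then deduces (2) by transporting the equality back through $\psi$. You instead prove (2) first — a one-line consequence of Lemma \ref{GYUS}, since $\mathrm{supp}(f)=\mathrm{cl}_X\mathrm{coz}(f)$ automatically acquires a null neighborhood (the detour through Proposition \ref{JFJG} is harmless but not needed) — and push it forward through the isomorphisms of Theorems \ref{UUS} and \ref{UDR} to get $C_0(Y)=C_{00}(Y)$, from which countable compactness follows by the remark preceding the theorem. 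Your route is shorter and avoids the paper's explicit $\sigma$-compactness argument entirely; what it gives up is the slightly stronger structural information that argument records about $Y$ (every $\sigma$-compact subspace sits inside a compact one, verified intrinsically rather than read off from the function algebra). Both deductions are sound, and all the auxiliary facts you cite (surjectivity of $\psi$ onto $C_{00}(\lambda_{\mathscr I}X)$, the equivalences in (4) from Theorem \ref{UDR}) are indeed available where you use them.
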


\begin{proof}
Note that every regular Lindel\"{o}f space is normal. The first part of the theorem, as well as (1) and a part of (4), follow from Theorem \ref{UDR}, provided that we prove $Y=\lambda_\mathscr{I} X$; this we will do next.

Denote
\[T=\bigcup\{\mathrm{int}_{\beta X}\mathrm{cl}_{\beta X}U:U\mbox{ is null}\}.\]
We check that $T=\lambda_\mathscr{I} X$. By the definition of $\lambda_\mathscr{I} X$ it is clear that $\lambda_\mathscr{I} X\subseteq T$. Let $U$ be a null subset of $X$. Then $\mathrm{cl}_XU$ has a null neighborhood in $X$, say $W$, by Lemma \ref{GYUS}. By the Urysohn Lemma there exists a continuous $f:X\rightarrow[0,1]$ such that
\[f|_{\mathrm{cl}_X U}=\mathbf{1}\;\;\;\;\mbox{ and }\;\;\;\;f|_{X\setminus\mathrm{int}_X W}=\mathbf{0}.\]
Let $C=f^{-1}((1/2,1])$. Then $C\in\mathrm{Coz}(X)$ and $\mathrm{cl}_X C$ has a null neighborhood in $X$, namely, $W$ itself, as $\mathrm{cl}_X C\subseteq f^{-1}([1/2,1])$ and $f^{-1}([1/2,1])\subseteq\mathrm{int}_X W$. Therefore $\mathrm{int}_{\beta X}\mathrm{cl}_{\beta X}U\subseteq\lambda_\mathscr{I} X$, as $\mathrm{cl}_XU\subseteq C$ and $\mathrm{int}_{\beta X}\mathrm{cl}_{\beta X}C\subseteq\lambda_\mathscr{I} X$. This shows that $T\subseteq\lambda_\mathscr{I} X$.

Next, we show that every $\sigma$-compact subspace of $Y$ is contained in a compact subspace of $Y$; this will show (3). Since every countable set is $\sigma$-compact, this in particular proves that every countable infinite subspace of $Y$ has an accumulation point in $Y$. That is, $Y$ is countably compact, which shows (4).

Let $A$ be a $\sigma$-compact subspace of $Y$. Let
\[A=\bigcup_{n=1}^\infty A_n,\]
where $A_n$, for each positive integer $n$, is compact. Let $n$ be a positive integer. By compactness of $A_n$ we have
\begin{equation}\label{DSD}
A_n\subseteq\mathrm{int}_{\beta X}\mathrm{cl}_{\beta X}U^n_1\cup\cdots\cup\mathrm{int}_{\beta X}\mathrm{cl}_{\beta X}U^n_{k_n},
\end{equation}
where $U^n_i$ for each $i=1,\ldots,k_n$ is a null subset of $X$. Let
\[V=\bigvee_{n=1}^\infty\bigvee_{i=1}^{k_n}U^n_i.\]
That $V$ is well defined follows from the fact that $\mathscr{I}$ is a $\sigma$-ideal. By Lemma \ref{GYUS} there exists a null neighborhood $W$ of $\mathrm{cl}_X V$ in $X$. By the Urysohn Lemma there exists a continuous $g:X\rightarrow[0,1]$ with
\[g|_{\mathrm{cl}_X V}=\mathbf{0}\;\;\;\;\mbox{ and }\;\;\;\;g|_{X\setminus\mathrm{int}_X W}=\mathbf{1}.\]
We verify that $\mathrm{z}(g_\beta)$ is the desired compact subspace of $Y$ which contains $A$. Let $C=g^{-1}([0,1/2))$. Observe that  $g_\beta^{-1}([0,1/2))\subseteq\mathrm{int}_{\beta X}\mathrm{cl}_{\beta X}C$ by Lemma \ref{LKG}, $C\subseteq W$ and $\mathrm{int}_{\beta X}\mathrm{cl}_{\beta X}W\subseteq Y$. Thus $g_\beta^{-1}([0,1/2))\subseteq Y$. Since $\mathrm{z}(g_\beta)\subseteq g_\beta^{-1}([0,1/2))$ it follows that $\mathrm{z}(g_\beta)\subseteq Y$. Observe that $\mathrm{z}(g_\beta)$ is closed in $\beta X$ and is therefore compact. Let $n=1,2,\ldots$ and $i=1,\ldots,k_n$. Note that $U^n_i\subseteq\mathrm{z}(g)$, as $U^n_i\subseteq V$ and $V\subseteq\mathrm{z}(g)$. Therefore $\mathrm{cl}_{\beta X}U^n_i\subseteq\mathrm{cl}_{\beta X}\mathrm{z}(g)$. But $\mathrm{cl}_{\beta X}\mathrm{z}(g)\subseteq\mathrm{z}(g_\beta)$. It now follows from (\ref{DSD}) that $A_n\subseteq\mathrm{z}(g_\beta)$ for each positive integer $n$. That is, $\mathrm{z}(g_\beta)$ contains $A$.

Finally, to prove (2), note that by the proofs of Theorems \ref{UUS} and \ref{UDR} the mapping
\[\psi:C^{\mathscr I}_0(X)\rightarrow C_0(Y)\]
defined by
\[\psi(f)=f_\lambda\]
for any $f\in C^{\mathscr I}_0(X)$ is a bijection with
\[\psi\big(C^{\mathscr I}_{00}(X)\big)=C_{00}(Y);\]
it is now trivial that (2) follows from (3).
\end{proof}

\begin{remark}\label{GGJ}
As we have seen in Theorem \ref{YUS}, if ${\mathscr I}$ is a proper $\sigma$-ideal then $\lambda_{\mathscr I} X$ is a non-compact countably compact space with $C_0(\lambda_{\mathscr I} X)=C_{00}(\lambda_{\mathscr I} X)$. In particular, if $\mathfrak{P}$ is a topological property such that
\[\mbox{$\mathfrak{P}$ $+$ countable compactness $\rightarrow$ compactness}\]
then $\lambda_{\mathscr I} X$ is non-$\mathfrak{P}$ either. The list of such topological properties is quite wide, including topological properties such as the Lindel\"{o}f property, paracompactness, realcompactness, metacompactness, subparacompactness, submetacompactness (or $\theta$-refinability), the meta-Lindel\"{o}f property, the submeta-Lindel\"{o}f property (or $\delta\theta$-refinability), weak submetacompactness (or weak $\theta$-refinability) and the weak submeta-Lindel\"{o}f property (or weak $\delta\theta$-refinability) among others. (See Parts 6.1 and 6.2 of \cite{Va}.)
\end{remark}

\part{Examples}\label{KHGJ}

In this part we study specific examples of either $C^{\mathscr I}_{00}(X)$ or $C^{\mathscr I}_0(X)$. We will see how this specification, of either the space $X$ or the ideal ${\mathscr I}$, enables us to study $C^{\mathscr I}_{00}(X)$ and $C^{\mathscr I}_0(X)$ further and deeper.

\section{Subalgebras of $\ell_\infty$}\label{HFLH}

In this section, which comprises our first set of examples, we consider certain ideals in $({\mathscr P}(\mathbb{N}),\subseteq)$ (with $\mathbb{N}$ endowed with the discrete topology). This leads to the introduction of certain subalgebras of $\ell_\infty$.

By $\ell_\infty$, $c_0$ and $c_{00}$, respectively, we denote the set of all bounded sequences in $\mathbb{R}$, the set of all vanishing sequences in $\mathbb{R}$, and the set of all sequences in $\mathbb{R}$ with only finitely many non-zero terms. Note that $\ell_\infty=C_b(\mathbb{N})$, $c_0=C_0(\mathbb{N})$ and $c_{00}=C_{00}(\mathbb{N})$, if $\mathbb{N}$ is given the discrete topology.

Let
\[{\mathscr S}=\bigg\{A\subseteq\mathbb{N}:\sum_{n\in A}\frac{1}{n}\mbox{ converges}\bigg\}.\]
Then ${\mathscr S}$ is an ideal in $({\mathscr P}(\mathbb{N}),\subseteq)$, called the \textit{summable ideal} in $\mathbb{N}$. A subset of $\mathbb{N}$ is called \textit{small} if it is null (with respect to ${\mathscr S}$).

Note that there exists a family $\{A_i:i<2^\omega\}$ consisting of infinite subsets of $\mathbb{N}$ such that the intersection $A_i\cap A_j$ is finite for any distinct $i,j<2^\omega$. To see this, arrange the rational numbers into a sequence $q_1,q_2,\ldots$ and for each $i\in\mathbb{R}$ define $A_i=\{n_1,n_2,\ldots\}$ where $q_{n_1},q_{n_2},\ldots$ is a subsequence of $q_1,q_2,\ldots$ which converges to $i$. This known fact will be used in the proof of the following.

Recall that for a collection $\{X_i:i\in I\}$ of algebras the direct sum $\bigoplus_{i\in I}X_i$ is the set of all sequences $\{x_i\}_{i\in I}$ where $x_i\in X_i$ for each $i\in I$ such that $x_i=0$ for all but a finite number of indices $i\in I$. The set $\bigoplus_{i\in I}X_i$ is an algebra with addition, multiplication and scalar multiplication defined component-wise. We denote the sequence $\{x_i\}_{i\in I}$ by a sum $\sum_{i\in I}x_i$. The direct sum $\bigoplus_{i\in I}X_i$ of a collection $\{X_i:i\in I\}$ of normed spaces is defined analogously and is a normed space with the norm given by
\[\bigg\|\sum_{i\in I}x_i\bigg\|=\sup\big\{\|x_i\|_{X_i}:i\in I\big\}.\]

The proof of the following makes use of certain standard properties of the Stone--\v{C}ech compactification as stated in the Introduction.

\begin{theorem}\label{TDK}
Let
\[\mathfrak{s}_{00}=\bigg\{\mathbf{x}\in\ell_\infty:\sum_{\mathbf{x}(n)\neq0}\frac{1}{n}\mbox{ converges}\bigg\}.\]
Then
\begin{itemize}
\item[\rm(1)] $\mathfrak{s}_{00}$ is an ideal in $\ell_\infty$.
\item[\rm(2)] $\mathfrak{s}_{00}$ is non-unital.
\item[\rm(3)] $\mathfrak{s}_{00}$ is isometrically isomorphic to $C_{00}(Y)$ for the subspace
\[Y=\bigcup\bigg\{\mathrm{cl}_{\beta\mathbb{N}}A:A\subseteq\mathbb{N}\mbox{ and }\sum_{n\in A}\frac{1}{n}\mbox{ converges}\bigg\},\]
of $\beta\mathbb{N}$.
\item[\rm(4)] $\mathfrak{s}_{00}$ contains a copy of the normed algebra $\bigoplus_{n=1}^\infty\ell_\infty$.
\item[\rm(5)] $\mathfrak{s}_{00}/c_{00}$ contains a copy of the algebra
\[\bigoplus_{i<2^\omega}\frac{\ell_\infty}{c_{00}}.\]
\end{itemize}
\end{theorem}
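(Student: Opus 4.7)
The plan is to derive everything from the general framework already developed. First I would identify $\mathfrak{s}_{00}$ with $C^{\mathscr{S}}_{00}(\mathbb{N})$, where $\mathbb{N}$ carries the discrete topology and $\mathscr{S}$ is the summable ideal: since every point of $\mathbb{N}$ is isolated, $\mathrm{coz}(\mathbf{x})=\{n:\mathbf{x}(n)\neq 0\}$ is clopen, so $\mathrm{supp}(\mathbf{x})=\mathrm{coz}(\mathbf{x})$, and for a subset $A$ of the discrete space $\mathbb{N}$, ``$A$ has a null neighborhood'' is equivalent to $A\in\mathscr{S}$ (since $\mathscr{S}$ is hereditary and $\mathrm{int}_\mathbb{N}U=U$). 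The space $\mathbb{N}$ is trivially normal, and is locally null with respect to $\mathscr{S}$ because each singleton $\{n\}$ is itself small.

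Parts (1), (2), (3) are then direct applications. (1) is a special case of Theorem \ref{TTG}. (2) follows from Theorem \ref{HGBV}: $\mathfrak{s}_{00}$ is unital iff $\mathbb{N}\in\mathscr{S}$, but the harmonic series diverges. For (3), Theorem \ref{UUS} gives an isometric isomorphism $\mathfrak{s}_{00}\cong C_{00}(\lambda_{\mathscr{S}}\mathbb{N})$, so it remains only to verify that $\lambda_{\mathscr{S}}\mathbb{N}$ coincides with the displayed $Y$. Every $A\subseteq\mathbb{N}$ is a cozero-set and is clopen in $\mathbb{N}$; the listed property of $\beta\mathbb{N}$ then yields $\mathrm{cl}_{\beta\mathbb{N}}A$ clopen in $\beta\mathbb{N}$, so $\mathrm{int}_{\beta\mathbb{N}}\mathrm{cl}_{\beta\mathbb{N}}A=\mathrm{cl}_{\beta\mathbb{N}}A$. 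Combined with the neighborhood simplification above, Definition \ref{HWA} collapses to exactly the displayed union.

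For (4) I would fix a disjoint family of infinite small subsets of $\mathbb{N}$; a concrete choice is $A_n=\{p_n^k:k\geq 1\}$, where $p_n$ denotes the $n$-th prime, since $\sum_k 1/p_n^k=1/(p_n-1)<\infty$ and distinct primes give disjoint sets. Fixing bijections $\phi_n\colon\mathbb{N}\to A_n$, the assignment $\sum_{n=1}^N x_n\mapsto\mathbf{y}$, with $\mathbf{y}(\phi_n(k))=x_n(k)$ for $n\leq N$ and $\mathbf{y}\equiv 0$ elsewhere, produces an element of $\mathfrak{s}_{00}$ (its support is a finite union of elements of $\mathscr{S}$), and one checks directly that the resulting map is an isometric algebra embedding of $\bigoplus_{n=1}^\infty\ell_\infty$, using that $\|\mathbf{y}\|=\sup_{n\leq N}\|x_n\|$.

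The delicate point is part (5), where I need an almost disjoint family $\{A_i:i<2^\omega\}$ of infinite \emph{small} subsets of $\mathbb{N}$. I would enumerate the nodes of the binary tree $\{0,1\}^{<\omega}$ level-by-level so that the $k$-th level occupies positions in $[2^k,2^{k+1})$; for each branch $b\in\{0,1\}^\omega$ let $A_b$ be the set of positions of the nodes along $b$. Any $A_b$ has exactly one element in each block $[2^k,2^{k+1})$, so $\sum_{m\in A_b}1/m\leq\sum_k 2^{-k}<\infty$, while distinct branches share only their longest common prefix, making $A_b\cap A_{b'}$ finite. Fixing bijections $\phi_i\colon\mathbb{N}\to A_i$, I send a finitely supported $\sum_{i\in J}[x_i]\in\bigoplus_{i<2^\omega}\ell_\infty/c_{00}$ to the class of $\mathbf{y}$ defined by $\mathbf{y}(\phi_i(k))=x_i(k)$ on $A_i$, with any fixed choice on the (finitely many) overlap points and $\mathbf{y}\equiv 0$ outside $\bigcup_{i\in J}A_i$. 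The support of $\mathbf{y}$ is a finite union of smalls, so $\mathbf{y}\in\mathfrak{s}_{00}$; the residual ambiguity on overlaps lies in $c_{00}$, so the induced map to the quotient is a well-defined algebra homomorphism, and it is injective because $A_i$ minus its overlaps with the other $A_j$ (for $j\in J$) is cofinite in $A_i$. The one nontrivial ingredient is arranging an almost disjoint family of cardinality $2^\omega$ consisting of small sets; the exponential enumeration of the binary tree handles this cleanly.
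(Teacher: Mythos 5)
Your proposal is correct. Parts (1)--(3) proceed exactly as the paper does: identify $\mathfrak{s}_{00}$ with $C^{\mathscr S}_{00}(\mathbb{N})$, invoke Theorems \ref{TTG}, \ref{HGBV} and \ref{UUS}, and collapse $\lambda_{\mathscr S}\mathbb{N}$ to the displayed $Y$ using that every $A\subseteq\mathbb{N}$ is a clopen cozero-set whose closure in $\beta\mathbb{N}$ is clopen. For (4) and (5) you reach the same conclusions by a genuinely different, more hands-on route. The paper works inside the representation $C_{00}(Y)$: for (4) it partitions a single small set $A=\{2^n:n\in\mathbb{N}\}$ into disjoint infinite pieces $A_n$ and embeds $\bigoplus_n C(\mathrm{cl}_{\beta\mathbb{N}}A_n)$ using $\mathrm{cl}_{\beta\mathbb{N}}A_n=\beta A_n$ and $C(\beta A_n)\cong\ell_\infty$; for (5) it takes the classical almost disjoint family of size $2^\omega$ (rational sequences converging to reals), transported into subsets of the fixed small set $A$, and maps $\bigoplus_i C(\mathrm{cl}_{\beta\mathbb{N}}A_i)/H_i$ into a quotient of $C_{00}(Y)$ by the ideal of functions supported in $A$. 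You instead work directly at the level of sequences in $\ell_\infty$, with disjoint small sets given by prime powers for (4), and for (5) you build an almost disjoint family of $2^\omega$ \emph{small} sets explicitly (branches of the binary tree under the exponential level enumeration, so each branch meets $[2^k,2^{k+1})$ in one point and its reciprocal sum is at most $\sum_k 2^{-k}$), which replaces the paper's transport step; your $\Theta$ also lands directly in $\mathfrak{s}_{00}/c_{00}$, matching the statement of (5) more immediately. The two delicate points --- well-definedness modulo $c_{00}$ (the overlap set $\bigcup_{i\neq j\in J}(A_i\cap A_j)$ is finite) and injectivity (each $A_i$ minus the overlaps is cofinite in $A_i$) --- are exactly the ones you address, and they parallel the finiteness arguments the paper carries out with supports in $\beta\mathbb{N}$.
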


\begin{proof}
(1)--(3). Consider the ideal ${\mathscr S}$ of $({\mathscr P}(\mathbb{N}),\subseteq)$. Note that if $\mathbf{x}\in\ell_\infty$ (since $\mathbb{N}$ is discrete) then
\[\mathrm{supp}(\mathbf{x})=\big\{n\in\mathbb{N}:\mathbf{x}(n)\neq0\big\},\]
and $\mathrm{supp}(\mathbf{x})$ is null if and only if it has a null neighborhood in $\mathbb{N}$. Thus $\mathfrak{s}_{00}=C^{\mathscr S}_{00}(\mathbb{N})$.
It now follows from Theorem \ref{TTG} that $\mathfrak{s}_{00}$ is an ideal in $\ell_\infty$. The remaining parts follow from Theorem \ref{UUS}. Observe that $\mathbb{N}$ is locally null (indeed, $\{n\}$ is a null neighborhood of $n$ in $\mathbb{N}$ for each $n\in\mathbb{N}$) and that ${\mathscr S}$ is non-proper (as $\sum 1/n$ diverges). Also, note that (since $\mathbb{N}$ is discrete) every subset $A$ of $\mathbb{N}$ is a cozero-set in $\mathbb{N}$ and since $A$ is open-closed in $\mathbb{N}$ its closure $\mathrm{cl}_{\beta\mathbb{N}}A$ in $\beta\mathbb{N}$ is open in $\beta\mathbb{N}$. Therefore $Y=\lambda_{\mathscr S}\mathbb{N}$.

(4). By (3), we may consider $C_{00}(Y)$ in place of $\mathfrak{s}_{00}$. Let $A$ be an infinite subset of $\mathbb{N}$ such that $\sum_{n\in A}1/n$ converges (which exists, for example, let $A=\{2^n:n\in \mathbb{N}\}$). Let $A_1, A_2,\dots$ be a partition of $A$ into pairwise disjoint infinite subsets. Let $n=1,2,\dots$. We may assume that $C(\mathrm{cl}_{\beta\mathbb{N}}A_n)$ is a subalgebra of $C_{00}(Y)$. (Since $A_n$ is open-closed in $\mathbb{N}$ it has open-closed closure $\mathrm{cl}_{\beta\mathbb{N}}A_n$ in $\beta\mathbb{N}$. Thus each element of $C(\mathrm{cl}_{\beta\mathbb{N}}A_n)$ may be continuously extended over $Y$ by defining it to be identically $0$ elsewhere.) Note that $\mathrm{cl}_{\beta\mathbb{N}}A_i$ and $\mathrm{cl}_{\beta\mathbb{N}}A_j$ are disjoint for any distinct $i,j=1,2,\dots$, as $A_i$ and $A_j$ are disjoint open-closed subspaces (and thus zero-sets) of $\mathbb{N}$. Thus, the inclusion mapping
\[\iota:\bigoplus_{n=1}^\infty C(\mathrm{cl}_{\beta\mathbb{N}}A_n)\rightarrow C_{00}(Y)\]
is an algebra isomorphism (onto its image) and it preserves norms. (For the latter, use the fact that $\mathrm{cl}_{\beta\mathbb{N}}A_i$'s are disjoint for distinct indices.) Note that $\mathrm{cl}_{\beta\mathbb{N}}A_n$ coincides with $\beta A_n$, as $A_n$ is closed in the normal space $\mathbb{N}$. Finally, observe that
\[C(\mathrm{cl}_{\beta\mathbb{N}}A_n)=C(\beta A_n)=C(\beta\mathbb{N})=C_b(\mathbb{N})=\ell_\infty.\]

(5). By (3), we may consider $C_{00}(Y)$ in place of $\mathfrak{s}_{00}$. Let $A$ be an infinite subset of $\mathbb{N}$ such that $\sum_{n\in A}1/n$ converges. Consider a family $\{A_i:i<2^\omega\}$ consisting of infinite subsets of $A$ such that $A_i\cap A_j$ is finite for any distinct $i,j<2^\omega$. Let
\[H=\big\{f\in C_{00}(Y):\mathrm{supp}(f)\subseteq A\big\}\]
and let
\[H_i=\big\{f\in C(\mathrm{cl}_{\beta\mathbb{N}}A_i):\mathrm{supp}(f)\subseteq A_i\big\}\]
for each $i<2^\omega$. As in (4), we may assume that $C(\mathrm{cl}_{\beta\mathbb{N}}A_i)$ is a subalgebra of $C_{00}(Y)$ for each $i<2^\omega$, and thus, we may assume that $H_i\subseteq H$. Note that if $f\in H$ then $\mathrm{supp}(f)$ is finite, as it is a compact subspace of $\mathbb{N}$.

Define
\[\Theta:\bigoplus_{i<2^\omega}\frac{C(\mathrm{cl}_{\beta\mathbb{N}}A_i)}{H_i}\rightarrow\frac{C_{00}(Y)}{H}\]
by
\[\sum_{i<2^\omega}(f_i+H_i)\mapsto\sum_{i<2^\omega}f_i+H\]
where $f_i\in C(\mathrm{cl}_{\beta\mathbb{N}}A_i)$ for each $i<2^\omega$. We show that $\Theta$ is an isometric isomorphism onto its image; since
\[\frac{C(\mathrm{cl}_{\beta\mathbb{N}}A_i)}{H_i}=\frac{\ell_\infty}{c_{00}}\]
for each $i<2^\omega$, this completes the proof.

First, note that $\Theta$ is well defined; to show this, let
\[\sum_{i<2^\omega}(f_i+H_i)=\sum_{i<2^\omega}(g_i+H_i)\]
where $f_i, g_i\in C(\mathrm{cl}_{\beta\mathbb{N}}A_i)$ for each $i<2^\omega$. For each $i<2^\omega$ then $f_i+H_i=g_i+H_i$, or equivalently $f_i-g_i\in H_i$; in particular $f_i-g_i\in H$. Thus
\[\sum_{i<2^\omega}(f_i-g_i)\in H\]
and therefore
\[\Theta\bigg(\sum_{i<2^\omega}(f_i+H_i)\bigg)=\sum_{i<2^\omega}f_i+H=\sum_{i<2^\omega}g_i+H=\Theta\bigg(\sum_{i<2^\omega}(g_i+H_i)\bigg).\]

Now, we show that $\Theta$ preserves product. Let $f_i,g_i\in C(\mathrm{cl}_{\beta\mathbb{N}}A_i)$ for each $i<2^\omega$. Then
\[\Theta\bigg(\sum_{i<2^\omega}(f_i+H_i)\cdot\sum_{i<2^\omega}(g_i+H_i)\bigg)=\Theta\bigg(\sum_{i<2^\omega}(f_ig_i+H_i)\bigg)
=\sum_{i<2^\omega}f_ig_i+H.\]
Note that if $k,l<2^\omega$ with $k\neq l$ then
\begin{eqnarray*}
\mathrm{supp}(f_kg_l)&\subseteq&\mathrm{supp}(f_k)\cap\mathrm{supp}(g_l)\\&\subseteq&\mathrm{cl}_{\beta\mathbb{N}}A_k\cap\mathrm{cl}_{\beta\mathbb{N}}A_l=
\mathrm{cl}_{\beta\mathbb{N}}(A_k\cap A_l)=A_k\cap A_l\subseteq A
\end{eqnarray*}
and thus $f_kg_l\in H$. We have
\begin{eqnarray*}
\Theta\bigg(\sum_{i<2^\omega}(f_i+H_i)\bigg)\cdot\Theta\bigg(\sum_{i<2^\omega}(g_i+H_i)\bigg)&=&\bigg(\sum_{i<2^\omega}f_i+H\bigg)\cdot\bigg(\sum_{i<2^\omega}g_i+H\bigg)
\\&=&\bigg(\sum_{i<2^\omega}f_i\sum_{i<2^\omega}g_i\bigg)+H\\&=&\bigg(\sum_{i<2^\omega}f_ig_i+\sum_{k\neq l}f_kg_l\bigg)+H\\&=&\sum_{i<2^\omega}f_ig_i+H.
\end{eqnarray*}
This together with the above proves that
\[\Theta\bigg(\sum_{i<2^\omega}(f_i+H_i)\cdot\sum_{i<2^\omega}(g_i+H_i)\bigg)=\Theta\bigg(\sum_{i<2^\omega}(f_i+H_i)\bigg)\cdot\Theta\bigg(\sum_{i<2^\omega}(g_i+H_i)\bigg).\]
That $\Theta$ preserves addition and scalar multiplication follows analogously.

Next, we show that $\Theta$ is injective. Let
\[\Theta\bigg(\sum_{i<2^\omega}(f_i+H_i)\bigg)=0\]
where $f_i\in C(\mathrm{cl}_{\beta\mathbb{N}}A_i)$ for each $i<2^\omega$. Then
\[\sum_{i<2^\omega}f_i+H=0,\]
or, equivalently
\[h=\sum_{i<2^\omega}f_i\in H.\]
Suppose that $f_{i_j}$, where $j=1,\ldots,n$, are the possibly non-zero terms. Fix some $k=1,\ldots,n$. Then
\[f_{i_k}=h-\sum_{1\leq j\neq k\leq n}f_{i_j}.\]
We have
\[\mathrm{coz}(f_{i_k})\subseteq\mathrm{coz}(h)\cup\bigcup_{1\leq j\neq k\leq n}\mathrm{coz}(f_{i_j})\subseteq\mathrm{supp}(h)\cup\bigcup_{1\leq j\neq k\leq n}\mathrm{cl}_{\beta\mathbb{N}}A_{i_j}.\]
Note that the latter set is compact, as $\mathrm{supp}(h)$ is finite, since $h\in H$. Therefore
\begin{equation}\label{PJUF}
\mathrm{supp}(f_{i_k})\subseteq\mathrm{supp}(h)\cup\bigcup_{1\leq j\neq k\leq n}\mathrm{cl}_{\beta\mathbb{N}}A_{i_j}.
\end{equation}
Intersecting both sides of (\ref{PJUF}) with $\mathrm{cl}_{\beta\mathbb{N}}A_{i_k}$ yields
\begin{eqnarray*}
\mathrm{supp}(f_{i_k})&\subseteq&\mathrm{supp}(h)\cup\bigcup_{1\leq j\neq k\leq n}(\mathrm{cl}_{\beta\mathbb{N}}A_{i_j}\cap\mathrm{cl}_{\beta\mathbb{N}}A_{i_k})\\&=&\mathrm{supp}(h)\cup\bigcup_{1\leq j\neq k\leq n}\mathrm{cl}_{\beta\mathbb{N}}(A_{i_j}\cap A_{i_k})=\mathrm{supp}(h)\cup\bigcup_{1\leq j\neq k\leq n}(A_{i_j}\cap A_{i_k})
\end{eqnarray*}
with the latter being a subset of $\mathbb{N}$. Thus
\[\mathrm{supp}(f_{i_k})\subseteq\mathbb{N}\cap\mathrm{cl}_{\beta\mathbb{N}} A_{i_k}=A_{i_k}\]
and therefore $f_{i_k}\in H_{i_k}$. This implies that
\[\sum_{j=1}^n(f_{i_j}+H_{i_j})=0.\]
Thus $\Theta$ is injective.
\end{proof}

\begin{theorem}\label{JHF}
Let
\[\mathfrak{s}_0=\bigg\{\mathbf{x}\in\ell_\infty:\sum_{|\mathbf{x}(n)|\geq\epsilon}\frac{1}{n}\mbox{ converges for each }\epsilon>0\bigg\}.\]
Then
\begin{itemize}
\item[\rm(1)] $\mathfrak{s}_0$ is a closed ideal in $\ell_\infty$.
\item[\rm(2)] $\mathfrak{s}_0$ is non-unital.
\item[\rm(3)] $\mathfrak{s}_0$ contains $\mathfrak{s}_{00}$ as a dense subspace.
\item[\rm(4)] $\mathfrak{s}_0$ is isometrically isomorphic to $C_0(Y)$  for the subspace
\[Y=\bigcup\bigg\{\mathrm{cl}_{\beta\mathbb{N}}A:A\subseteq\mathbb{N}\mbox{ and }\sum_{n\in A}\frac{1}{n}\mbox{ converges}\bigg\},\]
of $\beta\mathbb{N}$.
\item[\rm(5)] $\mathfrak{s}_0/c_0$ contains a copy of the normed algebra
\[\bigoplus_{i<2^\omega}\frac{\ell_\infty}{c_0}.\]
\end{itemize}
\end{theorem}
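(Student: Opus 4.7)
The plan is to deduce (1)--(4) from the general framework of Section \ref{JGGG} applied to the summable ideal $\mathscr{S}$, and to prove (5) by adapting the argument of Theorem \ref{TDK}(5) with additional attention to norms. First observe that $\mathfrak{s}_0=C^{\mathscr S}_0(\mathbb{N})$: because $\mathbb{N}$ is discrete, every subset is clopen, so the ``null neighborhood'' condition in Definition \ref{HHLG} reduces to ``small'', which is precisely the sequence condition $\sum_{|\mathbf{x}(n)|\geq\epsilon}1/n<\infty$ for every $\epsilon>0$. Note that $\mathbb{N}$ is normal (being discrete), locally null with respect to $\mathscr{S}$ (every singleton is small), and that $\mathscr{S}$ is proper since the harmonic series diverges. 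Item (1) then follows from Theorem \ref{DGH}, item (2) from Theorem \ref{HFBY}, and items (3) and (4) from Theorem \ref{UDR}, with $Y=\lambda_{\mathscr S}\mathbb{N}$. To match this with the simpler description in the statement, use that every $A\subseteq\mathbb{N}$ is a cozero-set with $\mathrm{cl}_\mathbb{N}A=A$ and that clopen subsets of $\mathbb{N}$ have clopen closures in $\beta\mathbb{N}$, so $\mathrm{int}_{\beta\mathbb{N}}\mathrm{cl}_{\beta\mathbb{N}}A=\mathrm{cl}_{\beta\mathbb{N}}A$.

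For (5) I would mirror the construction in the proof of Theorem \ref{TDK}(5). Fix an infinite small set $A\subseteq\mathbb{N}$ (say $A=\{2^n:n\in\mathbb{N}\}$) and choose an almost-disjoint family $\{A_i:i<2^\omega\}$ of infinite subsets of $A$. Each $A_i$ is clopen in $\mathbb{N}$, so $\beta A_i=\mathrm{cl}_{\beta\mathbb{N}}A_i$ is clopen in $\beta\mathbb{N}$ and contained in $Y$; extension by zero outside $\beta A_i$ realizes $C(\beta A_i)=\ell_\infty$ as a subalgebra of $C_0(Y)\cong\mathfrak{s}_0$, and sends $c_0(A_i)$ into $c_0$. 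The candidate embedding is
\[
\Theta:\bigoplus_{i<2^\omega}\frac{\ell_\infty}{c_0}\longrightarrow\frac{\mathfrak{s}_0}{c_0},\qquad \sum_i\bigl(f_i+c_0(A_i)\bigr)\longmapsto\sum_i\tilde f_i+c_0,
\]
where $\tilde f_i$ is the zero extension. Well-definedness, the algebra homomorphism property, and injectivity follow from the same cross-term analysis as in Theorem \ref{TDK}(5): for $k\neq l$ the product $\tilde f_k\tilde g_l$ is supported on $\mathrm{cl}_{\beta\mathbb{N}}A_k\cap\mathrm{cl}_{\beta\mathbb{N}}A_l=A_k\cap A_l$, a finite set, hence lies in $c_{00}\subseteq c_0$; and a relation $\sum_j\tilde f_{i_j}\in c_0$ forces, after restriction to $\beta A_{i_k}$ and subtraction of the finitely supported contributions from $i_j\neq i_k$, that $f_{i_k}\in c_0(A_{i_k})$.

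The main obstacle, absent from Theorem \ref{TDK}(5), is to verify that $\Theta$ is isometric. Under the isomorphism $\mathfrak{s}_0\cong C_0(Y)$ from (4), $c_0$ corresponds to the closed ideal $J=\{g\in C_0(Y):g|_{Y\setminus\mathbb{N}}=\mathbf{0}\}$. A direct truncation argument, replacing a representative $g$ by $g-h$, where $h$ is the finitely supported function on $\mathbb{N}$ taking the value $g(n)$ precisely when $|g(n)|>\sup_{Y\setminus\mathbb{N}}|g|$, shows that $\|g+J\|_{C_0(Y)/J}=\sup_{Y\setminus\mathbb{N}}|g|$; the same argument specialized to $\beta A_i$ yields $\|f_i+c_0(A_i)\|_{\ell_\infty/c_0}=\sup_{A_i^\ast}|f_i|$ with $A_i^\ast=\beta A_i\setminus A_i$. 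The key geometric input is that the $A_i^\ast$ are pairwise disjoint compact subsets of $Y\setminus\mathbb{N}$, since $\beta A_i\cap\beta A_j=\mathrm{cl}_{\beta\mathbb{N}}(A_i\cap A_j)=A_i\cap A_j\subseteq\mathbb{N}$ for $i\neq j$. Consequently $\sum_i\tilde f_i$ restricted to $Y\setminus\mathbb{N}$ is supported on $\bigcup_i A_i^\ast$ and equals $f_i$ on each $A_i^\ast$, so its supremum there is $\sup_i\|f_i|_{A_i^\ast}\|$; this matches the direct-sum norm on the domain and completes the verification of isometry.
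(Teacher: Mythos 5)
Your treatment of (1)--(4) is exactly the paper's: identify $\mathfrak{s}_0=C^{\mathscr S}_0(\mathbb{N})$, observe that $\mathbb{N}$ is discrete, locally null and that ${\mathscr S}$ is proper, and quote Theorems \ref{DGH}, \ref{HFBY} and \ref{UDR}, together with the observation that $\mathrm{int}_{\beta\mathbb{N}}\mathrm{cl}_{\beta\mathbb{N}}A=\mathrm{cl}_{\beta\mathbb{N}}A$ for every $A\subseteq\mathbb{N}$, so that $Y=\lambda_{\mathscr S}\mathbb{N}$. For (5) your construction is also the paper's (same almost disjoint family, same map $\Theta$, same cross-term and restriction-to-$\mathrm{cl}_{\beta\mathbb{N}}A_{i_k}$ arguments), except that you quotient directly by $c_0$ where the paper quotients by the smaller ideal $H=\{f\in C_0(Y):|f|^{-1}([\epsilon,\infty))\subseteq A\mbox{ for each }\epsilon>0\}$; your choice matches the statement of the theorem more directly, and the norm computations are unaffected because both ideals consist of functions vanishing on $Y\setminus\mathbb{N}$, which gives the lower bound $\|g+c_0\|\geq\|g|_{Y\setminus\mathbb{N}}\|_\infty$ at once (more cheaply than the paper's argument by contradiction).

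The one step that fails as written is your truncation. You take $h$ to be ``the finitely supported function on $\mathbb{N}$ taking the value $g(n)$ precisely when $|g(n)|>\delta$,'' where $\delta=\sup_{Y\setminus\mathbb{N}}|g|$; but the set $\{n\in\mathbb{N}:|g(n)|>\delta\}$ need not be finite. For instance, enumerate an infinite small set $A_i=\{n_1<n_2<\cdots\}$ and let $g\in C_0(Y)$ be the extension of the sequence equal to $1+1/k$ at $n_k$ and $0$ elsewhere; then $g\equiv 1$ on $\mathrm{cl}_{\beta\mathbb{N}}A_i\setminus A_i$, so $\delta=1$, while $|g(n)|>1$ for every $n\in A_i$, so your $h$ is supported on all of $A_i$ and in fact does not even lie in $c_0$. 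What is true, and what the paper proves by a closure argument in $\beta\mathbb{N}$, is that $D_\epsilon=\{n:|g(n)|>\delta+\epsilon\}$ is finite for every $\epsilon>0$ (its closure lies in the compact set $|g|^{-1}([\delta+\epsilon,\infty))\subseteq Y$, and a point of that closure outside $\mathbb{N}$ would force $\sup_{Y\setminus\mathbb{N}}|g|\geq\delta+\epsilon$). Truncating on $D_\epsilon$ gives $\|g+c_0\|\leq\delta+\epsilon$, and letting $\epsilon\to 0$ yields the desired equality. With that repair your isometry argument (pairwise disjointness of the compact sets $A_i^{\ast}\subseteq Y\setminus\mathbb{N}$ and evaluation of $\sum_i\tilde f_i$ on them) goes through and coincides with the two claims in the paper's proof.
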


\begin{proof}
The proofs for (1)--(4) are analogous to the proofs for the corresponding parts in Theorem \ref{TDK}, making use of Theorems \ref{DGH} and \ref{UDR}. Note that if $\mathbf{x}\in\ell_\infty$ and $\epsilon>0$ then $|\mathbf{x}|^{-1}([\epsilon,\infty))$ is null (with respect to ${\mathscr S}$) if and only if
\[\sum_{|\mathbf{x}(n)|\geq\epsilon}\frac{1}{n}\]
converges. Thus, in particular $\mathfrak{s}_0=C^{\mathscr S}_0(\mathbb{N})$.

(5). The proof of this part is analogous to the proof of the corresponding part in Theorem \ref{TDK}; we will highlight only the differences.

Let $A$ and $\{A_i:i<2^\omega\}$ be as chosen in the proof of Theorem \ref{TDK}. Let
\[H=\big\{f\in C_0(Y):|f|^{-1}\big([\epsilon,\infty)\big)\subseteq A\mbox{ for each }\epsilon>0\big\}\]
and let
\[H_i=\big\{f\in C(\mathrm{cl}_{\beta\mathbb{N}}A_i):|f|^{-1}\big([\epsilon,\infty)\big)\subseteq A_i\mbox{ for each }\epsilon>0\big\}\]
for each $i<2^\omega$. We consider $C(\mathrm{cl}_{\beta\mathbb{N}}A_i)$ as a subalgebra of $C_0(Y)$, and thus, we may assume that $H_i\subseteq H$ for each $i<2^\omega$. Note that if $i<2^\omega$ and $f\in H_i$ then $|f|^{-1}([\epsilon,\infty))$ is a finite subset of $A_i$, as it is compact (since it is  closed in $\mathrm{cl}_{\beta\mathbb{N}}A_i$). Define
\[\Theta:\bigoplus_{i<2^\omega}\frac{C(\mathrm{cl}_{\beta\mathbb{N}}A_i)}{H_i}\rightarrow\frac{C_0(Y)}{H}\]
by
\[\sum_{i<2^\omega}(f_i+H_i)\mapsto\sum_{i<2^\omega}f_i+H\]
where $f_i\in C(\mathrm{cl}_{\beta\mathbb{N}}A_i)$ for each $i<2^\omega$. Arguing as in the proof of Theorem \ref{TDK} it follows that $\Theta$ is well defined. The proofs that $\Theta$ preserves product is analogous to the corresponding part in the proof of Theorem \ref{TDK}; simply observe that if $f_i,g_i\in C(\mathrm{cl}_{\beta\mathbb{N}}A_i)$ for each $i<2^\omega$ and $\epsilon>0$, then for any $k,l<2^\omega$ with $k\neq l$ we have
\begin{eqnarray*}
|f_kg_l|^{-1}\big([\epsilon,\infty)\big)&\subseteq&\mathrm{coz}(f_kg_l)\\&\subseteq&\mathrm{coz}(f_k)\cap\mathrm{coz}(g_l)
\\&\subseteq&\mathrm{cl}_{\beta\mathbb{N}}A_k\cap\mathrm{cl}_{\beta\mathbb{N}}A_l=\mathrm{cl}_{\beta\mathbb{N}}(A_k\cap A_l)=A_k\cap A_l\subseteq A
\end{eqnarray*}
and thus $f_kg_l\in H$. The proofs that $\Theta$ preserves addition and scalar multiplication are analogous.

Now, we show that $\Theta$ is injective. Let
\[\Theta\bigg(\sum_{i<2^\omega}(f_i+H_i)\bigg)=0\]
where $f_i\in C(\mathrm{cl}_{\beta\mathbb{N}}A_i)$ for each $i<2^\omega$. Suppose that $f_{i_j}$, where $j=1,\ldots,n$, are the possibly non-zero terms. Fix some $k=1,\ldots,n$. Then, as in the proof of Theorem \ref{TDK} we have
\[f_{i_k}=h-\sum_{1\leq j\neq k\leq n}f_{i_j}\]
for some $h\in H$. Let $\epsilon>0$. Then
\begin{eqnarray*}
|f_{i_k}|^{-1}\big([\epsilon,\infty)\big)&\subseteq&|h|^{-1}\big([\epsilon/n,\infty)\big)\cup\bigcup_{1\leq j\neq k\leq n}|f_{i_j}|^{-1}\big([\epsilon/n,\infty)\big)\\&\subseteq& A\cup\bigcup_{1\leq j\neq k\leq n}\mathrm{cl}_{\beta\mathbb{N}}A_{i_j}.
\end{eqnarray*}
Arguing as in the proof of Theorem \ref{TDK}, intersecting both sides of the above with $\mathrm{cl}_{\beta\mathbb{N}}A_{i_k}$ yields
\begin{eqnarray*}
|f_{i_k}|^{-1}\big([\epsilon,\infty)\big)\subseteq A_{i_k}
\end{eqnarray*}
and therefore $f_{i_k}\in H_{i_k}$. This implies that
\[\sum_{j=1}^n(f_{i_j}+H_{i_j})=0.\]
Thus $\Theta$ is injective.

Next, we show that $\Theta$ is an isometry. First, we need to show the following.

\begin{xclaim}
Let $i<2^\omega$ and $f\in C(\mathrm{cl}_{\beta\mathbb{N}}A_i)$. Then
\[\|f+H_i\|=\big\|f|_{Y\setminus\mathbb{N}}\big\|_\infty.\]
\end{xclaim}

\noindent \emph{Proof of the claim.} Suppose to the contrary that
\begin{equation}\label{FSD}
\|f+H_i\|<\big\|f|_{Y\setminus\mathbb{N}}\big\|_\infty.
\end{equation}
Then
\[\alpha=\|f+h\|_\infty<\big\|f|_{Y\setminus\mathbb{N}}\big\|_\infty\]
for some $h\in H_i$. Let $\alpha<\gamma<\|f|_{Y\setminus\mathbb{N}}\|_\infty$. Then
\[B=|f|^{-1}\big([\gamma,\infty)\big)\subseteq|h|^{-1}\big([\gamma-\alpha,\infty)\big)=C;\]
as if $y\in Y$ such that $|f(y)|\geq\gamma$, then
\[\alpha\geq\big|f(y)+h(y)\big|\geq\big|f(y)\big|-\big|h(y)\big|\geq\gamma-\big|h(y)\big|\]
and thus $|h(y)|\geq\gamma-\alpha$. Note that $B$ is a finite subset of $\mathbb{N}$, as $C$ is so, since $h\in H_i$. We have
\[Y=\mathrm{cl}_Y \mathbb{N}=B\cup\mathrm{cl}_Y (\mathbb{N}\setminus B)\]
and thus
\begin{eqnarray*}
|f|(Y\setminus\mathbb{N})\subseteq|f|\big(\mathrm{cl}_Y (\mathbb{N}\setminus B)\big)\subseteq\overline{|f|(\mathbb{N}\setminus B)}&=&\overline{|f|\big(\mathbb{N}\cap|f|^{-1}\big([0,\gamma)\big)\big)}\\&\subseteq&\overline{|f|\big(|f|^{-1}\big([0,\gamma)\big)\big)}
\subseteq\overline{[0,\gamma)}=[0,\gamma],
\end{eqnarray*}
where the bar denotes the closure in $\mathbb{R}$. This implies that $\|f|_{Y\setminus\mathbb{N}}\|_\infty\leq\gamma$, which contradicts the choice of $\gamma$. Thus (\ref{FSD}) is false, that is
\begin{equation}\label{GD}
\|f+H_i\|\geq\big\|f|_{Y\setminus\mathbb{N}}\big\|_\infty.
\end{equation}

Now, we prove the reverse inequality in (\ref{GD}). Let $\delta=\|f|_{Y\setminus\mathbb{N}}\|_\infty$. We first show that
\[D_\epsilon=\mathbb{N}\cap|f|^{-1}\big((\delta+\epsilon,\infty)\big)\]
is finite for every $\epsilon>0$. Suppose the contrary, that is, suppose that $D_\epsilon$ is infinite for some $\epsilon>0$. Note that $\mathrm{cl}_{\beta\mathbb{N}}D_\epsilon\setminus\mathbb{N}$ is non-empty, as $\mathrm{cl}_{\beta\mathbb{N}}D_\epsilon\subseteq\mathbb{N}$ implies that $D_\epsilon=\mathrm{cl}_{\beta\mathbb{N}}D_\epsilon$ is compact and is then finite. Let $p\in\mathrm{cl}_{\beta\mathbb{N}}D_\epsilon\setminus\mathbb{N}$. Note that
\[\mathrm{cl}_{\beta\mathbb{N}}D_\epsilon=\mathrm{cl}_{\beta\mathbb{N}}\big(|f|^{-1}\big((\delta+\epsilon,\infty)\big)\big).\]
We have
\begin{eqnarray*}
|f|(p)\in|f|(\mathrm{cl}_{\beta\mathbb{N}}D_\epsilon)&=&|f|\big(\mathrm{cl}_{\beta\mathbb{N}}\big(|f|^{-1}\big((\delta+\epsilon,\infty)\big)\big)\big)
\\&\subseteq&\overline{|f|\big(|f|^{-1}\big((\delta+\epsilon,\infty)\big)\big)}\subseteq\overline{(\delta+\epsilon,\infty)}=[\delta+\epsilon,\infty),
\end{eqnarray*}
where the bar denotes the closure in $\mathbb{R}$. Therefore
\[\delta=\big\|f|_{Y\setminus\mathbb{N}}\big\|_\infty\geq |f(p)|\geq\delta+\epsilon,\]
which is not possible. This shows that $D_\epsilon$ is finite for every $\epsilon>0$. Now, let $\epsilon>0$. Define $h:Y\rightarrow\mathbb{R}$ such that $h(x)=-f(x)$ if $x\in D_\epsilon$ and $h(x)=0$ otherwise. Note that $D_\epsilon$ is closed in $Y$, as it is finite, and $D_\epsilon$ is open in $Y$, as it is open in $\mathbb{N}$ and $\mathbb{N}$ is open in $\beta\mathbb{N}$ (and thus in $Y$), since $\mathbb{N}$ is locally compact. Therefore $h$ is continuous. Observe that
\[\mathrm{coz}(h)\subseteq D_\epsilon\subseteq\mathbb{N}\cap|f|^{-1}\big([\delta+\epsilon,\infty)\big)\subseteq\mathbb{N}\cap\mathrm{cl}_{\beta\mathbb{N}}A_i=A_i.\]
Thus $h\in H_i$. Note that $f+h=\mathbf{0}$ on $D_\epsilon$ and $f+h= f$ on $Y\setminus D_\epsilon$. Also,
\[\big\|f|_{\mathbb{N}\setminus D_\epsilon}\big\|_\infty\leq\delta+\epsilon\]
by the way we have defined $D_\epsilon$. Now
\[\|f+H_i\|\leq\|f+h\|_\infty=\big\|f|_{Y\setminus D_\epsilon}\big\|_\infty=\max\big\{\big\|f|_{\mathbb{N}\setminus D_\epsilon}\big\|_\infty, \big\|f|_{Y\setminus \mathbb{N}}\big\|_\infty\big\}\leq\delta+\epsilon.\]
Since $\epsilon>0$ is arbitrary, it follows that
\[\|f+H_i\|\leq\delta=\big\|f|_{Y\setminus\mathbb{N}}\big\|_\infty.\]
This together with (\ref{GD}) proves the claim.

\begin{xclaim}
Let $i_1,\ldots,i_n<2^\omega$ and $f_{i_j}\in C(\mathrm{cl}_{\beta\mathbb{N}}A_{i_j})$ for each $j=1,\ldots,n$. Then
\[\bigg\|\sum_{j=1}^nf_{i_j}+H\bigg\|=\bigg\|\bigg(\sum_{j=1}^nf_{i_j}\bigg)\bigg|_{Y\setminus\mathbb{N}}\bigg\|_\infty.\]
\end{xclaim}

\noindent \emph{Proof of the claim.}
This follows by an argument similar to the one we have given in the first claim; one simply needs to replace $f$ by $\sum_{j=1}^nf_{i_j}$, $H_i$ by $H$, and $A_i$ by $A$ throughout.

\medskip

Now, let $i_1,\ldots,i_n<2^\omega$ and $f_{i_j}\in C(\mathrm{cl}_{\beta\mathbb{N}}A_{i_j})$ for each $j=1,\ldots,n$. For simplicity of the notation let
\[f=\sum_{j=1}^nf_{i_j}.\]
Let $1\leq j\neq k\leq n$. Then $A_j\cap A_k$ is finite and thus
\[\mathrm{coz}(f_{i_k})\cap\mathrm{coz}(f_{i_l})\subseteq\mathrm{cl}_{\beta\mathbb{N}}A_{i_k}\cap\mathrm{cl}_{\beta\mathbb{N}}A_{i_l}=\mathrm{cl}_{\beta\mathbb{N}}(A_{i_k}\cap A_{i_l})=A_{i_k}\cap A_{i_l}\subseteq\mathbb{N}.\]
In particular,
\[\mathrm{coz}\big(f_{i_k}|_{Y\setminus\mathbb{N}}\big)\cap\mathrm{coz}\big(f_{i_l}|_{Y\setminus\mathbb{N}}\big)=\emptyset.\]
Now, by the second claim
\[\bigg\|\Theta\bigg(\sum_{j=1}^n(f_{i_j}+H_{i_j})\bigg)\bigg\|=\|f+H\|=\big\|f|_{Y\setminus\mathbb{N}}\big\|_\infty\]
and by the first claim
\begin{eqnarray*}
\big\|f|_{Y\setminus\mathbb{N}}\big\|_\infty&=&\max\big\{\big\|f|_{(\mathrm{cl}_{\beta\mathbb{N}}A_{i_j}\setminus\mathbb{N})}\big\|_\infty:j=1,\ldots,n\big\}
\\&=&\max\big\{\big\|f_{i_j}|_{Y\setminus\mathbb{N}}\big\|_\infty:j=1,\ldots,n\big\}
\\&=&\max\big\{\|f_{i_j}+H_{i_j}\|:j=1,\ldots,n\big\}=\bigg\|\sum_{j=1}^n(f_{i_j}+H_{i_j})\bigg\|.
\end{eqnarray*}
That is, $\Theta$ is an isometry.
\end{proof}

\begin{remark}
Any sequence $f:\mathbb{N}\rightarrow(0,\infty)$ such that $\sum_{n=1}^\infty f(n)$ diverges, determines an ideal
\[{\mathscr S}_f=\bigg\{A\subseteq\mathbb{N}:\sum_{n\in A}f(n)\mbox{ converges}\bigg\}\]
in $({\mathscr P}(\mathbb{N}),\subseteq)$. This provides a more general setting to state and prove Theorems \ref{TDK} and \ref{JHF}.
\end{remark}

Let
\[{\mathscr D}=\bigg\{A\subseteq\mathbb{N}:\limsup_{n\rightarrow\infty}\frac{|A\cap\{1,\ldots,n\}|}{n}=0\bigg\}.\]
Then ${\mathscr D}$ also is an ideal in $({\mathscr P}(\mathbb{N}),\subseteq)$, called the \textit{density ideal} in $\mathbb{N}$. In other words, ${\mathscr D}$ consists of those subsets $D$ of $\mathbb{N}$ such that $D$ has \textit{asymptotic density} zero. Note that every small set in $\mathbb{N}$ has asymptotic density zero, that is ${\mathscr S}\subseteq{\mathscr D}$; the converse, however, does not hold in general. The set of all prime numbers has asymptotic density zero but it is not small. (For more information on the subject, see \cite{F}.)

The following is dual to Theorems \ref{TDK} and \ref{JHF} and may be proved analogously, replacing the ideal ${\mathscr S}$ by the ideal ${\mathscr D}$ throughout the proofs already given.

\begin{theorem}\label{DFDS}
Let
\[\mathfrak{d}_{00}=\bigg\{\mathbf{x}\in\ell_\infty:\limsup_{n\rightarrow\infty}\frac{|\{k\leq n:\mathbf{x}(k)\neq0\}|}{n}=0\bigg\}\]
and
\[\mathfrak{d}_0=\bigg\{\mathbf{x}\in\ell_\infty:\limsup_{n\rightarrow\infty}\frac{|\{k\leq n:|\mathbf{x}(k)|\geq\epsilon\}|}{n}=0\mbox{ for each }\epsilon>0\bigg\}.\]
Then
\begin{itemize}
\item[\rm(1)] $\mathfrak{d}_{00}$ is an ideal in $\ell_\infty$ and $\mathfrak{d}_0$ is a closed ideal in $\ell_\infty$.
\item[\rm(2)] $\mathfrak{d}_{00}$ is dense in $\mathfrak{d}_0$.
\item[\rm(3)] Neither $\mathfrak{d}_{00}$ nor $\mathfrak{d}_0$ is unital.
\item[\rm(4)] $\mathfrak{d}_{00}$ and $\mathfrak{d}_0$ are isometrically isomorphic to $C_{00}(Y)$ and $C_0(Y)$, respectively, for the subspace
\[Y=\bigcup\bigg\{\mathrm{cl}_{\beta\mathbb{N}}A:A\subseteq\mathbb{N}\mbox{ and }\limsup_{n\rightarrow\infty}\frac{|A\cap\{1,\ldots,n\}|}{n}=0\bigg\}\]
of $\beta\mathbb{N}$.
\item[\rm(5)] $\mathfrak{d}_{00}$ contains a copy of the normed algebra $\bigoplus_{n=1}^\infty\ell_\infty$.
\item[\rm(6)] $\mathfrak{d}_{00}/c_{00}$ contains a copy of the algebra
\[\bigoplus_{i<2^\omega}\frac{\ell_\infty}{c_{00}}.\]
\item[\rm(7)] $\mathfrak{d}_0/c_0$ contains a copy of the normed algebra
\[\bigoplus_{i<2^\omega}\frac{\ell_\infty}{c_0}.\]
\end{itemize}
\end{theorem}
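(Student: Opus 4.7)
The plan is to fit everything into the general machinery of Sections \ref{GPG} and \ref{JGGG} exactly as was done for the summable ideal in Theorems \ref{TDK} and \ref{JHF}, since $\mathscr{D}$ is an ideal in $(\mathscr{P}(\mathbb{N}),\subseteq)$ of precisely the same type. First I would note that when $\mathbb{N}$ carries the discrete topology, every $\mathbf{x}\in\ell_\infty$ satisfies $\mathrm{supp}(\mathbf{x})=\{n:\mathbf{x}(n)\neq 0\}$ and $|\mathbf{x}|^{-1}([1/n,\infty))=\{k:|\mathbf{x}(k)|\geq 1/n\}$, and a subset $A\subseteq\mathbb{N}$ has a null neighborhood in $\mathbb{N}$ if and only if $A$ itself is null (since $A$ is already open). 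Reading off the definitions, this yields $\mathfrak{d}_{00}=C^{\mathscr{D}}_{00}(\mathbb{N})$ and $\mathfrak{d}_0=C^{\mathscr{D}}_0(\mathbb{N})$. Furthermore $\mathbb{N}$ is normal (trivially), locally null (each singleton is null since $\limsup|\{n\}\cap\{1,\ldots,k\}|/k=0$), and $\mathscr{D}$ is proper ($\mathbb{N}\notin\mathscr{D}$ because $\limsup k/k=1\neq 0$).

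With these observations in hand, part (1) is immediate from Theorems \ref{TTG} and \ref{DGH}, part (2) from Theorem \ref{UDR}(3), and part (3) from Theorems \ref{HGBV} and \ref{HFBY} together with properness of $\mathscr{D}$. For part (4), Theorems \ref{UUS} and \ref{UDR} supply the required isometric isomorphisms with $C_{00}(Y)$ and $C_0(Y)$ for $Y=\lambda_{\mathscr{D}}\mathbb{N}$; to reconcile this with the stated formula, I would observe that every $A\subseteq\mathbb{N}$ is clopen in $\mathbb{N}$, hence a cozero-set, and $\mathrm{cl}_{\beta\mathbb{N}}A$ is clopen in $\beta\mathbb{N}$ by the properties of $\beta\mathbb{N}$ recalled in the Preliminaries, so $\mathrm{int}_{\beta\mathbb{N}}\mathrm{cl}_{\beta\mathbb{N}}A=\mathrm{cl}_{\beta\mathbb{N}}A$. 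The displayed formula for $Y$ then matches $\lambda_{\mathscr{D}}\mathbb{N}$ verbatim.

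For parts (5)–(7), the argument is essentially a transcription of the corresponding parts of Theorems \ref{TDK} and \ref{JHF}, with a single adjustment at the very start: one needs an infinite $A\subseteq\mathbb{N}$ with $A\in\mathscr{D}$, and I would use $A=\{n^2:n\in\mathbb{N}\}$ (so that $|A\cap\{1,\ldots,n\}|\leq\sqrt{n}$, giving density zero), or equivalently $A=\{2^n:n\in\mathbb{N}\}$ as before. Since $\mathscr{D}$ is downward closed, every subset of $A$ also lies in $\mathscr{D}$. For (5), partition $A$ into pairwise disjoint infinite pieces $A_1,A_2,\ldots$ and form the inclusion $\bigoplus_{n=1}^\infty C(\mathrm{cl}_{\beta\mathbb{N}}A_n)\hookrightarrow C_{00}(Y)$ exactly as in Theorem \ref{TDK}(4); each factor is $C(\beta A_n)=\ell_\infty$ because $A_n$ is closed in normal $\mathbb{N}$. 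For (6) and (7), use the standard almost-disjoint-family trick (enumerating rationals and extracting convergent subsequences within $A$) to produce $\{A_i:i<2^\omega\}$, infinite subsets of $A$ with pairwise finite intersections, and define the map $\Theta$ on the relevant quotients exactly as in Theorems \ref{TDK}(5) and \ref{JHF}(5).

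The arguments for well-definedness, multiplicativity, injectivity and (in the $\mathfrak{d}_0$ case) isometry of $\Theta$ then carry over unchanged, since they depend only on the almost-disjointness of the $A_i$ and on the identities $\mathrm{cl}_{\beta\mathbb{N}}A_i\cap\mathrm{cl}_{\beta\mathbb{N}}A_j=\mathrm{cl}_{\beta\mathbb{N}}(A_i\cap A_j)=A_i\cap A_j\subseteq\mathbb{N}$, both of which hold verbatim here. The only place where the specific ideal entered the earlier proofs was in the existence of the initial set $A$, which has already been handled; in particular there is no real obstacle, and the main point is simply to verify that $\{n^2\}\in\mathscr{D}$ so the whole scheme activates.
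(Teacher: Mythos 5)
Your proposal is correct and follows exactly the route the paper itself takes: the paper proves Theorem \ref{DFDS} by declaring it ``dual'' to Theorems \ref{TDK} and \ref{JHF} and replacing the ideal $\mathscr{S}$ by $\mathscr{D}$ throughout, which is precisely your transcription, and you rightly identify that the only ideal-specific input is the existence of an infinite set of asymptotic density zero (your $\{n^2:n\in\mathbb{N}\}$ or $\{2^n:n\in\mathbb{N}\}$ both work, the latter since $\mathscr{S}\subseteq\mathscr{D}$). Your preliminary reductions ($\mathfrak{d}_{00}=C^{\mathscr{D}}_{00}(\mathbb{N})$, $\mathfrak{d}_0=C^{\mathscr{D}}_0(\mathbb{N})$, local nullity and properness of $\mathscr{D}$, and the identification of $Y$ with $\lambda_{\mathscr{D}}\mathbb{N}$ via clopenness of $\mathrm{cl}_{\beta\mathbb{N}}A$) are all accurate and are the same checks carried out in the proof of Theorem \ref{TDK}.
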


\section{Banach subalgebras of $C_b(X)$}\label{KHDS}

In this section we consider certain subalgebras of $C_b(X)$ and we obtain several representation theorems. Recall that $(\mathrm{Coz}(X),\subseteq)$ is a lattice which is closed under formation of countable unions. (See the Introduction.)

\begin{lemma}\label{KHFD}
Let $X$ be a completely regular space and let $H$ be a Banach subalgebra of $C_b(X)$ such that
\begin{itemize}
\item[\rm(1)] For every $x\in X$ there exists some $h\in H$ with $h(x)\neq 0$.
\item[\rm(2)] For every $f\in C_b(X)$ we have $f\in H$, if $\mathrm{coz}(f)\subseteq\mathrm{coz}(h)$ for some $h\in H$.
\end{itemize}
Let
\[{\mathscr H}=\big\{\mathrm{coz}(h):h\in H\big\}.\]
Then $\mathscr{H}$ is a $\sigma$-ideal in $(\mathrm{Coz}(X),\subseteq)$ and
\[C^{\mathscr H}_0(X)=H.\]
\end{lemma}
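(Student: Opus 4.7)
The plan is to handle the two conclusions in sequence: first establish that $\mathscr{H}$ is a $\sigma$-ideal in $(\mathrm{Coz}(X),\subseteq)$, and then apply Proposition \ref{JFJG} to identify $C^{\mathscr H}_0(X)$ with $H$. The easy ideal axioms come first: $\emptyset=\mathrm{coz}(\mathbf{0})\in\mathscr{H}$ since $\mathbf{0}\in H$, giving nonemptiness; closure under binary joins (which in $\mathrm{Coz}(X)$ agree with unions, as noted in the preliminaries) follows from the identity $\mathrm{coz}(h_1)\cup\mathrm{coz}(h_2)=\mathrm{coz}(h_1^2+h_2^2)$ together with the algebra property of $H$; and downward closure invokes assumption (2), for if $C\in\mathrm{Coz}(X)$ with $C\subseteq\mathrm{coz}(h)$ for some $h\in H$, we can write $C=\mathrm{coz}(f)$ with $f\in C_b(X)$, whence $\mathrm{coz}(f)\subseteq\mathrm{coz}(h)$ forces $f\in H$, so $C\in\mathscr{H}$.

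The main obstacle will be the $\sigma$-ideal requirement, i.e.\ closure under countable joins. Given $h_n\in H$ for $n=1,2,\ldots$, the preliminaries identify $\bigvee_n\mathrm{coz}(h_n)$ with $\bigcup_n\mathrm{coz}(h_n)$, so I need to produce a single $h\in H$ with $\mathrm{coz}(h)=\bigcup_n\mathrm{coz}(h_n)$. My candidate is
\[h=\sum_{n=1}^\infty\frac{1}{2^n}\cdot\frac{h_n^2}{1+\|h_n\|^2},\]
each partial sum of which lies in $H$ since $H$ is a subalgebra. The Weierstrass $M$-test yields absolute uniform convergence, so the series converges in the supremum norm; the Banach hypothesis on $H$ is essential at this step, since closedness of $H$ in $C_b(X)$ is what guarantees that the limit belongs to $H$. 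Each summand is nonnegative and vanishes precisely on $\mathrm{z}(h_n)$, so $\mathrm{coz}(h)$ coincides with $\bigcup_n\mathrm{coz}(h_n)$, confirming that $\mathscr{H}$ is a $\sigma$-ideal.

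Once this is known, Proposition \ref{JFJG} (applied with $\mathscr{L}=\mathrm{Coz}(X)$, which trivially contains $\mathrm{Coz}(X)$) yields
\[C^{\mathscr H}_0(X)=\big\{f\in C_b(X):\mathrm{coz}(f)\in\mathscr{H}\big\}.\]
The inclusion $H\subseteq C^{\mathscr H}_0(X)$ is immediate from the very definition of $\mathscr{H}$. Conversely, if $f\in C^{\mathscr H}_0(X)$, then $\mathrm{coz}(f)=\mathrm{coz}(h)$ for some $h\in H$, so in particular $\mathrm{coz}(f)\subseteq\mathrm{coz}(h)$, and condition (2) gives $f\in H$. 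This yields $C^{\mathscr H}_0(X)=H$ and completes the proof. I note that hypothesis (1) plays no role in the argument itself; it is recorded in the statement because subsequent applications of the lemma will combine it with Theorem \ref{UF} to recognize $H$ as isometrically isomorphic to $C_0(\lambda_\mathscr{H} X)$.
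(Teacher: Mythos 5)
Your proof is correct and follows essentially the same route as the paper: the key step in both is the weighted series $\sum_n 2^{-n}h_n^2/(\text{normalizer})$ converging in $H$ by the $M$-test and closedness, which shows $\mathscr{H}$ is closed under countable unions, after which condition (2) finishes the identification $C^{\mathscr H}_0(X)=H$ (the paper redoes the union argument inline rather than citing Proposition \ref{JFJG}, but the content is identical). Your normalizer $1+\|h_n\|^2$ is a small improvement over the paper's $\|h_n^2\|$, since it avoids the ``without loss of generality $h_n\neq\mathbf{0}$'' step and thereby confirms your observation that hypothesis (1) is not actually needed for this lemma.
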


\begin{proof}
For each positive integer $n$ let $h_n\in H$ and (without any loss of generality) assume that $h_n\neq\mathbf{0}$. Then
\[\bigcup_{n=1}^\infty\mathrm{coz}(h_n)=\mathrm{coz}(h)\]
with
\[h=\sum_{n=1}^\infty\frac{h_n^2}{2^n\|h_n^2\|}.\]
That $h$ is well defined and continuous follows from the Weierstrass $M$-test. Observe that $h\in H$, as $h$ is the limit of a sequence in $H$ and $H$ is closed in $C_b(X)$ (since it is a Banach subalgebra of $C_b(X)$). That is, ${\mathscr H}$ is closed under formation of countable unions. This together with (2) shows that $\mathscr{H}$ is a $\sigma$-ideal in $(\mathrm{Coz}(X),\subseteq)$.

Next, we check that $H=C^{\mathscr H}_0(X)$. It is trivial that $H\subseteq C^{\mathscr H}_0(X)$. Let $f\in C^{\mathscr H}_0(X)$. For each positive integer $n$ let $g_n\in H$ such that
\[|f|^{-1}\big([1/n,\infty)\big)\subseteq\mathrm{coz}(g_n).\]
Without any loss of generality we may assume that $g_n\neq\mathbf{0}$ for each positive integer $n$. (Note that $H$ contains non-zero elements by (1), unless $X$ is empty, in which case the whole discussion will be empty as well!) We have
\begin{equation}\label{JKJTF}
\mathrm{coz}(f)=\bigcup_{n=1}^\infty|f|^{-1}\big([1/n,\infty)\big)\subseteq\bigcup_{n=1}^\infty\mathrm{coz}(g_n)=\mathrm{coz}(g)
\end{equation}
where
\[g=\sum_{n=1}^\infty\frac{g_n^2}{2^n\|g_n^2\|}.\]
Observe that $g\in H$ by the above argument. By (2), it follows from (\ref{JKJTF}) that $f\in H$. This shows that $C^{\mathscr H}_0(X)\subseteq H$ and completes the proof.
\end{proof}

\begin{theorem}\label{HGL}
Let $X$ be a completely regular space and let $H$ be a Banach subalgebra of $C_b(X)$ such that
\begin{itemize}
\item[\rm(1)] For every $x\in X$ there exists some $h\in H$ with $h(x)\neq 0$.
\item[\rm(2)] For every $f\in C_b(X)$ we have $f\in H$, if $\mathrm{coz}(f)\subseteq\mathrm{coz}(h)$ for some $h\in H$.
\end{itemize}
Then $H$ is isometrically isomorphic to $C_0(Y)$ with the locally compact Hausdorff space $Y$ given by
\[Y=\bigcup\big\{\mathrm{int}_{\beta X}\mathrm{cl}_{\beta X}C:C\in\mathrm{Coz}(X)\mbox{ and }\mathrm{cl}_XC\subseteq\mathrm{coz}(h)\mbox{ for some }h\in H\big\},\]
considered as a subspace of $\beta X$. Furthermore,
\begin{itemize}
\item[\rm(a)] $X$ is dense in $Y$.
\item[\rm(b)] $H$ is unital if and only if $H$ contains a non-vanishing element if and only if $Y$ is compact.
\end{itemize}
\end{theorem}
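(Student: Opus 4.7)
The plan is to reduce Theorem \ref{HGL} to the already-established Theorem \ref{UF} by choosing the right ideal on $(\mathrm{Coz}(X),\subseteq)$, namely the ideal supplied by Lemma \ref{KHFD}. Concretely, I would set
\[\mathscr{H}=\big\{\mathrm{coz}(h):h\in H\big\},\]
and invoke Lemma \ref{KHFD} to obtain at once that $\mathscr{H}$ is a $\sigma$-ideal in $(\mathrm{Coz}(X),\subseteq)$ and that $C^{\mathscr{H}}_0(X)=H$. In particular, $(\mathrm{Coz}(X),\subseteq)$ is an upper semi-lattice (as noted in the Preliminaries), so $\mathscr{H}$ fits the framework of Section \ref{JGGG}.

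Next I would verify that $X$ is locally null with respect to $\mathscr{H}$. This is immediate from hypothesis (1): given $x\in X$, pick $h\in H$ with $h(x)\neq0$; then $\mathrm{coz}(h)\in\mathscr{H}$ is an open (hence self-interior) neighborhood of $x$ in $X$, and it is null by construction. Hence Theorem \ref{UF} applies to $X$ and $\mathscr{H}$, yielding an isometric algebra isomorphism $C^{\mathscr{H}}_0(X)\cong C_0(\lambda_{\mathscr{H}}X)$, which via the identification $H=C^{\mathscr{H}}_0(X)$ becomes $H\cong C_0(\lambda_{\mathscr{H}}X)$.

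The one point requiring a brief check is that the space $\lambda_{\mathscr{H}}X$ furnished by Definition \ref{HWA} is exactly the space $Y$ appearing in the statement of Theorem \ref{HGL}. This is a direct unraveling of definitions: for $C\in\mathrm{Coz}(X)$, the phrase ``$\mathrm{cl}_XC$ has a null neighborhood in $X$'' means there is some $U\in\mathscr{H}$ with $\mathrm{cl}_XC\subseteq\mathrm{int}_XU$; since every element of $\mathscr{H}$ has the form $\mathrm{coz}(h)$ and cozero-sets are open, this is equivalent to the existence of $h\in H$ with $\mathrm{cl}_XC\subseteq\mathrm{coz}(h)$. Thus the unions defining $\lambda_{\mathscr{H}}X$ and $Y$ are identical. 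In particular $Y$ is locally compact Hausdorff (being open in $\beta X$), and its uniqueness up to homeomorphism follows from the Banach--Stone-type result for $C_0$ quoted before Theorem \ref{UDR}.

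For the supplementary statements, (a) is Theorem \ref{UF}(1), and (b) follows from Theorem \ref{UF}(4) together with the observation that $\mathscr{H}$ is non-proper precisely when $X\in\mathscr{H}$, i.e.\ when $X=\mathrm{coz}(h)$ for some $h\in H$ --- equivalently, when $H$ contains a non-vanishing element. I do not anticipate a real obstacle here; the entire argument is a translation of hypotheses (1) and (2) into the language already developed, and the only mild subtlety is the identification $\lambda_{\mathscr{H}}X=Y$, which hinges on the openness of cozero-sets.
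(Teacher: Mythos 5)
Your proposal is correct and follows essentially the same route as the paper: the paper's proof likewise sets $\mathscr{H}=\{\mathrm{coz}(h):h\in H\}$, invokes Lemma \ref{KHFD} to get $C^{\mathscr H}_0(X)=H$, notes local nullness from hypothesis (1), and applies Theorem \ref{UF}. Your extra remarks (the identification $\lambda_{\mathscr H}X=Y$ via openness of cozero-sets, and the translation of non-properness of $\mathscr H$ into the existence of a non-vanishing element for part (b)) are details the paper leaves implicit, and they are handled correctly.
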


\begin{proof}
Let
\[{\mathscr H}=\big\{\mathrm{coz}(h):h\in H\big\}.\]
Then $\mathscr{H}$ is an ideal in $(\mathrm{Coz}(X),\subseteq)$ and
\[C^{\mathscr H}_0(X)=H\]
by Lemma \ref{KHFD}. Note that $X$ is locally null with respect to $\mathscr{H}$ by (1). By Theorem \ref{UF} it follows that $C^{\mathscr H}_0(X)$ is isometrically isomorphic to $C_0(Y)$ with $Y$ as given in the statement of the theorem. Also, (a)--(b) follow from Theorem \ref{UF}.
\end{proof}

The following is the main result of \cite{Ko11}; we deduce it here as a corollary to Theorem \ref{HGL}.

\begin{corollary}\label{KKK}
Let $X$ be a completely regular space and let $H$ be a Banach subalgebra of $C_b(X)$ such that
\begin{itemize}
\item[\rm(1)] For every $x\in X$ there exists some $h\in H$ with $h(x)\neq 0$.
\item[\rm(2)] For every $f\in C_b(X)$ we have $f\in H$ if $\mathrm{supp}(f)\subseteq\mathrm{supp}(h)$ for some $h\in H$.
\end{itemize}
Then $H$ is isometrically isomorphic to $C_0(Y)$ with the locally compact Hausdorff space $Y$ given by
\[Y=\bigcup\big\{\mathrm{int}_{\beta X}\mathrm{cl}_{\beta X}\mathrm{coz}(h):h\in H\big\},\]
considered as a subspace of $\beta X$.
\end{corollary}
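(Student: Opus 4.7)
The plan is to derive this as a corollary of Theorem \ref{HGL}. Condition (1) is identical, and condition (2) of the corollary is stronger than condition (2) of Theorem \ref{HGL}: if $f\in C_b(X)$ and $\mathrm{coz}(f)\subseteq\mathrm{coz}(h)$ for some $h\in H$, then taking closures in $X$ yields $\mathrm{supp}(f)\subseteq\mathrm{supp}(h)$, and membership of $f$ in $H$ follows. Theorem \ref{HGL} therefore applies, and we obtain an isometric isomorphism $H\cong C_0(Y_1)$ with
\[Y_1=\bigcup\big\{\mathrm{int}_{\beta X}\mathrm{cl}_{\beta X}C:C\in\mathrm{Coz}(X)\mbox{ and }\mathrm{cl}_XC\subseteq\mathrm{coz}(h)\mbox{ for some }h\in H\big\}.\]
The remaining task is to prove $Y_1=Y_2$, where $Y_2=\bigcup\{\mathrm{int}_{\beta X}\mathrm{cl}_{\beta X}\mathrm{coz}(h):h\in H\}$ is the space appearing in the statement. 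Uniqueness and the other conclusions (density of $X$, (b)) will then follow verbatim from Theorem \ref{HGL}.

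The inclusion $Y_1\subseteq Y_2$ is immediate: if $\mathrm{cl}_XC\subseteq\mathrm{coz}(h)$ then $C\subseteq\mathrm{coz}(h)$, whence $\mathrm{int}_{\beta X}\mathrm{cl}_{\beta X}C\subseteq\mathrm{int}_{\beta X}\mathrm{cl}_{\beta X}\mathrm{coz}(h)\subseteq Y_2$. The reverse inclusion $Y_2\subseteq Y_1$ is the real content and is where the strengthened hypothesis of the corollary must actually be used, since the obvious candidate $C=\mathrm{coz}(h)$ yields only $\mathrm{cl}_XC=\mathrm{supp}(h)$, which is typically \emph{not} contained in the cozero-set of any element of $H$; this is the main obstacle.

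To surmount it, I would pick $h\in H$ and $p\in\mathrm{int}_{\beta X}\mathrm{cl}_{\beta X}\mathrm{coz}(h)$, then exploit complete regularity of the compact Hausdorff space $\beta X$ to produce a continuous $\phi:\beta X\rightarrow[0,1]$ with $\phi(p)=1$ and $\phi\equiv\mathbf{0}$ on $\beta X\setminus\mathrm{int}_{\beta X}\mathrm{cl}_{\beta X}\mathrm{coz}(h)$. Set $f=\phi|_X$. Since
\[\mathrm{coz}(f)=X\cap\mathrm{coz}(\phi)\subseteq X\cap\mathrm{cl}_{\beta X}\mathrm{coz}(h)=\mathrm{cl}_X\mathrm{coz}(h)=\mathrm{supp}(h),\]
we obtain $\mathrm{supp}(f)\subseteq\mathrm{supp}(h)$, and hypothesis (2) of the corollary gives $f\in H$. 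This $f$ supplies the witness: let $C=f^{-1}((1/2,1])=\mathrm{coz}(\max\{\mathbf{0},f-\mathbf{1/2}\})\in\mathrm{Coz}(X)$. The chain $\mathrm{cl}_XC\subseteq f^{-1}([1/2,1])\subseteq\mathrm{coz}(f)$ shows $\mathrm{cl}_XC\subseteq\mathrm{coz}(f)$ with $f\in H$, so $\mathrm{int}_{\beta X}\mathrm{cl}_{\beta X}C\subseteq Y_1$. Finally, applying the standard density identity $\mathrm{cl}_{\beta X}U=\mathrm{cl}_{\beta X}(U\cap X)$ (valid for open $U\subseteq\beta X$) to the open set $U=\phi^{-1}((1/2,1])$ gives $\mathrm{cl}_{\beta X}C=\mathrm{cl}_{\beta X}U\supseteq U\ni p$, and since $U$ is open we conclude $p\in\mathrm{int}_{\beta X}\mathrm{cl}_{\beta X}C\subseteq Y_1$. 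This establishes $Y_2\subseteq Y_1$ and completes the proof.
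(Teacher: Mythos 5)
Your proposal is correct and follows essentially the same route as the paper: reduce to Theorem \ref{HGL} by observing that condition (2) of the corollary implies condition (2) of the theorem, and then prove the two descriptions of $Y$ coincide by constructing, for a point $p\in\mathrm{int}_{\beta X}\mathrm{cl}_{\beta X}\mathrm{coz}(h)$, an auxiliary function vanishing off that set whose membership in $H$ is forced by hypothesis (2). The only difference is a minor streamlining: where the paper uses two nested Urysohn functions $f,g$ on $\beta X$ (with $g|_X\in H$ witnessing $\mathrm{cl}_X\mathrm{coz}(f|_X)\subseteq\mathrm{coz}(g|_X)$), you let a single function $f=\phi|_X\in H$ play both roles via the level set $C=f^{-1}((1/2,1])$, which satisfies $\mathrm{cl}_XC\subseteq\mathrm{coz}(f)$ automatically.
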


\begin{proof}
Trivially, for any $f\in C_b(X)$ and $h\in H$ we have $\mathrm{supp}(f)\subseteq\mathrm{supp}(h)$ provided that $\mathrm{coz}(f)\subseteq\mathrm{coz}(h)$. That is,  condition (2) here implies condition (2) in Theorem \ref{HGL}. The result now follows from Theorem \ref{HGL} with
\[Y=\bigcup\big\{\mathrm{int}_{\beta X}\mathrm{cl}_{\beta X}C:C\in\mathrm{Coz}(X)\mbox{ and }\mathrm{cl}_XC\subseteq\mathrm{coz}(h)\mbox{ for some }h\in H\big\}.\]
Denote
\[Y'=\bigcup\big\{\mathrm{int}_{\beta X}\mathrm{cl}_{\beta X}\mathrm{coz}(h):h\in H\big\}.\]
To complete the proof we need to show that $Y=Y'$. By the definitions, it is clear that $Y\subseteq Y'$. Let $t\in Y'$. Then $t\in\mathrm{int}_{\beta X}\mathrm{cl}_{\beta X}\mathrm{coz}(h)$ for some $h\in H$. By normality of $\beta X$ there exists open subspaces $U$ and $V$ of $\beta X$ such that
\[t\in U\subseteq\mathrm{cl}_{\beta X}U\subseteq V\subseteq\mathrm{cl}_{\beta X}V\subseteq\mathrm{int}_{\beta X}\mathrm{cl}_{\beta X}\mathrm{coz}(h).\]
By the Urysohn Lemma there exist continuous $f,g:\beta X\rightarrow[0,1]$ such that
\[f(t)=1\;\;\;\;\mbox{ and }\;\;\;\;f|_{\beta X\setminus U}=\mathbf{0}\]
and
\[g|_{\mathrm{cl}_{\beta X}U}=\mathbf{1}\;\;\;\;\mbox{ and }\;\;\;\;g|_{\beta X\setminus V}=\mathbf{0}.\]
We show that
\[g|_X\in H,\;\;\;\;\mbox{ }\;\;\;\;\mathrm{cl}_X\mathrm{coz}(f|_X)\subseteq\mathrm{coz}(g|_X)\;\;\;\;\mbox{ and }\;\;\;\;t\in\mathrm{int}_{\beta X}\mathrm{cl}_{\beta X}\mathrm{coz}(f|_X).\]
This will prove that $t\in Y$. Note that
\[\mathrm{coz}(g|_X)=X\cap\mathrm{coz}(g),\]
and
\[\mathrm{cl}_{\beta X}\big(X\cap\mathrm{coz}(g)\big)=\mathrm{cl}_{\beta X}\mathrm{coz}(g),\]
as $\mathrm{coz}(g)$ is open in $\beta X$ and $X$ is dense in $\beta X$. Since $\mathrm{coz}(g)\subseteq V$ by the definition of $g$, and $\mathrm{cl}_{\beta X}V\subseteq\mathrm{cl}_{\beta X}\mathrm{coz}(h)$ by the choice of $V$, we have
\begin{equation}\label{JY}
\mathrm{cl}_{\beta X}\mathrm{coz}(g|_X)\subseteq\mathrm{cl}_{\beta X}\mathrm{coz}(h).
\end{equation}
Thus $\mathrm{supp}(g|_X)\subseteq\mathrm{supp}(h)$, if we intersect the two sides of (\ref{JY}) with $X$. Therefore $g|_X\in H$ by (2). By the same argument, since $\mathrm{coz}(f)\subseteq U$ and $\mathrm{cl}_{\beta X}U\subseteq\mathrm{coz}(g)$ by the definitions of $f$ and $g$, respectively, we have $\mathrm{cl}_X\mathrm{coz}(f|_X)\subseteq\mathrm{coz}(g|_X)$. Finally, note that
\[t\in\mathrm{int}_{\beta X}\mathrm{cl}_{\beta X}\mathrm{coz}(f|_X),\]
as $t\in\mathrm{int}_{\beta X}\mathrm{cl}_{\beta X}\mathrm{coz}(f)$ (since $t\in\mathrm{coz}(f)$) and $\mathrm{cl}_{\beta X}\mathrm{coz}(f)=\mathrm{cl}_{\beta X}\mathrm{coz}(f|_X)$ by the above argument.
\end{proof}

\begin{remark}
As it is observed in \cite{Ko11}, condition (2) in Corollary \ref{KKK} may be abstractified into the following condition.
\begin{itemize}
\item[\rm(2)$'$] For every $f\in C_b(X)$ we have $f\in H$ if $\mathrm{ann}(h)\subseteq\mathrm{ann}(f)$ for some $h\in H$.
\end{itemize}
Here
\[\mathrm{ann}(f)=\big\{g\in C_b(X): fg=\mathbf{0}\big\}\]
for every $f\in C_b(X)$.

To show that (2) (in Corollary \ref{KKK}) implies (2)$'$, let $f\in C_b(X)$ with $\mathrm{ann}(h)\subseteq\mathrm{ann}(f)$ for some $h\in H$. Suppose to the contrary that $\mathrm{supp}(f)\nsubseteq\mathrm{supp}(h)$. Then $x\notin\mathrm{supp}(h)$ for some $x\in\mathrm{supp}(f)$. There exists a continuous $k:X\rightarrow[0,1]$ with \[k(x)=1\;\;\;\;\mbox{ and }\;\;\;\;k|_{\mathrm{supp}(h)}=\mathbf{0}.\]
Then $k^{-1}((1/2,1])$ is an open neighborhood of $x$ in $X$. Thus
\[A=k^{-1}\big((1/2,1]\big)\cap\mathrm{coz}(f)\neq\emptyset.\]
Let $t\in A$. Then $k(t)f(t)\neq0$. But $k\in\mathrm{ann}(h)$, as $kh=\mathbf{0}$ by the definition of $k$. Thus $k\in\mathrm{ann}(f)$, which is a contradiction. Therefore $\mathrm{supp}(f)\subseteq\mathrm{supp}(h)$. It now follows from (2) that $f\in H$.

Next, we check that (2)$'$ implies (2). Let $f\in C_b(X)$ with $\mathrm{supp}(f)\subseteq\mathrm{supp}(h)$ for some $h\in H$. Suppose to the contrary that $\mathrm{ann}(h)\nsubseteq\mathrm{ann}(f)$. Then $g\notin\mathrm{ann}(f)$ for some $g\in\mathrm{ann}(h)$. Thus $gf\neq\mathbf{0}$. Let $t\in X$ with $g(t)f(t)\neq0$. Let  $r=|g(t)|>0$. Note that $t\in\mathrm{supp}(f)$, as $f(t)\neq0$, and therefore $t\in\mathrm{supp}(h)$. Now $|g|^{-1}((r/2,\infty))$ is an open neighborhood of $t$ in $X$. Thus
\[B=|g|^{-1}\big((r/2,\infty)\big)\cap\mathrm{coz}(h)\neq\emptyset.\]
Let $x\in B$. Then $g(x)h(x)\neq 0$, which is a contradiction, as $gh=\mathbf{0}$ by the way $g$ is chosen. Therefore $\mathrm{ann}(h)\subseteq\mathrm{ann}(f)$. By (2)$'$, it follows that $f\in H$.
\end{remark}

Following \cite{GJ}, we call an ideal $H$ of $C_b(X)$ a \textit{$z$-ideal} if $\mathrm{z}(f)=\mathrm{z}(h)$, where $f\in C_b(X)$ and $h\in H$, implies that $f\in H$.

\begin{theorem}\label{GLS}
Let $X$ be a regular Lindel\"{o}f space. Let $H$ be a closed $z$-ideal in $C_b(X)$ with empty hull. Then $H$ is isometrically isomorphic to $C_0(Y)$ for the locally compact Hausdorff space $Y$ given by
\[Y=\bigcup\big\{\mathrm{int}_{\beta X}\mathrm{cl}_{\beta X}\mathrm{coz}(h):h\in H\big\},\]
considered as a subspace of $\beta X$. Furthermore,
\begin{itemize}
  \item $X$ is dense in $Y$.
  \item $C_0(Y)=C_{00}(Y)$; in particular, $Y$ is countably compact.
  \item $Y$ is compact if and only $H$ is unital.
\end{itemize}
\end{theorem}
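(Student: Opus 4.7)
The plan is to deduce this from Theorem \ref{YUS} applied to the ideal $\mathscr{H}=\{\mathrm{coz}(h):h\in H\}$ inside the upper semi-lattice $(\mathrm{Coz}(X),\subseteq)$. The preparatory work splits into three small checks: that $\mathscr{H}$ really is a $\sigma$-ideal with $C^{\mathscr{H}}_0(X)=H$ (via Lemma \ref{KHFD}), that $X$ is locally null with respect to $\mathscr{H}$, and that the description of $Y$ furnished by Theorem \ref{YUS} matches the one asserted here.

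First I would verify the hypotheses of Lemma \ref{KHFD}. Condition (1) of that lemma is just the empty-hull assumption on $H$. For condition (2), suppose $f\in C_b(X)$ and $\mathrm{coz}(f)\subseteq\mathrm{coz}(h)$ for some $h\in H$. Since $H$ is an ideal in $C_b(X)$, the product $fh$ lies in $H$, and the containment yields
\[\mathrm{coz}(fh)=\mathrm{coz}(f)\cap\mathrm{coz}(h)=\mathrm{coz}(f),\]
so $\mathrm{z}(fh)=\mathrm{z}(f)$; the $z$-ideal property of $H$ then forces $f\in H$. Lemma \ref{KHFD} thus delivers both that $\mathscr{H}$ is a $\sigma$-ideal in $(\mathrm{Coz}(X),\subseteq)$ and that $C^{\mathscr{H}}_0(X)=H$. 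Local nullness of $X$ with respect to $\mathscr{H}$ is then immediate: given $x\in X$ the empty-hull hypothesis supplies $h\in H$ with $h(x)\neq 0$, and $\mathrm{coz}(h)$ is an open member of $\mathscr{H}$ containing $x$.

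Since every regular Lindel\"{o}f space is normal, Theorem \ref{YUS} applies with $\mathscr{I}=\mathscr{H}$ and $\mathscr{L}=\mathrm{Coz}(X)$, yielding at one stroke that $H=C^{\mathscr{H}}_0(X)$ is isometrically isomorphic to $C_0(Y')$ for the unique locally compact Hausdorff space
\[Y'=\bigcup\{\mathrm{int}_{\beta X}\mathrm{cl}_{\beta X}U:U\text{ is null}\},\]
together with the density of $X$ in $Y'$, the equality $C_0(Y')=C_{00}(Y')$, countable compactness of $Y'$, and the equivalence of compactness of $Y'$ with unitality of $H$ (via $\mathscr{H}$ being non-proper). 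It remains only to identify $Y'$ with the $Y$ of the present statement: the members of $\mathscr{H}$ are exactly the sets $\mathrm{coz}(h)$ with $h\in H$ (downward closure being precisely what we verified above), so taking the union over $U$ null is the same as taking the union over $U=\mathrm{coz}(h)$ with $h\in H$, giving $Y'=Y$. The one step requiring actual argument is the verification of Lemma \ref{KHFD}'s condition (2); the rest is bookkeeping that packages the $z$-ideal hypothesis into the framework of Theorem \ref{YUS}.
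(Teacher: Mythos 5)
Your proposal is correct and follows essentially the same route as the paper: verify conditions (1) and (2) of Lemma \ref{KHFD} (using exactly the $fh$ trick with the $z$-ideal hypothesis), note that $X$ is normal and locally null with respect to $\mathscr{H}=\{\mathrm{coz}(h):h\in H\}$, and invoke Theorem \ref{YUS}. Your extra remark identifying $\bigcup\{\mathrm{int}_{\beta X}\mathrm{cl}_{\beta X}U:U\mbox{ null}\}$ with the stated $Y$ is a worthwhile explicit step that the paper leaves implicit.
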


\begin{proof}
We verify that $H$ satisfies the assumption of Lemma \ref{KHFD}. First, note that $X$ is completely regular (indeed normal), as it is regular and Lindel\"{o}f, and $H$ is a Banach subalgebra of $C_b(X)$, as it is a closed ideal in $C_b(X)$. Condition (1) in Lemma \ref{KHFD} holds, as $H$ is of empty hull. Let $\mathrm{coz}(f)\subseteq\mathrm{coz}(h)$ where $f\in C_b(X)$ and $h\in H$. Then $\mathrm{coz}(f)=\mathrm{coz}(fh)$, or equivalently, $\mathrm{z}(f)=\mathrm{z}(fh)$. But $fh\in H$, as $H$ is an ideal, and therefore $f\in H$, as $H$ is a $z$-ideal. This shows that condition (2) in Lemma \ref{KHFD} holds. By Lemma \ref{KHFD} the set
\[{\mathscr H}=\big\{\mathrm{coz}(h):h\in H\big\}\]
is a $\sigma$-ideal in $(\mathrm{Coz}(X),\subseteq)$ and
\[C^{\mathscr H}_0(X)=H.\]
Note that $X$ is locally null (with respect to $\mathscr{H}$), as $H$ is of empty hull. Theorem \ref{YUS} now concludes the proof.
\end{proof}

\section{Continuous mappings with measure-zero cozero-set}\label{HGF}

This section deals with continuous mappings with measure-zero cozero-set. The natural setting to state and prove our results is the one of topological measure spaces.

\begin{notation}\label{GGH}
Let $(X,{\mathscr B},\mu)$ be a measure space. Denote
\[{\mathscr M}=\big\{B\in{\mathscr B}:\mu(B)=0\big\}.\]
\end{notation}

Note that if $(X,{\mathscr B},\mu)$ is a measure space then $({\mathscr B},\subseteq)$ is an upper semi-lattice (indeed, $\bigvee{\mathscr C}=\bigcup{\mathscr C}$ for any countable subset ${\mathscr C}$ of ${\mathscr B}$). The following holds trivially.

\begin{lemma}\label{KGGF}
Let $(X,{\mathscr B},\mu)$ be a measure space. Then ${\mathscr M}$ is a $\sigma$-ideal in $({\mathscr B},\subseteq)$.
\end{lemma}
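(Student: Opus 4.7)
The plan is to verify, in turn, the three defining properties of a $\sigma$-ideal for $\mathscr{M}$ inside the upper semi-lattice $(\mathscr{B},\subseteq)$, relying only on the standard axioms of a measure space together with the fact that countable suprema in $(\mathscr{B},\subseteq)$ exist and coincide with ordinary unions (since $\mathscr{B}$ is a $\sigma$-algebra).

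First I would observe that $\mathscr{M}$ is non-empty because $\emptyset\in\mathscr{B}$ and $\mu(\emptyset)=0$. Then, for the downward closure requirement, suppose $t\in\mathscr{B}$ and $t\subseteq A$ with $A\in\mathscr{M}$. By monotonicity of $\mu$ (an immediate consequence of finite additivity together with non-negativity, applied to the decomposition $A=t\cup(A\setminus t)$) we obtain $\mu(t)\leq\mu(A)=0$, so $t\in\mathscr{M}$.

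For the remaining (and strongest) requirement, let $\mathscr{C}\subseteq\mathscr{M}$ be countable. Since $\mathscr{B}$ is a $\sigma$-algebra, $\bigcup\mathscr{C}\in\mathscr{B}$, so $\bigvee\mathscr{C}=\bigcup\mathscr{C}$ in $(\mathscr{B},\subseteq)$. Enumerating $\mathscr{C}=\{A_n:n\in\mathbb{N}\}$ (allowing repetitions if $\mathscr{C}$ is finite) and applying countable subadditivity of $\mu$, we get
\[
\mu\Big(\bigcup_{n=1}^\infty A_n\Big)\leq\sum_{n=1}^\infty\mu(A_n)=0,
\]
so $\bigvee\mathscr{C}\in\mathscr{M}$. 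This simultaneously handles the binary case $A\vee B=A\cup B\in\mathscr{M}$ needed for $\mathscr{M}$ to be an ideal, so no separate argument is required for finite joins.

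There is essentially no obstacle here; the proof is a one-line application of monotonicity plus countable subadditivity, and the only conceptual point worth flagging explicitly is that the countable join in the upper semi-lattice $(\mathscr{B},\subseteq)$ really does coincide with the set-theoretic union (which is why $\sigma$-subadditivity of $\mu$ is the right tool). I would write the proof in the compact form above, without separating out the binary ideal axiom.
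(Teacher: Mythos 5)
Your proof is correct and fills in exactly the routine verification the paper has in mind: the paper offers no argument at all, merely remarking that the lemma ``holds trivially'' after noting that countable joins in $(\mathscr{B},\subseteq)$ are set-theoretic unions. Your use of monotonicity for downward closure and countable subadditivity for countable joins (subsuming the binary case) is the intended argument.
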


A \textit{topological measure space} is a quadruple $(X,{\mathscr O},{\mathscr B},\mu)$ where $(X,{\mathscr B},\mu)$ is a measure space and $(X,{\mathscr O})$ is a topological space such that ${\mathscr O}\subseteq {\mathscr B}$, that is, every open set (and thus every Borel set) is measurable. Let $(X,{\mathscr O},{\mathscr B},\mu)$ be a topological measure space. By $X$ (if used without any specification) we mean the topological space $(X,{\mathscr O})$.

\begin{definition}\label{KHFD}
Let $(X,{\mathscr O},{\mathscr B},\mu)$ be a topological measure space. The measure $\mu$ is said to be \textit{locally null} if every $x\in X$ has a $\mu$-null neighborhood in $X$.
\end{definition}

The following follows trivially from the definitions.

\begin{lemma}\label{KFFG}
Let $(X,{\mathscr O},{\mathscr B},\mu)$ be a topological measure space. Then $X$ is locally null (with respect to the ideal $\mathscr{M}$) if and only if $\mu$ is locally null.
\end{lemma}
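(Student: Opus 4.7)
The plan is a direct unpacking of definitions; there is no substantive work to do. With $\mathscr{L}=\mathscr{B}$ and $\mathscr{I}=\mathscr{M}$, an element $U$ of the ambient upper semi-lattice is null precisely when $\mu(U)=0$, so a null neighborhood of $x$ in $X$ in the ideal-theoretic sense of Section \ref{GPG} is exactly a set $U\in\mathscr{B}$ with $x\in\mathrm{int}_X U$ and $\mu(U)=0$, which is precisely what Definition \ref{KHFD} means by a $\mu$-null neighborhood.

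For the forward direction, I would assume $X$ is locally null with respect to $\mathscr{M}$; given $x\in X$, pick a null neighborhood $U$ of $x$, so $U\in\mathscr{M}$, hence $\mu(U)=0$, and $U$ witnesses that $\mu$ is locally null at $x$. For the reverse direction, assume $\mu$ is locally null; given $x\in X$, pick a $\mu$-null neighborhood $V$ of $x$. If one reads the statement $\mu(V)=0$ as requiring $V\in\mathscr{B}$, then $V\in\mathscr{M}$ already and we are done; otherwise one replaces $V$ with $\mathrm{int}_X V$, which lies in $\mathscr{O}\subseteq\mathscr{B}$, still contains $x$ in its interior, and has measure zero by monotonicity, so again we produce a member of $\mathscr{M}$ that is a neighborhood of $x$.

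The main (and only) thing to check is that the two senses in which the word ``null'' is being used — the abstract ideal-theoretic notion from Section \ref{GPG} and the measure-theoretic notion of Definition \ref{KHFD} — really coincide on the nose under the identification $\mathscr{I}=\mathscr{M}\subseteq\mathscr{B}=\mathscr{L}$. They do, by the very definition of $\mathscr{M}$ in Notation \ref{GGH}, so the lemma follows. The role of this lemma in the sequel is purely translational: it lets one invoke the general results of Sections \ref{GPG} and \ref{JGGG} (in particular Theorems \ref{UUS} and \ref{UDR}) with the concrete hypothesis ``$\mu$ is locally null'' in place of the abstract ``$X$ is locally null with respect to $\mathscr{I}$''.
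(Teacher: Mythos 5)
Your proof is correct and takes essentially the same approach as the paper, which disposes of this lemma with the single remark that it ``follows trivially from the definitions''; your unpacking --- that a null neighborhood with respect to $\mathscr{M}$ is by definition a measurable neighborhood of measure zero, which is exactly a $\mu$-null neighborhood in the sense of Definition \ref{KHFD} --- is precisely the intended triviality. The measurability caveat you raise is already settled by the paper's convention that a $\mu$-null set is an element of $\mathscr{M}\subseteq\mathscr{B}$, so the fallback via $\mathrm{int}_X V$ is not needed.
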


The following theorem is the main results of this section.

\begin{theorem}\label{KOIJF}
Let $(X,{\mathscr O},{\mathscr B},\mu)$ be a topological measure space. Let
\[\mathfrak{M}_{00}(X)=\big\{f\in C_b(X):\mathrm{supp}(f)\mbox{ has a $\mu$-null neighborhood in $X$}\big\},\]
and
\[\mathfrak{M}_0(X)=\big\{f\in C_b(X):\mu\big(\mathrm{coz}(f)\big)=0\big\}.\]
Then
\begin{itemize}
\item[\rm(1)] $\mathfrak{M}_{00}(X)$ is an ideal in $C_b(X)$.
\item[\rm(2)] $\mathfrak{M}_0(X)$ is a closed ideal in $C_b(X)$ containing $\mathfrak{M}_{00}(X)$.
\item[\rm(3)] Let $X$ be completely regular. Then $\mathfrak{M}_{00}(X)$ is of empty hull if and only if $\mathfrak{M}_0(X)$ is of empty hull if and only if $\mu$ is locally null.
\item[\rm(4)] Let $X$ be completely regular and let $\mu$ be locally null. Then $\mathfrak{M}_{00}(X)$ is unital if and only if $\mathfrak{M}_0(X)$ is unital if and only if $\mu$ is trivial.
\item[\rm(5)] Let $X$ be normal and let $\mu$ be locally null. Then $\mathfrak{M}_{00}(X)$ and $\mathfrak{M}_0(X)$ are isometrically isomorphic to $C_{00}(Y)$ and $C_0(Y)$ for the unique (up to homeomorphism) locally compact Hausdorff space $Y$ given by
    \[Y=\bigcup\big\{\mathrm{int}_{\beta X}\mathrm{cl}_{\beta X}C:C\in\mathrm{Coz}(X)\mbox{ and }\mathrm{cl}_XC\mbox{ has a $\mu$-null neighborhood in $X$}\big\},\]
    considered as a subspace of $\beta X$. Furthermore,
    \begin{itemize}
    \item $X$ is dense in $Y$.
    \item $\mathfrak{M}_{00}(X)$ is dense in $\mathfrak{M}_0(X)$.
    \item $\mathfrak{M}_{00}(X)$ is unital if and only if $\mathfrak{M}_0(X)$ is unital if and only if $Y$ is compact.
    \end{itemize}
\end{itemize}
\end{theorem}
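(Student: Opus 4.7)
My plan is to derive Theorem \ref{KOIJF} as a direct specialization of the general theory of Part \ref{HFPG}, applied to the $\sigma$-ideal $\mathscr{M}$ of $\mu$-null Borel sets in $(\mathscr{B},\subseteq)$. The entire proof will reduce to two identifications and then a mechanical invocation of the appropriate general theorems.

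The first identification is $\mathfrak{M}_{00}(X) = C^{\mathscr{M}}_{00}(X)$, which is literally the definition, and $\mathfrak{M}_0(X) = C^{\mathscr{M}}_0(X)$, which I would get from Proposition \ref{JFJG}: since $\mathscr{M}$ is a $\sigma$-ideal (Lemma \ref{KGGF}) and since $\mathrm{Coz}(X)\subseteq\mathscr{O}\subseteq\mathscr{B}=\mathscr{L}$ holds in the topological-measure setting, the proposition gives $C^{\mathscr{M}}_0(X)=\{f\in C_b(X):\mathrm{coz}(f)\text{ is }\mu\text{-null}\}$, which is $\mathfrak{M}_0(X)$. I also need the trivial reformulation that $\mathscr{M}$ is non-proper (i.e.\ $X\in\mathscr{M}$) precisely when $\mu(X)=0$, that is, when $\mu$ is trivial; and the observation from Lemma \ref{KFFG} that local nullity of the ideal $\mathscr{M}$ coincides with local nullity of the measure $\mu$.

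Once these translations are in hand, the proof writes itself. Part (1) follows from Theorem \ref{TTG}; part (2) follows from Theorem \ref{DGH}, which also delivers the inclusion $\mathfrak{M}_{00}(X)\subseteq\mathfrak{M}_0(X)$. Part (3) is Theorem \ref{KGV} (which asserts the equivalence of local nullity, empty-hull of $C^{\mathscr{I}}_0(X)$, and empty-hull of $C^{\mathscr{I}}_{00}(X)$) together with Lemma \ref{KFFG}. Part (4) is Theorem \ref{HFBY} plus the equivalence ``$\mathscr{M}$ non-proper $\Leftrightarrow$ $\mu$ trivial''. For part (5), assuming $X$ normal and $\mu$ locally null, the definition of $\lambda_{\mathscr{M}}X$ is exactly the space $Y$ displayed in the statement; Theorem \ref{UUS} then yields the isometric isomorphism $\mathfrak{M}_{00}(X)\cong C_{00}(Y)$ together with the density of $X$ in $Y$ and the compactness-unital equivalence, while Theorem \ref{UDR} yields $\mathfrak{M}_0(X)\cong C_0(Y)$ and the density of $\mathfrak{M}_{00}(X)$ in $\mathfrak{M}_0(X)$. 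Uniqueness of $Y$ comes from the Banach--Stone-type results cited before Theorems \ref{UUS} and \ref{UDR}.

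There is essentially no obstacle beyond verifying these translations carefully; the one point requiring a line of justification is the identification $\mathfrak{M}_0(X)=C^{\mathscr{M}}_0(X)$, since a priori $C^{\mathscr{M}}_0(X)$ is defined via the level sets $|f|^{-1}([1/n,\infty))$ having $\mu$-null neighborhoods, whereas $\mathfrak{M}_0(X)$ is defined by $\mu(\mathrm{coz}(f))=0$. The bridge is Proposition \ref{JFJG}, whose $\sigma$-ideal hypothesis is supplied by Lemma \ref{KGGF} and whose $\mathrm{Coz}(X)\subseteq\mathscr{L}$ hypothesis follows from the measurability of open sets built into the definition of a topological measure space. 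Everything else is bookkeeping.
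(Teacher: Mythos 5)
Your proposal is correct and follows essentially the same route as the paper: the paper's proof likewise notes $\mathrm{Coz}(X)\subseteq\mathscr{B}$, identifies $\mathfrak{M}_0(X)=C^{\mathscr M}_0(X)$ via Lemma \ref{KGGF} and Proposition \ref{JFJG}, and then cites Lemmas \ref{KGGF}, \ref{KFFG} and Theorems \ref{TTG}, \ref{DGH}, \ref{KGV}, \ref{HFBY}, \ref{UUS}, \ref{UDR} (together with \ref{BBV} and \ref{HGBV}, which your citations of \ref{KGV} and \ref{HFBY} subsume) for the remaining parts. Your additional remark that ``$\mathscr{M}$ non-proper $\Leftrightarrow$ $\mu$ trivial'' is the one translation the paper leaves implicit, and it is correctly handled.
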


\begin{proof}
Note that $\mathrm{Coz}(X)\subseteq{\mathscr B}$. Thus $\mathfrak{M}_0(X)=C^{\mathscr M}_0(X)$ by Lemma \ref{KGGF} and Proposition \ref{JFJG}. The remaining parts of the theorem follow from Lemmas \ref{KGGF} and \ref{KFFG} and Theorems \ref{TTG}, \ref{BBV}, \ref{HGBV}, \ref{UUS}, \ref{DGH}, \ref{KGV}, \ref{HFBY} and \ref{UDR}.
\end{proof}

\begin{remark}
Let $(X,{\mathscr O},{\mathscr B},\mu)$ be a topological measure space. Let
\[{\mathscr H}=\big\{B\in{\mathscr B}:\mu\big(\mathrm{cl}_X(B)\big)=0\big\}.\]
Then ${\mathscr H}$ is an ideal in $({\mathscr B},\subseteq)$. One can now state and prove results analogous to Theorem \ref{KOIJF} with the ideal ${\mathscr H}$ in place of the $\sigma$-ideal ${\mathscr M}$.
\end{remark}

\section{Continuous mappings with support having a certain topological property}\label{KIJGD}

Let $X$ be a space and let $\mathfrak{P}$ be a topological property, where at some points either $X$ or $\mathfrak{P}$ will be subject to certain mild requirements. Then
\[{\mathscr I}_\mathfrak{P}=\{A\subseteq X:\mathrm{cl}_XA\mbox{ has }\mathfrak{P}\}\]
is an ideal in $({\mathscr P}(X),\subseteq)$. For simplicity of the notation when ${\mathscr I}={\mathscr I}_\mathfrak{P}$ we denote $C^{\mathscr I}_{00}(X)$ and $C^{\mathscr I}_0(X)$ by $C^\mathfrak{P}_{00}(X)$ and $C^\mathfrak{P}_0(X)$, respectively, and we denote $\lambda_{\mathscr I}X$ by $\lambda_\mathfrak{P}X$. In this context we will have
\[C^\mathfrak{P}_{00}(X)=\big\{f\in C_b(X):\mathrm{supp}(f)\mbox{ has a closed neighborhood in $X$ with }\mathfrak{P}\big\}\]
and
\[C^\mathfrak{P}_0(X)=\big\{f\in C_b(X):|f|^{-1}\big([1/n,\infty)\big)\mbox{ has }\mathfrak{P}\mbox{ for each }n\big\}.\]
The purpose of this section is to study $C^\mathfrak{P}_{00}(X)$ and $C^\mathfrak{P}_0(X)$. The main results of Section \ref{HFPG} imply that $C^\mathfrak{P}_{00}(X)$ and $C^\mathfrak{P}_0(X)$ are isometrically isomorphic to $C_{00}(\lambda_\mathfrak{P}X)$ and $C_0(\lambda_\mathfrak{P}X)$, respectively, for a locally-$\mathfrak{P}$ space $X$. Furthermore, $X$ is dense in $\lambda_\mathfrak{P}X$ and $C^\mathfrak{P}_{00}(X)$ is dense in $C^\mathfrak{P}_0(X)$. Particular attention will be paid to locally-$\mathfrak{P}$ spaces $X$ with $\mathfrak{Q}$ where $\mathfrak{P}$ and $\mathfrak{Q}$ are topological properties that (besides some other requirements) satisfy
\[\mbox{$\mathfrak{P}$ $+$ $\mathfrak{Q}$ $\rightarrow$ The Lindel\"{o}f property}.\]
In this case, we have
\[C^\mathfrak{P}_{00}(X)=\big\{f\in C_b(X):\mathrm{supp}(f)\mbox{ has }\mathfrak{P}\big\}=C^\mathfrak{P}_0(X),\]
and
\[C_{00}(\lambda_\mathfrak{P}X)=C_0(\lambda_\mathfrak{P}X).\]
In particular, $\lambda_\mathfrak{P}X$ is countably compact. The special case in which $\mathfrak{P}$ is the Lindel\"{o}f property and $\mathfrak{Q}$ is metrizability (or paracompactness) is studied in great detail. Among other things, we show that $\lambda_\mathfrak{P}X$ is non-normal if $X$ is non-$\mathfrak{P}$. Furthermore,
\[\dim C^\mathfrak{P}_{00}(X)=\ell(X)^{\aleph_0},\]
where $\ell(X)$ is the Lindel\"{o}f number of $X$. In addition, we insert a chain of length $\lambda$, in which $\aleph_\lambda=\ell(X)$, consisting of closed ideals $H_\mu$'s of $C_b(X)$ between $C_0(X)$ and $C_b(X)$; each $H_\mu$ is of the form
\[C_{00}(Y_\mu)=C_0(Y_\mu),\]
where $Y_\mu$ is a locally compact countably compact Hausdorff space which contains $X$ densely. The concluding results of this section deal with realcompactness and pseudocompactness. We show that if $\mathfrak{P}$ is realcompactness, then
\[\lambda_\mathfrak{P}X=\beta X\setminus\mathrm{cl}_{\beta X}(\upsilon X\setminus X),\]
where $\upsilon X$ is the Hewitt realcompactification of $X$, and if $\mathfrak{P}$ is pseudocompactness, then
\[\lambda_{\mathscr U}X=\mathrm{int}_{\beta X}\upsilon X\]
for the ideal
\[{\mathscr U}=\big\{A\in\mathscr{RC}(X):A\mbox{ is pseudocompact}\big\}\]
of $(\mathscr{RC}(X),\subseteq)$.

Now we proceed with the formal treatment of the subject. Theorems \ref{JGF} and \ref{GKGF} improve results from \cite{Ko10} and \cite{Ko11}, respectively, Proposition \ref{HJGL} is from \cite{Ko10}, Theorem \ref{CTTF} is actually the main result of \cite{Ko6} rephrased in the new context, Theorem \ref{HGFLK} has been observed in \cite{Ko11}, and Theorems \ref{GGFD}, \ref{PTF} and \ref{RRGY} reproves results from \cite{Ko10}, \cite{Ko4} and \cite{Ko12}, respectively.

\begin{definition}\label{HHI}
Let $\mathfrak{P}$ be a topological property. Then
\begin{itemize}
  \item $\mathfrak{P}$ is \textit{closed hereditary}, if any closed subspace of a space with $\mathfrak{P}$ also has $\mathfrak{P}$.
  \item $\mathfrak{P}$ is \textit{preserved under finite} (resp. \textit{countable}) \textit{closed sums}, if any space which is expressible as a finite (resp. countable) union of its closed subspaces each having $\mathfrak{P}$ also has $\mathfrak{P}$.
\end{itemize}
\end{definition}

\begin{notation}\label{GGH}
Let $X$ be a space and let $\mathfrak{P}$ be a topological property. Denote
\[{\mathscr I}_\mathfrak{P}=\{A\subseteq X:\mathrm{cl}_XA\mbox{ has }\mathfrak{P}\}.\]
\end{notation}

We may use the following lemma without explicitly referring to it.

\begin{lemma}\label{KGF}
Let $X$ be a space and let $\mathfrak{P}$ be a closed hereditary topological property preserved under finite closed sums.
Then $\mathscr{I}_\mathfrak{P}$ is an ideal in $({\mathscr P}(X),\subseteq)$.
\end{lemma}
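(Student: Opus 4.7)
The plan is a direct verification of the two ideal axioms (together with non-emptiness), using each of the two hypotheses on $\mathfrak{P}$ exactly once. Non-emptiness of $\mathscr{I}_\mathfrak{P}$ is immediate: the empty set is a closed subspace of any space with $\mathfrak{P}$, hence by closed hereditariness $\emptyset$ has $\mathfrak{P}$, so $\emptyset\in\mathscr{I}_\mathfrak{P}$. (Alternatively, if one prefers not to invoke $\mathfrak{P}$ for the empty space, pick any singleton in a space known to have $\mathfrak{P}$; but the cleanest route is simply to note that nothing in the definition of an ideal requires non-emptiness of the members, only of the ideal itself.)

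For the downward-closure axiom, I would take $T\subseteq A$ with $A\in\mathscr{I}_\mathfrak{P}$ and observe that $\mathrm{cl}_X T\subseteq\mathrm{cl}_X A$, with $\mathrm{cl}_X T$ closed in $\mathrm{cl}_X A$. Since $\mathrm{cl}_X A$ has $\mathfrak{P}$ by hypothesis and $\mathfrak{P}$ is closed hereditary, $\mathrm{cl}_X T$ inherits $\mathfrak{P}$, so $T\in\mathscr{I}_\mathfrak{P}$.

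For closure under the join, recall that in $(\mathscr{P}(X),\subseteq)$ the join is just the union, i.e.\ $A\vee B=A\cup B$. Let $A,B\in\mathscr{I}_\mathfrak{P}$. Then
\[\mathrm{cl}_X(A\cup B)=\mathrm{cl}_X A\cup\mathrm{cl}_X B,\]
and each of $\mathrm{cl}_X A$ and $\mathrm{cl}_X B$ is closed in $\mathrm{cl}_X(A\cup B)$ and has $\mathfrak{P}$. Since $\mathfrak{P}$ is preserved under finite closed sums, $\mathrm{cl}_X(A\cup B)$ has $\mathfrak{P}$, so $A\cup B\in\mathscr{I}_\mathfrak{P}$, as required.

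There is no real obstacle: the statement is essentially a translation of the two assumed permanence properties of $\mathfrak{P}$ into the two axioms defining an ideal. The only point worth flagging in the write-up is the identification $A\vee B=A\cup B$ in the upper semi-lattice $(\mathscr{P}(X),\subseteq)$, which is what allows the finite-closed-sum hypothesis to do its job cleanly.
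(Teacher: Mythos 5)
Your proof is correct and follows essentially the same route as the paper's: downward closure from closed hereditariness applied to $\mathrm{cl}_XT\subseteq\mathrm{cl}_XA$, and closure under joins from $\mathrm{cl}_X(A\cup B)=\mathrm{cl}_XA\cup\mathrm{cl}_XB$ together with preservation under finite closed sums. Your explicit remark on non-emptiness is a small addition the paper leaves implicit, but the argument is otherwise the same.
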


\begin{proof}
Let $B\subseteq A$ with $A\in\mathscr{I}_\mathfrak{P}$. Then $\mathrm{cl}_XB\subseteq\mathrm{cl}_XA$. Since $\mathrm{cl}_XA$ has $\mathfrak{P}$ and $\mathfrak{P}$ is closed hereditary, the closed subspace $\mathrm{cl}_XB$ of $\mathrm{cl}_XA$ also has $\mathfrak{P}$. That is $B\in\mathscr{I}_\mathfrak{P}$. Next, let $A_i\in\mathscr{I}_\mathfrak{P}$ for each $i=1,\ldots,n$. Since
\[\mathrm{cl}_X(A_1\cup\cdots\cup A_n)=\mathrm{cl}_XA_1\cup\cdots\cup\mathrm{cl}_XA_n\]
and the latter has $\mathfrak{P}$, as $\mathrm{cl}_XA_i$ has $\mathfrak{P}$ for each $i=1,\ldots,n$ and $\mathfrak{P}$ is preserved under finite closed sums, we have $A_1\cup\cdots\cup A_n\in\mathscr{I}_\mathfrak{P}$.
\end{proof}

\begin{example}\label{GLL}
The list of topological properties $\mathfrak{P}$ which are closed hereditary and preserved under finite closed sums (thus satisfying the assumption of Lemma \ref{KGF}) is quite long and include almost all major covering properties (that is, topological properties described in terms of the existence of certain kinds of open subcovers or refinements of a given open cover of a certain type); among them are: (1) compactness (2) $[\theta,\kappa]$-compactness (in particular, countable compactness and the Lindel\"{o}f property) (3) paracompactness (4) metacompactness (5) countable paracompactness (6) subparacompactness (7) $\theta$-refinability (or submetacompactness) (8) the $\sigma$-para-Lindel\"{o}f property (9) $\delta\theta$-refinability (or the submeta-Lindel\"{o}f property) (10) weak $\theta$-refinability, and finally (11) weak $\delta\theta$-refinability. (See \cite{Bu}, \cite{Steph} and \cite{Va} for definitions.) These topological properties are all closed hereditary (this is obvious for (1), see Theorem 7.1 of \cite{Bu} for (3)--(4) and (6)--(11), Theorem 3.1 of \cite{Steph} for (2) and Exercise 5.2.B of \cite{E} for (5)) and are preserved under finite closed sums (this follows from the definition for (2) and is obvious for (1), see Theorems 7.3 and 7.4 of \cite{Bu} for (3)--(4) and (6)--(11), and Theorem 3.7.22 and Exercises 5.2.B and 5.2.G of \cite{E} for (5)).

There are examples of topological properties, not generally considered as a covering property, which are closed hereditary and preserved under finite closed sums. We only mention $\alpha$-boundedness. (A space is called \textit{$\alpha$-bounded}, where $\alpha$ is an infinite cardinal, if every subspace of it of cardinality $\leq\alpha$ has compact closure in it.) That $\alpha$-boundedness is closed hereditary and preserved under finite closed sums follows easily from its definition.
\end{example}

\begin{definition}\label{KI}
Let $\mathfrak{P}$ be a topological property. A space $X$ is called \textit{locally-$\mathfrak{P}$} if each $x\in X$ has a neighborhood in $X$ with $\mathfrak{P}$.
\end{definition}

\begin{lemma}\label{KFFF}
Let $X$ be a regular space and let $\mathfrak{P}$ be a closed hereditary topological property preserved under finite closed sums. Then $X$ is locally null (with respect to the ideal $\mathscr{I}_\mathfrak{P}$) if and only if $X$ is locally-$\mathfrak{P}$.
\end{lemma}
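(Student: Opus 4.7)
The plan is to verify both implications directly, making use of regularity only for the harder direction.

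For the implication that local nullity implies local-$\mathfrak{P}$, I would start with a point $x\in X$ and a null neighborhood $U$ of $x$ in $X$. By definition of a neighborhood we have $x\in\mathrm{int}_X U$, and by definition of $\mathscr{I}_\mathfrak{P}$ the set $\mathrm{cl}_X U$ has $\mathfrak{P}$. Since $\mathrm{int}_X U\subseteq\mathrm{int}_X(\mathrm{cl}_X U)$, the closed set $\mathrm{cl}_X U$ itself serves as a neighborhood of $x$ in $X$ having $\mathfrak{P}$; this direction needs neither regularity nor the closed-hereditary hypothesis (nor closure under finite sums).

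For the converse, assume $X$ is locally-$\mathfrak{P}$ and let $x\in X$. Pick a neighborhood $V$ of $x$ in $X$ having $\mathfrak{P}$, so $x\in\mathrm{int}_X V$. Here regularity comes in: I would apply it to produce an open set $W$ of $X$ with
\[
x\in W\subseteq\mathrm{cl}_X W\subseteq\mathrm{int}_X V\subseteq V.
\]
Then $\mathrm{cl}_X W$ is closed in $X$ and contained in $V$, so it is closed in the subspace $V$. Since $V$ has $\mathfrak{P}$ and $\mathfrak{P}$ is closed hereditary, $\mathrm{cl}_X W$ has $\mathfrak{P}$, i.e.\ $W\in\mathscr{I}_\mathfrak{P}$. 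Thus $W$ is a null neighborhood of $x$ in $X$, proving that $X$ is locally null.

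There is really no main obstacle here: the only subtlety is remembering that the definition of local-$\mathfrak{P}$ does not automatically give a \emph{closed} neighborhood with $\mathfrak{P}$, which is why regularity is invoked to shrink $V$ to an open set whose closure still sits inside $V$. The hypothesis that $\mathfrak{P}$ is preserved under finite closed sums is not used in this lemma itself; it enters only to ensure that $\mathscr{I}_\mathfrak{P}$ is genuinely an ideal (via Lemma \ref{KGF}), so that the phrase ``locally null with respect to $\mathscr{I}_\mathfrak{P}$'' is meaningful in the framework of the preceding sections.
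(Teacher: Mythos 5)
Your proof is correct and follows essentially the same route as the paper: for one direction the closure of a null neighborhood is itself a neighborhood with $\mathfrak{P}$, and for the converse regularity shrinks a $\mathfrak{P}$-neighborhood $V$ to an open $W$ with $\mathrm{cl}_XW\subseteq V$, so that closed-hereditariness makes $W$ null. Your side remarks (that the finite-closed-sums hypothesis is only needed so that $\mathscr{I}_\mathfrak{P}$ is an ideal, via Lemma \ref{KGF}, and that the forward direction uses neither regularity nor heredity) are accurate.
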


\begin{proof}
Note that $\mathscr{I}_\mathfrak{P}$ is an ideal in $({\mathscr P}(X),\subseteq)$ by Lemma \ref{KGF}. Let $X$ be locally null and let $x\in X$. Then $x\in\mathrm{int}_XU$ for some $U\in\mathscr{I}_\mathfrak{P}$. Thus $\mathrm{cl}_XU$ is a neighborhood of $x$ in $X$ with $\mathfrak{P}$. For the converse, let $x\in X$ have a neighborhood $V$ in $X$ with $\mathfrak{P}$. There exists an open neighborhood $W$ of $x$ in $X$ with $\mathrm{cl}_XW\subseteq V$. Now, $\mathrm{cl}_XW$ has $\mathfrak{P}$, as it is closed in $V$ and $V$ has $\mathfrak{P}$. Thus $W$ is null.
\end{proof}

We introduce the following notation for convenience.

\begin{notation}\label{KHFG}
Let $X$ be a space and let $\mathfrak{P}$ be a closed hereditary topological property preserved under finite closed sums. Denote
\[C^\mathfrak{P}_{00}(X)=C^{\mathscr I_\mathfrak{P}}_{00}(X)\;\;\;\;\mbox{ and }\;\;\;\;C^\mathfrak{P}_0(X)=C^{\mathscr I_\mathfrak{P}}_0(X).\]
Also, denote
\[\lambda_\mathfrak{P}X=\lambda_{\mathscr I_\mathfrak{P}}X.\]
\end{notation}

\begin{theorem}\label{JGF}
Let $X$ be a space and let $\mathfrak{P}$ be a closed hereditary topological property preserved under finite closed sums. Then
\[C^\mathfrak{P}_{00}(X)=\big\{f\in C_b(X):\mathrm{supp}(f)\mbox{ has a closed neighborhood in $X$ with }\mathfrak{P}\big\},\]
and in particular,
\[C^\mathfrak{P}_{00}(X)=\big\{f\in C_b(X):\mathrm{supp}(f)\mbox{ has a neighborhood in $X$ with }\mathfrak{P}\big\},\]
if $X$ is normal. Furthermore,
\begin{itemize}
\item[\rm(1)] $C^\mathfrak{P}_{00}(X)$ is an ideal in $C_b(X)$.
\item[\rm(2)] Let $X$ be completely regular. Then $C^\mathfrak{P}_{00}(X)$ is of empty hull if and only if $X$ is locally-$\mathfrak{P}$.
\item[\rm(3)] Let $X$ be completely regular and locally-$\mathfrak{P}$. Then $C^\mathfrak{P}_{00}(X)$ is unital if and only if $X$ has $\mathfrak{P}$.
\item[\rm(4)] Let $X$ be normal and locally-$\mathfrak{P}$. Then $C^\mathfrak{P}_{00}(X)$ is a normed algebra isometrically isomorphic to $C_{00}(Y)$ for some unique locally compact Hausdorff space $Y$, namely $Y=\lambda_\mathfrak{P}X$. Furthermore,
    \begin{itemize}
    \item $X$ is dense in $Y$.
    \item $C^\mathfrak{P}_{00}(X)$ is unital if and only if $X$ has $\mathfrak{P}$ if and only if $Y$ is compact.
    \end{itemize}
\end{itemize}
\end{theorem}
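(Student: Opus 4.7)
The plan is to apply the general theory of Section~\ref{GPG} to the concrete ideal $\mathscr{I}_\mathfrak{P}$; all four numbered parts should essentially be translations of earlier theorems under the dictionary provided by Lemmas~\ref{KGF} and~\ref{KFFF}. First I would verify the two alternative representations of $C^\mathfrak{P}_{00}(X)$. Unwinding the definition, $f\in C^\mathfrak{P}_{00}(X)$ means $\mathrm{supp}(f)$ has a neighborhood $U$ with $\mathrm{cl}_X U\in\mathscr{I}_\mathfrak{P}$, i.e.\ $\mathrm{cl}_X\mathrm{cl}_X U=\mathrm{cl}_X U$ has $\mathfrak{P}$; replacing $U$ by $\mathrm{cl}_X U$ shows directly that this is the same as asking $\mathrm{supp}(f)$ to have a closed neighborhood with $\mathfrak{P}$. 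This proves the first representation.

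For the normal case, given a (not necessarily closed) neighborhood $V$ of $\mathrm{supp}(f)$ with $\mathfrak{P}$, apply normality to the disjoint closed sets $\mathrm{supp}(f)$ and $X\setminus\mathrm{int}_XV$ to produce an open $W$ with $\mathrm{supp}(f)\subseteq W\subseteq\mathrm{cl}_XW\subseteq\mathrm{int}_XV$. Since $\mathrm{cl}_XW$ is closed in $X$ and contained in $V$, it is a closed subspace of $V$; as $\mathfrak{P}$ is closed hereditary, $\mathrm{cl}_XW$ has $\mathfrak{P}$. Thus $\mathrm{cl}_XW$ is a closed neighborhood of $\mathrm{supp}(f)$ with $\mathfrak{P}$, and the first representation forces $f\in C^\mathfrak{P}_{00}(X)$. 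The reverse inclusion is trivial. The only subtlety here is making sure $V$ may be replaced by a closed neighborhood, and normality handles this cleanly.

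Part~(1) is immediate from Theorem~\ref{TTG} once we note that $\mathscr{I}_\mathfrak{P}$ is an ideal in $(\mathscr{P}(X),\subseteq)$ by Lemma~\ref{KGF}. Part~(2) follows from the equivalence (2)$\Leftrightarrow$(3) in Theorem~\ref{BBV} combined with Lemma~\ref{KFFF}, which identifies local nullity with respect to $\mathscr{I}_\mathfrak{P}$ with the locally-$\mathfrak{P}$ property (this uses regularity, which is implied by complete regularity). Part~(3) follows from Theorem~\ref{HGBV} after observing that $\mathscr{I}_\mathfrak{P}$ is non-proper precisely when $X\in\mathscr{I}_\mathfrak{P}$, i.e.\ when $\mathrm{cl}_XX=X$ has $\mathfrak{P}$.

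Finally, Part~(4) is a direct application of Theorem~\ref{UUS}: normality plus local-$\mathfrak{P}$-ness gives, via Lemma~\ref{KFFF}, normality plus local nullity with respect to $\mathscr{I}_\mathfrak{P}$, so Theorem~\ref{UUS} yields the isometric isomorphism $C^\mathfrak{P}_{00}(X)\cong C_{00}(\lambda_\mathfrak{P}X)$, together with uniqueness of $Y=\lambda_\mathfrak{P}X$ up to homeomorphism, density of $X$ in $Y$, and the equivalence of unitality of $C^\mathfrak{P}_{00}(X)$ with compactness of $Y$; combining with Part~(3) gives the final chain of equivalences. There is no real obstacle in this argument: the whole theorem is bookkeeping that identifies the abstract hypotheses of the Section~\ref{GPG} machinery with the topological hypotheses on $\mathfrak{P}$, and the only minor technical point is the closed-versus-open neighborhood shrinking step in the normal case, which I have indicated above.
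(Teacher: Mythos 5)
Your proposal is correct and follows essentially the same route as the paper: both establish the first representation by observing that a null neighborhood $U$ can be traded for the closed neighborhood $\mathrm{cl}_XU$ (and conversely a closed neighborhood with $\mathfrak{P}$ is itself null), both handle the normal case by shrinking to a closed neighborhood and invoking closed heredity, and both derive (1)--(4) by citing Lemmas \ref{KGF} and \ref{KFFF} together with Theorems \ref{TTG}, \ref{BBV}, \ref{HGBV} and \ref{UUS}. No gaps.
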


\begin{proof}
Let $f\in C_b(X)$. Suppose that $\mathrm{supp}(f)$ has a closed neighborhood $U$ in $X$ with $\mathfrak{P}$ then $\mathrm{supp}(f)$ has a null neighborhood in $X$, namely, $U$ itself. Thus $f\in C^\mathfrak{P}_{00}(X)$. On the other hand, if $f\in C^\mathfrak{P}_{00}(X)$, then $\mathrm{supp}(f)$ has a null neighborhood $V$ in $X$. Thus $\mathrm{cl}_X V$ is a closed neighborhood of $\mathrm{supp}(f)$ in $X$ with $\mathfrak{P}$. This shows the first representation of $C^\mathfrak{P}_{00}(X)$. Note that if $X$ is normal, then any neighborhood of $\mathrm{supp}(f)$ in $X$, where $f\in C_b(X)$, contains a closed neighborhood of $\mathrm{supp}(f)$ in $X$. The second representation of $C^\mathfrak{P}_{00}(X)$ then follows, as $\mathfrak{P}$ is closed hereditary. The remaining assertions of the theorem follow from Lemma \ref{KFFF} (and Lemma \ref{KGF}) and Theorems \ref{TTG}, \ref{BBV}, \ref{HGBV} and \ref{UUS}.
\end{proof}

\begin{theorem}\label{HJG}
Let $X$ be a space and let $\mathfrak{P}$ be a closed hereditary topological property preserved under finite closed sums. Then
\[C^\mathfrak{P}_0(X)=\big\{f\in C_b(X):|f|^{-1}\big([1/n,\infty)\big)\mbox{ has }\mathfrak{P}\mbox{ for each }n\big\},\]
and in particular,
\[C^\mathfrak{P}_0(X)=\big\{f\in C_b(X):\mathrm{coz}(f)\mbox{ has }\mathfrak{P}\big\},\]
if $\mathfrak{P}$ is preserved under countable closed sums. Furthermore,
\begin{itemize}
\item[\rm(1)] $C^\mathfrak{P}_0(X)$ is a closed ideal in $C_b(X)$.
\item[\rm(2)] $C^\mathfrak{P}_{00}(X)\subseteq C^\mathfrak{P}_0(X)$.
\item[\rm(3)] Let $X$ be completely regular. Then $C^\mathfrak{P}_0(X)$ is of empty hull if and only if $X$ is locally-$\mathfrak{P}$.
\item[\rm(4)] Let $X$ be completely regular and locally-$\mathfrak{P}$. Then $C^\mathfrak{P}_0(X)$ is unital if and only if $X$ has $\mathfrak{P}$.
\item[\rm(5)] Let $X$ be completely regular and locally-$\mathfrak{P}$. Furthermore, suppose that either
\begin{itemize}
\item $X$ is normal, or
\item $\mathfrak{P}$ is preserved under countable closed sums.
\end{itemize}
Then $C^\mathfrak{P}_0(X)$ is a Banach algebra isometrically isomorphic to $C_0(Y)$ for some unique locally compact Hausdorff space $Y$, namely $Y=\lambda_\mathfrak{P}X$. Furthermore,
    \begin{itemize}
    \item $X$ is dense in $Y$.
    \item $C^\mathfrak{P}_{00}(X)$ is dense in $C^\mathfrak{P}_0(X)$.
    \item $C^\mathfrak{P}_0(X)$ is unital if and only if $X$ has $\mathfrak{P}$ if and only if $Y$ is compact.
    \end{itemize}
\end{itemize}
\end{theorem}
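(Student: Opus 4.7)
The plan is to derive Theorem~\ref{HJG} by specializing the general theory of Section~\ref{JGGG} to the ideal $\mathscr{I}=\mathscr{I}_\mathfrak{P}$. Lemma~\ref{KGF} gives the ideal property in $(\mathscr{P}(X),\subseteq)$, Lemma~\ref{KFFF} translates local nullity into local-$\mathfrak{P}$-ness, and $\mathscr{I}_\mathfrak{P}$ is non-proper precisely when $X=\mathrm{cl}_XX$ has $\mathfrak{P}$. Combined with Theorems~\ref{DGH}, \ref{KGV}, and \ref{HFBY}, these translations yield parts (1)--(4) directly. For the first representation of $C^\mathfrak{P}_0(X)$, I would apply Proposition~\ref{JJG} with $\mathscr{L}=\mathscr{P}(X)\supseteq\mathscr{Z}(X)$: each closed level set $|f|^{-1}([1/n,\infty))$ equals its own closure, so being $\mathscr{I}_\mathfrak{P}$-null reduces to having $\mathfrak{P}$. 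For the second representation, under the added hypothesis that $\mathfrak{P}$ is preserved under countable closed sums, I would use the decomposition $\mathrm{coz}(f)=\bigcup_{n\geq 1}|f|^{-1}([1/n,\infty))$, in which each term is closed in $\mathrm{coz}(f)$: countable-closed-sum preservation supplies the forward direction and closed heredity the reverse.

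For the main isomorphism in part~(5), the normal case is a direct application of Theorem~\ref{UDR} to $\mathscr{I}=\mathscr{I}_\mathfrak{P}$, which produces $Y=\lambda_\mathfrak{P} X$ together with all listed features. In the completely regular case with $\mathfrak{P}$ preserved under countable closed sums, my plan is to pass to the sub-ideal
\[\mathscr{J}=\mathscr{I}_\mathfrak{P}\cap\mathrm{Coz}(X)=\big\{C\in\mathrm{Coz}(X):\mathrm{cl}_XC\mbox{ has }\mathfrak{P}\big\}\]
of $(\mathrm{Coz}(X),\subseteq)$, an ideal by closed heredity and the identity $\mathrm{cl}_X(C\cup D)=\mathrm{cl}_XC\cup\mathrm{cl}_XD$. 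I would then verify $C^\mathscr{J}_0(X)=C^\mathfrak{P}_0(X)$: the $\subseteq$ inclusion is automatic, and for the reverse, given $f\in C^\mathfrak{P}_0(X)$, the cozero-set $U_n=|f|^{-1}((1/(n+1),\infty))$ is a $\mathscr{J}$-null neighborhood of $|f|^{-1}([1/n,\infty))$ because $\mathrm{cl}_XU_n\subseteq|f|^{-1}([1/(n+1),\infty))$ is closed in a space with $\mathfrak{P}$. Theorem~\ref{UF} applied to $\mathscr{J}$ then delivers the isometric algebra isomorphism $C^\mathfrak{P}_0(X)\cong C_0(\lambda_\mathscr{J} X)$, together with denseness of $X$, emptiness of hull, denseness of $C^\mathfrak{P}_{00}(X)$, and the unital/compactness equivalence.

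The principal obstacle is then the identification $\lambda_\mathscr{J} X=\lambda_\mathfrak{P} X$ as subspaces of $\beta X$ in the completely regular case. The inclusion $\lambda_\mathscr{J} X\subseteq\lambda_\mathfrak{P} X$ is immediate from $\mathscr{J}\subseteq\mathscr{I}_\mathfrak{P}$; the reverse asks that whenever $C\in\mathrm{Coz}(X)$ and $\mathrm{cl}_XC$ admits an $\mathscr{I}_\mathfrak{P}$-null neighborhood $V$, there is also a \emph{cozero-set} $\mathscr{J}$-null neighborhood of $\mathrm{cl}_XC$ in $X$, i.e., a cozero-set $W\supseteq\mathrm{cl}_XC$ with $\mathrm{cl}_XW$ having $\mathfrak{P}$. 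Closed heredity already places $C$ itself in $\mathscr{J}$, but since Urysohn's Lemma is unavailable here, my plan is to use complete regularity together with local-$\mathfrak{P}$ to surround each $x\in\mathrm{cl}_XC$ by a cozero-set $D_x$ whose closure lies in a $\mathfrak{P}$-neighborhood and thus has $\mathfrak{P}$, and then to exploit the compactness of $\mathrm{cl}_{\beta X}C$ in $\beta X$ (via Lemma~\ref{LKG}, which places each $\{(h_x)_\beta>0\}\subseteq\mathrm{int}_{\beta X}\mathrm{cl}_{\beta X}D_x$) together with the closed-sum preservation of $\mathfrak{P}$ to amalgamate a suitable finite or countable subfamily into a single cozero-set $W$ with the required properties. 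Handling this amalgamation correctly in the absence of Lindel\"{o}fness of $\mathrm{cl}_XC$ is the technical heart of the argument.
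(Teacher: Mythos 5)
Your handling of the two representations, of parts (1)--(4), and of part (5) in the normal case coincides with the paper's: everything is funneled through Lemmas \ref{KGF} and \ref{KFFF}, Proposition \ref{JJG}, and Theorems \ref{DGH}, \ref{KGV}, \ref{HFBY} and \ref{UDR}. For part (5) in the completely regular case the paper goes a different way: it verifies that $H=C^\mathfrak{P}_0(X)$ satisfies the two hypotheses of Theorem \ref{HGL} (empty hull, and absorption of any $f$ with $\mathrm{coz}(f)\subseteq\mathrm{coz}(h)$), using the representation $C^\mathfrak{P}_0(X)=\{f\in C_b(X):\mathrm{coz}(f)\mbox{ has }\mathfrak{P}\}$ --- this is where countable closed sums is actually consumed --- and then quotes Theorem \ref{HGL}. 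Your route, replacing $\mathscr{I}_\mathfrak{P}$ by the cozero sub-ideal $\mathscr{J}$ and applying Theorem \ref{UF} directly, is legitimate and in some respects cleaner: the verification $C^{\mathscr J}_0(X)=C^\mathfrak{P}_0(X)$ via the shells $U_n=|f|^{-1}((1/(n+1),\infty))$ uses only closed heredity, local nullness with respect to $\mathscr{J}$ comes from shrinking a $\mathfrak{P}$-neighborhood of a point to a cozero one by complete regularity, and density of $C^\mathfrak{P}_{00}(X)$ follows because it contains the dense subset $C^{\mathscr J}_{00}(X)$.

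The one place your plan drifts is the step you call the technical heart. You do not need a single cozero $\mathscr{J}$-null neighborhood of $\mathrm{cl}_XC$, nor any amalgamation over a cover of $\mathrm{cl}_XC$: since $\lambda_{\mathscr J}X$ is a \emph{union} of sets $\mathrm{int}_{\beta X}\mathrm{cl}_{\beta X}D$, it suffices to catch each point of $\mathrm{int}_{\beta X}\mathrm{cl}_{\beta X}C$ separately, and Urysohn's Lemma is available exactly where you need it, namely in the compact Hausdorff (hence normal) space $\beta X$. Concretely, let $C\in\mathrm{Coz}(X)$ be such that $\mathrm{cl}_XC$ has an $\mathscr{I}_\mathfrak{P}$-null neighborhood, so that $\mathrm{cl}_XC$ has $\mathfrak{P}$ by closed heredity, and let $t\in\mathrm{int}_{\beta X}\mathrm{cl}_{\beta X}C$. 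Choose a continuous $h:\beta X\rightarrow[0,1]$ with $h(t)=1$ and $h=\mathbf{0}$ off $\mathrm{int}_{\beta X}\mathrm{cl}_{\beta X}C$, and set $D=X\cap h^{-1}((1/2,1])$ and $W=X\cap h^{-1}((1/3,1])$. Then $D,W\in\mathrm{Coz}(X)$, the open set $W$ contains $\mathrm{cl}_XD\subseteq X\cap h^{-1}([1/2,1])$, and
\[\mathrm{cl}_XW\subseteq X\cap h^{-1}\big([1/3,1]\big)\subseteq X\cap\mathrm{cl}_{\beta X}C=\mathrm{cl}_XC,\]
so $\mathrm{cl}_XW$ has $\mathfrak{P}$ and $W\in\mathscr{J}$; finally $t\in h^{-1}((1/2,1])\subseteq\mathrm{int}_{\beta X}\mathrm{cl}_{\beta X}D$ by Lemma \ref{LKG} applied to $1-h$. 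Hence $t\in\lambda_{\mathscr J}X$, and together with the trivial inclusion $\lambda_{\mathscr J}X\subseteq\lambda_\mathfrak{P}X$ this gives $\lambda_{\mathscr J}X=\lambda_\mathfrak{P}X$ with no Lindel\"{o}f or countable-sum input. This is precisely the device of Proposition \ref{KJHF}; with it in place your argument is complete.
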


\begin{proof}
Let $n$ be a positive integer. Then $|f|^{-1}([1/n,\infty))$ (since it is closed in $X$) is null if and only if it has $\mathfrak{P}$. The given representation of $C^\mathfrak{P}_0(X)$ then follows from Proposition \ref{JJG} (and Lemma \ref{KGF}). We will consider only the case when $\mathfrak{P}$ is preserved under countable closed sums; the other assertions and remaining cases follow from Lemma \ref{KFFF} and Theorems \ref{DGH}, \ref{KGV}, \ref{HFBY} and \ref{UDR}.

Let $X$ be completely regular and let $\mathfrak{P}$ be preserved under countable closed sums. Let $f\in C^\mathfrak{P}_0(X)$. By the representation of $C^\mathfrak{P}_0(X)$ we know that $|f|^{-1}([1/n,\infty))$ has $\mathfrak{P}$ for each positive integer $n$. But
\[\mathrm{coz}(f)=\bigcup_{n=1}^\infty|f|^{-1}\big([1/n,\infty)\big).\]
Thus $\mathrm{coz}(f)$ is a countable union of its closed subspaces each with $\mathfrak{P}$. Since $\mathfrak{P}$ is preserved under countable closed sums it follows that $\mathrm{coz}(f)$ has $\mathfrak{P}$. Next, let $g\in C_b(X)$ such that $\mathrm{coz}(g)$ has $\mathfrak{P}$. Then $|g|^{-1}([1/n,\infty))$ has $\mathfrak{P}$ for each positive integer $n$, as it is closed in $\mathrm{coz}(g)$ and $\mathfrak{P}$ is closed hereditary. Therefore $g\in C^\mathfrak{P}_0(X)$ by the first representation of $C^\mathfrak{P}_0(X)$. This proves the (second) representation for $C^\mathfrak{P}_0(X)$.

Next, let $X$ be completely regular and locally-$\mathfrak{P}$ and let $\mathfrak{P}$ be preserved under countable closed sums. Note that $C^\mathfrak{P}_0(X)$ is a Banach subalgebra of $C_b(X)$. We verify that $C^\mathfrak{P}_0(X)$ satisfies the assumption of Theorem \ref{HGL}. Let $x\in X$. Since $X$ is locally-$\mathfrak{P}$ there exists a neighborhood $U$ of $x$ in $X$ with $\mathfrak{P}$. Let $f:X\rightarrow[0,1]$ be continuous and such that
\[f(x)=1\;\;\;\;\mbox{ and }\;\;\;\;f|_{X\setminus\mathrm{int}_X U}=\mathbf{0}.\]
Then $f^{-1}([1/n,1])$ has $\mathfrak{P}$ for each positive integer $n$, as it is closed in $U$. Arguing as above it follows that $\mathrm{coz}(f)$ has $\mathfrak{P}$. Thus $f\in C^\mathfrak{P}_0(X)$ by (5). This shows that $C^\mathfrak{P}_0(X)$ satisfies condition (1) in Theorem \ref{HGL}. Next, let $\mathrm{coz}(g)\subseteq\mathrm{coz}(h)$ with $g\in C_b(X)$ and $h\in C^\mathfrak{P}_0(X)$. Then $|g|^{-1}([1/n,\infty))$ has $\mathfrak{P}$ for each positive integer $n$, as it is closed in $\mathrm{coz}(h)$ and the latter has $\mathfrak{P}$ by (5). By the above argument $\mathrm{coz}(g)$ has $\mathfrak{P}$. Therefore $g\in C^\mathfrak{P}_0(X)$ by the representation of $C^\mathfrak{P}_0(X)$. This shows that $C^\mathfrak{P}_0(X)$ satisfies condition (2) in Theorem \ref{HGL} as well. Now (5) follows from Theorem \ref{HGL}.
\end{proof}

The following theorem introduces conditions under which $C^\mathfrak{P}_0(X)=C^\mathfrak{P}_{00}(X)$. It also simplifies the representationes of $C^\mathfrak{P}_0(X)$ and $C^\mathfrak{P}_{00}(X)$. Examples of topological properties $\mathfrak{P}$ and $\mathfrak{Q}$ satisfying the assumption of the next theorem are given in Example \ref{GGK}. (See also Theorems \ref{CTTF} and \ref{HGFLK}.) Recall that if $X$ is a locally compact Hausdorff space, then $C_0(X)=C_{00}(X)$ implies that $X$ is countably compact; this will be used in the following.

\begin{theorem}\label{GKGF}
Let $\mathfrak{P}$ and $\mathfrak{Q}$ be topological properties such that
\begin{itemize}
\item $\mathfrak{P}$ and $\mathfrak{Q}$ are closed hereditary.
\item A space with both $\mathfrak{P}$ and $\mathfrak{Q}$ is Lindel\"{o}f.
\item $\mathfrak{P}$ is preserved under countable closed sums.
\item A space with $\mathfrak{Q}$ having a dense subspace with $\mathfrak{P}$ has $\mathfrak{P}$.
\end{itemize}
Let $X$ be a locally-$\mathfrak{P}$ space with $\mathfrak{Q}$.
\begin{itemize}
\item[\rm(1)] Let $X$ be regular. Then
\[C^\mathfrak{P}_0(X)=\big\{f\in C_b(X):\mathrm{supp}(f)\mbox{ has }\mathfrak{P}\big\}=C^\mathfrak{P}_{00}(X).\]
\item[\rm(2)] Let $X$ be normal. Then $C^\mathfrak{P}_0(X)$ is a Banach algebra isometrically isomorphic to $C_0(Y)$ for some unique (up to homeomorphism) locally compact Hausdorff space $Y$, namely $Y=\lambda_\mathfrak{P}X$. Furthermore,
    \begin{itemize}
    \item $Y$ is countably compact.
    \item $C_0(Y)=C_{00}(Y)$.
    \end{itemize}
\end{itemize}
\end{theorem}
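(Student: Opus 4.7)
The plan is to prove Part (1) by a four-link chain of inclusions among the three candidate sets, and then to deduce Part (2) from Part (1) by combining Theorem \ref{UDR} with the characterization of $C_0(Y)=C_{00}(Y)$ recalled just before Theorem \ref{YUS}. Two of the four inclusions in (1) are essentially formal. If $f\in C^\mathfrak{P}_{00}(X)$, Theorem \ref{JGF} furnishes a closed neighborhood of $\mathrm{supp}(f)$ in $X$ with $\mathfrak{P}$, and closed heredity of $\mathfrak{P}$ forces $\mathrm{supp}(f)$ to have $\mathfrak{P}$. Conversely, if $\mathrm{supp}(f)$ has $\mathfrak{P}$, then each $|f|^{-1}([1/n,\infty))$ is closed in $\mathrm{supp}(f)$ and hence has $\mathfrak{P}$, so
\[\mathrm{coz}(f)=\bigcup_{n=1}^\infty|f|^{-1}\bigl([1/n,\infty)\bigr)\]
is a countable union of its closed subspaces each having $\mathfrak{P}$; preservation under countable closed sums then gives that $\mathrm{coz}(f)$ has $\mathfrak{P}$, whence $f\in C^\mathfrak{P}_0(X)$ by the second representation in Theorem \ref{HJG}.

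The remaining two inclusions require the interplay between $\mathfrak{P}$ and $\mathfrak{Q}$. For $C^\mathfrak{P}_0(X)\subseteq\{f:\mathrm{supp}(f)\text{ has }\mathfrak{P}\}$, if $f\in C^\mathfrak{P}_0(X)$ then $\mathrm{coz}(f)$ has $\mathfrak{P}$ by Theorem \ref{HJG}, while $\mathrm{supp}(f)$ has $\mathfrak{Q}$ because it is closed in $X$; the density of $\mathrm{coz}(f)$ in $\mathrm{supp}(f)$ together with the fourth hypothesis on $(\mathfrak{P},\mathfrak{Q})$ then yields that $\mathrm{supp}(f)$ has $\mathfrak{P}$. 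The crucial and most delicate inclusion, which I expect to be the main obstacle, is $\{f:\mathrm{supp}(f)\text{ has }\mathfrak{P}\}\subseteq C^\mathfrak{P}_{00}(X)$, and its proof has to invoke all four hypotheses simultaneously. Assuming $\mathrm{supp}(f)$ has $\mathfrak{P}$, it also has $\mathfrak{Q}$ and is therefore Lindel\"of by the second hypothesis. Exploiting regularity together with local $\mathfrak{P}$-ness, for each $x\in\mathrm{supp}(f)$ I would fix an open neighborhood $V_x$ in $X$ with $\mathrm{cl}_XV_x$ having $\mathfrak{P}$, extract a countable subcover $\{V_{x_n}\}_{n\geq 1}$ of $\mathrm{supp}(f)$, and set
\[V=\bigcup_{n=1}^\infty V_{x_n}\quad\text{and}\quad K=\bigcup_{n=1}^\infty\mathrm{cl}_XV_{x_n}.\]
Then $K$ has $\mathfrak{P}$ by preservation under countable closed sums, $K$ is a neighborhood of $\mathrm{supp}(f)$ in $X$ because $\mathrm{supp}(f)\subseteq V\subseteq K$ with $V$ open, and $\mathrm{cl}_XK$ inherits $\mathfrak{Q}$ from $X$ while containing $K$ as a dense subspace with $\mathfrak{P}$; the fourth hypothesis therefore forces $\mathrm{cl}_XK$ to have $\mathfrak{P}$ as well. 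Hence $K$ is a null neighborhood of $\mathrm{supp}(f)$ in $X$, which shows $f\in C^\mathfrak{P}_{00}(X)$.

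For Part (2), normality implies regularity, so Part (1) applies and delivers $C^\mathfrak{P}_0(X)=C^\mathfrak{P}_{00}(X)$. Normality combined with local $\mathfrak{P}$-ness, via Lemma \ref{KFFF}, renders $X$ locally null with respect to $\mathscr{I}_\mathfrak{P}$, so Theorem \ref{UDR} supplies the claimed isometric algebra isomorphism $\phi:C^\mathfrak{P}_0(X)\to C_0(\lambda_\mathfrak{P}X)$ onto the unique locally compact Hausdorff realization, together with density of $X$ in $\lambda_\mathfrak{P}X$. The proof of part (3) of Theorem \ref{UDR} shows that the restriction of $\phi$ is a bijection $C^\mathfrak{P}_{00}(X)\to C_{00}(\lambda_\mathfrak{P}X)$; plugging in $C^\mathfrak{P}_0(X)=C^\mathfrak{P}_{00}(X)$ from Part (1) therefore yields $C_0(\lambda_\mathfrak{P}X)=C_{00}(\lambda_\mathfrak{P}X)$, and countable compactness of $\lambda_\mathfrak{P}X$ follows at once from the standard fact recalled just before Theorem \ref{YUS}.
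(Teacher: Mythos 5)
Your proposal is correct and follows essentially the same route as the paper: the two ``formal'' inclusions, the use of the fourth hypothesis on the pair $(\mathrm{coz}(f),\mathrm{supp}(f))$, and the Lindel\"{o}f-subcover argument for the delicate inclusion all match the paper's proof (which merely packages your $K=\bigcup_n\mathrm{cl}_XV_{x_n}$ computation as the prior verification that $\mathscr{I}_\mathfrak{P}$ is a $\sigma$-ideal), and Part (2) is deduced from Theorem \ref{UDR} exactly as in the paper.
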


\begin{proof}
We first verify that $\mathscr{I}_\mathfrak{P}$ is a $\sigma$-ideal in $({\mathscr P}(X),\subseteq)$. Since $\mathscr{I}_\mathfrak{P}$ is an ideal in $({\mathscr P}(X),\subseteq)$ by Lemma \ref{KGF}, we only check that $\mathscr{I}_\mathfrak{P}$ is closed under formation of countable unions. Let $A_n\in\mathscr{I}_\mathfrak{P}$ for each positive integer $n$. Then
\[G=\bigcup_{n=1}^\infty\mathrm{cl}_XA_n\]
has $\mathfrak{P}$, as it is a countable union of its closed subspaces each with $\mathfrak{P}$ and $\mathfrak{P}$ is preserved under countable closed sums. Note that
\[H=\mathrm{cl}_X\bigg(\bigcup_{n=1}^\infty A_n\bigg)\]
has $\mathfrak{Q}$, as it is closed in $X$ and $X$ has $\mathfrak{Q}$. Since $H$ contains $G$ as a dense subspace, using our assumption it follows that $H$ has $\mathfrak{P}$. Therefore
\[\bigcup_{n=1}^\infty A_n\in\mathscr{I}_\mathfrak{P}.\]

(1). We first show that
\begin{equation}\label{FGH}
C^\mathfrak{P}_{00}(X)=\big\{f\in C_b(X):\mathrm{supp}(f)\mbox{ has }\mathfrak{P}\big\}.
\end{equation}
Let $f\in C^\mathfrak{P}_{00}(X)$. Then $\mathrm{supp}(f)$ has a null neighborhood in $X$, that is, $\mathrm{supp}(f)\subseteq\mathrm{int}_XU$ for some subspace $U$ of $X$ such that $\mathrm{cl}_XU$ has $\mathfrak{P}$. Therefore $\mathrm{supp}(f)$ has $\mathfrak{P}$, as it is closed in $\mathrm{cl}_XU$.

Next, let $f\in C_b(X)$ such that $\mathrm{supp}(f)$ has $\mathfrak{P}$. Note that $X$ is locally null by Lemma \ref{KFFF}. For each $x\in X$ let $U_x$ be a null neighborhood of $x$ in $X$. Then
\[\mathrm{supp}(f)\subseteq\bigcup_{x\in X}\mathrm{int}_XU_x.\]
Note that $\mathrm{supp}(f)$ has $\mathfrak{Q}$, as it closed in $X$ and $X$ has $\mathfrak{Q}$. Since $\mathrm{supp}(f)$ has $\mathfrak{P}$, it is then Lindel\"{o}f by our assumption. Let $x_1,x_2,\ldots\in X$ such that
\[\mathrm{supp}(f)\subseteq\bigcup_{n=1}^\infty\mathrm{int}_XU_{x_n}.\]
Note that $\mathscr{I}_\mathfrak{P}$ is a $\sigma$-ideal. Therefore
\[\bigvee_{n=1}^\infty U_{x_n}\bigg(=\bigcup_{n=1}^\infty U_{x_n},\mbox{ by the above argument}\bigg)\]
is a null neighborhood of $\mathrm{supp}(f)$ in $X$. Thus $f\in C^\mathfrak{P}_{00}(X)$. This shows (\ref{FGH}).

Now, we show that $C^\mathfrak{P}_0(X)=C^\mathfrak{P}_{00}(X)$. Since $C^\mathfrak{P}_{00}(X)\subseteq C^\mathfrak{P}_0(X)$ by Theorem \ref{DGH}, we only check that $C^\mathfrak{P}_0(X)\subseteq C^\mathfrak{P}_{00}(X)$. Let $f\in C^\mathfrak{P}_0(X)$. Then $|f|^{-1}([1/n,\infty))$ has $\mathfrak{P}$ for each positive integer $n$ (as it has a null neighborhood in $X$). But then $\mathrm{coz}(f)$ has $\mathfrak{P}$, as it is a countable union of its closed subspaces $|f|^{-1}([1/n,\infty))$ each with $\mathfrak{P}$. Note that $\mathrm{supp}(f)$ has $\mathfrak{Q}$, as it is closed in $X$ and $X$ has $\mathfrak{Q}$. Therefore $\mathrm{supp}(f)$ has $\mathfrak{P}$, as it contains $\mathrm{coz}(f)$ as a dense subspace and $\mathrm{coz}(f)$ has $\mathfrak{P}$. Thus $f\in C^\mathfrak{P}_{00}(X)$ by (\ref{FGH}).

(2). Let $Y=\lambda_\mathfrak{P} X$. Let
\[\phi:C^\mathfrak{P}_0(X)\rightarrow C_0(Y)\]
denote the isometric isomorphism as defined in the proof of Theorem \ref{UDR}. As it is observed in the proof Theorem \ref{UDR}, the mapping $\phi$, when restricted to $C^\mathfrak{P}_{00}(X)$, induces an isometric isomorphism between $C^\mathfrak{P}_{00}(X)$ and $C_{00}(Y)$. But then $C_0(Y)=C_{00}(Y)$, as $C^\mathfrak{P}_0(X)=C^\mathfrak{P}_{00}(X)$ by (1). This in particular implies that $Y$ is countably compact.
\end{proof}

\begin{remark}\label{PF}
Observe that in Theorem \ref{GKGF} we indeed have
\[C^\mathfrak{P}_0(X)=C^{\mathfrak{P}+\mathfrak{Q}}_0(X)=C^{\mathfrak{P}+\mathfrak{Q}}_{00}(X)=C^\mathfrak{P}_{00}(X).\]
To show this, it suffices to observe that $\mathscr{I}_\mathfrak{P}=\mathscr{I}_{\mathfrak{P}+\mathfrak{Q}}$. It is obvious that $\mathscr{I}_{\mathfrak{P}+\mathfrak{Q}}\subseteq\mathscr{I}_\mathfrak{P}$. Let $A\in\mathscr{I}_\mathfrak{P}$. Then $\mathrm{cl}_XA$ has $\mathfrak{P}$. But $\mathrm{cl}_XA$ also has $\mathfrak{Q}$, as it is closed in $X$ and $X$ has $\mathfrak{Q}$. Thus $A\in\mathscr{I}_{\mathfrak{P}+\mathfrak{Q}}$.
\end{remark}

In the following we give examples of topological properties $\mathfrak{P}$ and $\mathfrak{Q}$ satisfying the assumption of Theorem \ref{GKGF}.

\begin{example}\label{GGK}
For any open covers $\mathscr{U}$ and $\mathscr{V}$ of a space $X$ we say that $\mathscr{U}$ is a \textit{refinement} of $\mathscr{V}$ if each element of $\mathscr{U}$ is contained in an element of $\mathscr{V}$. An open cover $\mathscr{U}$ of $X$ is called \textit{locally finite} if each point of $X$ has an open neighborhood in $X$ intersecting only a finite number of the elements of $\mathscr{U}$. A regular space $X$ is called \textit{paracompact} if for every open cover $\mathscr{U}$ of $X$ there is an open cover of $X$ which refines $\mathscr{U}$. Paracompact spaces are generally considered as the simultaneous generalizations of compact Hausdorff spaces and metrizable spaces. Every metrizable space as well as every compact Hausdorff space is paracompact and every paracompact space is normal. (See Theorems 5.1.1, 5.1.3 and 5.1.5 of \cite{E}.)

Let $\mathfrak{P}$ be the Lindel\"{o}f property and let $\mathfrak{Q}$ be either metrizability or paracompactness. Note that the Lindel\"{o}f property is closed hereditary (see Theorem 3.8.4 of \cite{E}) and is preserved under countable closed sums, as obviously, any space which is the union of a countable number of its Lindel\"{o}f subspaces is Lindel\"{o}f. Note that any subspace of a metrizable space is metrizable and a closed subspace of a paracompact space is paracompact. (See Corollary 5.1.29 of \cite{E}.) Also, any paracompact space with a dense Lindel\"{o}f space is Lindel\"{o}f (see Theorem 5.1.25 of \cite{E}), since any metrizable space is paracompact, it then follows that any metrizable space with a dense Lindel\"{o}f space is Lindel\"{o}f. Therefore, the pair $\mathfrak{P}$ and $\mathfrak{Q}$ satisfy the assumption of Theorem \ref{GKGF}.

Note that in the realm of metrizable spaces the Lindel\"{o}f property coincides with separability and second countability; denote the latter two by $\mathfrak{S}$ and $\mathfrak{C}$, respectively, and denote the Lindel\"{o}f property by $\mathfrak{L}$. Thus, if $X$ is a metrizable space, then
\[\mathscr{I}_\mathfrak{L}=\mathscr{I}_\mathfrak{S}=\mathscr{I}_\mathfrak{C},\]
which implies that
\[C^\mathfrak{L}_{00}(X)=C^\mathfrak{S}_{00}(X)=C^\mathfrak{C}_{00}(X)\;\;\;\;\mbox{ and }\;\;\;\;C^\mathfrak{L}_0(X)=C^\mathfrak{S}_0(X)=C^\mathfrak{C}_0(X).\]
\end{example}

Observe that for a paracompact locally-$\mathfrak{P}$ space $X$, where $\mathfrak{P}$ is the Lindel\"{o}f property, we have
\[C^\mathfrak{P}_{00}(X)=\big\{f\in C_b(X):\mathrm{supp}(f)\mbox{ has }\mathfrak{P}\big\},\]
by Theorem \ref{GKGF} (and Example \ref{GGK}). As we see in the following, for the case when $\mathfrak{P}$ is the Lindel\"{o}f property the above representation remains true even if one requires $X$ to be only normal.

\begin{proposition}\label{HJGL}
Let $X$ be a normal locally-$\mathfrak{P}$ space, where $\mathfrak{P}$ is the Lindel\"{o}f property. Then
\[C^\mathfrak{P}_{00}(X)=\big\{f\in C_b(X):\mathrm{supp}(f)\mbox{ has }\mathfrak{P}\big\}.\]
\end{proposition}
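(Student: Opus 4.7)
The plan is to prove the two inclusions separately. The forward inclusion is essentially immediate from the definitions: if $f\in C^\mathfrak{P}_{00}(X)$, then $\mathrm{supp}(f)$ admits a null neighborhood $U$ in $X$, meaning $\mathrm{cl}_XU$ has $\mathfrak{P}$ (is Lindel\"of); since $\mathrm{supp}(f)$ is closed in $X$ it is closed in $\mathrm{cl}_XU$, and so $\mathrm{supp}(f)$ inherits $\mathfrak{P}$ as a closed subspace of a Lindel\"of space.

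For the reverse inclusion I start with $f\in C_b(X)$ whose support is Lindel\"of and build the required null neighborhood. Using local-$\mathfrak{P}$-ness together with regularity (which follows from normality), for each $x\in\mathrm{supp}(f)$ I pick a Lindel\"of neighborhood $N_x$ and then an open set $U_x$ satisfying $x\in U_x\subseteq\mathrm{cl}_XU_x\subseteq\mathrm{int}_XN_x$; the closure $\mathrm{cl}_XU_x$, being closed in $N_x$, is Lindel\"of. Since $\mathrm{supp}(f)$ is Lindel\"of, the open cover $\{U_x:x\in\mathrm{supp}(f)\}$ reduces to a countable subcover $\{U_{x_n}\}_{n=1}^\infty$. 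Set $U=\bigcup_{n=1}^\infty U_{x_n}$ and $K=\bigcup_{n=1}^\infty\mathrm{cl}_XU_{x_n}$, and observe that $K$ is Lindel\"of as a countable union of Lindel\"of subspaces of $X$.

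The main obstacle is that although $U$ is an open neighborhood of $\mathrm{supp}(f)$ contained in $K$, its closure $\mathrm{cl}_XU$ may exceed $K$, so $U$ itself need not be a null neighborhood. This is where normality is essential. I plan to apply the Urysohn Lemma to the disjoint closed sets $\mathrm{supp}(f)$ and $X\setminus U$, producing a continuous $g:X\to[0,1]$ with $g|_{\mathrm{supp}(f)}=\mathbf{0}$ and $g|_{X\setminus U}=\mathbf{1}$, and then take $V=g^{-1}([0,1/2))$. Then $V$ is an open neighborhood of $\mathrm{supp}(f)$ whose closure satisfies
\[\mathrm{cl}_XV\subseteq g^{-1}\big([0,1/2]\big)\subseteq U\subseteq K.\]
Since $\mathrm{cl}_XV$ is closed in $X$ and contained in $K$, it is closed in $K$ and hence Lindel\"of. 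Thus $V$ is a null neighborhood of $\mathrm{supp}(f)$ in $X$, which places $f$ in $C^\mathfrak{P}_{00}(X)$ and completes the proof.
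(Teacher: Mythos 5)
Your proof is correct and follows essentially the same route as the paper: cover the Lindel\"of support by Lindel\"of neighborhoods, extract a countable subcover, and use normality to interpose an open set whose closure lies in the resulting Lindel\"of union. The only cosmetic differences are that you invoke the Urysohn Lemma where the paper uses the equivalent normal-separation statement directly, and your regularity step producing $\mathrm{cl}_XU_x\subseteq\mathrm{int}_XN_x$ is redundant, since the union $\bigcup_{n=1}^\infty N_{x_n}$ is already Lindel\"of as a countable union of Lindel\"of subspaces.
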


\begin{proof}
It is trivial that $\mathrm{supp}(f)$ has $\mathfrak{P}$ for any $f\in C^\mathfrak{P}_{00}(X)$, as $\mathrm{supp}(f)$ has a neighborhood in $X$ with $\mathfrak{P}$.

Next, let $f\in C_b(X)$ such that $\mathrm{supp}(f)$ has $\mathfrak{P}$. For each $x\in X$, let $U_x$ be a neighborhood of $x$ in $X$ with $\mathfrak{P}$. Then
\[\mathrm{supp}(f)\subseteq U_{x_1}\cup U_{x_2}\cup\cdots=W,\]
for some $x_1,x_2,\ldots\in\ X$. Since $X$ is normal, there exists an open subspace $V$ of $X$ with
\[\mathrm{supp}(f)\subseteq V\subseteq\mathrm{cl}_X V\subseteq W.\]
Thus, $V$ is a neighborhood of $\mathrm{supp}(f)$ in $X$, whose closure $\mathrm{cl}_X V$ has $\mathfrak{P}$, as it is closed in $W$ and $W$ has $\mathfrak{P}$. Therefore $f\in C^\mathfrak{P}_{00}(X)$.
\end{proof}

Let $X$ be a locally separable metrizable space. The study of the Banach subalgebra $C_s(X)$ of $C_b(X)$, consisting of those elements of $C_b(X)$ with separable support, constitutes the subject matter of our next result. We need some preliminaries before we further proceed.

The \textit{density} of a space $X$, denoted by $d(X)$, is defined by
\[d(X)=\min\big\{|D|:D\mbox{ is dense in }X\big\}+\aleph_0.\]
In particular, a space $X$ is separable if and only if $d(X)=\aleph_0$. Observe that in any metrizable space separability coincides with second countability (and the Lindel\"{o}f property); thus any subspace of a separable metrizable space is separable. A theorem of Alexandroff states that any locally separable metrizable space $X$ can be represented as a disjoint union
\[X=\bigcup_{i\in I}X_i,\]
where $I$ is an index set, and $X_i$ is a non-empty separable (and thus Lindel\"{o}f) open-closed subspace of $X$ for each $i\in I$. (See Problem 4.4.F of \cite{E}.) Note that $d(X)=|I|$, if $I$ is infinite.

Consider an uncountable discrete space $D$. Denote by $D_\lambda$ the subspace of $\beta D$ consisting of all elements in the closure in $D$ of countable subsets of $D$. It is known that there exists a continuous (2-valued) mapping $f:D_\lambda\setminus D\rightarrow[0,1]$ which is not continuously extendible over $\beta D\setminus D$. (See \cite{W}.) In particular, $D_\lambda$ is then not normal. (To see this, suppose the contrary. Note that $D_\lambda\setminus D$ is closed in $D_\lambda$, as $D$ is open in $\beta D$, since $D$ is locally compact. By the Tietze--Urysohn Extension Theorem, $f$ is extendible to a continuous bounded mapping over $D_\lambda$, and therefore over the whole $\beta D_\lambda$; note that $\beta D_\lambda=\beta D$, as $D\subseteq D_\lambda\subseteq\beta D$. This, however, is not possible.)

A theorem of Tarski ensures for an infinite set $I$ the existence of a collection $\mathscr{I}$ of cardinality $|I|^{\aleph_0}$ consisting of countable infinite subsets of $I$, such that the intersection of any two distinct elements of $\mathscr{I}$ is finite. (See \cite{Ho}.) Note that the collection of all subsets of cardinality at most $\mathfrak{m}$ in a set with cardinality $\mathfrak{n}\geq\mathfrak{m}$ is of cardinality at most $\mathfrak{n}^\mathfrak{m}$.

Observe that if $X$ is a space and $D$ is a subspace of $X$, then
\[U\cap\mathrm{cl}_XD=\mathrm{cl}_X(U\cap D)\]
for every open-closed subspace $U$ of $X$. This simple observation will be used in the following.

\begin{theorem}\label{CTTF}
Let $X$ be a locally separable metrizable space. Let
\[C_s(X)=\big\{f\in C_b(X):\mathrm{supp}(f)\mbox{ is separable}\big\}.\]
Then $C_s(X)$ is a Banach algebra isometrically isomorphic to $C_0(Y)$ for some unique (up to homeomorphism) locally compact Hausdorff space $Y$, namely
\[Y=\bigcup\{\mathrm{cl}_{\beta X}S:S\subseteq X\mbox{ is separable}\}.\]
Furthermore,
\begin{itemize}
\item[\rm(1)] $Y$ is countably compact.
\item[\rm(2)] $Y$ is non-normal if $X$ is non-separable.
\item[\rm(3)] $C_0(Y)=C_{00}(Y)$.
\item[\rm(4)] $\dim C_s(X)=d(X)^{\aleph_0}$.
\end{itemize}
\end{theorem}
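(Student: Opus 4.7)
The plan is to specialize Theorem \ref{GKGF} to the pair $\mathfrak{P}=$ Lindel\"{o}f property and $\mathfrak{Q}=$ metrizability, whose compatibility with that theorem's hypotheses was verified in Example \ref{GGK}. Since in metrizable spaces the Lindel\"{o}f property coincides with separability, the local separability of $X$ means $X$ is locally-$\mathfrak{P}$, and the support of any $f\in C_b(X)$ (being closed in the metrizable $X$ and hence itself metrizable) is separable if and only if it is Lindel\"{o}f. Thus Theorem \ref{GKGF}(1) gives
\[C_s(X)=C^\mathfrak{P}_0(X)=C^\mathfrak{P}_{00}(X),\]
and Theorem \ref{GKGF}(2), together with the normality of $X$, yields the isometric algebra isomorphism $C_s(X)\cong C_0(\lambda_\mathfrak{P}X)$, with $\lambda_\mathfrak{P}X$ a locally compact countably compact Hausdorff space satisfying $C_0(\lambda_\mathfrak{P}X)=C_{00}(\lambda_\mathfrak{P}X)$. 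This already delivers (1) and (3), and reduces the remaining work to identifying $\lambda_\mathfrak{P}X$ with the $Y$ of the statement, together with (2) and (4).

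To show $\lambda_\mathfrak{P}X=Y$, I would invoke Alexandroff's theorem to write $X=\bigsqcup_{i\in I}X_i$ as a disjoint union of separable open-closed subspaces. Any separable $S\subseteq X$ meets only countably many of the $X_i$'s, so $\mathrm{cl}_XS$ is contained in the corresponding countable union $U_S$, which is itself open-closed, separable, and a cozero-set with $\mathrm{cl}_XU_S=U_S$ serving as a null neighborhood of itself. Since $U_S$ is open-closed in $X$, its closure in $\beta X$ is open-closed, so
\[\mathrm{cl}_{\beta X}S\subseteq\mathrm{cl}_{\beta X}U_S=\mathrm{int}_{\beta X}\mathrm{cl}_{\beta X}U_S\subseteq\lambda_\mathfrak{P}X,\]
giving $Y\subseteq\lambda_\mathfrak{P}X$. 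Conversely, for any cozero-set $C\subseteq X$ such that $\mathrm{cl}_XC$ has a null neighborhood, $\mathrm{cl}_XC$ is separable, whence $\mathrm{int}_{\beta X}\mathrm{cl}_{\beta X}C\subseteq\mathrm{cl}_{\beta X}(\mathrm{cl}_XC)\subseteq Y$.

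For (2), assume $X$ is non-separable, so $|I|$ is uncountable, and pick one point $x_i$ from each $X_i$ to form the closed discrete subspace $D=\{x_i:i\in I\}$ of $X$. By normality of $X$ every subset $A$ of $D$ is closed in $X$ and $\mathrm{cl}_{\beta X}A=\beta A$; in particular $\mathrm{cl}_{\beta X}D=\beta D$. Using the open-closedness of the $X_i$'s and the fact that disjoint open-closed subsets of the normal space $X$ have disjoint $\beta X$-closures, I would establish $\mathrm{cl}_{\beta X}D\cap\mathrm{cl}_{\beta X}U_J=\beta(D\cap U_J)$ for every countable $J\subseteq I$; combined with the above description of $Y$, this yields the identification $\beta D\cap Y=D_\lambda$, where $D_\lambda=\bigcup\{\beta A:A\subseteq D\text{ countable}\}$. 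Since $Y$ is open in $\beta X$, the set $D_\lambda=\beta D\cap Y$ is closed in $Y$; the classical non-normality of $D_\lambda$ for an uncountable discrete $D$ then forces $Y$ itself to be non-normal.

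For (4), the above representation $Y=\bigcup\{\beta U_J:J\subseteq I\text{ countable}\}$ provides the upper bound $\dim C_s(X)\leq d(X)^{\aleph_0}$ via a cardinality count of $Y$ and of $C_0(Y)$. For the lower bound I would use Tarski's theorem to produce an almost disjoint family $\{J_\alpha:\alpha<|I|^{\aleph_0}\}$ of countable infinite subsets of $I$; the characteristic functions $\chi_{U_{J_\alpha}}$ lie in $C_s(X)$, and a standard argument exploiting that the pairwise intersections $U_{J_\alpha}\cap U_{J_\beta}$ are finite unions of the pieces $X_i$ (and hence separable) produces $|I|^{\aleph_0}=d(X)^{\aleph_0}$ many linearly independent elements modulo a controlled correction. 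The hardest step is (2), where the transfer of non-normality from the classically non-normal $D_\lambda$ to $Y$ hinges on the identification $\beta D\cap Y=D_\lambda$, which in turn rests on the Alexandroff decomposition and the disjointness of $\beta X$-closures of disjoint open-closed pieces of $X$.
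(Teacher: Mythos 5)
Your proposal is correct and follows essentially the same route as the paper: specializing Theorem \ref{GKGF} with $\mathfrak{P}$ the Lindel\"{o}f property and $\mathfrak{Q}$ metrizability, using the Alexandroff decomposition into separable open-closed pieces to identify $\lambda_\mathfrak{P}X$ with $Y$, transferring non-normality from $D_\lambda$ via the closed copy $\beta D\cap Y$, and computing the dimension with Tarski's almost disjoint family for the lower bound and a cardinality count for the upper bound. The only cosmetic difference is that you obtain (1) and (3) directly from Theorem \ref{GKGF}(2), whereas the paper also supplies an independent $\sigma$-compactness argument; both are valid.
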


\begin{proof}
Let $\mathfrak{P}$ be the Lindel\"{o}f property and let $\mathfrak{Q}$ be metrizability. Then, as we have seen in Example \ref{GGK}, the pair $\mathfrak{P}$ and $\mathfrak{Q}$ satisfy the assumption of Theorem \ref{GKGF}. Consider a representation
\[X=\bigcup_{i\in I}X_i,\]
of $X$, where $X_i$'s are disjoint non-empty separable (and therefore Lindel\"{o}f) open-closed subspaces of $X$ and $I$ is an index set. For convenience, denote
\[H_J=\bigcup_{i\in J}X_i\]
for any $J\subseteq I$. Observe that each $H_J$ is open-closed in $X$, thus it has open-closed closure in $\beta X$. Also, as we will see now
\begin{equation}\label{DSY}
\lambda_\mathfrak{P} X=\bigcup\{\mathrm{cl}_{\beta X}H_J:J\subseteq I\mbox{ is countable}\}.
\end{equation}
Let $C\in\mathrm{Coz}(X)$ such that $\mathrm{cl}_X C$ has a neighborhood $U$ in $X$ such that $\mathrm{cl}_X U$ is Lindel\"{o}f. Then $\mathrm{cl}_X C$ itself is Lindel\"{o}f, as it is closed in $\mathrm{cl}_X U$, and therefore $\mathrm{cl}_X C\subseteq H_J$ for some countable $J\subseteq I$. Thus $\mathrm{cl}_{\beta X}C\subseteq\mathrm{cl}_{\beta X}H_J$. On the other hand, if $J\subseteq I$ is countable, then $H_J$ is a cozero-set in $X$, as it is open-closed in $X$, and it is Lindel\"{o}f, as it is a countable union of Lindel\"{o}f subspaces. Since $\mathrm{cl}_{\beta X}H_J$ is open in $\beta X$ we have
\[\mathrm{cl}_{\beta X}H_J=\mathrm{int}_{\beta X}\mathrm{cl}_{\beta X}H_J\subseteq\lambda_\mathfrak{P} X.\]
This shows (\ref{DSY}). Note that
\[\lambda_\mathfrak{P} X=\bigcup\{\mathrm{cl}_{\beta X}S:S\subseteq X\mbox{ is separable}\};\]
by (\ref{DSY}) in particular; simply observe that any separable subspace of $X$ is Lindel\"{o}f, and is then contained in $H_J$ for some countable $J\subseteq I$, and on the other hand, $H_J$ is separable for any countable $J\subseteq I$, as it is a countable union of separable spaces.

Note that (3) implies (1); we prove (3). (Observe that (1) and (3) follow from Theorem \ref{GKGF}; the proof given here, however, is independent from Theorem \ref{GKGF}.)

(3). It suffices to check that every $\sigma$-compact subspace of $\lambda_\mathfrak{P} X$ is contained in a compact subspace of $\lambda_\mathfrak{P} X$. Let \[A=\bigcup_{n=1}^\infty A_n,\]
where each $A_n$ is compact, be a $\sigma$-compact subspace of $\lambda_\mathfrak{P} X$. Using (\ref{DSY}), by compactness we have
\begin{equation}\label{DJB}
A_n\subseteq\mathrm{cl}_{\beta X}H_{J_1}\cup\cdots\cup\mathrm{cl}_{\beta X}H_{J_{k_n}}
\end{equation}
for some countable $J_1,\ldots,J_{k_n}\subseteq I$. Let
\begin{equation}\label{FFB}
J=\bigcup_{n=1}^\infty(J_{k_1}\cup\cdots\cup J_{k_n}).
\end{equation}
Then $J$ is countable and $A\subseteq\mathrm{cl}_{\beta X}H_J$. That is $A$ is contained in the compact subspace $\mathrm{cl}_{\beta X}H_J$ of $\lambda_\mathfrak{P} X$.

(2). Let $x_i\in X_i$ for each $i\in I$. Then $D=\{x_i:i\in I\}$ is a closed discrete subspace of $X$, and since $X$ is non-separable, it is uncountable. Suppose to the contrary that $\lambda_\mathfrak{P} X$ is normal. Then, using (\ref{DSY}), it follows that
\[\lambda_\mathfrak{P} X\cap\mathrm{cl}_{\beta X}D=\bigcup\{\mathrm{cl}_{\beta X}H_J\cap\mathrm{cl}_{\beta X}D:J\subseteq I\mbox{ is countable}\}\]
is normal, as it is closed in $\lambda_\mathfrak{P} X$. Now, let $J\subseteq I$ be countable. Since $\mathrm{cl}_{\beta X}H_J$ is open-closed in $\beta X$ (using the observation preceding the statement of the theorem) we have
\[\mathrm{cl}_{\beta X}H_J\cap\mathrm{cl}_{\beta X}D=\mathrm{cl}_{\beta X}(\mathrm{cl}_{\beta X}H_J\cap D)=\mathrm{cl}_{\beta X}(H_J\cap D)=\mathrm{cl}_{\beta X}\big(\{x_i:i\in J\}\big).\]
But $\mathrm{cl}_{\beta X}D=\beta D$, as $D$ is closed in $X$ and $X$ is normal. Therefore
\[\mathrm{cl}_{\beta X}\big(\{x_i:i\in J\}\big)=\mathrm{cl}_{\beta X}\big(\{x_i:i\in J\}\big)\cap\mathrm{cl}_{\beta X}D=\mathrm{cl}_{\beta D}\big(\{x_i:i\in J\}\big).\]
Thus
\[\lambda_\mathfrak{P} X\cap\mathrm{cl}_{\beta X}D=\bigcup\{\mathrm{cl}_{\beta D}E:E\subseteq D\mbox{ is countable}\}=D_\lambda,\]
contradicting the fact that $D_\lambda$ is not normal.

(4). Since $X$ is non-separable, $I$ is infinite and $d(X)=|I|$. Let $\mathscr{I}$ be a collection of cardinality $|I|^{\aleph_0}$ consisting of countable infinite subsets of $I$, such that the intersection of any two distinct elements of $\mathscr{I}$ is finite. Let $f_J=\chi_{H_J}$ for any $J\in\mathscr{I}$. No element in
\[\mathscr{F}=\{f_J:J\in\mathscr{I}\}\]
is a linear combination of other elements (as each element of $\mathscr{I}$ is infinite and each pair of distinct elements of $\mathscr{I}$ has finite intersection). Observe that $\mathscr{F}$ is of cardinality $|\mathscr{I}|$. Thus
\[\dim C_s(X)\geq|\mathscr{I}|=|I|^{\aleph_0}=d(X)^{\aleph_0}.\]
On the other hand, if $f\in C_s(X)$, then $\mathrm{supp}(f)$ is Lindel\"{o}f (as it is separable) and thus $\mathrm{supp}(f)\subseteq H_J$, where $J\subseteq I$ is countable; therefore, we may assume that $f\in C_b(H_J)$. Conversely, if $J\subseteq I$ is countable, then each element of $C_b(H_J)$ can be extended trivially to an element of $C_s(X)$ (by defining it to be identically $0$ elsewhere). Thus $C_s(X)$ may be viewed as the union of all $C_b(H_J)$, where $J$ runs over all countable subsets of $I$. Note that if $J\subseteq I$ is countable, then $H_J$ is separable; thus any element of $C_b(H_J)$ is determined by its value on a countable set. This implies that for each countable $J\subseteq I$, the set $C_b(H_J)$ is of cardinality at most $\mathfrak{c}^{\aleph_0}=2^{\aleph_0}$. There are at most $|I|^{\aleph_0}$ countable $J\subseteq I$. Therefore
\begin{eqnarray*}
\dim C_s(X)\leq\big|C_s(X)\big|&\leq& \Big|\bigcup\big\{C_b(H_J):J\subseteq I\mbox{ is countable}\big\}\Big|\\&\leq& 2^{\aleph_0}\cdot|I|^{\aleph_0}=|I|^{\aleph_0}=d(X)^{\aleph_0}.
\end{eqnarray*}
\end{proof}

\begin{remark}\label{OLJDF}
As it is remarked previously, the three notions of separability, second countability and the Lindel\"{o}f property coincide in the class of metrizable spaces. In particular, Theorem \ref{CTTF} holds true if one replaces separability by either second countability or the Lindel\"{o}f property.
\end{remark}

The \textit{Lindel\"{o}f number} of a space $X$, denoted by $\ell(X)$, is defined by
\[\ell(X)=\min\{\mathfrak{n}:\mbox{any open cover of $X$ has a subcover of cardinality}\leq\mathfrak{n}\}+\aleph_0.\]
In particular, a space $X$ is Lindel\"{o}f if and only if $\ell(X)=\aleph_0$. Any locally compact paracompact space $X$ may be represented as
\[X=\bigcup_{i\in I}X_i,\]
where $X_i$'s are disjoint non-empty Lindel\"{o}f open-closed subspaces of $X$ and $I$ is an index set. (See Theorem 5.1.27 of \cite{E}.) Note that $\ell(X)=|I|$ if
$I$ is infinite, and $\ell(X)=d(X)$ if $X$ is a locally separable metrizable space.

The next theorem is (in part) a corollary of Theorem \ref{GKGF} and may be viewed as a dual to Theorem \ref{CTTF}.

\begin{theorem}\label{HGFLK}
Let $X$ be a locally Lindel\"{o}f paracompact space. Let
\[C_l(X)=\big\{f\in C_b(X):\mathrm{supp}(f)\mbox{ is Lindel\"{o}f}\,\big\}.\]
Then $C_l(X)$ is a Banach algebra isometrically isomorphic to $C_0(Y)$ for some unique (up to homeomorphism) locally compact Hausdorff space $Y$, namely
\[Y=\bigcup\{\mathrm{cl}_{\beta X}L:L\subseteq X\mbox{ is Lindel\"{o}f}\,\}.\]
Furthermore, if $X$ is also locally compact then
\begin{itemize}
\item[\rm(1)] $Y$ is countably compact.
\item[\rm(2)] $Y$ is non-normal if $X$ is non-Lindel\"{o}f.
\item[\rm(3)] $C_0(Y)=C_{00}(Y)$.
\item[\rm(4)] $\dim C_l(X)=\ell(X)^{\aleph_0}$.
\end{itemize}
\end{theorem}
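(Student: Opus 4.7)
The strategy is to recognize the theorem as a direct application of Theorem \ref{GKGF} with $\mathfrak{P}$ the Lindel\"{o}f property and $\mathfrak{Q}$ paracompactness, followed by an explicit identification of $\lambda_{\mathfrak{P}} X$ in the spirit of Theorem \ref{CTTF}. As noted in Example \ref{GGK}, the pair $(\mathfrak{P},\mathfrak{Q})$ satisfies the hypotheses of Theorem \ref{GKGF}: both are closed hereditary, any space with both is trivially Lindel\"{o}f, the Lindel\"{o}f property is preserved under countable closed sums, and every paracompact space with a dense Lindel\"{o}f subspace is itself Lindel\"{o}f (Engelking 5.1.25). Since $X$ is locally-$\mathfrak{P}$ with $\mathfrak{Q}$ and paracompactness implies normality, Theorem \ref{GKGF}(1) gives $C^{\mathfrak{P}}_0(X)=C^{\mathfrak{P}}_{00}(X)=\{f\in C_b(X):\mathrm{supp}(f)\text{ is Lindel\"{o}f}\}=C_l(X)$, and Theorem \ref{GKGF}(2) produces the isometric isomorphism $C_l(X)\cong C_0(\lambda_{\mathfrak{P}} X)$ with $\lambda_{\mathfrak{P}} X$ locally compact Hausdorff countably compact and $C_0(\lambda_{\mathfrak{P}} X)=C_{00}(\lambda_{\mathfrak{P}} X)$; uniqueness is the Banach--Stone theorem.

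The substantive remaining task is to identify $\lambda_{\mathfrak{P}} X$ with $Y=\bigcup\{\mathrm{cl}_{\beta X}L:L\subseteq X\text{ is Lindel\"{o}f}\}$. The inclusion $\lambda_{\mathfrak{P}} X\subseteq Y$ is immediate: if $C\in\mathrm{Coz}(X)$ with $\mathrm{cl}_XC$ having a closed Lindel\"{o}f neighborhood, then $\mathrm{cl}_XC$ is itself Lindel\"{o}f, and $\mathrm{int}_{\beta X}\mathrm{cl}_{\beta X}C\subseteq\mathrm{cl}_{\beta X}C=\mathrm{cl}_{\beta X}(\mathrm{cl}_XC)\subseteq Y$. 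For the reverse inclusion I would invoke the structure theorem that every locally Lindel\"{o}f paracompact space decomposes as a topological sum $X=\bigsqcup_{i\in I}X_i$ of open-closed Lindel\"{o}f subspaces (the analog of Engelking 5.1.27). Since the $X_i$ form a discrete partition, any Lindel\"{o}f $L\subseteq X$ meets only countably many of them, so $L\subseteq H_J=\bigcup_{i\in J}X_i$ for some countable $J\subseteq I$. The set $H_J$ is open-closed in $X$, hence a cozero set, and is Lindel\"{o}f as a countable union of Lindel\"{o}f subspaces, so $H_J$ is its own Lindel\"{o}f neighborhood. Because open-closed subspaces of $X$ have open-closed closures in $\beta X$, we have $\mathrm{int}_{\beta X}\mathrm{cl}_{\beta X}H_J=\mathrm{cl}_{\beta X}H_J\subseteq\lambda_{\mathfrak{P}} X$, whence $\mathrm{cl}_{\beta X}L\subseteq\mathrm{cl}_{\beta X}H_J\subseteq\lambda_{\mathfrak{P}} X$.

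Properties (1) and (3) in the ``furthermore'' clause are already delivered by Theorem \ref{GKGF}(2) and in fact do not require the locally compact hypothesis. For (2), assume $X$ is locally compact and non-Lindel\"{o}f; Engelking 5.1.27 supplies the decomposition with uncountable index set $I$, and picking $x_i\in X_i$ yields an uncountable closed discrete $D\subseteq X$. Following the argument of Theorem \ref{CTTF}(2) verbatim with Lindel\"{o}fness in place of separability, one identifies $\lambda_{\mathfrak{P}} X\cap\mathrm{cl}_{\beta X}D$ with the non-normal space $D_\lambda$; were $\lambda_{\mathfrak{P}} X$ normal, this closed subset would also be normal, a contradiction. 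For (4), mirror the proof of Theorem \ref{CTTF}(4): a Tarski family $\mathscr{I}$ of cardinality $|I|^{\aleph_0}=\ell(X)^{\aleph_0}$ consisting of countable infinite subsets of $I$ with pairwise finite intersection produces a linearly independent family $\{\chi_{H_J}\}_{J\in\mathscr{I}}\subseteq C_l(X)$ for the lower bound, while the upper bound follows from expressing $C_l(X)$ as the union of $C_b(H_J)$ over countable $J\subseteq I$ and counting.

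The principal obstacle is the decomposition of a locally Lindel\"{o}f paracompact space into open-closed Lindel\"{o}f summands: in the locally compact case this is the familiar Engelking 5.1.27, but in the merely locally Lindel\"{o}f case a separate argument is needed, using regularity to shrink Lindel\"{o}f open neighborhoods into ones with Lindel\"{o}f closure and exploiting the local finiteness furnished by paracompactness to build the closed-open Lindel\"{o}f components. Without this structural fact the representation of $\lambda_{\mathfrak{P}} X$ as a union of $\mathrm{cl}_{\beta X}L$ over Lindel\"{o}f $L$ does not fall out of the abstract content of Theorem \ref{GKGF} alone.
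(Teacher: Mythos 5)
Your overall strategy coincides with the paper's: its proof of Theorem \ref{HGFLK} consists exactly of citing Example \ref{GGK} to verify the hypotheses of Theorem \ref{GKGF} for $\mathfrak{P}$ the Lindel\"{o}f property and $\mathfrak{Q}$ paracompactness, and then transferring the argument of Theorem \ref{CTTF} --- via the decomposition of a locally compact paracompact space into disjoint non-empty open-closed Lindel\"{o}f pieces (Engelking 5.1.27) --- for the identification of $Y$ and for items (1)--(4). Your observation that (1) and (3) already follow from Theorem \ref{GKGF}(2) without local compactness is correct and consistent with the paper.

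The one substantive point is the ``principal obstacle'' you flag: the decomposition of a merely locally Lindel\"{o}f paracompact space into open-closed Lindel\"{o}f summands. Two remarks. First, items (2) and (4) are asserted only under the additional hypothesis that $X$ is locally compact, precisely so that Engelking 5.1.27 applies verbatim; no new structure theorem is needed there. Second, for the identification $\lambda_{\mathfrak{P}}X=\bigcup\{\mathrm{cl}_{\beta X}L:L\subseteq X\mbox{ is Lindel\"{o}f}\}$ in the general locally Lindel\"{o}f paracompact case, the decomposition can be bypassed entirely by the argument already used in Proposition \ref{KJHF} and Theorem \ref{YUS}. Given Lindel\"{o}f $L$, the closed set $\mathrm{cl}_XL$ is paracompact with a dense Lindel\"{o}f subspace, hence Lindel\"{o}f; cover it by the interiors of countably many closed Lindel\"{o}f neighborhoods $U_{x_n}$ and put $V=\bigcup_n\mathrm{int}_XU_{x_n}$. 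Then $\mathrm{cl}_XV$ is Lindel\"{o}f, being closed in $\mathrm{cl}_X\big(\bigcup_nU_{x_n}\big)$, which is paracompact and has the dense Lindel\"{o}f subspace $\bigcup_nU_{x_n}$. By normality choose a continuous $f:X\rightarrow[0,1]$ with $f|_{\mathrm{cl}_XL}=\mathbf{1}$ and $f|_{X\setminus V}=\mathbf{0}$, and set $C=f^{-1}((1/2,1])\in\mathrm{Coz}(X)$. Then $\mathrm{cl}_XC\subseteq f^{-1}([1/2,1])\subseteq V\subseteq\mathrm{int}_X\mathrm{cl}_XV$, so $\mathrm{cl}_XC$ has the null neighborhood $\mathrm{cl}_XV$ and hence $\mathrm{int}_{\beta X}\mathrm{cl}_{\beta X}C\subseteq\lambda_{\mathfrak{P}}X$, while $\mathrm{cl}_{\beta X}L\subseteq f_\beta^{-1}(1)\subseteq f_\beta^{-1}((1/2,1])\subseteq\mathrm{int}_{\beta X}\mathrm{cl}_{\beta X}C$ by the obvious variant of Lemma \ref{LKG}. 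This closes the gap with no appeal to a structure theorem. (The decomposition does in fact hold for locally Lindel\"{o}f paracompact spaces --- run the usual chain argument on a locally finite open refinement of a cover by sets with Lindel\"{o}f closures, noting that a locally finite family restricted to a Lindel\"{o}f set is countable --- but it is not needed for the identification of $Y$.)
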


\begin{proof}
Let $\mathfrak{P}$ be the Lindel\"{o}f property and let $\mathfrak{Q}$ be paracompactness. By Example \ref{GGK}, the pair $\mathfrak{P}$ and $\mathfrak{Q}$ satisfy the assumption of Theorem \ref{GKGF}. Note that if $X$ is also locally compact then it assumes a representation as given in the proof of Theorem \ref{CTTF}. The theorem now follows by an argument analogous to the one given in the proof of Theorem \ref{CTTF}.
\end{proof}

\begin{remark}\label{KHGDF}
A point $x$ of a space $X$ is called a \textit{complete accumulation point} of a subspace $A$ of $X$ if $|U\cap A|=|A|$ for every neighborhood $U$ of $x$ in $X$. A space $X$ is called \textit{linearly Lindel\"{o}f} if every linearly ordered (by set theoretic inclusion $\subseteq$) open cover of $X$ has a countable subcover, or equivalently, if every uncountable subspace of $X$ has a complete accumulation point in $X$. As it is observed in \cite{Ko5}, the notions of $\sigma$-compactness, the Lindel\"{o}f property and the linearly Lindel\"{o}f property all coincide in the class of locally compact paracompact spaces; this we will now show. Since $\sigma$-compactness and the Lindel\"{o}f property are identical in the realm of locally compact spaces (see Exercise 3.8.C of \cite{E}) and by the definitions it is clear that the Lindel\"{o}f property implies the linearly Lindel\"{o}f property, we check only that the linearly Lindel\"{o}f property implies the linearly Lindel\"{o}f property in any locally compact paracompact space. Let $X$ be a locally compact paracompact space. Assume a representation for $X$ as
\[X=\bigcup_{i\in I}X_i,\]
where $X_i$'s are disjoint non-empty Lindel\"{o}f open subspaces of $X$ and $I$ is an index set. Suppose that $X$ is not Lindel\"{o}f. Then $I$ is uncountable. Let $A =\{x_i:i\in I\}$, where $x_i\in X_i$ for each $i\in I$. Then $A$ is an uncountable subspace of $X$ with no accumulation point in $X$, as any accumulation point of $A$ should be contained in $X_i$ for some $i\in I$ and $X_i$ is an open subspace of $X$ which intersects $A$ in only one element. Thus, $X$ cannot be linearly Lindel\"{o}f either.

In Theorem \ref{HGFLK} for a locally compact paracompact space $X$ we have considered the set of all $f\in C_b(X)$ such that $\mathrm{supp}(f)$ is Lindel\"{o}f. Since local compactness and paracompactness are both closed hereditary, $\mathrm{supp}(f)$, for each $f\in C_b(X)$, is locally compact and paracompact, and thus is Lindel\"{o}f if and only if it is linearly Lindel\"{o}f if and only if it is $\sigma$-compact. In other words, Theorem \ref{HGFLK} holds true (provided that we assume that $X$ is locally compact from the outset) if one replaces the Lindel\"{o}f property by either the linearly Lindel\"{o}f property or $\sigma$-compactness.
\end{remark}

A regular space is said to be \textit{$\mu$-Lindel\"{o}f}, where $\mu$ is an infinite cardinal, if every open cover of it contains a subcover of cardinality $\leq\mu$. The $\mu$-Lindel\"{o}f property weakens as $\mu$ grows. The Lindel\"{o}f property is identical to the $\aleph_0$-Lindel\"{o}f property; in particular, every Lindel\"{o}f space is $\mu$-Lindel\"{o}f.

The aim of the next theorem is to insert a chain of certain type of ideals between $C_0(X)$ and $C_b(X)$; the length of the chain will be determined by the topology of the space $X$.

\begin{theorem}\label{GGFD}
Let $X$ be a non-Lindel\"{o}f space such that it is either
\begin{itemize}
  \item locally compact paracompact, or
  \item locally Lindel\"{o}f metrizable.
\end{itemize}
Then there is a chain
\[C_0(X)\subsetneqq H_0\subsetneqq H_1\subsetneqq\cdots\subsetneqq H_\lambda=C_b(X)\]
of closed ideals of $C_b(X)$ such that $H_\mu$, for each $\mu\leq\lambda$, is isometrically isomorphic to
\[C_0(Y_\mu)=C_{00}(Y_\mu),\]
where $Y_\mu$ is a locally compact countably compact Hausdorff space which contains $X$ densely. Furthermore,
\[\aleph_\lambda=\ell(X).\]
\end{theorem}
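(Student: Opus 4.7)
The plan is to start from the standard decomposition, valid for both classes of spaces under consideration, which writes $X=\bigcup_{i\in I}X_i$ as a disjoint union of non-empty Lindel\"{o}f open-closed subspaces with $|I|=\ell(X)=\aleph_\lambda$. For the locally compact paracompact case this is Theorem 5.1.27 of \cite{E}; for the locally Lindel\"{o}f metrizable case it is Alexandroff's theorem, since separability, second countability and the Lindel\"{o}f property coincide in the metrizable realm. For $J\subseteq I$ set $H_J=\bigcup_{i\in J}X_i$, which is open-closed in $X$, so $\chi_{H_J}\in C_b(X)$ and $\mathrm{cl}_{\beta X}H_J$ is open-closed (hence compact and clopen) in $\beta X$. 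For each $\mu\leq\lambda$, define
\[H_\mu=\big\{f\in C_b(X):\mathrm{supp}(f)\subseteq H_J\mbox{ for some }J\subseteq I\mbox{ with }|J|\leq\aleph_\mu\big\}\]
and take $Y_\mu=\bigcup\{\mathrm{cl}_{\beta X}H_J:J\subseteq I,\,|J|\leq\aleph_\mu\}\subseteq\beta X$.

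First I would verify that each $H_\mu$ is a closed ideal in $C_b(X)$: closure under addition uses $\aleph_\mu+\aleph_\mu=\aleph_\mu$; closure under multiplication by $C_b(X)$ follows from $\mathrm{supp}(fg)\subseteq\mathrm{supp}(f)$; closure in the uniform norm uses that if $f_n\to f$ with $\mathrm{supp}(f_n)\subseteq H_{J_n}$ and $|J_n|\leq\aleph_\mu$, then $\mathrm{coz}(f)\subseteq\bigcup_n\mathrm{coz}(f_n)\subseteq H_{\bigcup_n J_n}$ with $|\bigcup_n J_n|\leq\aleph_0\cdot\aleph_\mu=\aleph_\mu$. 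Next I would invoke Theorem \ref{HGL}: non-vanishing is witnessed by $\chi_{X_i}\in H_\mu$ at any $x\in X_i$, and the cozero-inclusion condition holds because $\mathrm{coz}(f)\subseteq\mathrm{coz}(h)$ with $h\in H_\mu$ gives $\mathrm{supp}(f)\subseteq\mathrm{supp}(h)\subseteq H_J$. This yields $H_\mu\cong C_0(Y'_\mu)$ for the locally compact Hausdorff space $Y'_\mu$ of Theorem \ref{HGL}, and I would then identify $Y'_\mu=Y_\mu$: the inclusion $Y'_\mu\subseteq Y_\mu$ uses $\mathrm{cl}_XC\subseteq H_J$ and hence $\mathrm{cl}_{\beta X}C\subseteq\mathrm{cl}_{\beta X}H_J$, while the reverse inclusion uses the admissible choice $C=H_J$, $h=\chi_{H_J}\in H_\mu$, combined with $\mathrm{int}_{\beta X}\mathrm{cl}_{\beta X}H_J=\mathrm{cl}_{\beta X}H_J$ by open-closedness.

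To upgrade to $C_0(Y_\mu)=C_{00}(Y_\mu)$ (whence $Y_\mu$ is countably compact) I would verify the criterion recalled before Theorem \ref{YUS}: every $\sigma$-compact $A=\bigcup_n A_n\subseteq Y_\mu$ lies in a compact subset of $Y_\mu$. Indeed, each $A_n$ is covered by finitely many $\mathrm{cl}_{\beta X}H_{J_{n,k}}$ with $|J_{n,k}|\leq\aleph_\mu$, so $J=\bigcup_{n,k}J_{n,k}$ still has $|J|\leq\aleph_\mu$ and $A\subseteq\mathrm{cl}_{\beta X}H_J\subseteq Y_\mu$. For the strict inclusions in the chain: for any $\nu<\mu\leq\lambda$ pick $J\subseteq I$ with $|J|=\aleph_{\nu+1}\leq\aleph_\mu$, so $\chi_{H_J}\in H_\mu\setminus H_\nu$, because pairwise disjointness of the $X_i$'s forces $H_J\subseteq H_{J'}$ only when $J\subseteq J'$, ruling out $|J'|\leq\aleph_\nu$. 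That $C_0(X)\subsetneqq H_0$ is seen by the same construction with $J$ countably infinite: $\chi_{H_J}\in H_0$ but $|\chi_{H_J}|^{-1}([1/2,\infty))=H_J$ is not compact (the $X_i$'s form a disjoint open cover with no finite subcover). Finally $H_\lambda=C_b(X)$ is automatic since $X=H_I$ with $|I|=\aleph_\lambda$, and density of $X$ in each $Y_\mu$ is part of the conclusion of Theorem \ref{HGL}.

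The main obstacle is the identification $Y'_\mu=Y_\mu$, which is essentially a bookkeeping argument but one where the open-closedness of each $H_J$ (and therefore of $\mathrm{cl}_{\beta X}H_J$ in $\beta X$) is what makes the two forms match without a cozero-thickening. A secondary point requiring attention is the closedness of $H_\mu$ in $C_b(X)$ at limit $\mu$, where the identity $\aleph_0\cdot\aleph_\mu=\aleph_\mu$ is precisely what keeps uniform limits inside the cardinality bound imposed by $\aleph_\mu$.
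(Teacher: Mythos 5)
Your proof is correct and takes essentially the same route as the paper's: the same clopen Lindel\"{o}f decomposition $X=\bigcup_{i\in I}X_i$, the same description of $Y_\mu$ as a union of the open-closed sets $\mathrm{cl}_{\beta X}H_J$ over $|J|\leq\aleph_\mu$, the same $\sigma$-compact criterion for $C_0(Y_\mu)=C_{00}(Y_\mu)$, and the same characteristic-function witnesses for strictness of the chain. The only, essentially cosmetic, difference is that the paper defines $H_\mu=C^{\mathfrak{P}_\mu}_0(X)$ with $\mathfrak{P}_\mu$ the $\aleph_\mu$-Lindel\"{o}f property and invokes Theorem \ref{HJG}, whereas you define $H_\mu$ directly from the decomposition and verify the hypotheses of Theorem \ref{HGL} by hand; since a closed subspace of $X$ is $\aleph_\mu$-Lindel\"{o}f precisely when it is contained in some $H_J$ with $|J|\leq\aleph_\mu$, the two definitions coincide.
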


\begin{proof}
Let $\mu$ be an ordinal. Let $\mathfrak{P}_\mu$ be the $\aleph_\mu$-Lindel\"{o}f property and denote
\[H_\mu=C^{\mathfrak{P}_\mu}_0(X).\]
Observe that $X$ is locally Lindel\"{o}f and thus locally $\aleph_\mu$-Lindel\"{o}f. It also follows from the definitions that the $\aleph_\mu$-Lindel\"{o}f property is closed hereditary and is preserved under countable closed sums. Theorem \ref{HJG} now implies that $H_\mu$ is a closed ideal in $C_b(X)$ which is isometrically isomorphic to $C_0(Y_\mu)$ with
\[Y_\mu=\lambda_{\mathfrak{P}_\mu}X.\]
By Theorem \ref{HJG} the space $Y_\mu$ is locally compact Hausdorff and contains $X$ as a dense subspace. We show that
\begin{equation}\label{KJHD}
C_0(Y_\mu)=C_{00}(Y_\mu);
\end{equation}
this in particular proves that $X$ is countably compact. To do this, it suffices to check that every $\sigma$-compact subspace of $Y_\mu$ is contained in a compact subspace of $Y_\mu$. Observe that the space $X$ may be represented as
\[X=\bigcup_{i\in I}X_i,\]
where $X_i$'s are disjoint non-empty Lindel\"{o}f open (and thus closed either) subspaces of $X$ and $I$ is an index set. Note that if $J\subseteq I$ is of cardinality $\leq\aleph_\mu$, then
\[\bigcup_{i\in J}X_i\]
is $\aleph_\mu$-Lindel\"{o}f, and conversely, any $\aleph_\mu$-Lindel\"{o}f subspace of $X$ is contained in such a subspace. An argument similar to the one we have given in the proof of Theorem \ref{CTTF} shows that
\[\lambda_{\mathfrak{P}_\mu}X=\bigcup\bigg\{\mathrm{cl}_{\beta X}\bigg(\bigcup_{i\in J}X_i\bigg):J\subseteq I\mbox{ is of cardinality }\leq\aleph_\mu\bigg\}.\]
Using the above representation and arguing as in the proof of Theorem \ref{CTTF}, it follows that every $\sigma$-compact subspace of $Y_\mu$ is contained in a compact subspace of $Y_\mu$. This proves (\ref{KJHD}).

To conclude the proof we verify that $H_\mu$'s ascend as $\mu$ increases. It is clear that $H_\mu\subseteq H_\kappa$, whenever $\mu\leq\kappa$, as the $\aleph_\mu$-Lindel\"{o}f property is stronger than the $\aleph_\kappa$-Lindel\"{o}f property. Let $\lambda$ be such that $\aleph_\lambda=\ell(X)$. Note that $\ell(X)=|I|$. In particular, $X$ is $\aleph_\lambda$-Lindel\"{o}f, as it is the union of $\aleph_\lambda$ many of its Lindel\"{o}f subspaces. Therefore $H_\lambda=C_b(X)$, as $\mathrm{supp}(f)$ is $\aleph_\lambda$-Lindel\"{o}f for any $f\in C_b(X)$, since it is closed in $X$. Next, we show that all chain inclusions are proper. Note that $C_0(X)\subsetneqq H_0$, as for the characteristic mapping $\chi_A$, where
\[A=\bigcup_{i\in K}X_i\]
and $K\subseteq I$ is infinite and countable, we have $\chi_A\in H_0$ while $\chi_A\notin C_0(X)$. Also, if $\mu<\kappa\leq\lambda$, then $\chi_B\in H_\kappa$ while $\chi_B\notin H_\mu$, where
\[B=\bigcup_{i\in L}X_i\]
and $L\subseteq I$ is of cardinality $\aleph_\kappa$.
\end{proof}

\begin{remark}\label{HFJ}
A \textit{weakly inaccessible cardinal} is defined as an uncountable limit regular cardinal. The existence of weakly inaccessible cardinals cannot be proved within \textsf{ZFC}, though, that such cardinals exist is not known to be inconsistent with \textsf{ZFC}. The existence of weakly inaccessible cardinals is sometimes assumed as an additional axiom. Observe that weakly inaccessible cardinals are necessarily the fixed points of the aleph function, that is, $\aleph_\lambda=\lambda$, if $\lambda$ is a weakly inaccessible cardinal. In Theorem \ref{GGFD} we have inserted a chain of ideals between $C_0(X)$ and $C_b(X)$; this chain will therefore have its length equal to the Lindel\"{o}f number $\ell(X)$ of $X$ provided that $\ell(X)$ is weakly inaccessible.
\end{remark}

\begin{remark}\label{KJHG}
Theorems \ref{CTTF}, \ref{HGFLK} and \ref{GGFD} make essential use of the fact that the spaces under consideration (locally separable metrizable spaces as well as locally compact paracompact spaces) are representable as a disjoint union of open Lindel\"{o}f subspaces. Much of Theorems \ref{CTTF}, \ref{HGFLK} and \ref{GGFD} still remain valid if one replaces the Lindel\"{o}f property by a more or less general topological property $\mathfrak{P}$, provided that the spaces under consideration are representable as a disjoint union of open subspaces each with $\mathfrak{P}$.
\end{remark}

The topological properties considered so far have all been closed hereditary; we now consider pseudocompactness. (Recall that a completely regular space $X$ is called \textit{pseudocompact} if every continuous $f:X\rightarrow\mathbb{R}$ is bounded.) Pseudocompactness is not a closed hereditary topological property; however, it is hereditary with respect to regular closed subspaces, that is, every regular closed subspace of a pseudocompact space is pseudocompact. (See Exercise 3.10.F(e) of \cite{E}.) What makes pseudocompactness so distinct is the known structure of $\lambda_{\mathscr U} X$ (with the precise definition of the ideal ${\mathscr U}$ given in Notation \ref{JJJ}).

Recall that $\mathscr{RC}(X)$ denotes the set of all regular closed subspaces of a space $X$. As noted previously, $(\mathscr{RC}(X),\subseteq)$ is an upper semi-lattice (indeed, a lattice) with $A\vee B=A\cup B$ for any $A,B\in\mathscr{RC}(X)$.

\begin{notation}\label{JJJ}
Let $X$ be a completely regular space. Denote
\[{\mathscr U}=\big\{A\in\mathscr{RC}(X):A\mbox{ is pseudocompact}\big\}.\]
\end{notation}

\begin{lemma}\label{FCG}
Let $X$ be a completely regular space. Then ${\mathscr U}$ is an ideal in $(\mathscr{RC}(X),\subseteq)$.
\end{lemma}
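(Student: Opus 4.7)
The plan is to unpack the two ideal axioms directly, using two standard facts about pseudocompactness: it is preserved under finite unions (of any subspaces of a Tychonoff space), and it is inherited by regular closed subspaces.

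First, I would observe that $\mathscr{U}$ is non-empty (the empty set is vacuously pseudocompact and regular closed) and then split the proof into the two ideal conditions. For the join closure, let $A,B\in\mathscr{U}$. Since $(\mathscr{RC}(X),\subseteq)$ is an upper semi-lattice with $A\vee B=A\cup B$, the set $A\cup B$ already lies in $\mathscr{RC}(X)$. To see that $A\cup B$ is pseudocompact, let $f:A\cup B\to\mathbb{R}$ be continuous. Then $f|_A$ and $f|_B$ are continuous real-valued mappings on the pseudocompact spaces $A$ and $B$, hence bounded, so $f$ itself is bounded on $A\cup B$. Thus $A\vee B=A\cup B\in\mathscr{U}$.

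For the downward closure, suppose $T\in\mathscr{RC}(X)$ with $T\subseteq A$ and $A\in\mathscr{U}$. The key step is to verify that $T$ is regular closed in $A$; once this is done, hereditariness of pseudocompactness with respect to regular closed subspaces (Exercise 3.10.F(e) of \cite{E}, invoked in the paragraph preceding Notation \ref{JJJ}) yields that $T$ is pseudocompact, whence $T\in\mathscr{U}$. To show that $T$ is regular closed in $A$, use that $\mathrm{int}_XT\subseteq T\subseteq A$, so $\mathrm{int}_XT$ is open in $A$ and therefore $\mathrm{int}_XT\subseteq\mathrm{int}_AT$. Combining with the identity $\mathrm{cl}_AS=A\cap\mathrm{cl}_XS$ for $S\subseteq A$, one gets
\[T=T\cap A=\mathrm{cl}_X(\mathrm{int}_XT)\cap A=\mathrm{cl}_A(\mathrm{int}_XT)\subseteq\mathrm{cl}_A(\mathrm{int}_AT)\subseteq\mathrm{cl}_AT=T,\]
so $T=\mathrm{cl}_A\mathrm{int}_AT$, that is, $T\in\mathscr{RC}(A)$.

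The argument is essentially routine; the only mildly delicate point is the identification of $T$ as regular closed in $A$, since being regular closed in $X$ is a priori a statement involving $X$'s topology, not $A$'s. That verification is a one-line computation once one uses the relation between closure in a subspace and closure in the ambient space, so I do not anticipate any serious obstacle.
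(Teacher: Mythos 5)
Your proof is correct and follows essentially the same route as the paper's: nonemptiness via $\emptyset$, closure under joins via $A\vee B=A\cup B$ and the finite-union argument for pseudocompactness, and downward closure by verifying that a regular closed subset of $X$ contained in $A$ is regular closed in $A$ and then invoking hereditariness of pseudocompactness with respect to regular closed subspaces. The only cosmetic difference is that the paper identifies $T$ as $\mathrm{cl}_A U$ for $U=\mathrm{int}_XT$ directly, whereas you run the equivalent interior/closure computation.
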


\begin{proof}
Note that ${\mathscr U}$ is non-empty, as it contains $\emptyset$. Let $B\subseteq A$ with $A\in\mathscr{U}$ and $B\in\mathscr{RC}(X)$. Note that $B$ is regular closed in $A$. (Since $B$ is regular closed in $X$ we have $B=\mathrm{cl}_XU$ for some open subspace $U$ of $X$. But then $U$ is open in $A$ and $B=\mathrm{cl}_AU$.) Since $A$ is pseudocompact and pseudocompactness is hereditary with respect to regular closed subspaces then $B$ is pseudocompact. That is $B\in\mathscr{U}$. Next, let $C,D\in\mathscr{U}$. Then $C\vee D=C\cup D$ is pseudocompact, as each $C$ and $D$ is so. Thus $C\vee D\in\mathscr{U}$.
\end{proof}

The following is dual to Lemma \ref{KFFF}.

\begin{lemma}\label{HF}
Let $X$ be a completely regular space. Then $X$ is locally null (with respect to the ideal $\mathscr{U}$) if and only if $X$ is locally pseudocompact.
\end{lemma}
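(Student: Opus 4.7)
The plan is to unpack both sides of the equivalence against the definition of the ideal $\mathscr{U}$ and then exploit the fact that pseudocompactness, though not closed hereditary in general, is hereditary with respect to regular closed subspaces (as already noted in the paragraph introducing $\mathscr{U}$).

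For the forward direction, suppose $X$ is locally null with respect to $\mathscr{U}$ and let $x\in X$. By definition there is a subspace $U\subseteq X$ with $x\in\mathrm{int}_X U$ and $U\in\mathscr{U}$; in particular $U$ is pseudocompact, so $U$ is itself a pseudocompact neighborhood of $x$ in $X$. This direction is immediate from the definitions.

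For the converse, suppose $X$ is locally pseudocompact and fix $x\in X$ with a pseudocompact neighborhood $V$ of $x$ in $X$. Since $X$ is completely regular, I would use the standard separation argument to produce an open subspace $W$ of $X$ with $x\in W\subseteq\mathrm{cl}_X W\subseteq\mathrm{int}_X V$ (apply complete regularity to the point $x$ and the closed set $X\setminus\mathrm{int}_X V$, then take an appropriate sublevel set of the resulting $[0,1]$-valued map). Set $U=\mathrm{cl}_X W$. The closure of any open subspace of $X$ is regular closed in $X$, so $U\in\mathscr{RC}(X)$, and clearly $x\in W\subseteq\mathrm{int}_X U$. It therefore suffices to show that $U$ is pseudocompact.

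The key step is to verify that $U$ is regular closed in $V$: indeed $W$ is open in $X$ and $W\subseteq V$, hence $W$ is open in $V$, and since $U\subseteq V$ we have $\mathrm{cl}_V W=V\cap\mathrm{cl}_X W=U$, so $U=\mathrm{cl}_V W\subseteq\mathrm{cl}_V(\mathrm{int}_V U)\subseteq U$, giving $U=\mathrm{cl}_V\mathrm{int}_V U$. Because $V$ is pseudocompact and pseudocompactness passes to regular closed subspaces, $U$ is pseudocompact, hence $U\in\mathscr{U}$, and so $U$ is a null neighborhood of $x$ in $X$. I do not anticipate any serious obstacle here: the only subtlety is the observation that regular closedness of $U$ in $X$ automatically upgrades to regular closedness of $U$ in $V$, which lets the hereditary property of pseudocompactness do the work.
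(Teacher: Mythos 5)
Your proof is correct and follows essentially the same route as the paper's: the paper simply adapts the argument of Lemma \ref{KFFF}, using regularity to shrink a pseudocompact neighborhood $V$ to a regular closed neighborhood $\mathrm{cl}_XW$ and then invoking that pseudocompactness is hereditary with respect to regular closed subspaces. The one step you spell out explicitly --- that $\mathrm{cl}_XW$ being regular closed in $X$ makes it regular closed in $V$ --- is exactly the mechanism the paper itself uses in the proof of Lemma \ref{FCG}, so nothing is missing.
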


\begin{proof}
The proof is similar to that of Lemma \ref{KFFF}. Observe that (since $X$ is regular) for every $x\in X$ each neighborhood of $x$ in $X$ contains a regular closed neighborhood of $x$ in $X$, that is, a neighborhood in $X$ of the form $\mathrm{cl}_XU$ where $U$ is open in $X$.
\end{proof}

Considering the dualities between Lemmas \ref{KGF} and \ref{FCG} and between Lemmas \ref{KFFF} and \ref{HF}, one can state and prove results dual to Theorems \ref{TTG}, \ref{BBV}, \ref{HGBV}, \ref{UUS}, \ref{DGH}, \ref{KGV} and \ref{HFBY}. We will now proceed with determining $\lambda_{\mathscr U}X$. We need some preliminaries first.

A completely regular space $X$ is said to be \textit{realcompact} if it is homeomorphic to a closed subspaces of some product $\mathbb{R}^\mathfrak{m}$ of the real line. Realcompactness is a closed hereditary topological property. Every regular Lindel\"{o}f space (in particular, every compact Hausdorff space) is realcompact. Also, every realcompact pseudocompact space is compact. To every completely regular space $X$ there corresponds a realcompact space $\upsilon X$ (called the \textit{Hewitt realcompactification of $X$}) which contains $X$ as a dense subspace and is such that every continuous $f:X\rightarrow\mathbb{R}$ is continuously extendible over $\upsilon X$; we may assume that $\upsilon X\subseteq\beta X$. Note that a completely regular space $X$ is realcompact if and only if $X=\upsilon X$. (See Section 3.11 of \cite{E} for further information.)

The following lemma, which may be considered as a dual result of Lemma \ref{HDHD}, is due to A. W. Hager and D. G. Johnson in \cite{HJ}; a direct proof may be found in \cite{C}. (See also Theorem 11.24 of \cite{W}.)

\begin{lemma}[Hager--Johnson \cite{HJ}]\label{A}
Let $X$ be a completely regular space. Let $U$ be an open subspace of $X$ such that $\mathrm{cl}_{\upsilon X} U$ is compact. Then $\mathrm{cl}_X U$ is pseudocompact.
\end{lemma}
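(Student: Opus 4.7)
The plan is to argue by contradiction: assuming that $\mathrm{cl}_X U$ is not pseudocompact, I will manufacture a continuous unbounded real-valued mapping on $X$ whose extension over $\upsilon X$ (guaranteed by the defining property of the Hewitt realcompactification) must be bounded on the compact set $\mathrm{cl}_{\upsilon X}U\supseteq U$, giving the desired contradiction.

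The starting point is the standard characterization: a completely regular space is pseudocompact if and only if every locally finite family of non-empty open subsets is finite. Assuming $\mathrm{cl}_X U$ is not pseudocompact, I obtain a pairwise disjoint locally finite (in $\mathrm{cl}_X U$) countably infinite family $\{V_n\}$ of non-empty open subsets of $\mathrm{cl}_X U$. Since $U$ is dense in $\mathrm{cl}_X U$, the sets $W_n=V_n\cap U$ are non-empty, pairwise disjoint, and open in $X$. A short verification shows that $\{W_n\}$ is locally finite not merely in $\mathrm{cl}_X U$ but in all of $X$: at a point outside $\mathrm{cl}_X U$ the complement $X\setminus\mathrm{cl}_X U$ is a disjoint neighborhood, while at a point $x\in\mathrm{cl}_X U$ any open neighborhood $V'$ in $X$ restricts to an open neighborhood $V'\cap\mathrm{cl}_X U$ in $\mathrm{cl}_X U$, and the inclusion $V'\cap W_n\subseteq V'\cap\mathrm{cl}_X U$ converts local finiteness in $\mathrm{cl}_X U$ to local finiteness in $X$.

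Next I pick $x_n\in W_n$ and invoke complete regularity to produce continuous $g_n\colon X\to[0,1]$ with $g_n(x_n)=1$ and $g_n|_{X\setminus W_n}=\mathbf{0}$, so that $\mathrm{supp}(g_n)\subseteq\mathrm{cl}_X W_n$. Since $\{W_n\}$ is locally finite in $X$ so is $\{\mathrm{cl}_X W_n\}$, hence the series
\[f=\sum_{n=1}^\infty n\cdot g_n\]
reduces locally to a finite sum and therefore defines a continuous function $f\colon X\to\mathbb{R}$. Because the $W_n$ are pairwise disjoint one has $f(x_n)=n$, so $f$ is unbounded on $U$.

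The final step uses the Hewitt realcompactification: $f$ admits a continuous extension $f^\upsilon\colon\upsilon X\to\mathbb{R}$. Since $\mathrm{cl}_{\upsilon X}U$ is compact, $f^\upsilon|_{\mathrm{cl}_{\upsilon X}U}$ is bounded; but $U\subseteq\mathrm{cl}_{\upsilon X}U$ and $f^\upsilon|_U=f|_U$ is unbounded, which is impossible. The expected obstacle is the bookkeeping in the second paragraph — namely, checking that local finiteness of the family $\{W_n\}$ (and consequently of $\{\mathrm{supp}(g_n)\}$) upgrades from $\mathrm{cl}_X U$ to all of $X$, which is exactly what justifies the continuity of $f$ on $X$ and thus the applicability of the extension to $\upsilon X$.
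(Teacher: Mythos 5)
Your proof is correct. Note that the paper does not actually prove this lemma: it attributes the result to Hager--Johnson and points to Comfort's paper (and Theorem 11.24 of Warren) for a direct proof, and your argument is essentially that standard direct proof --- build a pairwise disjoint locally finite family of non-empty open sets in $\mathrm{cl}_XU$ from the failure of pseudocompactness, push it into $U$, sum weighted Urysohn functions to get an unbounded $f\in C(X)$, and contradict boundedness of $f^{\upsilon}$ on the compact set $\mathrm{cl}_{\upsilon X}U$. The only step worth phrasing more carefully is the extraction in your second paragraph: the bare characterization ``every locally finite family of non-empty open sets is finite'' hands you an infinite locally finite family that need not be pairwise disjoint, so it is cleaner to start from an unbounded continuous $g$ on $\mathrm{cl}_XU$ and take $V_n=g^{-1}\big((a_n-\tfrac12,a_n+\tfrac12)\big)$ for values $a_n$ spaced more than $1$ apart, which yields the disjoint locally finite family directly; with that said, your upgrade of local finiteness from $\mathrm{cl}_XU$ to $X$ and the rest of the argument are exactly right.
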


Observe that any completely regular space $X$ with a dense pseudocompact subspace $A$ is pseudocompact; as for any continuous $f:X\rightarrow\mathbb{R}$ we have
\[f (X)=f(\mathrm{cl}_XA)\subseteq \mathrm{cl}_\mathbb{R}f(A)\]
and $f(A)$ is bounded (since $A$ is pseudocompact).

\begin{lemma}\label{HGA}
Let $X$ be a completely regular space and let $A$ be regular closed in $X$. Then $\mathrm{cl}_{\beta X} A\subseteq\upsilon X$ if and only if $A$ is pseudocompact.
\end{lemma}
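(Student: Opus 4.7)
The plan is to prove each implication separately, leveraging the Hager--Johnson lemma (Lemma \ref{A}) for the forward direction and, for the reverse direction, the chain of facts noted in the text: realcompactness is closed hereditary, a space with a dense pseudocompact subspace is pseudocompact, and a realcompact pseudocompact space is compact.

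For the forward direction, assume $\mathrm{cl}_{\beta X} A \subseteq \upsilon X$. Since $A$ is regular closed in $X$, write $A = \mathrm{cl}_X U$ for some open $U \subseteq X$. From $U \subseteq \mathrm{cl}_X U \subseteq \mathrm{cl}_{\beta X} U$ one gets $\mathrm{cl}_{\beta X} U = \mathrm{cl}_{\beta X}(\mathrm{cl}_X U) = \mathrm{cl}_{\beta X} A$, so $\mathrm{cl}_{\beta X} U \subseteq \upsilon X$. Therefore
\[\mathrm{cl}_{\upsilon X} U = \upsilon X \cap \mathrm{cl}_{\beta X} U = \mathrm{cl}_{\beta X} U,\]
which is compact, being closed in the compact space $\beta X$. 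Lemma \ref{A} then yields that $\mathrm{cl}_X U = A$ is pseudocompact.

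For the reverse direction, suppose $A$ is pseudocompact. Since $A$ is closed in $X$ (being regular closed), $\mathrm{cl}_{\upsilon X} A$ is closed in the realcompact space $\upsilon X$, and hence is itself realcompact. On the other hand, $\mathrm{cl}_{\upsilon X} A$ contains $A$ as a dense subspace, and $A$ is pseudocompact, so (by the observation preceding Lemma \ref{A}) $\mathrm{cl}_{\upsilon X} A$ is pseudocompact. A realcompact pseudocompact space is compact, so $\mathrm{cl}_{\upsilon X} A$ is compact, hence closed in $\beta X$. Combined with $A \subseteq \mathrm{cl}_{\upsilon X} A$, this forces $\mathrm{cl}_{\beta X} A = \mathrm{cl}_{\upsilon X} A \subseteq \upsilon X$.

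The argument above has no serious obstacle: both implications are short applications of machinery already developed in the preliminaries. The only subtle point to keep in mind is that the regular-closedness hypothesis on $A$ is used only in the forward direction, where it furnishes the open set $U$ with $A = \mathrm{cl}_X U$ needed to invoke Lemma \ref{A}; the reverse implication in fact remains valid for any closed subset $A$ of $X$, which is worth recording in case a later application requires it.
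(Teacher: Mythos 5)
Your proof is correct and follows essentially the same route as the paper's: the forward implication is the Hager--Johnson lemma applied to the open set $U$ with $A=\mathrm{cl}_XU$ (after noting $\mathrm{cl}_{\upsilon X}U=\mathrm{cl}_{\beta X}U$ is compact), and the reverse implication uses that $\mathrm{cl}_{\upsilon X}A$ is realcompact, pseudocompact (having $A$ dense in it), hence compact. Your closing observation that regular-closedness is needed only in the forward direction is accurate and consistent with the paper's argument.
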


\begin{proof}
One half follows from Lemma \ref{A}, as if $\mathrm{cl}_{\beta X} A\subseteq\upsilon X$ then $\mathrm{cl}_{\upsilon X} A=\mathrm{cl}_{\beta X} A$ is compact, since it is closed in $\beta X$. For the other half, note that if $A$ is pseudocompact then so is $\mathrm{cl}_{\upsilon X} A$, as it contains $A$ as a dense subspace.  But $\mathrm{cl}_{\upsilon X} A$ is realcompact (as it is closed in $\upsilon X$ and $\upsilon X$ is so) and therefore is compact. Thus $\mathrm{cl}_{\beta X} A\subseteq\mathrm{cl}_{\upsilon X} A$.
\end{proof}

\begin{theorem}\label{PTF}
Let $X$ be a completely regular space. Then
\[\lambda_{\mathscr U}X=\mathrm{int}_{\beta X}\upsilon X.\]
\end{theorem}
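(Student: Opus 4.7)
The plan is to prove the equality by double inclusion, using Lemma \ref{HGA} as the main bridge between pseudocompactness and containment in $\upsilon X$.

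For the inclusion $\lambda_{\mathscr U}X\subseteq\mathrm{int}_{\beta X}\upsilon X$, I would take a basic piece $\mathrm{int}_{\beta X}\mathrm{cl}_{\beta X}C$ of $\lambda_{\mathscr U}X$, where $C\in\mathrm{Coz}(X)$ and $\mathrm{cl}_XC\subseteq\mathrm{int}_XV$ for some pseudocompact regular closed $V\in\mathscr{U}$. Since $\mathrm{cl}_{\beta X}C\subseteq\mathrm{cl}_{\beta X}V$ and Lemma \ref{HGA} gives $\mathrm{cl}_{\beta X}V\subseteq\upsilon X$, the open set $\mathrm{int}_{\beta X}\mathrm{cl}_{\beta X}C$ sits inside $\upsilon X$ and therefore inside $\mathrm{int}_{\beta X}\upsilon X$.

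The reverse inclusion is the substantive half. Given $t\in\mathrm{int}_{\beta X}\upsilon X$, I would use normality of $\beta X$ to pick a continuous $f:\beta X\to[0,1]$ with $f(t)=0$ and $f|_{\beta X\setminus\mathrm{int}_{\beta X}\upsilon X}=\mathbf{1}$. Set
\[C=X\cap f^{-1}\bigl([0,1/3)\bigr)\qquad\text{and}\qquad U=X\cap f^{-1}\bigl([0,1/2)\bigr),\]
so $C\in\mathrm{Coz}(X)$ and $V=\mathrm{cl}_XU$ is regular closed in $X$. The point $t$ lies in $\mathrm{int}_{\beta X}\mathrm{cl}_{\beta X}C$ by Lemma \ref{LKG}. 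I would then verify $\mathrm{cl}_XC\subseteq U\subseteq\mathrm{int}_XV$, so $V$ is a neighborhood of $\mathrm{cl}_XC$ in $X$. Finally, since $\mathrm{cl}_{\beta X}V=\mathrm{cl}_{\beta X}U\subseteq f^{-1}([0,1/2])\subseteq\mathrm{int}_{\beta X}\upsilon X$, Lemma \ref{HGA} (applied to the regular closed set $V$) gives that $V$ is pseudocompact, hence $V\in\mathscr{U}$. Thus $\mathrm{int}_{\beta X}\mathrm{cl}_{\beta X}C\subseteq\lambda_{\mathscr U}X$ and in particular $t\in\lambda_{\mathscr U}X$.

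The main obstacle is the reverse inclusion: one needs to manufacture, starting from the purely set-theoretic condition $t\in\mathrm{int}_{\beta X}\upsilon X$, both a cozero-set $C$ of $X$ whose closure in $\beta X$ catches $t$ in its interior, and a regular closed pseudocompact neighborhood of $\mathrm{cl}_XC$ in $X$. The Urysohn separation of $t$ from $\beta X\setminus\mathrm{int}_{\beta X}\upsilon X$ provides all three level sets at $0$, $1/3$, $1/2$ simultaneously, which is what makes the construction work; Lemma \ref{HGA} then converts the containment in $\upsilon X$ into pseudocompactness of $V$. No further technical ingredient beyond those already in the paper is required.
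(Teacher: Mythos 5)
Your proof is correct and follows essentially the same route as the paper: both halves hinge on Lemma \ref{HGA}, and the reverse inclusion uses the same Urysohn function on $\beta X$ with Lemma \ref{LKG}, differing only in the choice of cut-off levels and in that you apply Lemma \ref{HGA} directly to the regular closed neighborhood rather than first transferring pseudocompactness to $\mathrm{cl}_XC$. No gaps.
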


\begin{proof}
Suppose that $C\in\mathrm{Coz}(X)$ is such that $\mathrm{cl}_X C$ has a pseudocompact neighborhood $U$ in $X$. Since $U$ is pseudocompact and $\mathrm{cl}_X C$ is regular closed in $X$ (and thus in $U$) then $\mathrm{cl}_X C$ is pseudocompact. Thus $\mathrm{cl}_{\beta X} C\subseteq\upsilon X$, by Lemma \ref{HGA}, and therefore $\mathrm{int}_{\beta X}\mathrm{cl}_{\beta X} C\subseteq\mathrm{int}_{\beta X}\upsilon X$.

To show the reverse inclusion, let $t\in\mathrm{int}_{\beta X}\upsilon X$. By the Urysohn Lemma there is a continuous $f:\beta X\rightarrow[0,1]$ with $f(t)=0$ and $f|_{\beta X\setminus\mathrm{int}_{\beta X}\upsilon X}=\mathbf{1}$. Then
\[C=X\cap f^{-1}\big([0,1/2)\big)\in\mathrm{Coz}(X)\]
and $t\in\mathrm{int}_{\beta X}\mathrm{cl}_{\beta X} C$, as $t\in f^{-1}([0,1/2))$ and $f^{-1}([0,1/2))\subseteq\mathrm{int}_{\beta X}\mathrm{cl}_{\beta X} C$ by Lemma \ref{LKG}. Note that if $V=X\cap f^{-1}([0,2/3))$ then $\mathrm{cl}_XV$ is a neighborhood of $\mathrm{cl}_X C$ in $X$, and $\mathrm{cl}_X V$ is pseudocompact by Lemma \ref{HGA}, as it is regular closed in $X$ and
\[\mathrm{cl}_{\beta X}V\subseteq f^{-1}\big([0,2/3]\big)\subseteq\upsilon X.\]
\end{proof}

In our final result we will be dealing with realcompactness. Despite the fact that realcompactness is closed hereditary, it is unfortunately not preserved under finite closed sums in the realm of completely regular spaces. (In \cite{M} -- a correction in \cite{M1} -- S. Mr\'{o}wka describes a completely regular space which is not realcompact but it can be represented as the union of two of its closed realcompact subspaces; a simpler example is given by A. Mysior in \cite{My}.) So, our previous results are not applicable as long as the underlying space is required to be only completely regular. As we will see, the situation changes if we confine ourselves to the class of normal spaces.

Recall that a subspace $A$ of a space $X$ is called \textit{$C$-embedded} in $X$ if every continuous $f:A\rightarrow\mathbb{R}$ is continuously extendible over $X$.

\begin{lemma}[Gillman--Jerison \cite{GJ}]\label{DDJD}
Let $X$ be a completely regular space. Let $A$ be $C$-embedded in $X$. Then $\mathrm{cl}_{\upsilon X}A=\upsilon A$.
\end{lemma}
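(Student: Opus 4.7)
The plan is to prove the two inclusions separately using the internal characterization of the Hewitt realcompactification inside the Stone--\v{C}ech compactification: for a completely regular space $T$, a point $p\in\beta T$ lies in $\upsilon T$ if and only if $f_\beta(p)\in\mathbb{R}$ for every continuous $f:T\rightarrow\mathbb{R}$ (where $f_\beta:\beta T\rightarrow[-\infty,+\infty]$ is the canonical continuous extension). The first preliminary step would be to observe that a $C$-embedded subspace is a fortiori $C^*$-embedded: any bounded continuous $f:A\rightarrow\mathbb{R}$ extends to some continuous $\widetilde{f}:X\rightarrow\mathbb{R}$ by hypothesis, and composing with a truncation of $\mathbb{R}$ onto the range-bounding interval produces a bounded continuous extension. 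Hence $\mathrm{cl}_{\beta X}A=\beta A$, which places everything in a common ambient space: $A\subseteq\upsilon A\subseteq\beta A\subseteq\beta X$.

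Next I would verify that $\upsilon A\subseteq\upsilon X$. Let $p\in\upsilon A$ and let $f:X\rightarrow\mathbb{R}$ be continuous. Then $f|_A:A\rightarrow\mathbb{R}$ is continuous, and its $[-\infty,+\infty]$-valued extension $(f|_A)_\beta$ agrees with $f_\beta|_{\beta A}$ on the dense subspace $A$ of $\beta A$, so the two coincide on $\beta A$. Since $p\in\upsilon A$, we have $(f|_A)_\beta(p)\in\mathbb{R}$, and therefore $f_\beta(p)\in\mathbb{R}$. As $f$ was arbitrary, $p\in\upsilon X$. In particular $\upsilon A\subseteq\beta A\cap\upsilon X$.

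Now I would invoke the elementary subspace identity $\mathrm{cl}_{\upsilon X}A=\mathrm{cl}_{\beta X}A\cap\upsilon X=\beta A\cap\upsilon X$ and dispatch both inclusions. The inclusion $\upsilon A\subseteq\mathrm{cl}_{\upsilon X}A$ is immediate from the preceding paragraph. For the reverse inclusion $\mathrm{cl}_{\upsilon X}A\subseteq\upsilon A$, fix $p\in\beta A\cap\upsilon X$ and an arbitrary continuous $g:A\rightarrow\mathbb{R}$. Since $A$ is $C$-embedded in $X$, $g$ extends to a continuous $\widetilde{g}:X\rightarrow\mathbb{R}$. The membership $p\in\upsilon X$ gives $\widetilde{g}_\beta(p)\in\mathbb{R}$, and the same density-of-$A$-in-$\beta A$ argument as above identifies $\widetilde{g}_\beta|_{\beta A}$ with $g_\beta$, so $g_\beta(p)\in\mathbb{R}$. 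Since $g$ was arbitrary, $p\in\upsilon A$, completing the proof.

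There is no substantive obstacle in this argument; the proof is essentially a bookkeeping exercise once the internal description of $\upsilon(\cdot)$ inside $\beta(\cdot)$ is in hand. The only point requiring any care is the implicit identification of the two extensions $\widetilde{g}_\beta|_{\beta A}$ and $g_\beta$, which relies precisely on $A$ being $C^*$-embedded in $X$ so that $\beta A$ genuinely sits inside $\beta X$; this is why the preliminary reduction from $C$-embedded to $C^*$-embedded is indispensable.
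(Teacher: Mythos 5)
Your argument is correct and complete. The paper itself offers no proof of this lemma --- it is quoted from Gillman--Jerison \cite{GJ} as a known fact --- so there is nothing to compare against line by line; your write-up is a sound reconstruction via the characterization of $\upsilon T$ as the set of points $p\in\beta T$ at which every $f\in C(T)$ remains real-valued. The two places needing care are both handled: the reduction from $C$-embedded to $C^*$-embedded legitimizes the identification $\mathrm{cl}_{\beta X}A=\beta A$ (without which the whole bookkeeping collapses), and the agreement of the two Stone extensions on $\beta A$ follows from continuity into the Hausdorff space $[-\infty,+\infty]$ plus density of $A$ in $\beta A$. Note also that your first inclusion $\upsilon A\subseteq\upsilon X$ uses only $C^*$-embedding, while the reverse inclusion genuinely needs full $C$-embedding to extend an arbitrary $g\in C(A)$ over $X$; your proof makes this asymmetry visible. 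For comparison, the textbook route in \cite{GJ} is slicker but less self-contained: $A$ is $C$-embedded in $X$, hence in $\upsilon X$, hence in the closed (therefore realcompact) subspace $\mathrm{cl}_{\upsilon X}A$, in which $A$ is dense; the uniqueness of the Hewitt realcompactification as the realcompact space containing $A$ densely and $C$-embedded then forces $\mathrm{cl}_{\upsilon X}A=\upsilon A$. Your version trades that appeal to the universal property for an explicit computation with real points of $\beta X$; both are standard and correct.
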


By ${\mathscr Q}$ in the following we simply mean ${\mathscr I}_\mathfrak{P}$, as defined in Notation \ref{GGH}, with $\mathfrak{P}$ assumed to be realcompactness; the re-definition is for convenience. (This is also consistent with the initial terminology once used to refer to realcompact spaces; realcompact spaces were originally introduced by E. Hewitt in \cite{H}; they were then called \textit{$Q$-spaces}.)

\begin{notation}\label{GYH}
Let $X$ be a space. Denote
\[{\mathscr Q}=\{A\subseteq X:\mathrm{cl}_XA\mbox{ is realcompact}\}.\]
\end{notation}

Recall that a completely regular space $X$ is realcompact if and only if $X=\upsilon X$. Observe that in a normal space each closed subspace is $C$-embedded. (See Problem 3.D.1 of \cite{GJ}.) This observation will be used in the following.

\begin{lemma}\label{KGPF}
Let $X$ be a normal space. Then $\mathscr{Q}$ is an ideal in $({\mathscr P}(X),\subseteq)$.
\end{lemma}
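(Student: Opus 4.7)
The plan is to verify the two defining conditions of an ideal in $({\mathscr P}(X),\subseteq)$: closure under taking subsets, and closure under finite unions. Non-emptiness is immediate since $\emptyset$ has realcompact closure.

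For the subset condition, suppose $B\subseteq A$ with $A\in\mathscr{Q}$. Then $\mathrm{cl}_XB\subseteq\mathrm{cl}_XA$, and $\mathrm{cl}_XB$ is closed in $\mathrm{cl}_XA$. Since realcompactness is closed hereditary (a fact recalled in the paper just above the lemma), $\mathrm{cl}_XB$ inherits realcompactness from $\mathrm{cl}_XA$, giving $B\in\mathscr{Q}$. This step is automatic and does not use normality.

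The content of the lemma lies in closure under finite unions. Here the obstacle is precisely the one flagged in the paragraph preceding the notation: realcompactness is \emph{not} preserved under finite closed sums for general completely regular spaces (Mr\'{o}wka's/Mysior's examples). My plan is to bypass this by transporting the problem into $\upsilon X$, where closed subspaces are automatically realcompact. Given $A,B\in\mathscr{Q}$, I need to show $\mathrm{cl}_X(A\cup B)=\mathrm{cl}_XA\cup\mathrm{cl}_XB$ is realcompact. Because $X$ is normal, both $\mathrm{cl}_XA$ and $\mathrm{cl}_XB$ are closed, hence $C$-embedded in $X$. Lemma \ref{DDJD} (Gillman--Jerison) then gives
\[\mathrm{cl}_{\upsilon X}(\mathrm{cl}_XA)=\upsilon(\mathrm{cl}_XA),\]
and since $\mathrm{cl}_XA$ is realcompact the right-hand side equals $\mathrm{cl}_XA$ itself; that is, $\mathrm{cl}_XA$ is closed in $\upsilon X$. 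The same holds for $\mathrm{cl}_XB$. Consequently $\mathrm{cl}_XA\cup\mathrm{cl}_XB$ is closed in the realcompact space $\upsilon X$, and therefore is realcompact by closed heredity. Thus $A\cup B\in\mathscr{Q}$.

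The key step, and the only non-routine one, is the passage to $\upsilon X$ via $C$-embedding; this is where normality enters essentially and where the potential failure of realcompactness under finite closed sums is defeated. Once that identification is in hand, both ideal axioms reduce to the closed-hereditary nature of realcompactness.
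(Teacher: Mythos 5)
Your proof is correct and follows essentially the same route as the paper: normality gives $C$-embedding of closed subspaces, Lemma \ref{DDJD} then identifies $\mathrm{cl}_{\upsilon X}(\mathrm{cl}_XA)$ with $\upsilon(\mathrm{cl}_XA)=\mathrm{cl}_XA$, and the union condition follows. The paper phrases the conclusion as the single chain $\upsilon(\mathrm{cl}_X(C\cup D))=\mathrm{cl}_{\upsilon X}C\cup\mathrm{cl}_{\upsilon X}D=\mathrm{cl}_X(C\cup D)$ rather than your equivalent ``each piece is closed in the realcompact space $\upsilon X$, hence so is their union,'' but this is only a cosmetic difference.
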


\begin{proof}
Note that ${\mathscr Q}$ is non-empty, as it contains $\emptyset$. Let $B\subseteq A$ with $A\in\mathscr{Q}$. Then $\mathrm{cl}_XB\subseteq\mathrm{cl}_XA$. Since $\mathrm{cl}_XA$ is realcompact and realcompactness is closed hereditary then $\mathrm{cl}_XB$ is realcompact. That is $B\in\mathscr{Q}$. Next, let $C,D\in\mathscr{Q}$. Since $X$ is normal, every closed subspace of $X$ is $C$-embedded in $X$. Thus, using Lemma \ref{DDJD} we have
\begin{eqnarray*}
\upsilon\big(\mathrm{cl}_X(C\cup D)\big)&=&\mathrm{cl}_{\upsilon X}(C\cup D)\\&=&\mathrm{cl}_{\upsilon X}C\cup\mathrm{cl}_{\upsilon X}D\\&=&\upsilon(\mathrm{cl}_X C)\cup\upsilon(\mathrm{cl}_XD)\\&=&\mathrm{cl}_X C\cup\mathrm{cl}_XD=\mathrm{cl}_X(C\cup D).
\end{eqnarray*}
That is $\mathrm{cl}_X(C\cup D)$ is realcompact. Therefore $C\cup D\in{\mathscr Q}$.
\end{proof}

Once one states and proves a lemma dual to Lemma \ref{KFFF} it will be then possible to state and prove results dual to Theorems \ref{TTG}, \ref{BBV}, \ref{HGBV}, \ref{UUS}, \ref{DGH}, \ref{KGV} and \ref{HFBY}. Our concluding result determines $\lambda_{\mathscr Q}X$. As in the case of pseudocompactness, it turns out that $\lambda_{\mathscr Q}X$ is a familiar subspace of $\beta X$ as well.

\begin{theorem}\label{RRGY}
Let $X$ be a normal space. Then
\[\lambda_{\mathscr Q}X=\beta X\setminus\mathrm{cl}_{\beta X}(\upsilon X\setminus X).\]
\end{theorem}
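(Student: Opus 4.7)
I would prove the equality by a two-sided inclusion, using the Urysohn Lemma in $\beta X$ for one direction and Lemma~\ref{DDJD} for the other.

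\textbf{Plan for $\beta X\setminus\mathrm{cl}_{\beta X}(\upsilon X\setminus X)\subseteq\lambda_{\mathscr Q}X$.} Let $t$ belong to the left-hand side. Since $\beta X$ is normal, the Urysohn Lemma produces a continuous $f:\beta X\rightarrow[0,1]$ with $f(t)=0$ and $f|_{\mathrm{cl}_{\beta X}(\upsilon X\setminus X)}=\mathbf{1}$. Writing $g=f|_X$ (so that $g_\beta=f$), I set $C=g^{-1}([0,1/3))\in\mathrm{Coz}(X)$ and $W=g^{-1}([0,2/3))$. Then $W$ is an open neighborhood of $\mathrm{cl}_XC$ in $X$, and $\mathrm{cl}_{\beta X}W\subseteq f^{-1}([0,2/3])$ is disjoint from $\mathrm{cl}_{\beta X}(\upsilon X\setminus X)$, so $\mathrm{cl}_{\beta X}W\cap\upsilon X\subseteq X$. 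Consequently $\mathrm{cl}_{\upsilon X}W=\mathrm{cl}_{\beta X}W\cap\upsilon X\subseteq X$, so $\mathrm{cl}_{\upsilon X}W$ coincides with $\mathrm{cl}_X W$ and, being closed in the realcompact space $\upsilon X$, is itself realcompact. Hence $W\in{\mathscr Q}$, so $W$ is a null neighborhood of $\mathrm{cl}_XC$ in $X$, giving $\mathrm{int}_{\beta X}\mathrm{cl}_{\beta X}C\subseteq\lambda_{\mathscr Q}X$. Finally, $t\in f^{-1}([0,1/3))=g_\beta^{-1}([0,1/3))\subseteq\mathrm{int}_{\beta X}\mathrm{cl}_{\beta X}C$ by Lemma~\ref{LKG}, so $t\in\lambda_{\mathscr Q}X$.

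\textbf{Plan for $\lambda_{\mathscr Q}X\subseteq\beta X\setminus\mathrm{cl}_{\beta X}(\upsilon X\setminus X)$.} Let $t\in\lambda_{\mathscr Q}X$. Then $t\in\mathrm{int}_{\beta X}\mathrm{cl}_{\beta X}C$ for some $C\in\mathrm{Coz}(X)$ such that $\mathrm{cl}_XC$ has a null neighborhood $W$ in $X$; in particular $\mathrm{cl}_XW$ is realcompact and $C\subseteq W$. This is the step where normality of $X$ is crucial: since $\mathrm{cl}_XW$ is closed in the normal space $X$, it is $C$-embedded in $X$, so by Lemma~\ref{DDJD},
\[\mathrm{cl}_{\upsilon X}(\mathrm{cl}_XW)=\upsilon(\mathrm{cl}_XW)=\mathrm{cl}_XW,\]
the last equality because $\mathrm{cl}_XW$ is realcompact. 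Thus $\mathrm{cl}_{\upsilon X}W\subseteq\mathrm{cl}_{\upsilon X}(\mathrm{cl}_XW)\subseteq X$, which translates to $\mathrm{cl}_{\beta X}W\cap(\upsilon X\setminus X)=\emptyset$. Setting $V=\mathrm{int}_{\beta X}\mathrm{cl}_{\beta X}W$, which is open in $\beta X$ and disjoint from $\upsilon X\setminus X$, I get $V\cap\mathrm{cl}_{\beta X}(\upsilon X\setminus X)=\emptyset$. Since $C\subseteq W$ gives $\mathrm{int}_{\beta X}\mathrm{cl}_{\beta X}C\subseteq\mathrm{cl}_{\beta X}W$, and the left side is open, $\mathrm{int}_{\beta X}\mathrm{cl}_{\beta X}C\subseteq V$, so $t\in V\subseteq\beta X\setminus\mathrm{cl}_{\beta X}(\upsilon X\setminus X)$.

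\textbf{Where I expect the main obstacle.} The one-line identification that drives everything is the equality $\mathrm{cl}_{\upsilon X}(\mathrm{cl}_XW)=\mathrm{cl}_XW$. Both hypotheses are essential: without normality one cannot apply Lemma~\ref{DDJD} to move from $\mathrm{cl}_{\upsilon X}$-closure to the Hewitt realcompactification; without realcompactness of $\mathrm{cl}_XW$ one cannot collapse $\upsilon(\mathrm{cl}_XW)$ back to $\mathrm{cl}_XW$. Passing from ``$\mathrm{cl}_{\beta X}W$ misses $\upsilon X\setminus X$'' to ``$\mathrm{cl}_{\beta X}W$ misses $\mathrm{cl}_{\beta X}(\upsilon X\setminus X)$'' is then an easy open-set manoeuvre, but it is what forces the use of the \emph{interior} $\mathrm{int}_{\beta X}\mathrm{cl}_{\beta X}W$ rather than $\mathrm{cl}_{\beta X}W$ itself, and the definition of $\lambda_{\mathscr Q}X$ via interiors is exactly what makes this transition available.
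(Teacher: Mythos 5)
Your proof is correct and follows essentially the same route as the paper: the Urysohn Lemma in $\beta X$ together with Lemma \ref{LKG} for the inclusion $\beta X\setminus\mathrm{cl}_{\beta X}(\upsilon X\setminus X)\subseteq\lambda_{\mathscr Q}X$, and normality via $C$-embedding and Lemma \ref{DDJD} for the reverse. The only (immaterial) difference is that you apply Lemma \ref{DDJD} to the closure of the null neighborhood $W$ rather than to $\mathrm{cl}_XC$ itself, as the paper does.
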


\begin{proof}
Suppose that $C\in\mathrm{Coz}(X)$ is such that $\mathrm{cl}_X C$ has a realcompact neighborhood $U$ in $X$. Then $\mathrm{cl}_X C$ is realcompact, as it is closed in $U$. Since $\mathrm{cl}_X C$ is $C$-embedded in $X$, as $X$ is normal, by Lemma \ref{DDJD} we have $\mathrm{cl}_{\upsilon X}C=\upsilon(\mathrm{cl}_X C)=\mathrm{cl}_X C$. But then $\mathrm{int}_{\beta X}\mathrm{cl}_{\beta X} C\cap(\upsilon X\setminus X)$ is empty, as
\[\mathrm{cl}_{\beta X} C\cap(\upsilon X\setminus X)=\mathrm{cl}_{\upsilon X} C\cap(\upsilon X\setminus X)=\emptyset\]
and thus $\mathrm{int}_{\beta X}\mathrm{cl}_{\beta X} C\cap\mathrm{cl}_{\beta X}(\upsilon X\setminus X)$ is empty, that is
\[\mathrm{int}_{\beta X}\mathrm{cl}_{\beta X} C\subseteq\beta X\setminus\mathrm{cl}_{\beta X}(\upsilon X\setminus X).\]

To show the reverse inclusion, let $t\in\beta X\setminus\mathrm{cl}_{\beta X}(\upsilon X\setminus X)$. Let $f:\beta X\rightarrow[0,1]$ be continuous with $f(t)=0$ and $f|_{\mathrm{cl}_{\beta X}(\upsilon X\setminus X)}=\mathbf{1}$. Then
\[C=X\cap f^{-1}\big([0,1/2)\big)\in\mathrm{Coz}(X)\]
and $t\in\mathrm{int}_{\beta X}\mathrm{cl}_{\beta X} C$, as $t\in f^{-1}([0,1/2))$ and $f^{-1}([0,1/2))\subseteq\mathrm{int}_{\beta X}\mathrm{cl}_{\beta X} C$ by Lemma \ref{LKG}. Now let $V=X\cap f^{-1}([0,2/3))$. Then $\mathrm{cl}_XV$ is a neighborhood of $\mathrm{cl}_X C$ in $X$. Since $\mathrm{cl}_{\beta X}V\cap(\upsilon X\setminus X)$ is empty, as $\mathrm{cl}_{\beta X}V\subseteq f^{-1}([0,2/3])$, we have
\[\mathrm{cl}_X V=X\cap\mathrm{cl}_{\beta X}V=\upsilon X\cap\mathrm{cl}_{\beta X}V=\mathrm{cl}_{\upsilon X}V.\]
Therefore $\mathrm{cl}_X V$ is realcompact, as it is closed in $\upsilon X$.
\end{proof}

\subsubsection*{Acknowledgement}

This work has been initiated during author's short visit to Chamran University of Ahvaz. The author wishes to express his gratitude to Professor F. Azarpanah for making this visit possible.

This research was in part supported by a grant from IPM.




\begin{thebibliography}{HD}

\bibitem{AG} S. K. Acharyya and S. K. Ghosh, Functions in $C(X)$ with support lying on a class of subsets of $X$. \textit{Topology Proc.} \textbf{35} (2010) 127--148.

\bibitem{AN} S. Afrooz and M. Namdari, $C_\infty(X)$ and related ideals. \textit{Real Anal. Exchange} \textbf{36} (2010) 45--54.

\bibitem{A} A. R. Aliabad, F. Azarpanah and M. Namdari, Rings of continuous functions vanishing at infinity. \textit{Comment. Math. Univ. Carolin.} \textbf{45} (2004)
    519--533.

\bibitem{Be} E. Behrends, \textit{$M$-structure and the Banach--Stone Theorem}. Springer, Berlin, 1979.

\bibitem{Bu} D. K. Burke, \textit{Covering properties}, in: K. Kunen and J. E. Vaughan (Eds.), Handbook of Set-Theoretic Topology, Elsevier, Amsterdam, 1984, pp. 347--422.

\bibitem{C} W. W. Comfort, On the Hewitt realcompactification of a product space. \textit{Trans. Amer. Math. Soc.} \textbf{131} (1968) 107--118.

\bibitem{E} R. Engelking, \textit{General Topology}. Second edition. Heldermann Verlag, Berlin, 1989.

\bibitem{F} I. Farah, Analytic quotients: theory of liftings for quotients over analytic ideals on the integers. \textit{Mem. Amer. Math. Soc.} \textbf{148} (2000) 177 pp.

\bibitem{GJ} L. Gillman and M. Jerison, \textit{Rings of Continuous Functions}. Springer--Verlag, New York--Heidelberg, 1976.

\bibitem{HJ} A. W. Hager and D. G. Johnson, A note on certain subalgebras of $C(X)$. \textit{Canad. J. Math.} \textbf{20} (1968) 389--393.

\bibitem{H} E. Hewitt, Rings of real-valued continuous functions. I. \textit{Trans. Amer. Math. Soc.} \textbf{64} (1948) 45--99.

\bibitem{Ho} R. E. Hodel, Jr., \textit{Cardinal functions I}, in: K. Kunen and J. E. Vaughan (Eds.), Handbook of Set-Theoretic Topology, Elsevier, Amsterdam, 1984, pp. 1--61.

\bibitem{Ko4} M. R. Koushesh, The partially ordered set of one-point extensions. \textit{Topology Appl.} \textbf{158} (2011) 509--532.

\bibitem{Ko3} M. R. Koushesh, Compactification-like extensions. \textit{Dissertationes Math. (Rozprawy Mat.)} \textbf{476} (2011) 88 pp.

\bibitem{Ko7} M. R. Koushesh, A pseudocompactification. \textit{Topology Appl.} \textbf{158} (2011) 2191--2197.

\bibitem{Ko5} M. R. Koushesh, One-point extensions of locally compact paracompact spaces. \textit{Bull. Iranian Math. Soc.} \textbf{37} (2011), no. 4, 199--228.

\bibitem{Ko13} M. R. Koushesh, Connectedness modulo a topological property. \textit{Topology Appl.} \textbf{159} (2012) 3417--3425.

\bibitem{Ko6} M. R. Koushesh, The Banach algebra of continuous bounded functions with separable support. \textit{Studia Math.} \textbf{210} (2012) 227--237.

\bibitem{Ko11} M. R. Koushesh, Representation theorems for Banach algebras. \textit{Topology Appl.} \textbf{160} (2013) 1781--1793.

\bibitem{Ko10} M. R. Koushesh, Representation theorems for normed algebras. \textit{J. Aust. Math. Soc.} \textbf{95} (2013) 201--222.

\bibitem{Ko8} M. R. Koushesh, One-point extensions and local topological properties. \textit{Bull. Aust. Math. Soc.} \textbf{88} (2013) 12--16.

\bibitem{Ko12} M. R. Koushesh, Topological extensions with compact remainder. \textit{J. Math. Soc. Japan} In press. (47 pp.) Available at: \verb"http://mathsoc.jp/publication/JMSJ/inpress.html"

\bibitem{M} S. Mr\'{o}wka, On the unions of $Q$-spaces. \textit{Bull. Acad. Polon. Sci. S\'{e}r. Sci. Math. Astronom. Phys.} \textbf{6} (1958) 365--368.

\bibitem{M1} S. Mr\'{o}wka, Some comments on the author's example of a non-$\mathfrak{R}$-compact space. \textit{Bull. Acad. Polon. Sci. S\'{e}r. Sci. Math. Astronom. Phys.} \textbf{18} (1970), 443--448.

\bibitem{My} A. Mysior, A union of realcompact spaces. \textit{Bull. Acad. Polon. Sci. S\'{e}r. Sci. Math.} \textbf{29} (1981) 169--172.

\bibitem{PW} J. R. Porter and R. G. Woods, \textit{Extensions and Absolutes of Hausdorff Spaces}. Springer--Verlag, New York, 1988.

\bibitem{Steph} R. M. Stephenson, Jr., \textit{Initially $\kappa$-compact and related spaces}, in: K. Kunen and J. E. Vaughan (Eds.), Handbook of Set-Theoretic Topology, Elsevier, Amsterdam, 1984, pp. 603--632.

\bibitem{T} A. Taherifar, Some generalizations and unifications of $C_K(X)$, $C_\psi(X)$ and $C_\infty(X)$. Manuscript. (13 pp.) Available at: \verb"arXiv:1210.6521 [math.GN]"

\bibitem{Va} J. E. Vaughan, \textit{Countably compact and sequentially compact spaces}, in: K. Kunen and J. E. Vaughan (Eds.), Handbook of Set-Theoretic Topology, Elsevier, Amsterdam, 1984, pp. 569--602.

\bibitem{W} N. M. Warren, Properties of Stone--\v{C}ech compactifications of discrete spaces. \textit{Proc. Amer. Math. Soc.} \textbf{33} (1972) 599--606.

\bibitem{We} M. D. Weir, \textit{Hewitt--Nachbin Spaces}. American Elsevier, New York, 1975.

\end{thebibliography}
\end{document}